\newcommand{\pathfig}{.}
\newcommand{\R}{\mathbb{R}}
\newcommand{\X}{\mathcal{X}}
\newcommand{\egaldef}{:=} 
\newcommand{\defegal}{=:} 
\newcommand{\flapp}{\mapsto} 
\newcommand{\telque}{\, \text{ s.t. } \,} 
\newcommand{\grandO}{\ensuremath{\mathcal{O}}}
\newcommand{\petito}{\ensuremath{\mathrm{o}}}
\newcommand{\guil}[1]{``#1''}
\newcommand{\paren}[1]{\left(  #1 \right)}
\newcommand{\parenj}[1]{\mathopen{}\left( #1  \right) \mathclose{}}
\newcommand{\parens}[1]{( #1 )}
\newcommand{\parenb}[1]{\bigl( #1 \bigr)}
\newcommand{\parenB}[1]{\Bigl( #1  \Bigr)}
\newcommand{\carrej}[1]{\mathopen{}\left( #1  \right)^2 \mathclose{}}
\newcommand{\croch}[1]{\left[ #1 \right]}
\newcommand{\crochj}[1]{\mathopen{}\left[ #1 \right] \mathclose{}}
\newcommand{\crochs}[1]{[ #1 ]}
\newcommand{\crochb}[1]{\bigl[ #1 \bigr]}
\newcommand{\crochB}[1]{\Bigl[ #1 \Bigr]}
\newcommand{\crochbb}[1]{\biggl[ #1 \biggr]}
\newcommand{\set}[1]{\left\{ #1 \right\}}
\newcommand{\setj}[1]{\mathopen{}\left\{ #1 \right\} \mathclose{}}
\newcommand{\sets}[1]{\{ #1 \}}
\newcommand{\setb}[1]{\bigl\{ #1 \bigr\}}
\newcommand{\setB}[1]{\Bigl\{ #1  \Bigr\}}
\newcommand{\absj}[1]{\mathopen{} \left\lvert #1 \right\rvert \mathclose{}} 
\newcommand{\abss}[1]{\lvert #1 \rvert}
\newcommand{\absb}[1]{\bigl\lvert #1 \bigr\rvert}
\newcommand{\absB}[1]{\Bigl\lvert #1 \Bigr\rvert}
\newcommand{\prodscal}[2]{\left\langle #1 , \, #2 \right\rangle}
\newcommand{\prodscalj}[2]{\mathopen{} \left\langle #1 , \, #2 \right\rangle \mathclose{}}
\newcommand{\prodscals}[2]{\langle #1 , \, #2 \rangle}
\newcommand{\prodscalb}[2]{\bigl\langle #1 , \, #2 \bigr\rangle}
\newcommand{\norm}[1]{\left \lVert #1 \right\rVert}
\newcommand{\norms}[1]{\lVert #1 \rVert}
\newcommand{\normb}[1]{\bigl \lVert #1 \bigr\rVert}
\newcommand{\Proba}{\mathbb{P}} 
\newcommand{\E}{\mathbb{E}} 
\DeclareMathOperator{\tr}{tr}
\DeclareMathOperator{\card}{card}
\DeclareMathOperator{\var}{var}
\DeclareMathOperator{\tmpargmin}{argmin}
\newcommand{\argmin}{\mathop{\tmpargmin}}
\DeclareMathOperator{\tmpargmax}{argmax}
\newcommand{\argmax}{\mathop{\tmpargmax}}
\newcommand{\Id}{I}
\DeclareMathOperator{\diag}{diag}
\newcommand{\vectun}{\ensuremath{\mathbf{1}}}
\newcommand{\Fh}{\widehat{F}}
\newcommand{\ih}{\widehat{\imath}}
\newcommand{\bayes}{\ensuremath{s^{\star}}}
\newcommand{\Set}{\mathbb{S}}
\newcommand{\Risk}{\mathcal{R}}
\newcommand{\Remp}{\widehat{\mathcal{R}}_n}
\newcommand{\M}{\mathcal{M}}
\newcommand{\mM}{m \in \M}
\newcommand{\C}{\mathcal{C}}
\newcommand{\cE}{\mathcal{E}}
\newcommand{\cF}{\mathcal{F}}
\newcommand{\cG}{\mathcal{G}}
\newcommand{\Fhm}{\Fh_m}
\newcommand{\sh}{\widehat{s}}
\newcommand{\shp}{\sh^{\,\,\,\prime}}
\newcommand{\shm}{\sh_m}
\newcommand{\shmp}{\sh_{m^{\prime}}}
\newcommand{\mh}{\widehat{m}}
\newcommand{\mhmingal}{\mh_{\mathrm{min}}^{\mathrm{gal}}}
\newcommand{\mhgalzero}{\mh_{\mathrm{min}}^{(0)}}
\newcommand{\mhminlin}{\mh_{\mathrm{min}}^{\mathrm{lin}}}
\newcommand{\mhoptun}{\mh_{\mathrm{opt}}^{(1)}}
\newcommand{\mhslopeun}{\mh_1}
\newcommand{\mhAlgA}{\mh_{\mathrm{Alg. 1}}}
\newcommand{\mhAlgB}{\mh_{\mathrm{Alg. 2}}}
\newcommand{\mhAlgC}{\mh_{\mathrm{Alg. 3}}}
\newcommand{\mhAlgD}{\mh_{\mathrm{Alg. 4}}}
\newcommand{\mhAlgE}{\mh_{\mathrm{Alg. 5}}}
\newcommand{\mhAlgF}{\mh_{\mathrm{Alg. 6}}}
\newcommand{\mo}{m^{\star}}
\newcommand{\mominlin}{m_{\mathrm{min}}^{\star \, \mathrm{lin}}}
\newcommand{\momingal}{m_{\mathrm{min}}^{\mathrm{gal} \star}}
\newcommand{\critCgal}{\crit^{\mathrm{gal}}_C}
\newcommand{\Fhmh}{\widehat{F}_{\mh}}
\newcommand{\Ch}{\widehat{C}}
\newcommand{\Chjumpgal}{\Ch_{\mathrm{jump}}}
\newcommand{\Chmaxjump}{\Ch_{\mathrm{max\,j.}}}
\newcommand{\Chwindow}{\Ch_{\mathrm{window}}}
\newcommand{\Chwindowgal}{\widehat{\mathcal{E}}_{\mathrm{window}}^{\mathrm{gal}}}
\newcommand{\Chthr}{\Ch_{\mathrm{thr.}}}
\newcommand{\Chslope}{\Ch_{\mathrm{slope}}}
\newcommand{\Chcapushe}{\Ch_{\mathrm{CAPUSHE}}}
\newcommand{\Chslopecapushe}{\Ch_s}
\newcommand{\mhcapushe}{\mh_{\mathrm{CAPUSHE}}}
\newcommand{\ChRoz}{\Ch_{\mathrm{SBJ}}}
\newcommand{\mhRoz}{\mh_{\mathrm{SBJ}}}
\newcommand{\un}{\mathds{1}}
\newcommand{\phW}{\widehat{p}^{\,W}}
\newcommand{\Dh}{\widehat{D}}
\newcommand{\N}{\mathbb{N}}
\newcommand{\sigh}{\widehat{\sigma}}
\DeclareMathOperator{\pen}{pen}
\DeclareMathOperator{\crit}{crit}
\newcommand{\critFPE}{\crit_{\mathrm{FPE}}}
\newcommand{\critGCV}{\crit_{\mathrm{GCV}}}
\newcommand{\penid}{\pen_{\mathrm{id}}} 
\newcommand{\penmin}{\pen_{\min}} 
\newcommand{\penminlin}{\pen_{\min}^{\mathrm{lin}}} 
\newcommand{\penmingala}{\pen_{\min,0}^{\mathrm{gal}}} 
\newcommand{\penmingal}{\pen_{\min}^{\mathrm{gal}}} 
\newcommand{\opt}{\mathrm{opt}}
\newcommand{\penopt}{\pen_{\opt}}
\newcommand{\penoptgal}{\pen_{\opt}^{\mathrm{gal}}}
\newcommand{\penoptz}{\pen_{\opt,0}}
\newcommand{\penmult}{\pen^{\mathrm{mult}}}
\newcommand{\ovdelta}{\delta}
\newcommand{\etamoins}{\eta_n^{-}}
\newcommand{\etaplus}{\eta_n^{+}}
\newcommand{\risksmall}{h} 
\newcommand{\biaismax}{\mathcal{B}}
\newcommand{\cteProPenminAbove}{K}
\newcommand{\cteThmOLS}{\kappa}
\newtheorem{algo}{Algorithm}
\newtheorem{theorem}{Theorem}
\newtheorem{proposition}{Proposition}
\newtheorem{lemma}{Lemma}
\newtheorem{remark}{Remark}
\newcommand{\algoinput}{\textup{\texttt{Input}}}
\newcommand{\algooutput}{\textup{\texttt{Output}}}
\newcommand{\Seasy}{S^{\mathrm{easy}}}
\newcommand{\Shard}{S^{\mathrm{hard}}}
\newcommand{\pbmult}{\ensuremath{\alpha}}
\newcommand{\pbmulttitre}{\texorpdfstring{\pbmult}{alpha}}
\newcommand{\pbpenmin}{\ensuremath{\beta}}
\newcommand{\pbpenmintitre}{\texorpdfstring{\pbpenmin}{beta}}
\newcommand{\pbpenminplustitre}{\texorpdfstring{\eqref{pb.penmin.Cgrd-Dpt}}{beta+}}
\newcommand{\pbpenminmoinstitre}{\texorpdfstring{\eqref{pb.penmin.Cpt-Dgrd}}{beta-}}
\newcommand{\pbpenmingap}{\ensuremath{\widetilde{\beta}}}
\newcommand{\pbpenminalt}{\ensuremath{\pbpenmin^{\prime}}}
\newcommand{\pbpenminaltgap}{\ensuremath{\widetilde{\pbpenmin}^{\, \prime}}}
\newcommand{\pbpenopt}{\ensuremath{\gamma}}
\newcommand{\pbpenopttitre}{\texorpdfstring{\pbpenopt}{gamma}}
\newcommand{\ch}{\phantom{0}}
\renewcommand{\leq}{\leqslant}
\renewcommand{\geq}{\geqslant}
\newcommand{\tabespvert}{\noalign{\vspace*{0.075cm}}}
\begin{document}

\begin{frontmatter}

  \title{Minimal penalties and the slope heuristics: a survey}
  \runtitle{Minimal penalties and the slope heuristics: a survey}
  \alttitle{P\'enalit\'es minimales et heuristique de pente}
  \begin{aug}
    \auteur{%
      \prenom{Sylvain} \nom{Arlot}%
      \thanksref{t1}%
      \contact[label=e1]{sylvain.arlot@u-psud.fr}%
    }%
\affiliation[t1]{%
Universit\'e Paris-Saclay, Univ. Paris-Sud, CNRS, Inria, Laboratoire de math\'ematiques d'Orsay, 91405, Orsay, France.
\printcontact{e1}
}
    \runauthor{S. Arlot}
  \end{aug}

 \begin{abstract}
Birg\'e and Massart proposed in 2001 the slope heuristics as a way to choose optimally 
from data an unknown multiplicative constant in front of a penalty.
It is built upon the notion of minimal penalty, 
and it has been generalized since to some ``minimal-penalty algorithms''.
This article reviews the theoretical results obtained for such algorithms, 
with a self-contained proof in the simplest framework, precise proof ideas for further generalizations, 
and a few new results.
Explicit connections are made with residual-variance estimators 
---with an original contribution on this topic, showing that for this task 
the slope heuristics performs almost as well as a residual-based estimator 
with the best model choice--- and 
some classical algorithms such as L-curve or elbow heuristics, 
Mallows' $C_p\,$, and Akaike's FPE. 
Practical issues are also addressed, including two new practical definitions of minimal-penalty algorithms that are compared on synthetic data to previously-proposed definitions. 
Finally, several conjectures and open problems are suggested 
as future research directions.
  \end{abstract}

  \begin{keywords}
    \kwd{model selection}%
    \kwd{estimator selection}%
    \kwd{penalization}%
    \kwd{slope heuristics}%
    \kwd{minimal penalty}%
    \kwd{residual-variance estimation}%
    \kwd{L-curve heuristics}%
    \kwd{elbow heuristics}%
    \kwd{scree test}%
    \kwd{overpenalization}%
  \end{keywords}

\begin{altabstract}
Birg\'e et Massart ont propos\'e en 2001 l'heuristique de pente, 
pour d\'eterminer \`a l'aide des donn\'ees 
une constante multiplicative optimale devant une p\'enalit\'e en s\'election de mod\`eles. 
Cette heuristique s'appuie sur la notion de p\'enalit\'e minimale, 
et elle a depuis \'et\'e g\'en\'eralis\'ee en 
``algorithmes \`a base de p\'enalit\'es minimales''. 
Cet article passe en revue les r\'esultats th\'eoriques obtenus sur ces algorithmes, 
avec une preuve compl\`ete dans le cadre le plus simple, 
des id\'ees de preuves pr\'ecises pour g\'en\'eraliser ce r\'esultat au-del\`a des cadres 
d\'ej\`a \'etudi\'es, 
et quelques r\'esultats nouveaux. 
Des liens sont faits avec les m\'ethodes d'estimation de la variance r\'esiduelle 
(avec une contribution originale sur ce th\`eme, qui d\'emontre que 
l'heuristique de pente produit un estimateur de la variance 
quasiment aussi bon qu'un estimateur fond\'e sur les r\'esidus d'un mod\`ele oracle) 
ainsi qu'avec plusieurs algorithmes classiques tels que les heuristiques de coude 
(ou de courbe en L), 
$C_p$ de Mallows et FPE d'Akaike. 
Les questions de mise en \oe uvre pratique sont \'egalement \'etudi\'ees, 
avec notamment la proposition de deux nouvelles d\'efinitions pratiques pour des algorithmes 
\`a base de p\'enalit\'es minimales 
et leur comparaison aux d\'efinitions pr\'ec\'edentes sur des donn\'ees simul\'ees. 
Enfin, des conjectures et probl\`emes ouverts sont propos\'es comme 
pistes de recherche pour l'avenir. 
\end{altabstract}

  \begin{altkeywords}
    \kwd{s\'election de mod\`eles}
    \kwd{s\'election d'estimateurs}%
    \kwd{p\'enalisation}%
    \kwd{heuristique de pente}%
    \kwd{p\'enalit\'e minimale}%
    \kwd{estimation de la variance r\'esiduelle}%
    \kwd{heuristique de courbe en L}%
    \kwd{heuristique de coude}%
    \kwd{test scree}%
    \kwd{surp\'enalisation}%
  \end{altkeywords}

  \begin{AMSclass}
    \kwd{62-02}
    \kwd{62G05}
    \kwd{62J05}
  \end{AMSclass}

\end{frontmatter}

\setcounter{footnote}{1} 


\tableofcontents


\section{Introduction} \label{sec.intro}

Model selection attracts much attention in statistics since more than forty years \citep{Aka:1973,Mal:1973,Bur_And:2002,Mas:2003:St-Flour}.
A related and crucial question for machine learning is the data-driven choice of hyperparameters of learning algorithms.
Both are particular instances of the estimator-selection problem: given a family of estimators, how to choose from data one among them whose risk is as small as possible?

One of the main strategies proposed for estimator (or model) selection is penalization, 
that is, choosing the estimator minimizing the sum of its empirical risk 
---how well it fits the data--- and some penalty term ---whose role is to avoid overfitting.
Optimal penalties often depend on at least one parameter whose data-driven choice is challenging.
In the early 2000s, \citet{Bir_Mas:2001,Bir_Mas:2006} pointed out two key facts 
leading to a novel approach for an optimal data-driven choice of multiplicative constants 
in front of penalties.
Birg\'e and Massart were considering a rather theoretical question: 
what is the \emph{minimal} amount of penalization needed for avoiding a strong overfitting?
For least-squares estimators in regression, they noticed that
(i) the minimal penalty is equal to half the optimal penalty, and
(ii) the minimal penalty is observable.
These two facts are called ``the slope heuristics''\footnote{In ``the slope heuristics'', the word ``heuristics'' is an \emph{uncountable} noun, following the Oxford Advanced Learner's Dictionary. One could also write ``the slope heuristic'', according to some other English dictionaries in which ``heuristic'' appears as a noun. We use the former spelling throughout this article, but some other articles make use of the latter spelling (without the final~s). }  
and lead to an algorithm for choosing multiplicative constants in front of penalties.

These ideas and the corresponding algorithm have been generalized since to several frameworks 
(see Section~\ref{sec.penmingal}--\ref{sec.theory} and \ref{sec.empirical}), 
with numerous applications in various fields such as 
biology \citep{Rey_Sch:2010,Aka:2011,Bon_Tou:2010,Rau_Mau_Mag_Cel:2015,Dev_Gal_Per:2017,Dev_Gal:2016}, 
energy \citep{Mic:2008:phd,Dev_Gou_Pog:2015:journal}, 
or text analysis \citep{Der_LeP:2017};  
Section~\ref{sec.empirical.conjectures} provides more examples of applications. 

In particular, for linear estimators in regression, 
the original slope heuristics does not work directly 
and can be modified successfully into a more general ``minimal-penalty algorithm'' 
\citep{Arl_Bac:2009:minikernel_nips,Arl_Bac:2009:minikernel_long_v2} 
detailed in Section~\ref{sec.penmingal}.

For least-squares regression with projection or linear estimators, 
the slope heuristics also provides a residual-variance estimator with nice properties 
(Section~\ref{sec.related.variance}). 
In the general setting, 
the slope heuristics can also be seen as a way to give proper mathematical grounds to 
``L-curve'' or ``elbow-heuristics'' algorithms that are used 
for choosing regularization parameters in ill-posed problems \citep{Han_OLe:1993}, 
as explained in Sections~\ref{sec.related.elbow}--\ref{sec.related.scree}.

\paragraph{Goals} 
The goals of this survey are the following:
\begin{enumerate}
\item 
to review recent theoretical results about the slope heuristics, 
and more generally about all minimal-penalty algorithms (Sections~\ref{sec.slopeOLS}--\ref{sec.theory}); 
\item 
to help identifying how ---and under which assumptions--- 
such results could be generalized to other settings, 
possibly with new algorithms, by giving a precise account of existing proofs 
(Sections~\ref{sec.slopeOLS.proofs}, \ref{sec.theory.approach}, and~\ref{sec.hints}),  
and by identifying several conjectures and open problems suggested by experimental results 
(Section~\ref{sec.empirical}); 
\item 
to make connections between minimal penalties and other classical procedures 
for residual-variance estimation and for model or estimator selection (Section~\ref{sec.related}).
\end{enumerate}
Practical issues are only briefly mentioned in Section~\ref{sec.practical}, 
since more details can be found on these in the survey by \citet{Bau_Mau_Mic:2010}.

There is currently no final answer to the question of generalizing minimal-penalty algorithms 
as much as possible, but we hope that this survey will motivate 
further theoretical and empirical work in this direction, 
which could have a great practical impact in statistics, machine learning, 
and data science in general.

\paragraph{Contributions} 
Let us finally point out some original results appearing in this article. 
In the framework of least-squares fixed-design regression with projection estimators 
and Gaussian noise, 
Theorem~\ref{thm.OLS} validates the slope heuristics in a stronger sense 
compared to previous results \citep{Bir_Mas:2006};  
it is inspired by \citet{Arl_Bac:2009:minikernel_long_v2} but makes weaker assumptions. 
Its extension to sub-Gaussian noise (Remark~\ref{rk.thm.OLS.subgaussian}  in Section~\ref{sec.slopeOLS.math}) is original. 
As a corollary, Proposition~\ref{pro.variance-estim} in Section~\ref{sec.related.variance} 
is the first precise statement on a slope-heuristics-based residual-variance estimator 
---more precise than the result that can be derived from \citet{Arl_Bac:2009:minikernel_long_v2}---, 
showing that it is minimax optimal (up to $\log(n)$ factors) under mild assumptions. 
Proposition~\ref{pro.variance-estim} provides non-asymptotic bounds 
(in expectation and with high probability) on this residual-variance estimator, 
that can be seen as some kind of oracle inequality for residual-variance estimation, 
which is interesting independently from the slope heuristics. 

In the general framework, Propositions~\ref{pro.pbpenmin.gal.below}--\ref{pro.pbpenmin.gal.above} 
in Section~\ref{sec.theory.hint.penmin} propose two general approaches 
for justifying minimal-penalty algorithms. 
These approaches were previously proposed in specific settings \citep{Ler_Tak:2011,Gar_Ler:2011}, 
but their generalization to the setting of Section~\ref{sec.penmingal.framework} is new. 
For instance, the application of Proposition~\ref{pro.pbpenmin.gal.below} 
to general minimum-contrast estimators with a bounded contrast is new, 
to the best of our knowledge. 

On the practical side, as a complement to the survey by \citet{Bau_Mau_Mic:2010}, 
Section~\ref{sec.practical} shows original numerical experiments on synthetic data, 
assessing the performance of the slope heuristics in the least-squares 
regression framework, 
for both residual-variance estimation and model selection. 
Two new practical definitions of the slope heuristics 
(called `median' and `consensus') are proposed and compared to the classical ones. 
An efficient implementation of one previously-proposed definition 
is also provided and proved (Algorithm~\ref{algo.window} and Proposition~\ref{pro.algo.window} 
in Appendix~\ref{app.algos.window}).


\section{The slope heuristics} \label{sec.slopeOLS}
This section presents the original ``slope heuristics'' \citep{Bir_Mas:2001,Bir_Mas:2006} in the framework of fixed-design regression, with the least-squares risk and projection estimators.
By focusing on this framework, we get most of the flavor of the slope heuristics while keeping the exposition simple.

\subsection{Framework} \label{sec.slopeOLS.framework}
The framework considered in Section~\ref{sec.slopeOLS} is the following.
We observe
\begin{equation}
\label{eq.OLS.model}
Y = F + \varepsilon \in \R^n
\end{equation}
where $\varepsilon_1, \ldots, \varepsilon_n$ are independent and identically distributed with mean 0 and variance $\sigma^2$,
and $F \in \R^n$ is some (deterministic) signal of interest.
For instance, $F$ can be equal to $(f(x_i))_{1\leq i \leq n}$ for some deterministic design points $x_1, \ldots, x_n \in \X$ and $f$ some unknown measurable function $\X \mapsto \R$, with no assumption on the set $\X$.

The goal is to reconstruct $F$ from $Y$, that is, to find some $t \in \R^n$ such that its quadratic risk
\[ \frac{1}{n} \norm{t-F}^2 \]
is small, where for every $u \in \R^n$, $\norm{u}^2=\sum_{i=1}^n u_i^2$.
To this end, for every linear subspace $S$ of $\R^n$, the \emph{projection estimator} or \emph{least-squares estimator} on $S$ is defined as
\[ \Fh_S \in \argmin_{t \in S} \set{ \frac{1}{n} \norm{t-Y}^2 } \]
where $n^{-1} \norm{t-Y}^2$  is called the empirical risk of $t$.
Since $S$ is a linear subspace, $\Fh_S$ exists and is unique:
$\Fh_S = \Pi_S Y$
where $\Pi_S : \R^n \to \R^n$ denotes the orthogonal projection onto $S$.
In the following, any linear subspace $S$ of $\R^n$ is called a \emph{model}.

Let $(S_m)_{\mM}$ be some collection of models, and for every $\mM$, let
\[ \Fhm = \Fh_{S_m} = \Pi_{S_m} Y \quad \text{and} \quad  \Pi_m = \Pi_{S_m} \, . \]
In this survey, we assume that the goal of model selection is to choose from data some $\mh \in \M$ such that the quadratic risk of $\Fhmh$ is minimal.
The best choice would be the \emph{oracle}:
\[ \mo \in \argmin_{\mM} \set{\frac{1}{n} \norm{\Fhm - F}^2 } \, , \]
which cannot be used since it depends on the unknown signal $F$.
Therefore, the goal is to define a data-driven $\mh$ satisfying an \emph{oracle inequality}
\begin{equation} \label{eq.oracle}
\frac{1}{n} \norm{\Fhmh - F}^2 \leq K_n \inf_{\mM} \set{ \frac{1}{n} \norm{\Fhm - F}^2 } + R_n 
\end{equation}
with large probability, where the leading constant $K_n$ should be close to~1 
---at least for large $n$--- and the remainder term $R_n$ should be small 
compared to the oracle risk $\inf_{\mM} \sets{ n^{-1} \norms{\Fhm - F}^2 }$.

\subsection{Optimal penalty} \label{sec.slopeOLS.optimal}
Many classical selection methods are built upon the ``unbiased risk estimation'' heuristics: If $\mh$ minimizes a criterion $\crit(m)$ such that
\[ \forall \mM, \qquad \E\crochb{ \crit(m) } \approx \E\crochj{ \frac{1}{n} \norm{\Fhm -F}^2 } \, , \]
then $\mh$ satisfies with large probability an oracle inequality such as Eq.~\eqref{eq.oracle} with an optimal constant $K_n = 1+\petito(1)$.
This can be proved by showing a concentration inequality for $\norms{ \Pi_m \varepsilon}^2$ and $\prodscal{\varepsilon}{(\Id_n - \Pi_m) F}$ around their expectations for all $\mM$, where $\Id_n$ denotes the identity matrix of $\R^n$, see Section~\ref{sec.slopeOLS.proofs}.
For instance, %
cross-validation \citep{All:1974,Sto:1974} and generalized cross-validation \citep[GCV;][]{Cra_Wah:1979}
are built upon this heuristics.

One way of implementing this heuristics is penalization, which consists of minimizing the sum of the empirical risk and a penalty term, that is, using a criterion of the form:
\begin{equation}
\label{eq.crit}
 \crit(m) = \frac{1}{n} \norm{ \Fhm - Y }^2 + \pen(m) \, .
\end{equation}

The unbiased risk estimation heuristics, also called Mallows' heuristics, then leads to the \emph{optimal \textup{(}deterministic\textup{)} penalty}
\begin{equation}
\label{eq.penoptz}
\penoptz(m) \egaldef \E\croch{ \frac{1}{n} \norm{ \Fhm - F }^2}  - \E \croch{ \frac{1}{n} \norm{ \Fhm - Y }^2 }
\, .
\end{equation}
When $\Fhm = \Pi_m Y$, we have 
\begin{align} \label{eq.riskFhm}
  \norm{ \Fhm - F }^2 &= \normb{ (\Id_n - \Pi_m) F}^2 + \norms{ \Pi_m \varepsilon}^2  \\ \label{eq.riskempFhm}
\text{and} \qquad 
\norm{ \Fhm - Y }^2 &= \norm{ \Fhm - F }^2 + \norms{\varepsilon}^2 - 2 \prodscals{\varepsilon}{\Pi_m \varepsilon} + 2 \prodscalb{\varepsilon}{(\Id_n - \Pi_m) F} \, ,
  \end{align}
where $\forall t,u \in \R^n$, $\prodscals{t}{u} = \sum_{i=1}^n t_i u_i\,$.
Since the $\varepsilon_i$ are independent, centered, with variance $\sigma^2$,
Eq.~\eqref{eq.riskFhm} and Eq.~\eqref{eq.riskempFhm} imply that
\begin{align}
\label{eq.EriskFhm}
\E\croch{\frac{1}{n} \norm{ \Fhm - F }^2} &= \frac{1}{n} \normb{ (\Id_n - \Pi_m) F}^2 + \frac{\sigma^2 D_m} {n} \, , 
\\
\label{eq.EriskempFhm}
\E\croch{\frac{1}{n} \norm{ \Fhm - Y }^2} &= \frac{1}{n} \normb{ (\Id_n - \Pi_m) F}^2 + \frac{\sigma^2 \paren{n-D_m}} {n} \, , 
 \\
\label{eq.penid}
\text{and} \qquad 
\penoptz(m) + \sigma^2
&= \frac{ 2 \sigma^2 D_m} {n} 
=: \penopt(m) \, ,
\end{align}
where $D_m \egaldef \dim(S_m)$.
Note that the optimal penalties \eqref{eq.penid} and \eqref{eq.penoptz} differ by an additive constant $\sigma^2$, which does not change the argmin of the penalized criterion \eqref{eq.crit}; this choice simplifies formulas involving $\penopt\,$.

Eq.~\eqref{eq.EriskFhm} is classically known as a \emph{bias-variance decomposition of the risk}: the first term ---called \emph{approximation error} or bias--- decreases when $S_m$ gets larger, while the second term ---called \emph{estimation error} or variance--- increases when $S_m$ gets larger, see Figure~\ref{fig.OLS.Erisk+critC} left.
Eq.~\eqref{eq.EriskempFhm} shows that the expectation of the empirical risk decreases when $S_m$ gets larger, as expected since $\Fhm$ is defined as a minimizer of the empirical risk, see Figure~\ref{fig.OLS.Erisk+critC} left.

\begin{figure}
\begin{center}
\includegraphics[width=0.455\textwidth]{\pathfig/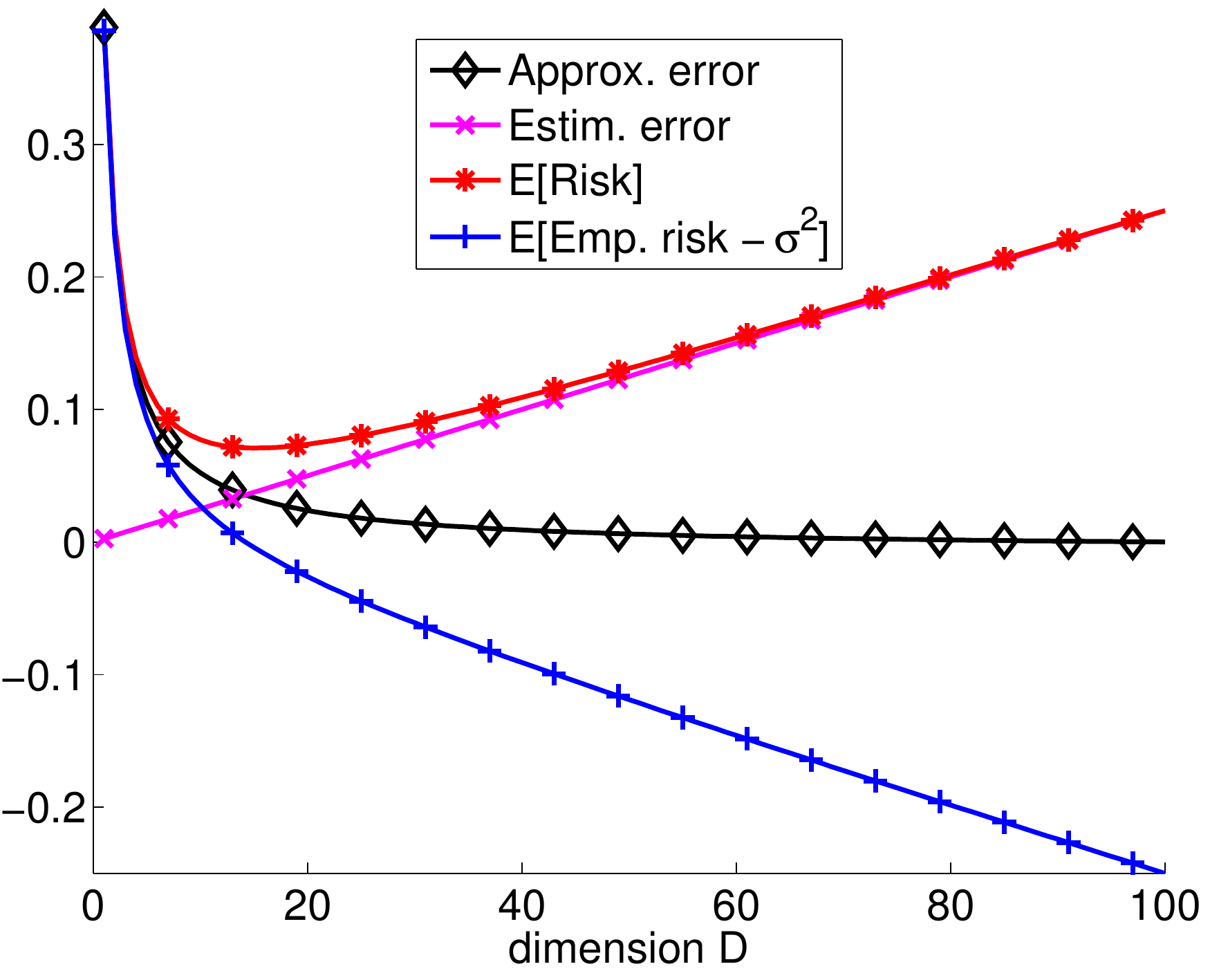}
%
\includegraphics[width=0.505\textwidth]{\pathfig/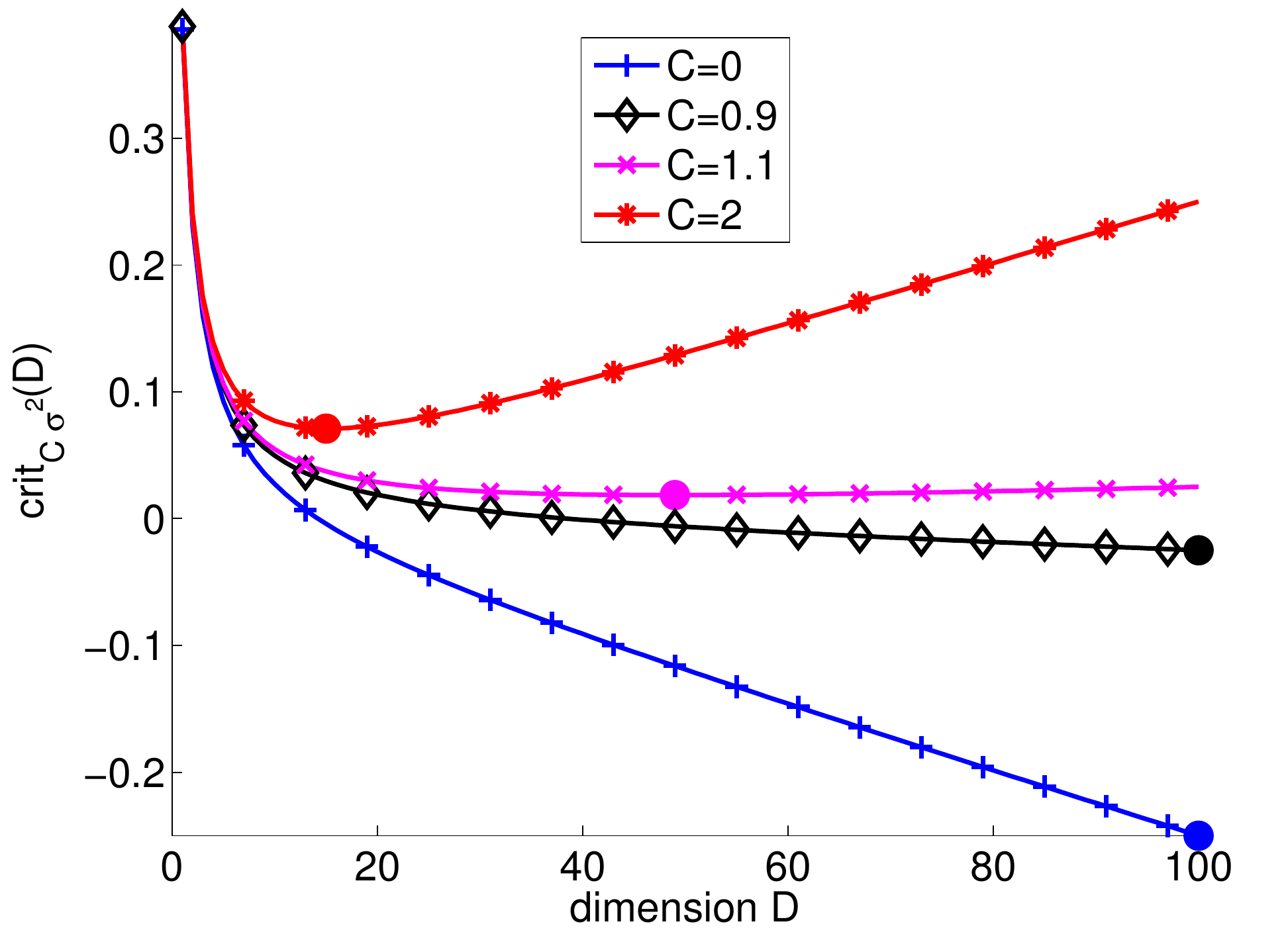}
%
\caption{\label{fig.OLS.Erisk+critC}
Left\textup{:} Expectations of the risk and empirical risk, bias-variance decomposition of the risk.
Right\textup{:} $\crit_{C\sigma^2}(m)$, defined by Eq.~\eqref{eq.critC}, for $C \in \set{0, 0.9, 1.1, 2}$\textup{;}
its minimal value at $\mo(C\sigma^2)$ is shown by a plain dot.
`Easy setting', see Appendix~\ref{app.details-simus} for detailed information.
}
\end{center}
\end{figure}

The expression  of the optimal penalty in Eq.~\eqref{eq.penid} leads to Mallows' $C_p$ \citep{Mal:1973}, where $\sigma^2$ is replaced by some estimator $\widehat{\sigma^2}$.
Several approaches exist for estimating $\sigma^2$, see Section~\ref{sec.related.variance}.
The slope heuristics provides a data-driven estimation of the unknown constant $\sigma^2$ in front of the penalty shape $D_m/n$ thanks to the notion of minimal penalty.

\subsection{Minimal penalty and the slope heuristics} \label{sec.slopeOLS.penmin}

Eq.~\eqref{eq.penid} shows that the shape $\pen_1(m)=D_m/n$ of the optimal penalty is known, even when $\sigma^2$ is unknown.
A natural question is to determine the minimal value of the constant that should be put in front of $\pen_1(m)$. More precisely, if for every $C \geq 0$ 
\begin{equation} \label{eq.mhC}
\mh(C) \in \argmin_{\mM} \setj{ \frac{1}{n} \norm{\Fhm - Y}^2 + C \frac{D_m}{n} }
\, ,
\end{equation}
what is the minimal value of $C$ such that $\mh(C)$ stays a \guil{reasonable} choice, that is, avoids strong overfitting, or equivalently, satisfies an oracle inequality like Eq.~\eqref{eq.oracle} with $K_n = \grandO(1)$ as $n$ tends to infinity?

In order to understand how $\mh(C)$ behaves as a function of $C$, let us consider, for every $C \geq 0$,
\begin{gather}
\mo(C) \in \argmin_{\mM} \setj{ \E\crochj{ \frac{1}{n} \norm{\Fhm - Y}^2 + C \frac{D_m}{n} } }
= \argmin_{\mM} \setb{ \crit_C(m)  }
\notag
\\
\text{with} \quad \crit_C(m) \egaldef \frac{1}{n} \crochj{ \normb{ (\Id_n - \Pi_m) F}^2 + ( C - \sigma^2) D_m }
\, ,
\label{eq.critC}
\end{gather}
by Eq.~\eqref{eq.EriskempFhm}.
Provided that we can prove some uniform concentration inequalities for $\norms{\Fhm - Y}^2$, $m \in \M$, we can expect $\mo(C)$ to be close to $\mh(C)$. 
Let us assume that for $D_m$ large enough the approximation error $n^{-1} \norms{ (\Id_n - \Pi_m) F}^2 $ is almost constant. 
For simplicity, let us also assume that the approximation error is a decreasing function of $D_m$ ---which holds for instance if the $S_m$ are nested. 
Then, two cases can be distinguished with respect to $C\,$:
\begin{itemize}
\item if $C < \sigma^2$, then $\crit_C(m)$ is a decreasing function of $D_m\,$, and $D_{\mo(C)}$ is huge: $\mo(C)$ overfits.
\item if $C > \sigma^2$, then $\crit_C(m)$ increases with $D_m$ for $D_m$ large enough, so $D_{\mo(C)}$ is much smaller. 
\end{itemize}
This behavior is illustrated on the right part of Figure~\ref{fig.OLS.Erisk+critC}.
In other words, 
\begin{equation}
\label{eq.penmin}
\penmin(m) \egaldef \frac{\sigma^2 D_m}{n}
\end{equation} 
seems to be the minimal amount of penalization needed so that a minimizer $\mh$ of the penalized criterion \eqref{eq.crit} does not clearly overfit.
The above arguments are made rigorous in Section~\ref{sec.slopeOLS.math}, showing that
$\sigma^2 D_m / n$ is indeed a minimal penalty in the current framework. 

\medbreak

We can now summarize the slope heuristics into two major facts.
First, from Eq.~\eqref{eq.penid} and~\eqref{eq.penmin}, we get a \emph{relationship between the optimal and minimal penalties}:
\begin{equation} \label{eq.OLS.slope}
\penopt(m) = 2 \penmin(m)
\, . \end{equation}
Second, \emph{the minimal penalty is observable}, since $D_{\mh(C)}$ decreases \guil{smoothly} as a function of $C$ everywhere except around $C=\sigma^2$ where it jumps.

\subsection{Data-driven penalty algorithm} \label{sec.slopeOLS.algo}
The two major facts of the slope heuristics described above directly lead to a data-driven penaltization algorithm, which can be formalized in two ways.
\subsubsection{Dimension jump} \label{sec.slopeOLS.algo.jump}
First, we can estimate the minimal penalty by looking for a jump of $C \mapsto D_{\mh(C)}\,$, and make use of Eq.~\eqref{eq.OLS.slope} to get an estimator of the optimal penalty.
\begin{algo}[Slope-heuristics algorithm, jump formulation] \label{algo.OLS.jump}
\algoinput\textup{:} $\parenb{ \norms{ \Fhm - Y }^2 }_{\mM}\,$.
\begin{enumerate}
\item Compute $(\mh(C))_{C \geq 0}\,$, where $\mh(C)$ is defined by Eq.~\eqref{eq.mhC}.
\item Find $\Chjumpgal>0$ corresponding to the ``unique large jump'' of $C \mapsto D_{\mh(C)}\,$.
\item Select $\mhAlgA \in \argmin_{\mM} \setb{ n^{-1} \norms{ \Fhm - Y }^2 + 2 \Chjumpgal D_m / n }$.
\end{enumerate}
\algooutput\textup{:} $\mhAlgA\,$.
\end{algo}
The left part of Figure~\ref{fig.OLS.algo} shows one instance of the plot of $C \mapsto D_{\mh(C)}\,$, with one clear jump corresponding to $\Chjumpgal\,$.
Computational issues are discussed in Section~\ref{sec.practical.cost}; in particular, step~1 of Algorithm~\ref{algo.OLS.jump} can be done efficiently, see Appendix~\ref{app.algos.path}.
Step~2 of Algorithm~\ref{algo.OLS.jump} can be done in several ways, see Section~\ref{sec.practical.jump-vs-slope}.
The practical problems arising with step~2 of Algorithm~\ref{algo.OLS.jump} can motivate the use of an alternative algorithm that we detail below.

\begin{figure}
\begin{center}
\includegraphics[width=0.49\textwidth]{\pathfig/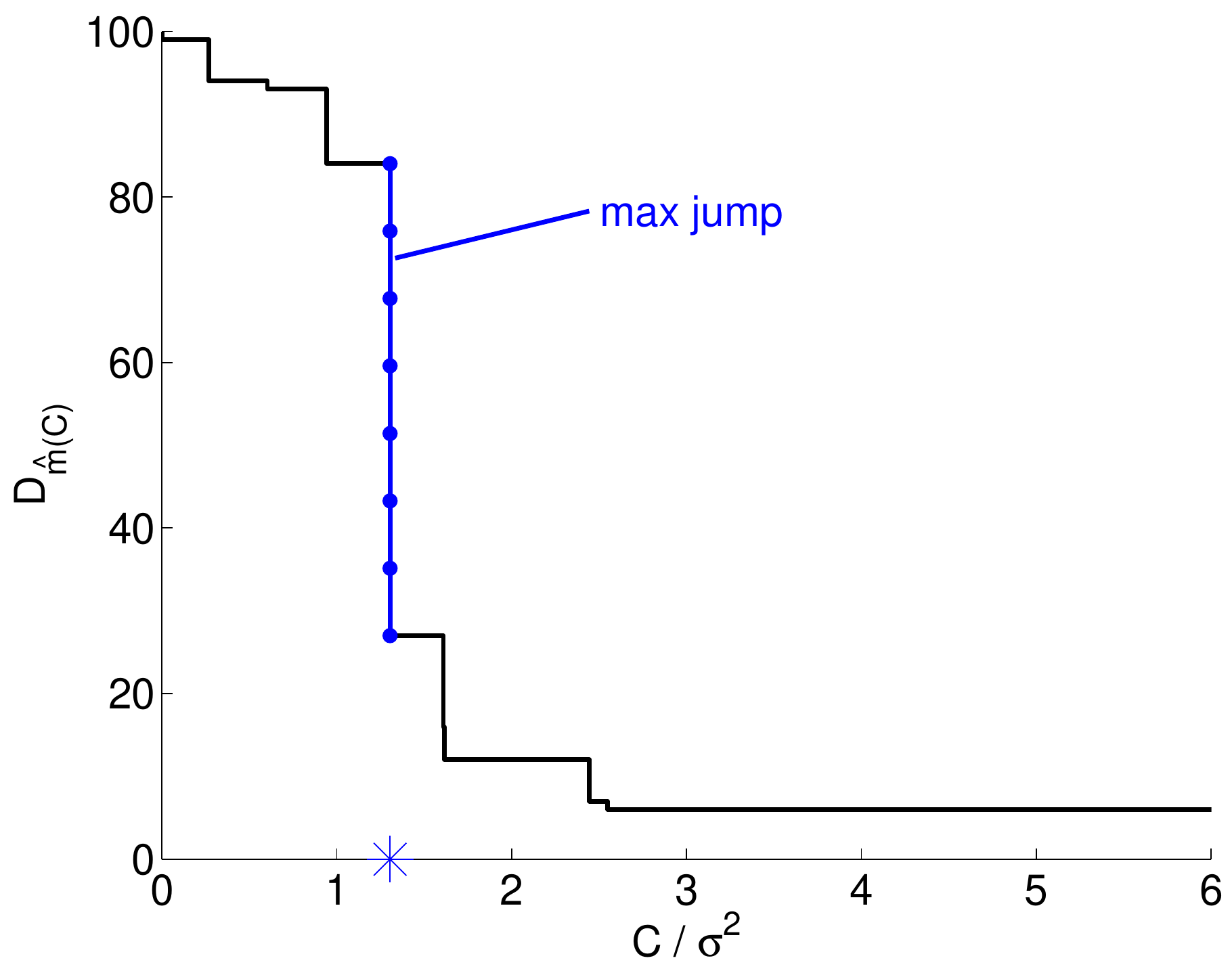}
\includegraphics[width=0.49\textwidth]{\pathfig/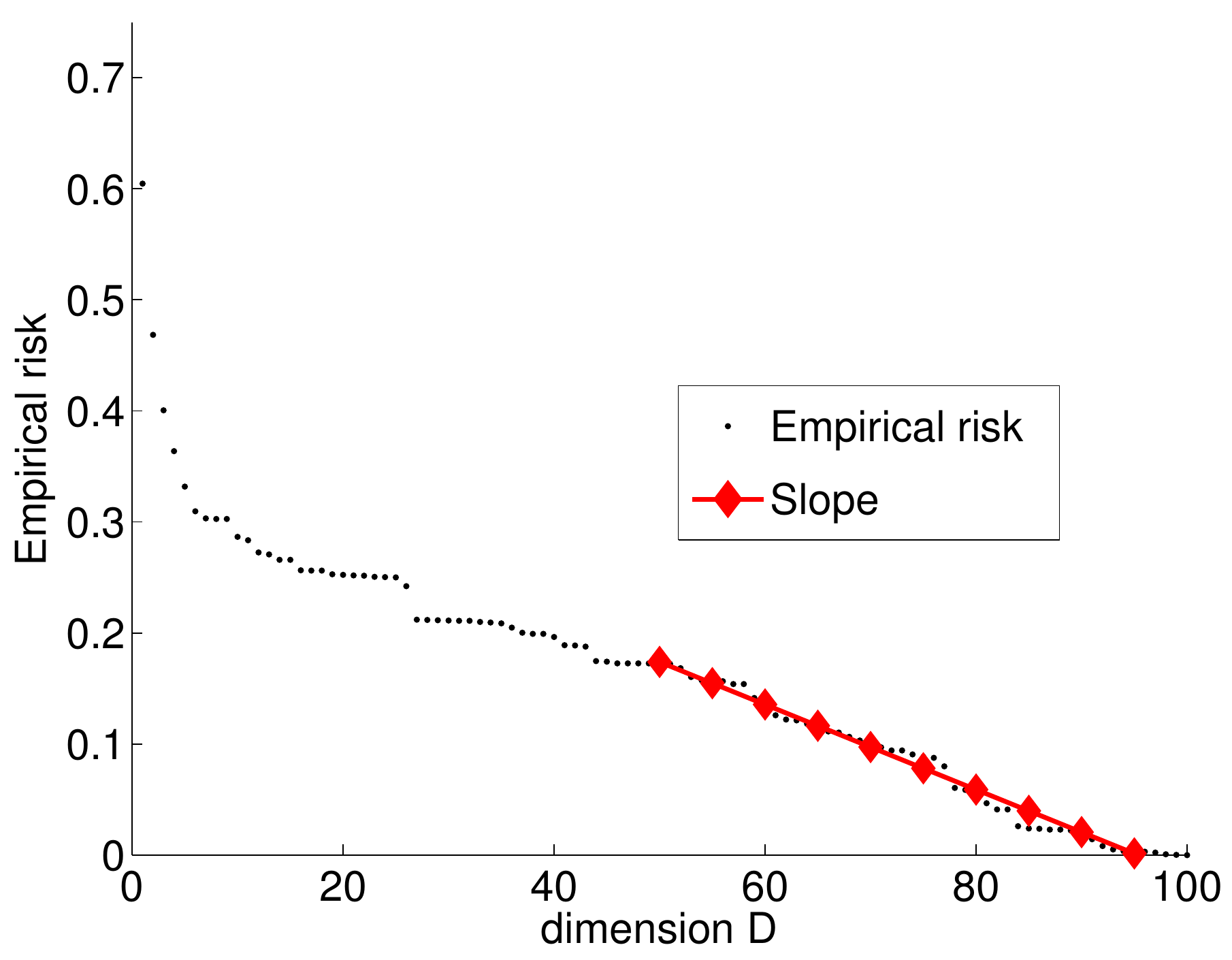}
\caption{\label{fig.OLS.algo}
Illustration of Algorithms~\ref{algo.OLS.jump} and~\ref{algo.OLS.slope} on the same sample
\textup{(}`easy' setting,
see Appendix~\ref{app.details-simus} for details\textup{)}.
Left\textup{:} Plot of $C \mapsto D_{\mh(C)}$ and visualization of $\Chjumpgal\,$.
Right\textup{:} Plot of $D_m \mapsto n^{-1}\norms{Y-\Fh_m}^2$ and visualization of $-\Chslope/n$.
}
\end{center}
\end{figure}

\subsubsection{Slope estimation} \label{sec.slopeOLS.algo.slope}
As explained in Section~\ref{sec.slopeOLS.penmin}, the reason why $D_{\mh(C)}$ jumps around $C \approx \sigma^2$ is that by Eq.~\eqref{eq.EriskempFhm},
\[ \frac{1}{n} \E\croch{\norm{ \Fhm - Y }^2} = \frac{a(m) - \sigma^2  D_m}{n} \]
where
$a(m) \egaldef  \norms{ (\Id_n - \Pi_m) F}^2 + n \sigma^2$. 
Let us assume that $a(m)$ ---or equivalently, the approximation error--- is 
almost constant for all $m$ such that $D_m$ is large enough.
Then, considering only models with a large dimension, the empirical risk approximately has a linear behavior as a function of $D_m\,$, with slope $-\sigma^2/n$.
Since the empirical risk is observable, one can estimate this slope in order to get an estimator of $\sigma^2$, and plug it in the optimal penalty given by Eq.~\eqref{eq.penid}.

\begin{algo}[Slope-heuristics algorithm, slope formulation] \label{algo.OLS.slope}
\algoinput\textup{:} $(\norms{ \Fhm - Y }^2)_{\mM}\,$.
\begin{enumerate}
\item Estimate the slope $\widehat{S}$ of $\norms{ \Fhm - Y }^2$ as a function of $D_m$ for all $\mM$ with $D_m$ ``large enough'', for instance by \textup{(}robust\textup{)} linear regression, and define $\Chslope = - n \widehat{S}$.
\item Select $\mhAlgB \in \argmin_{\mM} \sets{ n^{-1} \norms{ \Fhm - Y }^2 + 2 \Chslope D_m / n }$.
\end{enumerate}
\algooutput\textup{:} $\mhAlgB\,$.
\end{algo}
The right part of Figure~\ref{fig.OLS.algo} shows an instance of the plot of $n^{-1} \norms{ \Fhm - Y }^2$ as a function of~$D_m\,$.
Algorithm~\ref{algo.OLS.slope} relies on the choice of what is a ``large enough'' dimension, on how the slope $\Chslope$ is estimated, 
and on the assumption that the approximation error is almost constant among large models ---otherwise it can fail strongly, as shown in Section~\ref{sec.practical.jump-vs-slope}. 
Therefore, Algorithms~\ref{algo.OLS.jump} and~\ref{algo.OLS.slope} both have pros and cons, 
and there is no universal choice between them.
The links between Algorithms~\ref{algo.OLS.jump} and~\ref{algo.OLS.slope}, as well as their differences, 
are discussed in Section~\ref{sec.practical.jump-vs-slope}. 

\subsection{What can be proved mathematically} \label{sec.slopeOLS.math}
A major interest of the slope heuristics is that it can be made rigorous.
For instance, we prove in Section~\ref{sec.slopeOLS.proofs} the next theorem.
\begin{theorem} \label{thm.OLS}
In the framework described in Section~\ref{sec.slopeOLS.framework}, assume that $\M$ is finite, 
contains at least one model of dimension at most $n/20$, and that
\begin{gather}
\label{hyp.thm.OLS.Id}
\tag{\ensuremath{\mathbf{HId}}}
\exists m_1 \in \M \, , \quad S_{m_1} = \R^n
\\
\label{hyp.thm.OLS.Gauss}
\tag{\ensuremath{\mathbf{HG}}}
\text{and} \qquad \varepsilon \sim \mathcal{N}(0,\sigma^2 \Id_n)
\, .
\end{gather}
Recall that for every $C \geq 0$, $\mh(C)$ is defined by Eq.~\eqref{eq.mhC}.
Then, for every $\gamma \geq 0$, some $n_0(\gamma)$ exists such that if $n \geq n_0(\gamma)$, with probability at least $1 - 4 \card(\M) n^{-\gamma}$, the following inequalities hold simultaneously\textup{:}
\begin{eqnarray}
\label{eq.thm.OLS.Cpt-Dgrd}
&\hspace{-1cm}\forall C \leq \parenb{ 1 - \etamoins } \sigma^2
\, ,
\ 
&\hspace{1.5cm}D_{\mh(C)} \geq \frac{9n}{10} 
\, , 
\\
\label{eq.thm.OLS.Cpt-Rgrd}
&\hspace{-1cm}\forall C \leq \parenb{ 1 - \etamoins } \sigma^2
\, ,
\ 
&\frac{1}{n} \norm{\Fh_{\mh(C)} - F}^2 \geq \frac{7 \sigma^2}{8}
\, ,
\\
\label{eq.thm.OLS.Cgrd-Dpt}
&\hspace{-1cm}\forall C \geq \parenb{ 1 + \etaplus } \sigma^2
\, ,
\ 
&\hspace{1.5cm}D_{\mh(C)} \leq \frac{n}{10}
\, ,
\\
\label{eq.thm.OLS.Cgrd-Rpt}
&\hspace{-1cm}\forall C > \sigma^2  \, ,
\ 
&\frac{1}{n} \norm{\Fh_{\mh(C)} - F}^2 \leq \risksmall\parenj{\frac{C}{\sigma^2}} 
\crochj{ \inf_{\mM} \setj{ \frac{1}{n} \norm{\Fhm - F}^2 }
+ \frac{20 \sigma^2 \gamma \log(n)}{n} }
\, ,
\end{eqnarray}
and for every $\eta \in (0,1/2]$ and $C \in \crochb{ (2-\eta) \sigma^2 , (2+\eta) \sigma^2 }$,
\begin{gather}
\label{eq.thm.OLS.oracle}
\frac{1}{n} \norm{ \Fh_{\mh(C)} - F}^2 \leq (1+3\eta) \inf_{\mM} \set{ \frac{1}{n} \norm{ \Fhm - F}^2 } + \frac{880 \sigma^2 \gamma \log(n)}{\eta n}
\, ,
\\
\notag
\text{where} \qquad 
\sigma^2 \etaplus = 40 \inf_{\mM \,/\, D_m \leq n/20} \setj{\frac{1}{n} \normb{ (\Id_n - \Pi_m) F}^2 } + 82 \sigma^2 \sqrt{\frac{\gamma \log(n)}{n}}
\, , 
\\ 
\notag 
\etamoins = 41 \sqrt{\frac{\gamma \log(n)}{n}}
\, ,
\qquad 
\text{and}
\qquad
\forall u > 1 \, , \,
\risksmall\parens{u}  =  \frac{10}{(u-1)^4} \un_{u \in (1,2)} + u^3 \un_{u \geq 2} 
\, .
\end{gather}
\end{theorem}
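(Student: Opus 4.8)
The plan is to reduce the whole statement to a single uniform concentration of the two centered quantities $\norms{\Pi_m\varepsilon}^2 - \sigma^2 D_m$ and $\prodscals{(\Id_n - \Pi_m)F}{\varepsilon}$ that appear in the empirical-risk decomposition~\eqref{eq.riskempFhm}. Under~\eqref{hyp.thm.OLS.Gauss} one has $\norms{\Pi_m\varepsilon}^2/\sigma^2 \sim \chi^2_{D_m}$ and $\prodscals{(\Id_n-\Pi_m)F}{\varepsilon} \sim \mathcal{N}(0,\sigma^2\norms{(\Id_n-\Pi_m)F}^2)$, so the $\chi^2$ deviation inequalities (Laurent--Massart) together with the Gaussian tail bound give, for each fixed $\mM$ and each $x>0$, that with probability at least $1 - 4e^{-x}$,
\[ \absb{\norms{\Pi_m\varepsilon}^2 - \sigma^2 D_m} \leq 2\sigma^2\sqrt{D_m x} + 2\sigma^2 x \quad\text{and}\quad \absb{\prodscals{(\Id_n-\Pi_m)F}{\varepsilon}} \leq \sigma\norm{(\Id_n-\Pi_m)F}\sqrt{2x} \, . \]
Taking $x = \gamma\log(n)$ and a union bound over $\M$ defines a good event $\Omega$ of probability at least $1 - 4\card(\M)n^{-\gamma}$, on which all the subsequent reasoning is purely deterministic. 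It is convenient to work with $\Gamma_C(m) \egaldef \norms{(\Id_n-\Pi_m)F}^2 - \norms{\Pi_m\varepsilon}^2 + 2\prodscals{(\Id_n-\Pi_m)F}{\varepsilon} + CD_m$, which by~\eqref{eq.riskempFhm} equals $n$ times the criterion of~\eqref{eq.mhC} up to the additive constant $\norms{\varepsilon}^2$; hence $\mh(C) \in \argmin_{\mM}\Gamma_C(m)$ and $\E[\Gamma_C(m)] = n\,\crit_C(m)$.

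The two dimension bounds~\eqref{eq.thm.OLS.Cpt-Dgrd} and~\eqref{eq.thm.OLS.Cgrd-Dpt} I would obtain by comparing $\Gamma_C(\cdot)$ to a well-chosen reference model. For~\eqref{eq.thm.OLS.Cpt-Dgrd}, I compare any $m$ with $D_m < 9n/10$ to the full model $m_1$ of~\eqref{hyp.thm.OLS.Id}: a direct computation gives $\Gamma_C(m) - \Gamma_C(m_1) = \norms{(\Id_n-\Pi_m)Y}^2 - C(n-D_m)$, whose expectation is $\norms{(\Id_n-\Pi_m)F}^2 + (\sigma^2 - C)(n-D_m) \geq \etamoins\sigma^2(n-D_m)$ when $C \leq (1-\etamoins)\sigma^2$. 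On $\Omega$ the fluctuation of this difference is at most $2\sigma^2\sqrt{(n-D_m)x}$ plus a cross term absorbed into the nonnegative bias by Young's inequality; since $n - D_m > n/10$ and $\etamoins = 41\sqrt{\gamma\log(n)/n}$, the positive main term dominates for $n$ large, so $\Gamma_C(m) > \Gamma_C(m_1)$ and $D_{\mh(C)} \geq 9n/10$. For~\eqref{eq.thm.OLS.Cgrd-Dpt}, I compare any $m$ with $D_m > n/10$ to a model of dimension at most $n/20$ attaining the infimum in the definition of $\etaplus$ (call it $m_2$); the expected gap equals $\norms{(\Id_n-\Pi_m)F}^2 - \norms{(\Id_n-\Pi_{m_2})F}^2 + (C-\sigma^2)(D_m - D_{m_2})$, and the constants $40$ and $82$ in $\etaplus$ are tuned exactly so that $(C-\sigma^2)(D_m-D_{m_2}) \geq \etaplus\sigma^2 n/20$ absorbs both the bias $\norms{(\Id_n-\Pi_{m_2})F}^2$ and all the $\chi^2$/Gaussian fluctuations of the two models on $\Omega$. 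In both cases $\Gamma_C(m) - \Gamma_C(m_{\mathrm{ref}})$ is monotone in $C$ (decreasing, resp. increasing), so positivity at the threshold value of $C$ yields the bound for every admissible $C$ at once, and the worst case over $m$ is already contained in $\Omega$.

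The risk statements then follow. The lower bound~\eqref{eq.thm.OLS.Cpt-Rgrd} is immediate from~\eqref{eq.thm.OLS.Cpt-Dgrd}: by~\eqref{eq.riskFhm}, $n^{-1}\norms{\Fh_{\mh(C)} - F}^2 \geq n^{-1}\norms{\Pi_{\mh(C)}\varepsilon}^2 \geq \sigma^2 D_{\mh(C)}/n - 2\sigma^2\sqrt{D_{\mh(C)}x}/n$ on $\Omega$, and $D_{\mh(C)} \geq 9n/10$ pushes the right-hand side above $7\sigma^2/8$ for $n$ large. For the upper bounds~\eqref{eq.thm.OLS.Cgrd-Rpt} and~\eqref{eq.thm.OLS.oracle} I would start from the minimizing inequality $\Gamma_C(\mh(C)) \leq \Gamma_C(m)$ for an arbitrary reference $m$, add $2\norms{\Pi_{\mh(C)}\varepsilon}^2$ to both sides, and rearrange into
\[ \frac1n\norm{\Fh_{\mh(C)} - F}^2 \leq \frac1n\norm{\Fhm - F}^2 + \frac1n\croch{(2\sigma^2 - C)D_{\mh(C)}} + \text{(fluctuations)} \, , \]
then control the cross terms by Young's inequality and the $\chi^2$ terms on $\Omega$, and finally take $m$ to be the oracle. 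When $C = (2\pm\eta)\sigma^2$ the term $(2\sigma^2 - C)D_{\mh(C)} = \mp\eta\sigma^2 D_{\mh(C)}$ is small and, bounded by a multiple of $\eta\,\norms{\Fh_{\mh(C)}-F}^2$ up to lower-order terms (using $\sigma^2 D_{\mh(C)} \lesssim \norms{\Fh_{\mh(C)}-F}^2$ on $\Omega$), can be moved to the left-hand side; choosing the Young parameter for the cross terms of order $\eta$ then gives the leading constant $1+3\eta$ and the $\eta^{-1}$ in the remainder of~\eqref{eq.thm.OLS.oracle}.

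The hard part will be~\eqref{eq.thm.OLS.Cgrd-Rpt} for $C$ just above $\sigma^2$: there $2\sigma^2 - C$ is close to $\sigma^2 > 0$ while $D_{\mh(C)}$ is not yet small, so the term $(2\sigma^2 - C)D_{\mh(C)}$ cannot be discarded and must be tamed through a self-bounding estimate extracted from the same inequality $\Gamma_C(\mh(C)) \leq \Gamma_C(m)$, namely $(C-\sigma^2)D_{\mh(C)} \lesssim \norms{\Fhm - F}^2 + \text{(fluctuations)}$. Re-injecting this control of $D_{\mh(C)}$ into the displayed risk inequality couples the risk bound and the dimension bound, each of which feeds $\sqrt{D_{\mh(C)}}$-type fluctuation terms into the other; the resulting bound degrades polynomially as $C \downarrow \sigma^2$, and tracking this degradation so as to obtain the precise rate $\risksmall(C/\sigma^2) \sim (C/\sigma^2 - 1)^{-4}$ with the explicit constants of the statement is the most delicate book-keeping. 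Everything else reduces to the two concentration inequalities above together with elementary algebra on $\Omega$.
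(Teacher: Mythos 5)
Your proposal is correct and follows essentially the same route as the paper's proof: the same two Gaussian/$\chi^2$ concentration inequalities with a union bound, the same comparison of the penalized criterion against the full model $m_1$ for $C$ below $\sigma^2$ and against a low-bias model of dimension at most $n/20$ for $C$ above $\sigma^2$ (with Young's inequality absorbing the cross terms into the bias), and the same self-bounding mechanism $\sigma^2 D_{\mh(C)} \lesssim \norm{\Fh_{\mh(C)}-F}^2 + \sigma^2 \gamma \log(n)$ to tame the term $(2\sigma^2-C)D_{\mh(C)}$ in the risk bounds. The only difference is organizational: the paper proves one unified oracle inequality parametrized by $\delta$ (its Step~3.5) and then specializes $\delta$ to each regime of $C$, whereas you treat the regime $C\downarrow\sigma^2$ by re-injecting an explicit bound on $D_{\mh(C)}$ extracted from the same minimality inequality --- an algebraically equivalent way of moving a multiple of the risk to the left-hand side, yielding the same polynomial blow-up.
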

Theorem~\ref{thm.OLS} revisits results first obtained by \citet{Bir_Mas:2006}, 
formulating them similarly to \citet{Arl_Bac:2009:minikernel_long_v2} but with milder assumptions. 

\paragraph{What Theorem~\ref{thm.OLS} proves about Algorithms~\ref{algo.OLS.jump}--~\ref{algo.OLS.slope}}
Eq.~\eqref{eq.thm.OLS.Cpt-Dgrd} and~\eqref{eq.thm.OLS.Cgrd-Dpt} do not show exactly that there is a single large jump in $C \mapsto D_{\mh(C)}\,$, as in the heuristic reasoning of Section~\ref{sec.slopeOLS.penmin}.
We cannot hope to prove it since numerical experiments show that the global jump of $D_{\mh(C)}$ can be split into several small jumps within a small interval of values of $C$, see Figure~\ref{fig.DmhC.easy.ech5} in Section~\ref{sec.practical.jump-vs-slope}.
Nevertheless, Eq.~\eqref{eq.thm.OLS.Cpt-Dgrd} and~\eqref{eq.thm.OLS.Cgrd-Dpt} imply that the variation of $D_{\mh(C)}$ over a geometric window of $C$ is extremely strong around $\sigma^2$: if $\Chjumpgal$ in Algorithm~\ref{algo.OLS.jump} is defined as
\begin{equation} 
\label{def.Chwin} 
\Chwindow = \Chwindow(\eta) \in \argmax_{C > 0} \set{ D_{\mh(C/[1+\eta])} - D_{\mh(C[1+\eta])} } 
\end{equation} 
with $\eta = \max\sets{\etamoins,\etaplus}$,
then $\Chwindow$ is close to $\sigma^2$ ---see 
Proposition~\ref{pro.variance-estim} in Section~\ref{sec.related.variance} 
for a precise statement---, and Eq.~\eqref{eq.thm.OLS.oracle} implies a first-order optimal 
oracle inequality for the model-selection procedure of Algorithm~\ref{algo.OLS.jump}.
Note that $\Chwindow$ can be computed efficiently, see Section~\ref{sec.practical.cost} and Appendix~\ref{app.algos.window}.
In addition, Eq.~\eqref{eq.thm.OLS.Cpt-Dgrd} and~\eqref{eq.thm.OLS.Cgrd-Dpt} imply that
\begin{equation} 
\label{def.Chthr} 
\Chthr = \Chthr(T_n) \egaldef \inf \setj{ C \geq 0 \, / \, D_{\mh(C)} \leq T_n } 
\end{equation} 
is close to $\sigma^2$ when $T_n \in [n/10 , 9n/10]$ 
---precise statements are provided by Proposition~\ref{pro.variance-estim} in Section~\ref{sec.related.variance}---, 
and Eq.~\eqref{eq.thm.OLS.oracle} implies a first-order optimal 
oracle inequality for the corresponding model-selection procedure.
See Section~\ref{sec.practical.jump-vs-slope} for practical comments about these variants of  Algorithm~\ref{algo.OLS.jump}.

Theorem~\ref{thm.OLS} does not prove that  Algorithm~\ref{algo.OLS.slope} works, and it seems difficult to prove such a result without adding some assumptions.
Indeed, the key heuristics behind Algorithm~\ref{algo.OLS.slope} is a linear behavior of the empirical risk as a function of the dimension, at least for large models.
In the proof of Theorem~\ref{thm.OLS}, we control the deviations of the empirical risk around its expectation,
but this is not sufficient for justifying Algorithm~\ref{algo.OLS.slope} without a \emph{strong uniform control on the approximation errors} of the models, an assumption much stronger than the ones of Theorem~\ref{thm.OLS}. 

Note finally that Eq.~\eqref{eq.thm.OLS.Cpt-Rgrd} and~\eqref{eq.thm.OLS.Cgrd-Rpt} are not necessary for justifying Algorithm~\ref{algo.OLS.jump}, but they are interesting for theory since they justify the term ``minimal penalty''.
Eq.~\eqref{eq.thm.OLS.Cpt-Rgrd} is a straightforward consequence of Eq.~\eqref{eq.thm.OLS.Cpt-Dgrd}, and results like Eq.~\eqref{eq.thm.OLS.Cgrd-Rpt} are easier to obtain than Eq.~\eqref{eq.thm.OLS.oracle}, see Section~\ref{sec.theory.pbpenminalt}.

\paragraph{Variant of Theorem~\ref{thm.OLS}}
If $\M$ contains at least one model of dimension at most $c_n \in [0,n)$, 
on the event defined in Theorem~\ref{thm.OLS}, we can actually prove that more results hold true:
we can change Eq.~\eqref{eq.thm.OLS.Cpt-Dgrd} and~\eqref{eq.thm.OLS.Cgrd-Dpt} respectively into
\begin{align}
\label{eq.thm.OLS.Cpt-Dgrd.alt}
\forall a_n < n \, ,
\quad
\forall C \leq \crochb{  1 - \etamoins(a_n) } \sigma^2  \, ,
\quad
D_{\mh(C)} &\geq a_n
\\
\label{eq.thm.OLS.Cgrd-Dpt.alt}
\forall b_n > c_n \, ,
\quad
\forall C \geq  \crochb{ 1 + \etaplus(b_n,c_n) } \sigma^2 \, ,
\quad
D_{\mh(C)} &\leq b_n
\end{align}
where
\begin{gather*}
\sigma^2 \etaplus(b_n,c_n) \egaldef  \frac{n}{b_n - c_n} \parenj{ 2 \biaismax (c_n) + 4.1 \sigma^2 \sqrt{\frac{ \gamma \log(n)}{n} } }
\, , 
\\ 
\biaismax (c_n) \egaldef \inf_{\mM \,/\, D_m \leq c_n} \setj{\frac{1}{n} \normb{ (\Id_n - \Pi_m) F}^2 } 
\, , 
\quad \text{and} \quad 
\etamoins(a_n) \egaldef 4.1 \paren{1-\frac{a_n}{n}}^{-1} \sqrt{\frac{ \gamma \log(n)}{n} } 
\, .
\end{gather*}
In particular, under the assumptions of Theorem~\ref{thm.OLS}, 
taking $a_n \in (9n/10,n)$, $b_n \in ( n/20 , n/10)$ and $c_n = n/20$, 
we get a larger jump of $D_{\mh(C)}$ ---hence easier to detect--- by considering a larger window of values of $C$, hence reducing the precision of the estimation of $\sigma^2$.

\paragraph{Relaxation of the noise assumption}
Assumption~\eqref{hyp.thm.OLS.Gauss} is a classical noise model for proving non-asymptotic oracle inequalities. 
In Theorem~\ref{thm.OLS}, it is only used for proving some concentration inequalities at the beginning of the proof 
---Eq.~\eqref{eq.conc1}--\eqref{eq.conc2} in Section~\ref{sec.slopeOLS.proofs}---, 
so it could be changed into any noise assumption ensuring that similar concentration inequalities hold true.
For instance, Theorem~\ref{thm.OLS} can be generalized to the case of sub-Gaussian noise, 
as formalized below. 

\begin{remark}[Generalization of Theorem~\ref{thm.OLS} to sub-Gaussian noise] 
\label{rk.thm.OLS.subgaussian} 
Assume that the $(\varepsilon_i)_{1 \leq i \leq n}$ are centered, independent, and $(\phi^2 \sigma^2)$-sub-Gaussian for some $\phi >0$  
---with any definition of sub-Gaussianity among the classical ones since they are all equivalent 
up to numerical constants \citep[Section~2.3]{Bou_Lug_Mas:2011:livre}. 
Then, by the Cram\'er-Chernoff method \citep[Section~2.2]{Bou_Lug_Mas:2011:livre},  
Eq.~\eqref{eq.conc2} holds true with probability at least 
$1-2\exp(x/\phi^2)$. 
In addition, \citet[Theorem~3]{Bel:2019:quadform} 
shows that Eq.~\eqref{eq.conc1} holds true with probability at least 
$1-2\exp[ x/ (L \phi^2) ]$ for some numerical constant $L$. 
%
Therefore, the event $\Omega_{L \phi^2 x}$ defined in the proof of Theorem~\ref{thm.OLS} 
has a probability at least $1 - 4 \card(\M) \mathrm{e}^{-x}$. 
So, the result of Theorem~\ref{thm.OLS} holds true 
with $x$ \textup{(}resp. $\gamma$\textup{)} replaced by $L \phi^2 x$ \textup{(}resp. $L\phi^2 \gamma$\textup{)}  
in $n_0\,$, $\etamoins\,$, $\etaplus\,$, 
and in the risk bounds \eqref{eq.thm.OLS.Cgrd-Rpt}--\eqref{eq.thm.OLS.oracle}. 
The same generalization holds for Eq.~\eqref{eq.thm.OLS.Cpt-Dgrd.alt}--\eqref{eq.thm.OLS.Cgrd-Dpt.alt} 
and for consequences of Theorem~\ref{thm.OLS} such as Proposition~\ref{pro.variance-estim}. 
\end{remark}

\paragraph{Comments on the assumptions on $\M$}
Assumption~\eqref{hyp.thm.OLS.Id} is barely an assumption since we can always add such a model to the collection considered (and it will never be selected by the procedure). 
It is used in the proof of Eq.~\eqref{eq.thm.OLS.Cpt-Dgrd} where we need to make sure that a model of large dimension and small bias exists.

Theorem~\ref{thm.OLS} implicitly assumes that  
$\M$ contains a model of dimension at most $n/20$ 
with a small approximation error. 
This is much milder than the assumption of the corresponding results of \citet{Arl_Bac:2009:minikernel_nips,Arl_Bac:2009:minikernel_long_v2} and \citet{Arl_Mas:2009:pente}, 
which is that 
$\M$ contains a model of dimension at most $\sqrt{n}$ with an approximation error upper bounded by $\sigma^2 \sqrt{\log(n)/n}$. 
Here, having a model of dimension $n/20$ with approximation error $\sigma^2/\log(n)$ is sufficient 
to get a consistent estimation of $\sigma^2$ and a first-order optimal model-selection procedure. 
Note however that such an assumption seems almost \emph{necessary} for Algorithm~\ref{algo.OLS.jump} to work: 
if the approximation error never vanishes for a (not too) large model, 
and if it is not almost constant among large models 
---which can happen in practice---, 
we conjecture that the slope heuristics fails. 

Finally, $\M$ is assumed to be finite, but Theorem~\ref{thm.OLS} implicitly assumes a little more, since the event on which the result holds has a large probability only if $\card(\M) n^{-\gamma}$ is small, which requires to take $\gamma$ large enough.
Since $\gamma$ appears in all the bounds, assuming that it can be chosen fixed as $n$ grows is equivalent to assuming that $\card(\M)$ grows at most like a power of $n$, which excludes model collections of exponential complexity ---that is, $\card(\M) \propto a^n$ for some $a>0$.
The case of exponential collections is discussed in Sections \ref{sec.theory.rich} and~\ref{sec.empirical.conjectures.rich}.

\subsection{Bibliographical remarks} \label{sec.slopeOLS.history}
\paragraph{Algorithms}
The slope heuristics and the corresponding data-driven penalty were first proposed by Birg\'e and Massart in a preprint \citep{Bir_Mas:2001} and the subsequent article \citep{Bir_Mas:2006}.
They are also exposed by \citet{Mas:2005}, \citet[Section~2]{Bla_Mas:2006}, \citet[Section~8.5.2]{Mas:2003:St-Flour} and \citet{Mas:2008}.

The term ``slope'' corresponds to the linear behavior of the empirical risk as a function of the dimension, as Algorithm~\ref{algo.OLS.slope} exploits.

The first implementation of data-driven penalties built upon the slope heuristics was expressed as a slope estimation, as in Algorithm~\ref{algo.OLS.slope}; it was done by \citet[Section~A.4]{Let:2000} for penalized maximum likelihood, inspired by a preliminary version of the preprint by \citet{Bir_Mas:2001}.

Several practical issues with Algorithm~\ref{algo.OLS.slope} were underlined in the context of change-point detection by \citet[Chapter~4]{Leb:2002}, who then suggested to prefer the ``dimension jump'' formulation of Algorithm~\ref{algo.OLS.jump} which was present in the final version of the preprint by \citet{Bir_Mas:2001}, as well as in the articles by \citet{Mas:2005} and \citet{Bir_Mas:2006}. 
The drawbacks of Algorithm~\ref{algo.OLS.jump} were also underlined by \citet{Leb:2002,Leb:2005} where some automatic ways to detect the dimension jump were proposed and tested on some synthetic data.
Later on, \citet{Bau_Mau_Mic:2010} studied more deeply the practical use of Algorithms~\ref{algo.OLS.jump} and~\ref{algo.OLS.slope}, with several variants (see also Section~\ref{sec.practical}).
The first proposition of detecting a jump over some sliding window was made by \citet{Bon_Tou:2010}, who considered only a finite set of values of~$C$; to the best of our knowledge, the continuous formulation for $\Chwindow$ is new, as well as the corresponding algorithm in Appendix~\ref{app.algos.window}.

\paragraph{Theory} The first theoretical results about the slope heuristics were proved in the setting of the present section, that is, regression on a fixed design with the least-squares risk and projection (least-squares) estimators.
In the articles by \citet{Bir_Mas:2001,Bir_Mas:2006}, the first results obtained were similar to Eq.~\eqref{eq.thm.OLS.Cpt-Dgrd}, \eqref{eq.thm.OLS.Cpt-Rgrd}, \eqref{eq.thm.OLS.Cgrd-Rpt}, and~\eqref{eq.thm.OLS.oracle}, 
making slightly stronger assumptions.
A result similar to Eq.~\eqref{eq.thm.OLS.Cpt-Rgrd} was even published previously by \citet{Bir_Mas:2002}, but only in the restrictive case $F=0$. 

The first result showing the existence of a jump ---that is, Eq.~\eqref{eq.thm.OLS.Cpt-Dgrd} and~\eqref{eq.thm.OLS.Cgrd-Dpt} holding simultaneously for all $C$ on the same large-probability event--- was obtained for least-squares regression on a random design with regressogram estimators \citep{Arl_Mas:2009:pente}.
It was then proved in the fixed-design setting with more general estimators including projection estimators \citep{Arl_Bac:2009:minikernel_nips,Arl_Bac:2009:minikernel_long_v2}.

Eq.~\eqref{eq.thm.OLS.Cgrd-Rpt} is a corollary of a classical non-asymptotic oracle inequality for $C_p$-like penalties; 
similar results were known before the introduction of the slope heuristics 
\citep[see for instance][]{Bar_Bir_Mas:1999}.
Eq.~\eqref{eq.thm.OLS.oracle} is more precise because of the constant $1+\petito(1)$ in front of the oracle risk, which was first obtained by \citet{Bir_Mas:2001,Bir_Mas:2006}.

The extension of Theorem~\ref{thm.OLS} to sub-Gaussian noise (Remark~\ref{rk.thm.OLS.subgaussian}  in Section~\ref{sec.slopeOLS.math}) is new, to the best of our knowledge. 

\subsection{Proof of Theorem~\ref{thm.OLS}} \label{sec.slopeOLS.proofs}
The proof mixes ideas from 
\citet{Bir_Mas:2006} and \citet{Arl_Bac:2009:minikernel_long_v2}. 
We split it into three main steps, the last two ones being split themselves into several substeps: 
(1) using concentration inequalities, 
(2) proving the existence of a dimension jump (Eq.~\eqref{eq.thm.OLS.Cpt-Dgrd}--\eqref{eq.thm.OLS.Cgrd-Dpt}), 
and (3) proving risk bounds thanks to a general oracle inequality (Eq.~\eqref{eq.thm.OLS.Cgrd-Rpt}--\eqref{eq.thm.OLS.oracle}).

We define $n_0(\gamma)$ as the smallest integer such that
$\gamma \log(n)/n \leq 1/80^2$ for every $n \geq n_0(\gamma)$. 
At various places in the proof (in steps 2.3, 3.2, and 3.3), we make use of the inequality: 
for all $a,b,\theta >0$, 
$2 \sqrt{a b} \leq \theta a + \theta^{-1} b$. 

\paragraph{Step 1: concentration inequalities}
%
As explained in Section~\ref{sec.slopeOLS.penmin}, the slope heuristics relies on the fact that $\norms{\Fhm-Y}^2$ is close to its expectation.
Let $x \geq 0$ be fixed.
Given Eq.~\eqref{eq.riskFhm}--\eqref{eq.riskempFhm}, for every $\mM$, we consider the event $\Omega_{m,x}$ on which the following two inequalities hold simultaneously:
\begin{align}
\label{eq.conc1}
\absj{ \prodscal{\varepsilon}{\Pi_m \varepsilon} - \sigma^2 D_m} &\leq 2 \sigma^2 \sqrt{x D_m} + 2 x \sigma^2
\\
\label{eq.conc2}
\absj{\prodscalb{\varepsilon}{(\Id_n - \Pi_m) F} } &\leq \sigma \sqrt{2x} \normb{ (\Id_n - \Pi_m) F }
\, .
\end{align}
Under \eqref{hyp.thm.OLS.Gauss}, by standard Gaussian concentration results 
\citep[for instance,][Propositions 4 and~6]{Arl_Bac:2009:minikernel_long_v2}---, we have
\[ \Proba\paren{\Omega_{m,x} } \geq 1 - 4 \mathrm{e}^{-x} \, . \]
Then, defining $\Omega_x \egaldef \bigcap_{\mM} \Omega_{m,x}\,$, the union bound gives
\[ \Proba\paren{\Omega_{x} } \geq 1 - 4 \card(\M) \mathrm{e}^{-x} \]
and it is sufficient to prove that Eq.~\eqref{eq.thm.OLS.Cpt-Dgrd}--\eqref{eq.thm.OLS.oracle} hold true on $\Omega_x$ with $x = \gamma \log(n)$.

From now on, we restrict ourselves to the event $\Omega_x\,$.
%

\paragraph{Step 2: existence of a dimension jump}
For proving Eq.~\eqref{eq.thm.OLS.Cpt-Dgrd} and~\eqref{eq.thm.OLS.Cgrd-Dpt}, we show
that $\mh(C)$ minimizes a quantity $G_C(m)$ close to $\crit_C(m)$,
and then we show that $G_C(m_1)$ (resp. $G_C(m_2)$, for some well-chosen $m_2 \in \M$) is smaller than
$G_C(m)$ for any model $m$ with $D_m < 9n/10$ (resp. $D_m > n/10$).

\paragraph{Step 2.1: control of the difference between $\crit_C(m)$ and the quantity minimized by $\mh(C)$}
Let $C \geq 0$.
By Eq.~\eqref{eq.mhC}, \eqref{eq.riskFhm}, and \eqref{eq.riskempFhm}, since $\norms{\varepsilon}^2$ does not depend from $m$, $\mh(C)$ minimizes over $\M$ 
the function $G_C: \M \to \R$ defined by 
\begin{align*}
\forall m \in \M, \qquad 
G_C(m) &\egaldef \frac{1}{n} \norm{\Fhm - Y}^2 + C \frac{D_m}{n}  - \frac{1}{n} \norm{\varepsilon}^2
\notag
\\
&= \frac{1}{n} \normb{ (\Id_n - \Pi_m) F}^2 - \frac{1}{n} \prodscal{\varepsilon}{\Pi_m \varepsilon} + C \frac{D_m}{n} + \frac{2}{n} \prodscalb{\varepsilon}{(\Id_n- \Pi_m) F}
\notag
\\
&= \crit_C(m) - \paren{ \frac{1}{n} \prodscal{\varepsilon}{\Pi_m \varepsilon} - \sigma^2 D_m } + \frac{2}{n} \prodscalb{\varepsilon}{(\Id_n- \Pi_m) F}
\end{align*}
where $\crit_C$ is defined by Eq.~\eqref{eq.critC}.
Therefore, by Eq.~\eqref{eq.conc1}--\eqref{eq.conc2} and using $D_m \leq n$, for every $\mM$,
\begin{align}
\label{eq.GCM-critC}
\absb{G_C(m) - \crit_C(m) }
&\leq 2 \sigma^2 \parenj{ \sqrt{ \frac{x}{n} } + \frac{x}{n} }  + \frac{2 \sigma \sqrt{2x}}{n} \normb{(\Id_n- \Pi_m) F}
\, .
\end{align}

\paragraph{Step 2.2: lower bound on $D_{\mh(C)}$ when $C$ is too small (proof of Eq.~\eqref{eq.thm.OLS.Cpt-Dgrd})}
%
Let $C \in [0,\sigma^2)$. 
Since $\mh(C)$ minimizes $G_C(m)$ over $\mM$, it is sufficient to prove that if $C \leq \parens{ 1 - \etamoins } \sigma^2$,
\begin{equation}
\label{eq.pr.thm.OLS.Cpt-Dgrd.but}
G_C(m_1) < \inf_{\mM , \, D_m < 9n / 10} \setb{G_C(m)}
\end{equation}
where $m_1$ is given by \eqref{hyp.thm.OLS.Id}.
On the one hand, by Eq.~\eqref{eq.GCM-critC},
\begin{align}
G_C(m_1)
&\leq \crit_C(m_1) + 2 \sigma^2 \parenj{ \sqrt{ \frac{x}{n} } + \frac{x}{n} }
= C-\sigma^2 + 2 \sigma^2 \parenj{ \sqrt{ \frac{x}{n} } + \frac{x}{n} }
\label{eq.pr.thm.OLS.Cpt-Dgrd.1}
\, .
\end{align}
On the other hand, by Eq.~\eqref{eq.GCM-critC}, for any $\mM$ such that $D_m < 9n/10$,
\begin{align}
G_C(m)
&\geq
\frac{(C-\sigma^2) D_m}{n}
- 2 \sigma^2 \parenj{ \sqrt{ \frac{x}{n} } + \frac{x}{n} }
+ \frac{1}{n} \normb{(\Id_n - \Pi_m) F}^2 - \frac{2\sigma \sqrt{2x}}{n} \normb{(\Id_n - \Pi_m) F}
\notag
\\
&> \frac{9}{10} (C-\sigma^2)
- 2 \sigma^2 \parenj{ \sqrt{ \frac{x}{n} } + \frac{2 x}{n} }
\, .
\label{eq.pr.thm.OLS.Cpt-Dgrd.2}
\end{align}
To conclude, the upper bound in Eq.~\eqref{eq.pr.thm.OLS.Cpt-Dgrd.1} is smaller than the lower bound in Eq.~\eqref{eq.pr.thm.OLS.Cpt-Dgrd.2} when
\begin{equation}
\label{eq.pr.thm.OLS.Cpt-Dgrd.3}
C \leq \sigma^2 \parenj{ 1 - 40 \sqrt{\frac{x}{n}} - 60 \frac{x}{n} }
\defegal \widetilde{C}_1(x) \, .
\end{equation}
Taking $x=\gamma \log(n)$, for $n \geq n_0(\gamma)$, 
we have $\widetilde{C}_1(x) \geq \sigma^2 (1 - \etamoins)$ hence Eq.~\eqref{eq.thm.OLS.Cpt-Dgrd}.

Remark that the same reasoning with $9n/10$ replaced by any $a_n \in [0, n)$ proves that 
$D_{\mh(C)} \geq a_n$ for every 
\begin{equation}
\label{eq.pr.thm.OLS.Cpt-Dgrd.4}
C \leq \sigma^2 \parenj{ 1 - \frac{4 \sqrt{\frac{x}{n}} + 6 \frac{x}{n}}{1 - \frac{a_n}{n}} }
\defegal C_1(x; a_n) \, . 
\end{equation}
We get Eq.~\eqref{eq.thm.OLS.Cpt-Dgrd.alt} by taking $x = \gamma \log(n)$ 
and using that $x/n \leq 1/60^2$ since $n \geq n_0(\gamma)$.

\paragraph{Step 2.3: upper bound on $D_{\mh(C)}$ when $C$ is large enough (proof of Eq.~\eqref{eq.thm.OLS.Cgrd-Dpt})}
%
Let $C>\sigma^2$.
Similarly to the proof of Eq.~\eqref{eq.thm.OLS.Cpt-Dgrd}, it is sufficient to prove that 
if $C \geq \parens{ 1 + \etaplus } \sigma^2$,
\begin{equation}
\label{eq.pr.thm.OLS.Cpt-Dpt.but.alt2}
G_C(m_2) < \inf_{\mM \, , \, D_m > n / 10} \set{G_C(m)}
\end{equation}
where $m_2 \in \argmin_{\mM \,/\, D_m \leq n/20} \setb{ \norms{(\Id_n - \Pi_m) F}^2 }$ exists by assumption. 
For any $c_n \in [0,n]$, let us define 
\[ 
\biaismax (c_n) \egaldef \inf_{\mM \,/\, D_m \leq c_n} \setj{\frac{1}{n} \normb{ (\Id_n - \Pi_m) F}^2 } 
\, , 
\]
so that $m_2$ has an approximation error equal to $\biaismax (n/20)$. 
On the one hand, by Eq.~\eqref{eq.GCM-critC}, 
\begin{align}
G_C(m_2)
&\leq \crit_C(m_2)
+ 2 \sigma^2 \parenj{ \sqrt{ \frac{x}{n} } + \frac{x}{n} } + \frac{2 \sigma \sqrt{2x}}{n} \normb{(\Id_n- \Pi_{m_2}) F}
\notag
\\
&\leq \frac{2}{n}\normb{(\Id_n- \Pi_{m_2}) F}^2 + \frac{(C-\sigma^2) D_{m_2}}{n}
+ 2 \sigma^2 \parenj{ \sqrt{ \frac{x}{n} } + \frac{2x}{n} }
\notag
\\
&\leq  2 \biaismax  \parenj{\frac{n}{20}} + \parens{ C- \sigma^2 } \frac{n/20}{n} 
+ 2 \sigma^2 \parenj{ \sqrt{ \frac{x}{n} } + \frac{2x}{n} }
\label{eq.pr.thm.OLS.Cpt-Dpt.1.alt2}
\, .
\end{align}
On the other hand, by Eq.~\eqref{eq.GCM-critC}, for any $\mM$ such that $D_m > n/10$,
\begin{align}
G_C(m)
&> \frac{n/10}{n} (C-\sigma^2) - 2 \sigma^2 \parenj{ \sqrt{ \frac{x}{n} } + \frac{2x}{n} }
\, .
\label{eq.pr.thm.OLS.Cpt-Dpt.2.alt2}
\end{align}
To conclude, 
the upper bound in Eq.~\eqref{eq.pr.thm.OLS.Cpt-Dpt.1.alt2}
is  smaller than the lower bound in Eq.~\eqref{eq.pr.thm.OLS.Cpt-Dpt.2.alt2}
when
\begin{equation}
\label{eq.pr.thm.OLS.Cpt-Dpt.3}
C  \geq \sigma^2 \crochj{ 1 + 80 \parenj{ \sqrt{\frac{x}{n}} + \frac{2 x}{n} } } + 40 \biaismax \parenj{\frac{n}{20}}
\defegal \widetilde{C}_2(x)
\, .
\end{equation}
Taking $x = \gamma \log(n)$, for $n \geq n_0(\gamma)$, 
we have $\widetilde{C}_2(x) \leq \sigma^2 (1+\etaplus)$
hence Eq.~\eqref{eq.thm.OLS.Cgrd-Dpt}. 

Remark that if $\M$ contains a model of dimension at most $c_n \in [0,n)$, 
the same reasoning with $n/10$ replaced by any $b_n \in (c_n,n]$ 
and $n/20$ replaced by $c_n$ proves that 
$D_{\mh(C)} \leq b_n$ for every 
\begin{equation} 
\label{eq.pr.thm.OLS.Cpt-Dpt.4}
C \geq \sigma^2 \crochj{ 1 + \frac{4n}{b_n - c_n} \parenj{ \sqrt{\frac{x}{n}} + \frac{2 x}{n} } } 
+ \frac{2n}{b_n - c_n} \biaismax  (c_n) 
\defegal C_2 (x ; b_n ; c_n)
\, . 
\end{equation}
We get Eq.~\eqref{eq.thm.OLS.Cgrd-Dpt.alt} by taking $x = \gamma \log(n)$ 
and using that $x/n \leq 1/80^2$ since $n \geq n_0(\gamma)$. 

Until the end of the proof, we fix $x=\gamma \log(n)$.

\paragraph{Step 2.4: lower bound on the risk of large models (proof of Eq.~\eqref{eq.thm.OLS.Cpt-Rgrd})}
%
%
This is a straightforward consequence of Eq.~\eqref{eq.thm.OLS.Cpt-Dgrd}.
Indeed, on $\Omega_x\,$, for any $\mM$ such that $D_m \geq 9n/10$,
\begin{align*}
   \frac{1}{n} \norm{F - \Fhm}^2
&= \frac{1}{n} \normb{(\Id_n - \Pi_m) F}^2 + \frac{1}{n} \prodscal{\varepsilon}{\Pi_m \varepsilon}
\\
&\geq \frac{\sigma^2}{n} \parenb{ D_m - 2 \sqrt{x D_m} - 2 x}
= \frac{\sigma^2}{n} \crochB{ \parenb{ \sqrt{D_m} - \sqrt{x}}^2  - 3 x}
\geq \frac{7 \sigma^2}{8}
\, , 
\end{align*}
where we use that $x/n \leq 1/77^2$ since $n \geq n_0(\gamma)$. 

\paragraph{Step 3: upper bounds on the risk}
For proving Eq.~\eqref{eq.thm.OLS.Cgrd-Rpt}--\eqref{eq.thm.OLS.oracle}, we prove a slightly more general oracle inequality ---Eq.~\eqref{eq.pr.thm.OLS.oracle-general}--- using the classical approach used for instance by \citet{Bir_Mas:2002}, \citet{Mas:2003:St-Flour} and \citet{Arl_Bac:2009:minikernel_long_v2}.

\paragraph{Step 3.1: general approach for proving an oracle inequality}
Following Section~\ref{sec.slopeOLS.optimal}, an ideal penalty is
\[ \penid(m) \egaldef \frac{1}{n} \norm{\Fhm - F}^2 - \frac{1}{n} \norm{\Fhm - Y}^2 + \norm{\varepsilon}^2 \]
which has expectation $2\sigma^2 D_m / n = \penopt(m)$.
A key argument for getting an oracle inequality is that $\penid(m)$ concentrates around its expectation.
Indeed, let us define
\begin{equation}
\label{eq.eq.pr.thm.OLS.Delta}
 \Delta(m) \egaldef \penid(m) - \frac{2\sigma^2 D_m}{n}
 = \frac{2}{n} \paren{ \prodscal{\varepsilon}{\Pi_m \varepsilon} - \sigma^2 D_m } - \frac{2}{n} \prodscalb{\varepsilon}{(\Id_n - \Pi_m) F}
 \, ,
\end{equation}
where the second formulation is a consequence of Eq.~\eqref{eq.riskempFhm}.
Then, by Eq.~\eqref{eq.mhC}, for any $C \geq 0$ and $\mM$,
\[
\frac{1}{n} \norm{\Fh_{\mh(C)} - Y}^2 + \frac{C D_{\mh(C)}}{n} \leq \frac{1}{n} \norm{\Fhm - Y}^2 + \frac{C D_m}{n}
\]
which is equivalent to
\begin{equation}
\label{eq.eq.pr.thm.OLS.oracle.1}
\frac{1}{n} \norm{\Fh_{\mh(C)} - F}^2 - \Delta\parenb{ \mh(C) } + \frac{(C - 2\sigma^2) D_{\mh(C)}}{n} \leq \frac{1}{n} \norm{\Fhm - F}^2 - \Delta(m) + \frac{(C - 2\sigma^2) D_m}{n} \, .
\end{equation}
It remains to show that $\Delta(m)$ and $(C-2\sigma^2)D_m/n$ are small compared to $n^{-1} \norms{\Fhm - F}^2$ for all $\mM$. 
Recall that we restrict ourselves to the event $\Omega_x$ until the end of the proof, 
with $x = \gamma \log(n)$. 

\paragraph{Step 3.2: control of $\Delta(m)$}
By Eq.~\eqref{eq.conc1}, \eqref{eq.conc2}, and \eqref{eq.eq.pr.thm.OLS.Delta}, for every $\mM$ and $\theta>0$,
\begin{align}
\absb{\Delta(m)}
&\leq
\frac{2}{n} \croch{ 2 \sigma^2 \sqrt{x D_m} + 2 \sigma^2 x + \sigma \sqrt{2 x} \normb{ (\Id_n - \Pi_m) F} }
\notag
\\
&\leq
2 \theta \E\croch{ \frac{1}{n} \norm{\Fhm-F}^2 } + \frac{\sigma^2 x}{n} \paren{ 3 \theta^{-1} + 4 }
\, .
\label{eq.eq.pr.thm.OLS.oracle.2}
\end{align}

\paragraph{Step 3.3: upper bound on the expected risk in terms of risk}
By Eq.~\eqref{eq.riskFhm} and \eqref{eq.conc2}, for every $\mM$ and $\theta^{\prime}>0$,
\begin{align*}
\norm{\Fhm - F}^2
&=
\E\croch{ \norm{\Fhm - F}^2 } + \prodscal{\varepsilon}{\Pi_m \varepsilon} - \sigma^2 D_m
\\
&\geq
\E\croch{ \norm{\Fhm - F}^2 } - \sigma^2 \paren{ 2 \sqrt{ x D_m } + 2x }
\\
&\geq
\paren{ 1 - \theta^{\prime} } \E\croch{ \norm{\Fhm - F}^2 } - x \sigma^2 \paren{ 2 + \theta^{\prime -1} }
\end{align*}
so that, for every $\theta^{\prime} \in (0,1)$,
\begin{equation}
\E\croch{ \norm{\Fhm - F}^2 } \leq \frac{1}{1- \theta^{\prime}} \norm{\Fhm - F}^2 + \cteThmOLS(\theta^{\prime}) x \sigma^2
\qquad
\text{with} \qquad
\cteThmOLS(\theta^{\prime}) \egaldef \frac{2 + \frac{1}{\theta^{\prime}}}{1-\theta^{\prime}}
\, .
\label{eq.eq.pr.thm.OLS.oracle.3}
\end{equation}

\paragraph{Step 3.4: control of the remainder terms appearing in Eq.~\eqref{eq.eq.pr.thm.OLS.oracle.1}}
Combining Eq.~\eqref{eq.eq.pr.thm.OLS.oracle.2} and~\eqref{eq.eq.pr.thm.OLS.oracle.3}, we get on the one hand that for every $\mM$, $\theta>0$, $\theta^{\prime} \in (0,1)$,
\begin{align}
& \quad \Delta(m) + \frac{(2\sigma^2 - C)D_m}{n}
\notag
\\
&\leq
\crochj{ 2 \theta + \paren{ 2 - \frac{C}{\sigma^2}}_+ } \E\crochj{ \frac{1}{n} \norm{\Fhm-F}^2 } + \frac{\sigma^2 x}{n} \parenj{ \frac{3}{\theta} + 4 }
\notag
\\
&\leq
\frac{2 \theta + \paren{ 2 - \frac{C}{\sigma^2}}_+}{1- \theta^{\prime}}  \frac{1}{n} \norm{\Fhm-F}^2
+ \frac{\sigma^2 x}{n} \parenj{ \frac{3}{\theta} + 4 + \cteThmOLS(\theta^{\prime}) \crochj{ 2 \theta + \paren{ 2 - \frac{C}{\sigma^2}}_+ } }
\, .
\label{eq.eq.pr.thm.OLS.oracle.4}
\end{align}
On the other hand, similarly, for every $\mM$, $\theta>0$, $\theta^{\prime} \in (0,1)$,
\begin{align}
& \quad - \Delta(m) + \frac{(C - 2\sigma^2)D_m}{n}
\notag
\\
& \leq
\frac{2 \theta + \paren{ \frac{C}{\sigma^2} - 2 }_+}{1- \theta^{\prime}}  \frac{1}{n} \norm{\Fhm-F}^2
+ \frac{\sigma^2 x}{n} \parenj{ \frac{3}{\theta} + 4 + \cteThmOLS(\theta^{\prime}) \crochj{ 2 \theta + \paren{ \frac{C}{\sigma^2} - 2}_+ } }
\, .
\label{eq.eq.pr.thm.OLS.oracle.5}
\end{align}

\paragraph{Step 3.5: proof of a general oracle inequality}
Combining Eq.~\eqref{eq.eq.pr.thm.OLS.oracle.1}, \eqref{eq.eq.pr.thm.OLS.oracle.4}, and \eqref{eq.eq.pr.thm.OLS.oracle.5} 
above with $\theta^{\prime}=2\theta \in (0,1)$,
we get that for every $\theta \in (0,1/2)$ and $\mM$,
\begin{align}
\notag 
&\hspace{-1cm} 
\croch{ 1 - \frac{ 2 \theta + \paren{2 - \frac{C}{\sigma^2} }_+}{ 1 - 2 \theta } } \frac{1}{n} \norm{\Fh_{\mh(C)} - F}^2
\\ 
\label{eq.eq.pr.thm.OLS.oracle.6}
&\leq \croch{ 1 + \frac{ 2 \theta + \paren{\frac{C}{\sigma^2} -2}_+}{ 1 - 2 \theta }} \frac{1}{n} \norm{\Fhm - F}^2 + \frac{\sigma^2 x}{n} R_1(\theta,C\sigma^{-2})
\\
\notag
\text{with} \qquad
R_1(\theta,C\sigma^{-2}) &\egaldef \frac{6}{\theta} + 8 + \cteThmOLS( 2 \theta) \paren{ 4 \theta + \absj{ \frac{C}{\sigma^2} - 2} }
\, .
\end{align}

Let us assume $C>\sigma^2$. 
For any $\delta \in (0,1]$, we choose
\[
\theta = \theta^{\star}(\delta , C\sigma^{-2}) 
\egaldef \frac{\delta}{4} 
\frac{\croch{1 - \paren{2 - \frac{C}{\sigma^2} }_+}^2}{1 + \paren{\frac{C}{\sigma^2} -2}_+ + \delta \crochj{ 1 - \paren{2 - \frac{C}{\sigma^2} }_+}}  
< \frac{\delta}{4} 
\leq \frac{1}{4} \, . 
\]
So, if $C \geq (1+\delta) \sigma^2$, 
we have $C > (1+4\theta) \sigma^2$ 
hence we can divide both sides of Eq.~\eqref{eq.eq.pr.thm.OLS.oracle.6} by 
\[ 
1 - \frac{2\theta + \paren{2 - \frac{C}{\sigma^2}}_+}{1-2\theta} > 0 
\, . 
\]
Remark that 
\[ 
\crochj{ 1 + \frac{ 2 \theta + \paren{\frac{C}{\sigma^2} -2}_+}{ 1 - 2 \theta }}
\times 
\croch{ 1 - \frac{2\theta + \paren{2 - \frac{C}{\sigma^2}}_+}{1-2\theta} }^{-1} 
= \frac{ 1 + \paren{\frac{C}{\sigma^2} -2}_+ }{ 1 - 4\theta - \paren{2 - \frac{C}{\sigma^2} }_+ } 
= \frac{ 1 + \paren{\frac{C}{\sigma^2} -2}_+ }{ 1 - \paren{2 - \frac{C}{\sigma^2} }_+ } + \delta 
\]
where the last equality uses $\theta = \theta^{\star}(\delta , C\sigma^{-2})$. 
So, if $C \geq (1+\delta) \sigma^2$, Eq.~\eqref{eq.eq.pr.thm.OLS.oracle.6} leads to 
\begin{equation}
\label{eq.pr.thm.OLS.oracle-general}
\frac{1}{n} \norm{\Fh_{\mh(C)} - F}^2
\leq \paren{ \frac{1+ \paren{\frac{C}{\sigma^2} -2}_+}{1 -  \paren{2 - \frac{C}{\sigma^2}}_+} + \delta} \croch{ \inf_{\mM} \set{ \frac{1}{n} \norm{\Fhm - F}^2 }
+ \frac{\sigma^2 x}{n} R_2 \paren{ \delta,\frac{C}{\sigma^{2}} }
}
\end{equation}
where for every $\delta\in(0,1]$ and $u \in (1,+\infty)$,
\begin{align*} 
R_2(\delta ,u) 
= R_1 \parenb{ \theta^{\star}(\delta , u) , u } 
\leq \parenb{ 10+ 2 \abss{u-2} } \theta^{\star}(\delta , u)^{-1} 
\, .
\end{align*}
Therefore, for every $C \geq (1+\delta) \sigma^2$, 
\begin{align*} 
R_2 \parenj{ \delta , \frac{C}{\sigma^{2}} }
&\leq \frac{8}{\delta} \parenb{ 5 + \abss{C \sigma^{-2}-2}} \max\set{ 2+(C \sigma^{-2}-2)_+ \, , \, \frac{2}{\crochb{1 - (2 - C \sigma^{-2})_+}^2} } 
\, . 
\end{align*}

\paragraph{Step 3.6: risk bound for $\mh(C)$ when $C$ is large enough (proof of Eq.~\eqref{eq.thm.OLS.Cgrd-Rpt})}
%
%
In this step, we assume $C > \sigma^2$.
When $C / \sigma^2 \in (1,2]$, Eq.~\eqref{eq.pr.thm.OLS.oracle-general} with $\delta = C\sigma^{-2} - 1 \in (0,1]$ 
yields 
\begin{align*}
\frac{1}{n} \norm{\Fh_{\mh(C)} - F}^2
&\leq 2 \paren{\frac{C}{\sigma^2} - 1}^{-1} 
\crochj{ \inf_{\mM} \setj{ \frac{1}{n} \norm{\Fhm - F}^2 }
+ \frac{96 \sigma^2 x}{n} \paren{\frac{C}{\sigma^2} - 1}^{-3} }
\, .
\end{align*}
When $C / \sigma^2 \geq 2$, Eq.~\eqref{eq.pr.thm.OLS.oracle-general} with $\delta =  1$ yields 
\begin{align*}
\frac{1}{n} \norm{\Fh_{\mh(C)} - F}^2
&\leq \frac{C}{\sigma^2} 
\crochj{ \inf_{\mM} \setj{ \frac{1}{n} \norm{\Fhm - F}^2 }
+ \frac{20 \sigma^2 x}{n} \paren{\frac{C}{\sigma^2} }^{2} }
\, .
\end{align*}
Eq.~\eqref{eq.thm.OLS.Cgrd-Rpt} follows. 

\paragraph{Step 3.7: oracle inequality for $\mh(C)$ when $C$ is close to $2\sigma^2$ (proof of Eq.~\eqref{eq.thm.OLS.oracle})}
Now, we assume $C / \sigma^2 \in [2-\eta,2+\eta]$ with $\eta \in [0,1/2]$.
Taking $\delta = \eta$ in Eq.~\eqref{eq.pr.thm.OLS.oracle-general} yields
\begin{align*}
\frac{1}{n} \norm{\Fh_{\mh(C)} - F}^2
&\leq
\paren{ \max\set{ 1+\eta \, , \, \frac{1}{1-\eta} } + \eta} 
\\ 
&\qquad \times \crochj{ \inf_{\mM} \set{ \frac{1}{n} \norm{\Fhm - F}^2 }
+ \frac{\sigma^2 x}{n} \frac{8}{\eta} \paren{5+\eta} \max\set{ 2 + \eta \, , \, \frac{2}{(1-\eta)^2} } }
\\
&\leq
\paren{ 1 + 3 \eta} \inf_{\mM} \set{ \frac{1}{n} \norm{\Fhm - F}^2 }
\\ 
&\qquad + \frac{\sigma^2 x}{n} \frac{8}{\eta} \paren{ 1 + 3 \eta} \paren{5+\eta} \max\set{ 2 + \eta \, , \, \frac{2}{(1-\eta)^2} }
\\
&\leq
\paren{ 1 + 3 \eta} \inf_{\mM} \set{ \frac{1}{n} \norm{\Fhm - F}^2 }
+ \frac{880 \sigma^2 x}{\eta n} 
\, , 
\end{align*}
using that $1/(1-\eta) \leq 1+2\eta$ for every $\eta \in [0,1/2]$. 
Eq.~\eqref{eq.thm.OLS.oracle} follows. 
\qed


\section{Generalizing the slope heuristics} \label{sec.penmingal}
The slope heuristics has first been formulated and theoretically validated in the framework of Section~\ref{sec.slopeOLS}.
Then, it rapidly became a more general heuristics for building data-driven optimal penalties.
This section discusses two possible formulations for its generalization.

\subsection{General framework} \label{sec.penmingal.framework}
Before going any further, we need to introduce a general model/estimator-selection framework.
Let $\Set$ be some set, $\Risk:\Set\mapsto [0,+\infty)$ be some risk function, 
and assume that our goal is to build from data some estimator $\sh \in \Set$ 
such that $\Risk(\sh\,)$ is as small as possible.
Let $\parens{\shm}_{\mM}$ be a collection of estimators.
The goal of estimator selection is to choose from data some $\mh\in\M$ such that 
the risk of $\sh_{\mh}$ is as small as possible, that is, satisfying an oracle inequality
\begin{equation} \label{eq.oracle.gal}
\Risk\paren{\sh_{\mh}} - \Risk\parens{\bayes} 
\leq K_n \inf_{\mM} \setb{ \Risk\parens{\shm} - \Risk\parens{\bayes} } + R_n
\end{equation}
with large probability, where $\Risk\parens{\bayes} \egaldef \inf_{t \in \Set} \Risk\parens{t}$.
In the following, $K_n$ is called ``the leading constant'' of 
the oracle inequality \eqref{eq.oracle.gal}.
Let $\Remp:\Set\mapsto [0,+\infty)$ be the empirical risk 
associated with~$\Risk$, that is, we assume throughout Sections~\ref{sec.penmingal}--\ref{sec.hints} 
that $\forall t \in \Set$, $\E\crochs{\Remp\parens{t}} = \Risk\parens{t}$.

This framework includes the one of Section~\ref{sec.slopeOLS} by taking $\Set=\R^n$, 
$\Risk(t)=n^{-1}\norms{t-F}^2 + \sigma^2$, $\shm=\Fhm$ the projection estimator associated with  
some model $S_m$ for every $\mM$, and the $L^2$ empirical risk $\Remp(t)=n^{-1}\norms{t-Y}^2$.
Many other classical settings also fit into this framework, 
such as density estimation with the Kullback risk or the $L^2$ risk, 
random-design regression with the $L^2$ risk,  
and classification with the 0--1 risk 
\citep[see][Section~1, for details]{Arl_Cel:2010:surveyCV}. 

\subsection{Penalties known up to some constant factor} \label{sec.penmingal.apriori}
The most natural extension of the slope heuristics is to generalize it to all frameworks where a penalty is known up to some multiplicative constant \citep{Mas:2005,Bla_Mas:2006}, that is, if theoretical results show that a good penalty is $C^{\star} \pen_1(m)$ with $\pen_1$ known but $C^{\star}$ unknown.
Penalties known up to a constant factor appear in several frameworks, for four main reasons:
\begin{enumerate}
\item A penalty satisfying an optimal oracle inequality ---that is, an oracle inequality with leading constant $1+\petito(1)$--- is theoretically known, but involves \emph{unknown quantities} in practice, such as the noise-level $\sigma^2$ for Mallows' $C_p$ and $C_L$ \citep{Mal:1973}, see Sections~\ref{sec.slopeOLS} and~\ref{sec.penmingal.fails}.
\item An optimal penalty $\pen_1$ is known theoretically and in practice, but only \emph{asymptotically}, that is, the (unknown) non-asymptotic optimal penalty is $C^{\star}_n \pen_1$ with $C^{\star}_n \to 1$ as the sample size $n$ tends to infinity, but $C^{\star}_n$ is unknown 
and can be far from $1$ for finite sample sizes.
For instance, AIC \citep{Aka:1973} and BIC \citep{Sch:1978} penalties for maximum likelihood rely on asymptotic computations.
Section~\ref{sec.empirical.overpenalization} explains why such a problem can arise in 
almost any framework. 
\item An optimal penalty is obtained by resampling, hence depending on some multiplicative factor that might depend on unknown quantities or be correct only for $n$ large enough, see Remark~\ref{rk.reech} in Section~\ref{sec.theory.complete} and the article by \citet{Arl:2009:RP}.
\item A penalty $C\pen_1$ satisfying an oracle inequality with a leading constant $\grandO(1)$ when $C$ is well chosen is known theoretically, but theoretical results are not precise enough to specify the optimal value $C^{\star}$ of $C$.
This occurs for instance for change-point detection \citep{Com_Roz:2004,Leb:2005}, density estimation with Gaussian mixtures \citep{Mau_Mic:2008}, 
and local Rademacher complexities in classification \citep{Bar_Bou_Men:2005,Kol:2006}.
In some frameworks, some partial information is available about the optimal value of the constant:
in binary classification, global Rademacher complexities differ by a factor $2$ between theory \citep{Kol:2001} and practice \citep{Loz:2000}.
Note that in such cases, it might happen that $C^{\star}\pen_1$ is not exactly an optimal penalty, so that no oracle inequality with leading constant $1+\petito(1)$ can be obtained; nevertheless, choosing the constant $C$ in the penalty $C \pen_1$ remains an important practical problem.
\end{enumerate}

Then, if for every $\mM$, $\C_m$ measures the ``complexity'' of $\shm\,$, the slope heuristics suggests to generalize Algorithm~\ref{algo.OLS.jump} into the following.
\begin{algo}[Slope-heuristics algorithm, jump formulation, general setting] \label{algo.gal.slope.naif}
\hfill \newline \algoinput\textup{:} $\parenb{\Remp\parens{\shm}}_{\mM}\,$, $\parenb{\pen_1(m)}_{\mM}\,$, and $\parens{\C_m}_{\mM}\,$.
\begin{enumerate}
\item Compute $\parenb{ \mhslopeun(C) }_{C \geq 0}\,$, where for every $C \geq 0$,
\begin{equation} \label{eq.mhC.gal.naif}
\mhslopeun(C) \in \argmin_{\mM} \set{ \Remp\parens{\shm} + C \pen_1(m) } \, .
\end{equation}
\item Find $\Chjumpgal>0$ corresponding to the ``unique large jump'' of $C \mapsto \C_{\mhslopeun(C)}\,$.
\item Select $\mhAlgC \in \argmin_{\mM} \sets{ \Remp\parens{ \shm } + 2 \Chjumpgal \pen_1(m) }$.
\end{enumerate}
\algooutput\textup{:} $\mhAlgC\,$.
\end{algo}
Algorithm~\ref{algo.gal.slope.naif} relies on two ideas: (i) Eq.~\eqref{eq.OLS.slope}, that is, $\penopt = 2 \penmin$, is valid in a more general framework than least-squares regression and projection estimators, and (ii) if a proper complexity measure $\C_m$ is used instead of the dimension $D_m$ of the models, the minimal penalty can be characterized empirically by a jump of $\C_{\mhslopeun(C)}\,$.

\subsection{Algorithm~\ref{algo.gal.slope.naif} fails for linear-estimator selection} \label{sec.penmingal.fails}
We now illustrate on an example why Algorithm~\ref{algo.gal.slope.naif} can fail, before showing how to correct it in Section~\ref{sec.penmingal.linear}.
Let us consider the fixed-design regression framework of Section~\ref{sec.slopeOLS.framework} with linear estimators instead of projection estimators, that is, for every $\mM$,
\[ \Fhm = A_m Y \] for some deterministic linear mapping $A_m: \R^n \to \R^n\,$.
For instance, projection estimators are linear estimators since the orthogonal projection $A_m=\Pi_m$ onto a linear space $S_m$ is linear.
Other examples include kernel ridge regression or spline smoothing, nearest-neighbor regression, 
and Nadaraya-Watson estimators \citep[provide more examples and references]{Arl_Bac:2009:minikernel_long_v2}.

Similarly to Section~\ref{sec.slopeOLS.optimal}, expectations of the risk and empirical risk of a linear estimator can be computed as follows:
\begin{align}
\label{eq.EriskFhm.linear}
\E\crochj{\frac{1}{n} \norm{ \Fhm - F }^2} &= \frac{1}{n} \normb{ (\Id_n - A_m) F}^2 + \frac{\sigma^2 \tr\parenj{ A_m^{\top} A_m}} {n}
\, , 
\\
\label{eq.EriskempFhm.linear}
\E\crochj{\frac{1}{n} \norm{ \Fhm - Y }^2} &= \frac{1}{n} \normb{ (\Id_n - A_m) F}^2 + \frac{\sigma^2 \crochj{n + \tr\parenj{ A_m^{\top} A_m} - 2 \tr(A_m)}} {n}
\, ,
 \\
\label{eq.penid.linear}
\text{and} \qquad \penopt(m)
&= \E\crochj{\frac{1}{n} \norm{ \Fhm - F }^2} - \E\crochj{\frac{1}{n} \norm{ \Fhm - Y }^2} + \sigma^2
= \frac{ 2 \sigma^2 \tr(A_m)} {n}
\, .
\end{align}
Eq.~\eqref{eq.EriskFhm.linear} can be interpreted as a bias-variance decomposition similarly to Eq.~\eqref{eq.EriskFhm}.
The optimal penalty given by Eq.~\eqref{eq.penid.linear} has been called $C_L$ by \citet{Mal:1973} and is similar to $C_p\,$, with the dimension $D_m$ replaced by the \emph{degrees of freedom} $\tr(A_m)$.
It also depends on $\sigma^2$ which is unknown, so one could think of using Algorithm~\ref{algo.gal.slope.naif} with $\sh_m=\Fhm\,$, $\Remp\parens{t}=n^{-1}\norms{t-Y}^2$, $\pen_1(m)= \tr(A_m) / n$, and $\C_m = \tr(A_m)$.
Then, plotting $\C_{\mhslopeun(C)}$ as a function of~$C$, what we typically get is shown in Figure~\ref{fig.linear.jump} (black curve / diamonds): no clear jump of the complexity is observed around $\sigma^2$, contrary to what Algorithm~\ref{algo.gal.slope.naif} predicts.

\begin{figure}
\begin{center}
\includegraphics[width=0.49\textwidth]{\pathfig/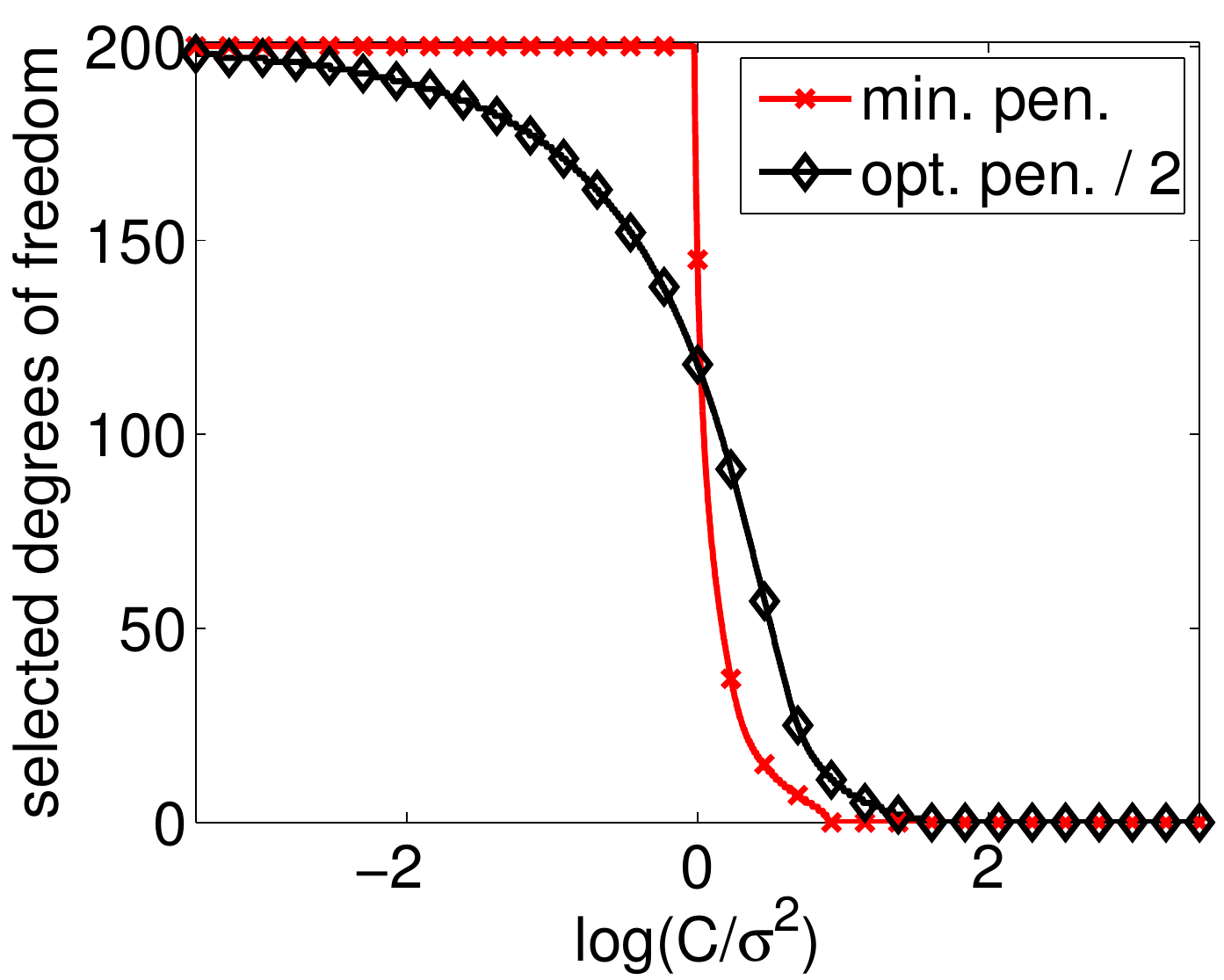}
\caption{\label{fig.linear.jump}
The minimal penalty is not proportional to $\tr(A_m)$ for kernel ridge estimators \textup{(}Figure taken from the article by \citet{Arl_Bac:2009:minikernel_long_v2}, `kernel ridge' framework, see Appendix~\ref{app.details-simus} for details\textup{)}\textup{:}
$C \mapsto \C_{\mhslopeun(C)}$ for Algorithm~\ref{algo.gal.slope.naif} with $\pen_1(m)=\tr(A_m) / n$ and $\C_m = \tr(A_m)$ \textup{(}black curve / diamonds), and $C \mapsto \tr(A_{\mhminlin(C)})$ for Algorithm~\ref{algo.penmin.linear} \textup{(}red curve / crosses\textup{)} with linear estimators \textup{(}kernel ridge\textup{)}.
}
\end{center}
\end{figure}

\subsection{Minimal-penalty heuristics for linear estimators} \label{sec.penmingal.linear}
Following \citet{Arl_Bac:2009:minikernel_nips,Arl_Bac:2009:minikernel_long_v2}, the correct minimal penalty in the linear-estimators framework is
\[
\penminlin(m)
\egaldef
\frac{\sigma^2 \crochj{2 \tr(A_m) - \tr\parenj{ A_m^{\top} A_m}}} {n}
\, .
\]
Indeed, as in Section~\ref{sec.slopeOLS.penmin}, let us consider, for every $C \geq 0$,
\begin{align}
\label{eq.mhC.linear}
\mhminlin(C) &\in \argmin_{\mM} \set{ \frac{1}{n} \norm{\Fhm - Y}^2 + C \frac{2 \tr(A_m) - \tr\paren{ A_m^{\top} A_m}} {n} }
\, ,
\\
\text{and}
\quad
\mominlin(C) &\in \argmin_{\mM} \set{ \E\croch{ \frac{1}{n} \norm{\Fhm - Y}^2 + C \frac{2 \tr(A_m) - \tr\parenj{ A_m^{\top} A_m}} {n} } }
\notag
\\
\notag
&= \argmin_{\mM} \set{ \crit_C^{\mathrm{lin}}(m)  }
\\
\notag
\text{with} \quad
\crit_C^{\mathrm{lin}}(m) &\egaldef \frac{1}{n} \parenB{ \normb{ (\Id_n - A_m) F}^2 + ( C - \sigma^2) \crochb{2 \tr(A_m) - \tr\parens{ A_m^{\top} A_m}} }
\, ,
\end{align}
by Eq.~\eqref{eq.EriskempFhm.linear}.
Let us assume that the approximation error term $n^{-1} \norms{ (\Id_n - A_m) F}^2$, which appears in Eq.~\eqref{eq.EriskFhm.linear}, is a decreasing function of the degrees of freedom $\C_m=\tr(A_m)$; 
let us also assume for simplicity that 
$0 \leq \tr\parens{ A_m^{\top} A_m} \leq \tr(A_m)$ for every $m \in \M$. 
Then, we can distinguish two cases:
\begin{itemize}
\item if $C < \sigma^2$, then $\crit_C^{\mathrm{lin}}(m)$ is a decreasing function of $\C_m\,$, and $\C_{\mominlin(C)}$ is huge: $\mominlin(C)$ overfits.
\item if $C > \sigma^2$, then $\crit_C^{\mathrm{lin}}(m)$ increases with $\C_m$ for $\C_m$ large enough, so $\C_{\mominlin(C)}$ is much smaller than when $C<\sigma^2$.
\end{itemize}
This behavior is also the one of $\mhminlin(C)$, as illustrated in Figure~\ref{fig.linear.jump} (red curve / crosses), which leads to the following algorithm.
\begin{algo}[Minimal-penalty algorithm for linear estimators] \label{algo.penmin.linear} 
\hfill \newline 
\algoinput\textup{:} $\parenb{ \norms{ \Fhm - Y }^2 }_{\mM}\,$, $\parenb{\tr \parens{A_m}}_{\mM}\,$, and $\parenb{\tr \parens{A_m^{\top} A_m}}_{\mM}\,$.
\begin{enumerate}
\item Compute $\parenb{ \mhminlin(C) }_{C \geq 0}\,$, where $\mhminlin(C)$ is defined by Eq.~\eqref{eq.mhC.linear}.
\item Find $\Chjumpgal>0$ corresponding to the ``unique large jump'' of $C \mapsto \tr \parenb{ A_{\mhminlin(C)} }$.
\item Select $\mhAlgD \in \argmin_{\mM} \setb{ n^{-1} \norms{ \Fhm - Y }^2 + 2 \Chjumpgal \tr(A_m) / n }$.
\end{enumerate}
\algooutput\textup{:} $\mhAlgD\,$.
\end{algo}
Theorem~\ref{thm.OLS} can be extended to Algorithm~\ref{algo.penmin.linear}, up to some minor changes in the assumptions and results \citep{Arl_Bac:2009:minikernel_nips,Arl_Bac:2009:minikernel_long_v2}.
For kernel ridge regression, Algorithm~\ref{algo.penmin.linear} is proved to work also for choosing over a continuous set $\M$ \citep{Arl_Bac:2009:minikernel_long_v2}, provided the kernel is fixed.

Note that when $\tr\parens{ A_m^{\top} A_m} =  \tr(A_m)$, 
$\penopt(m) = 2\penminlin(m)$. 
This occurs for least-squares estimators ---for which $A_m = A_m^{\top} A_m = \Pi_m\,$, 
and we recover the setting of Section~\ref{sec.slopeOLS} 
and Algorithm~\ref{algo.OLS.jump}--- 
and for $k$-nearest neighbors estimators.

\subsection{General minimal-penalty algorithm} \label{sec.penmingal.algo}
We now go back to the general setting of Section~\ref{sec.penmingal.framework}, and propose a generalization of Algorithms~\ref{algo.OLS.jump} and \ref{algo.penmin.linear}.
Here, we suggest to take $\Chjumpgal=\Chwindow\,$, but any other formal definition of $\Chjumpgal$ could be used instead.
\begin{algo}[General minimal-penalty algorithm, jump formulation] \label{algo.penmingal}
\hfill \newline 
\algoinput\textup{:} $\parenb{\Remp\parens{\shm}}_{\mM}\,$, $\parenb{\pen_0(m)}_{\mM}\,$, $\parenb{\pen_1(m)}_{\mM}\,$, $\parens{\C_m}_{\mM}\,$, and $\eta \geq 0$.
\begin{enumerate}
\item Compute $\parenb{ \mhgalzero(C) }_{C \geq 0}\,$, where for every $C \geq 0$,
\begin{equation} \label{eq.mhC.gal}
\mhgalzero(C) \in \argmin_{\mM} \set{ \Remp\parens{\shm} + C \pen_0(m) } \, .
\end{equation}
\item Find $\Chjumpgal>0$ corresponding to the ``unique large jump'' of $C \mapsto \C_{\mhgalzero(C)}\,$,
for instance,
\[
\Chwindow \in \argmax_{C>0} \setj{ \C_{\mhgalzero(C /[1+\eta])} -  \C_{\mhgalzero(C [1+\eta])} }
\, .
\]
\item Select $\mhAlgE \in \argmin_{\mM} \setb{ \Remp\parens{ \shm } + \Chjumpgal \pen_1(m) }$.
\end{enumerate}
\algooutput\textup{:} $\mhAlgE\,$.
\end{algo}
Algorithm~\ref{algo.penmingal} implicitly assumes that the minimal and the optimal penalty are respectively equal to $C^{\star} \pen_0$ and $C^{\star}\pen_1\,$, with $\pen_0$ and $\pen_1$ known, but $C^{\star}$ unknown.
We refer to Section~\ref{sec.practical.jump-vs-slope} for practical remarks about the choice of $\Chjumpgal\,$. 
Computational issues are discussed in Section~\ref{sec.practical.cost}. 

In the ``slope heuristics'' setting (Section~\ref{sec.slopeOLS}), $\pen_1=2\pen_0\,$, 
and Algorithm~\ref{algo.penmingal} reduces to Algorithm~\ref{algo.gal.slope.naif}.

Similarly to Algorithm~\ref{algo.OLS.slope}, we can also propose a ``slope'' formulation of Algorithm~\ref{algo.penmingal}.
\begin{algo}[General minimal-penalty algorithm, slope formulation] \label{algo.penmingal.slope} 
\hfill \newline 
\algoinput\textup{:} $\parenb{\Remp\parens{\shm}}_{\mM}\,$, $\parenb{\pen_0(m)}_{\mM}\,$, $\parenb{\pen_1(m)}_{\mM}\,$, and $\parens{\C_m}_{\mM}\,$.
\begin{enumerate}
\item Estimate the slope $\Chslope$ of $-\Remp\parens{\shm}$ as a function of $\pen_0(m)$ for all $\mM$ with $\C_m$ ``large enough'', for instance by \textup{(}robust\textup{)} linear regression.
\item Select $\mhAlgF \in \argmin_{\mM} \setb{ \Remp\parens{ \shm } + \Chslope \pen_1(m) }$.
\end{enumerate}
\algooutput\textup{:} $\mhAlgF\,$.
\end{algo}

What remains now is to identify natural candidates for being a minimal or an optimal penalty in the general setting.

\subsection{Optimal and minimal penalties} \label{sec.penmingal.penoptmin}
In the general setting, the unbiased risk estimation heuristics \citep{Aka:1969,Ste:1981} suggests the following optimal (deterministic) penalty
\begin{equation}
\label{eq.penopt.gal}
\penoptgal(m) \egaldef \E\crochb{ \Risk\parens{\shm} - \Remp\parens{\shm} }
\end{equation}
which generalizes formula~\eqref{eq.penoptz}.
If $\Risk\parens{\shm} - \Remp\parens{\shm}$ is concentrated around its expectation uniformly over $\mM$ ---which excludes too large collections $\M$---,
one can prove an oracle inequality for the penalty \eqref{eq.penopt.gal},
or for any penalty which differs from Eq.~\eqref{eq.penopt.gal}
by an additive term independent from~$m$,
as in Eq.~\eqref{eq.penid} and~\eqref{eq.penid.linear}.

Building a minimal penalty in the general setting is more difficult.
For every $\mM$, let $\C_m$ be some ``complexity measure'' associated with $\shm\,$, 
that is, we assume that the empirical risk $\Remp\parens{\shm}$ (or its expectation) is (approximately) a decreasing function of $\C_m\,$.
Algorithm~\ref{algo.penmingal.slope} suggests that the minimal penalty is a quantity which exactly compensates this decreasing trend, such as 
\begin{equation}
\label{eq.penmingala}
\penmingala(m) \egaldef -\E\crochb{ \Remp\parens{\shm} }
\, .
\end{equation}
Nevertheless, in most cases including least-squares and linear estimators, $\penmingala$ is unknown, even up to a multiplicative factor, so that we need another candidate for being a minimal penalty.

For every $\mM$, let $\bayes_m$ be a well-chosen element of $\Set$ 
---see Remark~\ref{rk.penmingal.bayesm} below--- such that
$\Risk\parens{\bayes_m} - \inf_{t \in \Set} \Risk\parens{t}  \text{ decreases to zero as }
\C_m \to \infty$. 
As argued below, a natural choice for the minimal penalty is
\begin{equation}
\label{eq.penmingal}
\penmingal(m) \egaldef \E\crochb{ \Remp\parens{\bayes_m} - \Remp\parens{\shm} }
\, .
\end{equation}
Indeed, for every $C \geq 0$ let
\begin{align}
\critCgal (m) \egaldef \E\croch{\Remp\parens{\shm}} + C \penmingal(m)
= C \Risk\parens{\bayes_m} + (1-C) \E\croch{ \Remp\parens{\shm} }
\label{eq.gal.critC}
\end{align}
so that 
$\momingal(C) \in \argmin_{\mM} \sets{ \critCgal (m) } $
is a proxy for
\[
\mhmingal(C) \in \argmin_{\mM} \setb{ \Remp\parens{\shm} + C \penmingal(m) }
\, .
\]
Let us assume for simplicity that $\Remp\parens{\shm}$ is a decreasing function of $\C_m\,$.
Then, when $C<1$, $\critCgal(m)$ is a decreasing function of $\C_m\,$, so that $\C_{\momingal(C)} \approx \max_{\mM} \C_m$ which corresponds to overfitting.
On the contrary, when $C>1$, $(1-C) \E\crochs{ \Remp\parens{\shm} } $ 
is an increasing function of $\C_m$ while $C \Risk\parens{\bayes_m}$ is approximately constant for $\C_m$ large enough, 
so that $\C_{\momingal(C)} \ll \max_{\mM} \C_m\,$.
Therefore, if concentration inequalities show that $\mhmingal(C)$ behaves likes $\momingal(C)$, $\penmingal$ is a minimal penalty.

Let us emphasize that for making use of the fact that $\penmingal$ is a minimal penalty, 
we must assume that $\penmingal \approx C^{\star} \pen_0$  and $\penoptgal \approx C^{\star} \pen_1$ 
for some unknown $C^{\star}>0$ and some \emph{known} penalty shapes $\pen_0$ and $\pen_1\,$.
Remark that we could generalize this assumption to the existence of some known function $f$ such that $\penmingal \approx C^{\star} \pen_0$  and $\penoptgal \approx f(C^{\star}) \pen_1$, but such a generalization has not been proved useful yet.
\begin{remark}[Choice of $\bayes_m$] \label{rk.penmingal.bayesm} 
Overall, the above heuristics makes two assumptions on $\bayes_m \in \Set$. 
First, $\Risk\parens{\bayes_m} - \inf_{t \in \Set} \Risk\parens{t}$ is 
small when $\C_m$ is large. 
Second, $\penmingal(m)$ is known up to a multiplicative constant, 
whose value can be used for deriving an optimal penalty. 
When $\shm \in \argmin_{t \in S_m} \Remp\parens{t}$ is an empirical risk minimizer over some model $S_m \subset \Set$, a natural choice is $\bayes_m \in \argmin_{t \in S_m} \Risk\parens{t}$,
so that
$\Risk\parens{\bayes_m} - \inf_{t \in \Set} \Risk\parens{t}$ is the approximation error.
For linear estimators, the decomposition \eqref{eq.EriskFhm.linear} of the risk suggests to take $\bayes_m = A_m F$.
By analogy, we call $\Risk\parens{\bayes_m} - \inf_{t \in \Set} \Risk\parens{t}$ 
the approximation error associated with $\shm$ in the general case. 
Choosing $\bayes_m$ might be difficult in general; 
when this makes sense, an option is the expectation of~$\shm\,$. 
\end{remark}

\subsection{Bibliographical remarks} \label{sec.penmingal.history}
\paragraph{Algorithms}
%
The slope-heuristics algorithm for calibrating penalties was first proposed in the Gaussian least-squares regression setting of Section~\ref{sec.slopeOLS} with a penalty proportional to the dimension \citep{Bir_Mas:2001}, as in Algorithms~\ref{algo.OLS.jump}--\ref{algo.OLS.slope}.
Then, it was generalized to a penalty function of the dimension \citep{Bir_Mas:2006}, and to a general penalty shape \citep{Mas:2005,Bla_Mas:2006,Mas:2003:St-Flour}, as in Algorithm~\ref{algo.gal.slope.naif}.

%
The first implementations of the slope heuristics were done directly with Algorithm~\ref{algo.gal.slope.naif} 
(or its ``slope'' version) with $\C_m=D_m\,$, instead of Algorithms~\ref{algo.OLS.jump}--\ref{algo.OLS.slope}, 
since they were outside the setting of Section~\ref{sec.slopeOLS}: maximum-likelihood estimators 
\citep[Section~A.4]{Let:2000}, and change-point detection \citep[Chapter~4]{Leb:2002}.

%
The proposition of using a general complexity measure $\C_m$ instead of a dimension $D_m$ (as in Algorithms~\ref{algo.gal.slope.naif}, \ref{algo.penmingal}--\ref{algo.penmingal.slope}) was first made in density estimation \citep{Ler:2009:phd}, with the suggestion of estimating $\C_m$ by resampling if necessary.


The failure of Algorithm~\ref{algo.gal.slope.naif} for linear estimators in regression was noticed by \citet{Arl_Bac:2009:minikernel_nips}, where Algorithm~\ref{algo.penmin.linear} was proposed and theoretically justified.
The general Algorithm~\ref{algo.penmingal} has only been formalized by \citet[Section~2.5]{Arl:2011:cours_Peccot}, while its ``slope estimation'' version (Algorithm~\ref{algo.penmingal.slope}) is new, even if the (approximate) equivalence between ``jump'' and ``slope'' algorithms is not.
Up to now, the general formulation of Algorithms~\ref{algo.penmingal}--\ref{algo.penmingal.slope} has only been proved useful in the case of linear estimators
in regression \citep{Arl_Bac:2009:minikernel_nips,Arl_Bac:2009:minikernel_long_v2} 
and in density estimation \citep{Mag:2015,Ler_Mag_Rey:2016},  
with different shapes for $\pen_0$ and $\pen_1\,$. 
It can also be useful in a few other settings where $\pen_1$ is proportional to $\pen_0$ 
but the ratio between optimal and minimal penalty 
might be different from~$2$, 
for selecting among a rich collection of models or estimators. 
%
%
For instance, for pruning a decision tree, 
the ``max'' variant considered by \citet[Section~5.4]{Bar_Gey_Pog:2018} 
is equivalent to Algorithm~\ref{algo.penmingal} with $\pen_1 = \pen_0\,$, 
hence selecting the estimator ``just after'' the maximal jump.

\paragraph{Theory}
%
%
The first (partial) theoretical result proved outside the setting of Section~\ref{sec.slopeOLS} 
was for maximum-likelihood estimators (histograms) in density estimation, 
assuming that the true density $\bayes$ is the uniform density over $[0,1]$ \citep{Cas:1999}. 
Other theoretical results outside the setting of Section~\ref{sec.slopeOLS} 
are reviewed in Section~\ref{sec.theory}.

%
The first theoretical result proved for Algorithm~\ref{algo.gal.slope.naif} with a penalty shape $\pen_1(m)$ \emph{not} function of a dimension $D_m$ was obtained in heteroscedastic least-squares regression \citep{Arl_Mas:2009:pente}, where the penalty shape can be estimated by resampling \citep{Arl:2009:RP}.

%
The general heuristics ``$\penopt \approx 2 \penmin$'' underlying Algorithm~\ref{algo.gal.slope.naif} was formulated by \citet[Section~2]{Bla_Mas:2006} and \citet[Section~8.5.2]{Mas:2003:St-Flour}, together with a heuristic argument for suggesting
\[
p_2(m) \egaldef \Remp\parens{\bayes_m} - \Remp\parens{\shm}
\]
as a minimal penalty, when $\shm$ is an empirical risk minimizer and $\bayes_m$ is defined according to  Remark~\ref{rk.penmingal.bayesm}.
In these papers, $p_2(m)$ is called $\widehat{v}_m$  since it can be interpretated as a variance.
Here, the general minimal penalty that we propose is $\penmingal(m) = \E\crochs{p_2(m)}$, as in the PhD dissertation of \citet[Chapter~3]{Arl:2007:phd} for instance.
Another formulation of the heuristics behind Algorithm~\ref{algo.gal.slope.naif} 
is ``$p_1(m) \approx p_2(m)$'', where
\[
p_1(m) \egaldef \Risk\parens{\shm} - \Risk\parens{\bayes_m}
\, ,
\]
as for instance written in a binary classification framework by \citet[Section~6.4.3]{Zwa:2005:phd}, 
together with ``$p_2(m) \propto D_m$ for $D_m$ large enough''.


\section{Theoretical results: a review} \label{sec.theory}
This section collects all theoretical results that are directly related to minimal-penalty algorithms, to the best of our knowledge.
First, the proof of Algorithm~\ref{algo.penmingal} is split into several subproblems (Section~\ref{sec.theory.approach}).
Then, we present full proofs of Algorithm~\ref{algo.penmingal} (Section~\ref{sec.theory.complete}) and partial results (Sections~\ref{sec.theory.penopt.weak}--\ref{sec.theory.rich}). 
Note that some related results outside the setting 
of Section~\ref{sec.penmingal} are reported in conclusion 
(Sections \ref{sec.empirical.conjectures.identif} and~\ref{sec.empirical.outside}).

In this section, all partial or full proofs of Algorithm~\ref{algo.penmingal} that we present define $\Chjumpgal$ as
$\Chthr (T_n)$ or $\Chwindow (\eta)$ for some well-chosen $T_n$ or~$\eta$.
For the sake of simplicity, we do not discuss anymore the exact definition chosen for $\Chjumpgal\,$, until we tackle this question in Section~\ref{sec.practical.jump-vs-slope}.

\subsection{General approach for proving Algorithm~\ref{algo.penmingal}} \label{sec.theory.approach}
Following Theorem~\ref{thm.OLS} and its proof, let us suggest a general approach towards a theoretical justification of Algorithm~\ref{algo.penmingal}, that we split into several subproblems.
\begin{itemize}
\item[(\pbmult)] \textbf{The minimal and optimal penalties are known up to some common multiplicative factor:}
Find two penalty functions $\pen_0\,$, $\pen_1\,$, and a complexity measure $(\C_m)_{\mM}\,$, such that for some (unknown) $C^{\star}>0$, $C^{\star} \pen_0$ is a minimal penalty and $C^{\star} \pen_1$ is an optimal penalty.
\item[(\pbpenmin)] \textbf{$C^{\star} \pen_0$ is actually a minimal penalty:}
$\etamoins, \etaplus>0$ exist such that, 
on a large-probability event, 
\begin{align}
\tag{\ensuremath{\pbpenmin^-}}
\label{pb.penmin.Cpt-Dgrd}
&\forall C < \paren{ 1 - \etamoins } C^{\star}
\, ,
\qquad
\C_{\mhgalzero(C)} \geq \C_{\mathrm{overfit}} \propto \max_{\mM} \C_m
\\
\tag{\ensuremath{\pbpenmin^+}}
\label{pb.penmin.Cgrd-Dpt}
&\forall C > \paren{ 1 + \etaplus } C^{\star}
\, ,
\qquad
\C_{\mhgalzero(C)} \leq \C_{\mathrm{small}} \ll \max_{\mM} \C_m
\\
\notag
\text{where}
\qquad
&\forall C \geq 0 \, ,
\quad
\mhgalzero(C) \in \argmin_{\mM} \set{ \Remp\parens{\shm} + C \pen_0(m) }
\, .
\end{align}
The above statements about $\C_{\mhgalzero(C)}$ are vague on purpose, 
since the range of $(\C_m)_{\mM}$ is not specified.
When $\C_m$ is the dimension $D_m \in [1,n]$ of some model, 
one can specify $\C_{\mathrm{overfit}}$ and $\C_{\mathrm{small}}$ 
similarly to Eq.~\eqref{eq.thm.OLS.Cpt-Dgrd} and~\eqref{eq.thm.OLS.Cgrd-Dpt}, respectively.

\item[(\pbpenopt)] \textbf{$C^{\star} \pen_1$ is actually an optimal penalty:} 
there exists $\eta>0$ such that, 
on a large-probability event, 
for every $C \in ([1-\eta] C^{\star} \, , \, [1+\eta] C^{\star})$,
\begin{gather}
\notag
\forall \mhoptun(C) \in \argmin_{\mM} \set{ \Remp\parens{\shm} + C \pen_1(m) } \, ,
\\
\label{pb.penopt}
\tag{\pbpenopt}
\Risk\parenB{ \sh_{\mhoptun(C)}} - \Risk\parens{\bayes} \leq \parenb{ 1+\varepsilon_n(\eta) } \inf_{\mM} \setb{\Risk\parens{\shm} - \Risk\parens{\bayes}} + R_n(\eta)
\end{gather}
where $\lim_{\eta \to 0, \, n \to + \infty } \varepsilon_n(\eta) = 0$ and the remainder term $R_n(\eta)$ 
is negligible in front of the oracle risk $\inf_{\mM} \sets{\Risk\parens{\shm} - \Risk\parens{\bayes}}$.
\end{itemize}

As in the PhD dissertation of \citet{Arl:2007:phd}, we use in this section the following notation:
\begin{align}
\label{def.p1}
p_1(m) &\egaldef \Risk\parens{\shm} - \Risk\parens{\bayes_m} 
\, , 
\\
\label{def.p2}
p_2(m) &\egaldef \Remp\parens{\bayes_m} - \Remp\parens{\shm}
\, , 
\\
\label{def.delta}
\text{and} \qquad 
\ovdelta(m) &\egaldef \Risk\parens{\bayes_m} - \Remp\parens{\bayes_m}
\, .
\end{align}
In particular, with the notation of Section~\ref{sec.penmingal.penoptmin},
\[
\penoptgal(m) = \E\crochb{ p_1(m) + \ovdelta(m) + p_2(m)}
\qquad \text{and} \qquad
\penmingal(m) = \E\crochb{p_2(m)}
\, .
\]

\subsection{Full proofs of Algorithm~\ref{algo.penmingal}} \label{sec.theory.complete}
%
%
%
%
%
%
Few settings exist where a full proof of Algorithm~\ref{algo.penmingal} is available, that is, 
a proof that \eqref{pb.penmin.Cpt-Dgrd}, \eqref{pb.penmin.Cgrd-Dpt}, and \eqref{pb.penopt} 
hold true on a large-probability event for some known $\pen_0\,$, $\pen_1\,$, $(\C_m)_{\mM}$ and 
some (unknown) $C^{\star}$.
In this article, $\M$ is always assumed to be finite with $\card(\M) \leq L_1 n^{L_2}$ for some $L_1, L_2>0$, except in Section~\ref{sec.theory.rich}. 

We first collect results assuming that $\pen_1 = 2 \pen_0\,$,
so that Algorithm~\ref{algo.penmingal} reduces to Algorithm~\ref{algo.gal.slope.naif}.
Without explicit mention of the contrary, for all results reviewed in the list below,
the noise is assumed independent and identically distributed,
$\shm \in \argmin_{t \in S_m} \Remp\parens{t}$ is an empirical risk minimizer,
so we take $\bayes_m \in \argmin_{t \in S_m} \Risk\parens{t}$ for defining $p_2(m)$,
and the complexity used is $\C_m=D_m$ the dimension of $S_m\,$.
Full proofs of Algorithm~\ref{algo.penmingal} exist in the following settings:
\begin{itemize}
\item \emph{Regression on a fixed design, homoscedastic (sub-)Gaussian noise, least-squares risk and estimators}:
\citet{Bir_Mas:2006} and Theorem~\ref{thm.OLS} 
(and Remark~\ref{rk.thm.OLS.subgaussian} for the sub-Gaussian case) 
prove it 
with $\pen_0(m)=D_m/n$ and $C^{\star}=\sigma^2$ the (constant) noise level.
Note that $p_1(m)=p_2(m)=n^{-1} \norms{\Pi_m \varepsilon}^2$ in this setting. 
\item Regression on a \emph{random design}, \emph{heteroscedastic noise} (not necessarily Gaussian), least-squares risk, with various least-squares estimators:
regressograms with moment assumptions on the noise \citep{Arl_Mas:2009:pente},
piecewise polynomials with bounded noise 
(\citealp{Sau:2010:Reg}, with key concentration results for $p_1$ and $p_2$ proved by \citealp{Sau:2010:conc-p1-p2}),
or more general models satisfying a ``strongly-localized basis'' assumption with bounded noise \citep{Sau:2010:supnorm,Sau_Nav:2017}.
Contrary to the previous setting, $\E\crochs{p_1(m)} \approx \E\crochs{p_2(m)}$ 
holds true only for most models and for $n$ large enough. 
\\
The penalty shape $\pen_0(m)=\E\crochs{p_2(m)}$ is unknown in general and $C^{\star}=1$.
For regressograms, the results remain true when $\pen_0(m)$ is a resampling-based estimation of $\E\crochs{p_2(m)}$ \citep[see][]{Arl:2009:RP}.
For piecewise polynomials, the same holds when $\pen_0(m)$ is a hold-out estimation of $\E\crochs{p_2(m)}$ \citep[see][]{Sau:2010:Reg}. 
For strongly localized bases, the (approximate) closed-form formula 
for $\E\crochs{p_1(m)}$ and $\E\crochs{p_2(m)}$ provided by \citet[Theorem~6.3]{Sau_Nav:2017} 
might be used for estimating $\pen_0(m)$ without resampling; 
another option is $V$-fold penalization \citep[Section~5]{Sau_Nav:2017}. 
\item \emph{Density estimation}, least-squares risk and estimators, i.i.d.\@ \citep{Ler:2010:iid} or \emph{mixing data} \citep{Ler:2010:mixing}.
The penalty shape $\pen_0(m)=\E\crochs{p_2(m)}$ is approximately known for some specific models (regular histograms), in general it can be estimated by resampling as previously.
In this setting, the complexity $\C_m$ can either be the dimension of $S_m$ or the resampling-based estimator of $\E\crochs{p_2(m)}$ itself. 
Note that in least-squares density estimation, we have $p_1(m)=p_2(m)$ almost surely.
\item Density estimation, \emph{Kullback risk and maximum-likelihood estimators}, histogram models \citep{Sau:2010:MLE}.
This result is the first one obtained without the least-squares risk.
The penalty shape $\pen_0(m)=D_m/(2n)$ is known, $C^{\star}=1$, and the optimal penalty is AIC. 
A partial result, for the uniform density over $[0,1]$ only, has previously been proved by \citet{Cas:1999}. 
\item \emph{Specification probabilities in general random fields} (that is, graphical models), 
least-squares or Kullback risks, estimators that are empirical distributions conditionally to the values observed on a subset $m$ of the field \citep{Ler_Tak:2011}. 
The shape of the penalty and the complexity $\pen_0(m) = \C_m = p_2(m)$ are unknown.
The authors suggest to use instead the shape of a theoretical upper bound on $\E\croch{p_2(m)}$, dropping off pessimistic constants, with convincing experimental results.
\end{itemize}
The above results for least-squares regression on a random design, 
least-squares density estimation (i.i.d.\@ case), and maximum-likelihood density estimation 
can all be recovered (sometimes up to minor differences) as a corollary of a general result 
which holds for all ``regular estimators'' \citep[Chapters~7--8]{Sau:2010:phd}.

\medbreak

Full proofs of Algorithm~\ref{algo.penmingal} (or a slight modification of it) also exist
in two settings where $\pen_1 \neq 2 \pen_0$ in general:
\begin{itemize}
\item Regression on a fixed design, independent and identically distributed (homoscedastic) Gaussian noise, least-squares risk,
\emph{linear estimators}:
\citet{Arl_Bac:2009:minikernel_nips,Arl_Bac:2009:minikernel_long_v2} prove that Algorithm~\ref{algo.penmin.linear} works,
while Algorithm~\ref{algo.gal.slope.naif} fails in general,
as detailed in Sections~\ref{sec.penmingal.apriori}--\ref{sec.penmingal.fails}.
\item Density estimation, independent and identically distributed data,
least-squares risk, \emph{linear estimators} (for instance, Parzen density estimators and weighted least-squares estimators):
\citet{Ler_Mag_Rey:2016} ---after a preliminary version in the PhD dissertation of \citet[Chapter~2]{Mag:2015}--- 
define some theoretical quantities $\pen_0(m) \approx \E[p_2(m)]$ and $\C_m \approx \E[p_1(m)]$ 
---easy to estimate in general, and known for several examples such as Parzen density estimators--- 
such that 
\[ 
\mh(C) \in \argmin_{\mM} \set{ \Remp\parens{\shm} + \pen_0(m) + C \C_m }
\] 
overfits for $C<0$ 
and satisfies an oracle inequality for all $C>0$, 
first-order optimal when $C=C^{\star}=1$. 
In other words, \citet{Ler_Mag_Rey:2016} almost prove that 
Algorithm~\ref{algo.penmingal} works 
with $\pen_0(m)=\E[p_2(m)]$, 
$\pen_1(m)=\E[p_1(m)] + \E[p_2(m)]$, 
$\C_m = \E[p_1(m)]$, and $C^{\star}=1$. 
This result implies the one of \citet{Ler:2010:iid} for least-squares estimators.
A noticeable fact in this framework is that
$\penmin$ ---and sometimes even $\penopt$, surprisingly--- can be negative, 
making the terminology ``minimal'' penalty questionable 
\citep[Sections~4.3 and~5]{Ler_Mag_Rey:2016}. 
Note that $\penopt$ here can be negative because $\sh_m$ is not an empirical 
risk minimizer, hence $\Remp\parens{\shm}$ is not necessarily biased downards 
as an estimator of $\Risk\parens{\shm}$. 
Nevertheless, for most usual estimators, $\penmin$ and $\penopt$ are always positive.
Theoretical results for choosing among Parzen density estimators with slightly different 
minimal-penalty algorithms ---closer to Goldenshluger-Lepski's method--- are discussed 
in Section~\ref{sec.empirical.outside}. 
\end{itemize}

\medbreak

\begin{remark}[Minimal penalties with resampling-based estimators of $\E\crochs{p_2(m)}$]
\label{rk.reech}
In several papers mentioned above, theoretical results validate Algorithm~\ref{algo.penmingal} with $\pen_0(m) = \C_m = p_2(m)$ or $\E\croch{p_2(m)}$, $\pen_1(m) = 2 \pen_0(m)$, and $C^{\star}=1$.
Such results might seem useless since
(i) $\pen_0$ is unknown, and
(ii) $C^{\star}$ is known,
that is, the exact opposite of the motivation for Algorithm~\ref{algo.penmingal} 
exposed in Section~\ref{sec.penmingal.apriori}.
Nevertheless, problem (i) can be solved by taking $\pen_0(m)=\C_m$ equal to 
a resampling-based estimator $\phW_2(m)$ of $\E\croch{p_2(m)}$ \citep[for instance]{Ler:2010:iid}.
Then, as usual with resampling, it remains to find the constant $C_W$ such that $C_W \E\croch{\phW_2(m)} \approx \E\croch{p_2(m)}$ for all $\mM$.
When such a constant $C_W$ exists, it usually depends on the resampling scheme $W$, the sample size, and the particular setting considered \citep{Arl:2009:RP}.
As a consequence, we recover a setting where $\pen_0(m)=\C_m=\phW_2(m)$ is known and $C^{\star}=C_W$ is unknown, for which Algorithm~\ref{algo.penmingal} can be useful.
Note that we here propose to take $\C_m = \phW_2(m)$, which does not estimate $\E\croch{p_2(m)}$ but $\E\croch{p_2(m)}/C_W\,$; 
this is not a problem since the complexity jump is independent from the rescaling by $C_W\,$. 
\\ 
According to simulation experiments, 
the above strategy of combining resampling penalties with the slope heuristics 
can be better 
\citep[least-squares density estimation]{Ler:2010:iid:v2} 
or worse \citep[context-tree estimation]{Gar_Ler:2011}  
compared to using the penalty $\widetilde{C}_W \phW_2(m)$, 
where $\widetilde{C}_W$ derives from asymptotic theoretical results 
and does not depend on any unknown quantity in the settings of these two articles. 
\end{remark}

\subsection{Partial proofs: uncertainty on the optimal penalty} \label{sec.theory.penopt.weak}
An optimal oracle inequality like \eqref{pb.penopt} in Section~\ref{sec.theory.approach} 
has not been proved in many frameworks, and it is quite difficult to obtain a leading constant $1+\varepsilon_n(\eta) = 1 +\petito(1)$ 
while keeping the remainder term negligible in front of the oracle risk.
A much more usual result in the model-selection literature is the following weakened version of \eqref{pb.penopt}: 
on a large-probability event, 
for every $C \in ( [1-\eta] C^{\star} \, , \, [1+\eta] C^{\star})$ with $\eta>0$ small enough,
\begin{gather}
\label{pb.penopt.weak}
\tag{\ensuremath{\widetilde{\pbpenopt}\,}}
\Risk\parenB{ \sh_{\mhoptun(C)}} - \Risk\parens{\bayes} \leq K_n(\eta) \inf_{\mM} \setb{\Risk\parens{\shm} - \Risk\parens{\bayes}} + R_n(\eta)
\end{gather}
for some $K_n(\eta),R_n(\eta)<\infty$.
Note that \eqref{pb.penopt} with $\varepsilon_n=0$ and 
$R_n \geq \inf_{\mM} \sets{\Risk\parens{\shm} - \Risk\parens{\bayes}}$ 
should be understood as \eqref{pb.penopt.weak} with $K_n\geq 2$. 
Similarly, a classical way to write an oracle-type inequality is
\begin{gather}
\label{pb.penopt.weak2}
\tag{\ensuremath{\widetilde{\pbpenopt}^{\, \prime}}}
     \Risk\parenB{ \sh_{\mhoptun(C)}} - \Risk\parens{\bayes} 
\leq \inf_{\mM} \setb{\Risk\parens{\shm} - \Risk\parens{\bayes} + R_n(m)}
\, .
\end{gather}
When $R_n(m)$ is comparable to $\Risk\parens{\shm} - \Risk\parens{\bayes}$, or even larger, 
(for instance, $R_n(m) \geq \pen(m)$), \eqref{pb.penopt.weak2} 
should be understood as \eqref{pb.penopt.weak} with $K_n\geq 2$.

Proving only \eqref{pb.penopt.weak} instead of \eqref{pb.penopt} is a significant limitation: 
\eqref{pb.penopt.weak} does not show that $C^{\star} \pen_1$ is an optimal penalty 
if we cannot prove that $K_n(\eta)$ is first-order optimal, 
which is very difficult to prove unless $K_n(\eta) = 1 + \petito(1)$ as in \eqref{pb.penopt}.
As a consequence, in such cases, $C^{\star} \pen_1$ might not be optimal, 
and the optimal penalty might be $C^{\prime} \pen_1$ with $C^{\star} \neq C^{\prime}$, 
or even have a completely different shape than $\pen_1\,$.
For instance, in the setting of Section~\ref{sec.penmingal.linear}, 
the optimal penalty is $2 \sigma^2 \tr(A_m) / n$,
but taking $2 \sigma^2 [\tr(A_m) + \tr(A_m^{\top} A_m)] / n$ as a penalty,
we could have an oracle inequality \eqref{pb.penopt.weak} with a penalty having a suboptimal shape.

\medbreak

\paragraph{Results}
Nevertheless, proving (\pbpenmin) and \eqref{pb.penopt.weak} still shows that
Algorithm~\ref{algo.penmingal} provides a data-driven estimator satisfying an oracle inequality.
Such a result exists for context-tree estimation with the Kullback risk, $\phi$-mixing processes, and maximum-likelihood estimators \citep{Gar_Ler:2011},
with $\pen_0(m) = \C_m = p_2(m)$ and $\pen_1(m) = 2 \pen_0(m)$.
Simulation experiments suggest that $p_2(m)$ can be replaced by a BIC-type penalty or a resampling-based estimator of $\E\croch{p_2(m)}$, 
see Remark~\ref{rk.reech} in Section~\ref{sec.theory.complete}.
What is missing to get a proof of \eqref{pb.penopt} 
is a tight concentration inequality for $\delta(m)-\delta(m^{\prime})$, that is, to have Eq.~\eqref{eq.altproof.controldelta} satisfied with $\varepsilon_{\delta} = \petito(1)$ as required in Proposition~\ref{pro.pbpenmin.gal.above}, see Section~\ref{sec.theory.hint.penmin.above}.
Simulation experiments suggest that $\pen_1=2\pen_0$ is indeed an optimal choice here.

\subsection{Minimal penalty in terms of risk: \texorpdfstring{(\pbpenminalt)}{(beta')}} \label{sec.theory.pbpenminalt}
Another way to define a minimal penalty is in terms of the risk of $\sh_{\mhgalzero(C)}\,$, which is theoretically interesting but does not prove the presence of a complexity jump as expected by Algorithm~\ref{algo.penmingal}:
\begin{align}
\tag{\ensuremath{\pbpenmin^{\prime -}}}
\label{pb.penmin.Cpt-Rgrd}
&\hspace{-1cm}\forall C < \parens{ 1 - \etamoins } C^{\star}
\, ,
\,
\Risk\parenB{\sh_{\mhgalzero(C)} } - \Risk\parens{\bayes} \geq \kappa \max_{\mM} \setb{\Risk\parens{\shm} - \Risk\parens{\bayes} }
\\
\tag{\ensuremath{\pbpenmin^{\prime +}}}
\label{pb.penmin.Cgrd-Rpt}
&\hspace{-1cm}\forall C > \parens{ 1 + \etaplus } C^{\star}
\, ,
\,
\Risk\parenB{\sh_{\mhgalzero(C)}}  - \Risk\parens{\bayes} 
\leq K\parenj{\frac{C}{C^{\star}}} 
    \inf_{\mM} \setb{ \Risk\parens{\shm} - \Risk\parens{\bayes} } 
    + R_n\parenj{\frac{C}{C^{\star}}}
\end{align}
where for every $x>1$, $K(x) \in [1,\infty)$ and  
$R_n(x) \ll \inf_{\mM} \sets{\Risk\parens{\shm} - \Risk\parens{\bayes}}$ in general, 
and $\kappa>0$ is an absolute constant.

When (\pbpenmin), which is defined in Section~\ref{sec.theory.approach}, 
is replaced by (\pbpenminalt) above, 
the justification of Algorithm~\ref{algo.penmingal} is far from being complete, 
since there might be no complexity jump as required in the definition of $\Chwindow\,$.
Nevertheless, once (\pbpenminalt) is proved, one can reasonably conjecture that 
(\pbpenmin) holds true under similar assumptions, provided that $\C_m$ is well chosen.
Moreover, (\pbpenmin) and (\pbpenminalt) are closely related if
\begin{equation}
\label{eq.hyp-lien-pbpenmin-alt}
\forall x > 0 \, , \quad
\inf_{\mM \, / \, \C_m \geq x} \setb{\Risk\parens{\shm} - \Risk\parens{\bayes}}
\geq
g(x) > 0
\end{equation}
for some increasing function~$g$.
Indeed, assuming \eqref{eq.hyp-lien-pbpenmin-alt}, 
\eqref{pb.penmin.Cpt-Dgrd} implies \eqref{pb.penmin.Cpt-Rgrd} with
\[
\kappa= \frac{g(\C_{\mathrm{overfit}})}{\max_{\mM} \setj{\Risk\parens{\shm} - \Risk\parens{\bayes}}}
\, ,
\]
and
\eqref{pb.penmin.Cgrd-Rpt} implies \eqref{pb.penmin.Cgrd-Dpt} with
\[
\C_{\mathrm{small}} = g^{-1} \parenj{ K\parenj{\frac{C}{C^{\star}}} \inf_{\mM} \setb{\Risk\parens{\shm} - \Risk\parens{\bayes}} } + R_n\parenj{\frac{C}{C^{\star}}}
\, .
\]
Note that Eq.~\eqref{eq.hyp-lien-pbpenmin-alt} holds true with $g(x) = x / \alpha$ if $\C_m \approx \alpha p_1(m)$; 
for instance, for least-squares estimators and risk, 
$\alpha = \sigma^2 / n$ since $\C_m = D_m$ and $p_1(m) \approx \sigma^2 D_m / n$.
Let us remark finally that the proof of \eqref{pb.penmin.Cpt-Rgrd} usually relies on a proof of \eqref{pb.penmin.Cpt-Dgrd}, sometimes hidden by technical details.

\medbreak

\paragraph{Results}
To the best of our knowledge, a full proof of (\pbpenminalt) currently exists only in settings where (\pbpenmin) is proved to hold, except one result that we report in Section~\ref{sec.theory.rich}.
%
Some partial proofs of (\pbpenminalt) are reviewed in the next subsections.

\subsection{Partial proofs: uncertainty on the minimal penalty} \label{sec.theory.pbpenmingap}
A result weaker than (\pbpenmin) can be proved about the complexity jump: 
for some $C^{\star}_1 < C^{\star}_2$ (that remain distinct even when $n \to +\infty$), 
\begin{equation}
\tag{\ensuremath{\pbpenmingap}}
\label{pb.penmin.gap}
\forall C < C^{\star}_1 \, ,
\quad
\text{ \eqref{pb.penmin.Cpt-Dgrd} holds true,} 
\qquad 
\text{and}
\qquad
\forall C > C^{\star}_2 \, ,
\quad
\text{\eqref{pb.penmin.Cgrd-Dpt} holds true.}
\end{equation}
In other words, $C^{\star}_1 \pen_0$ is a too small penalty, 
while $C^{\star}_2 \pen_0$ is a sufficiently large penalty. 

From the theoretical point of view, proving (\pbpenmingap) instead of (\pbpenmin) is a serious limitation:
for reasons similar to the ones explained in Section~\ref{sec.theory.penopt.weak} for
\eqref{pb.penopt.weak}, it can happen that $\pen_0$ is not the shape of a minimal penalty.
For instance, in the setting of Section~\ref{sec.penmingal.fails},
(\pbpenmingap) holds true with $\pen_0(m) = \tr(A_m)$ although this quantity is not
always proportional to the minimal penalty, as shown by Figure~\ref{fig.linear.jump}.

Nevertheless, from the practical point of view, one can still derive from \eqref{pb.penmin.gap} a way to get from data some $\Ch \in [C^{\star}_1 , C^{\star}_2]$, for instance by taking a large $\eta$
in the definition of~$\Chwindow(\eta)$.
If $(C^{\star}_2 / C^{\star}_1)$ is not too large and if \eqref{pb.penopt.weak} holds true for some $C^{\star} \in [C^{\star}_1 , C^{\star}_2]$, this leads to an estimator satisfying an oracle inequality.

\medbreak

\paragraph{Results}
One full proof of \eqref{pb.penmin.gap} and \eqref{pb.penopt.weak} is available for prediction 
in a Gaussian graphical model via neighborhood selection, 
with conditional least-squares risk and estimators, 
a minimal-penalty shape $\pen_0(m) = \C_m  = D_m\,$, and 
an optimal penalty shape $\pen_1$ proportional to $\pen_0$ \citep{Ver:2010}. 
The proof of \eqref{pb.penmin.gap} assumes in addition that the graph is a square lattice.
Simulation experiments suggest that there is indeed a jump around $C^{\star}$ and
that Algorithm~\ref{algo.penmingal} works well.

\subsection{Partial proofs: for some specific \texorpdfstring{$\bayes$}{settings} only} \label{sec.theory.partial-specific}
The weakest partial proofs of (\pbpenmin) are the ones only valid for some particular $\bayes$, which often is $\bayes=0$.
Then, although $C^{\star} \pen_0$ is a minimal penalty for this particular $\bayes$, the general shape of the minimal penalty can differ from $\pen_0\,$.
For instance, in the Lasso case, an empirical study shows that the shape of $\E\croch{p_2(m)}$ depends on $\bayes$ and on some other features of the distribution of the data \citep{Con:2011:phd}.

Nevertheless, such weak results still are a good way to guess $\pen_0$ for a practical use of Algorithm~\ref{algo.penmingal}, and they can be a first step towards a full theoretical justification.

\medbreak

\paragraph{Results}
Such partial proofs exist in the case of multiplicative penalties, an apparently different setting that can still be cast into the framework of Algorithm~\ref{algo.penmingal}.
The principle, as exposed by \citet{Bar_Gir_Hue:2007} for least-squares regression, is to replace the penalized criterion \eqref{eq.crit} by the \emph{product} of the empirical risk by some penalty term, that is, choosing
\begin{equation}
\label{def.penmult}
 \mh \in \argmin_{\mM} \setj{ \Remp\parens{\shm} \parenj{ 1 + \frac{\penmult(m)}{n - D_m} } } \, .
 \end{equation}
This can actually be seen as an additive penalization method as in Eq.~\eqref{eq.crit},
with the penalty
\[ \pen(m) \egaldef \Remp\parens{\shm} \frac{\penmult(m)}{n - D_m} \, . \]
So, choosing a multiplicative factor in front of $\penmult(m)$ is equivalent to choosing a multiplicative factor in front of an additive penalty of a particular form.
For fixed-design regression with least-squares risk and estimators, \citet{Bar_Gir_Hue:2007}
prove that \eqref{pb.penmin.Cpt-Dgrd} holds true if $\bayes=0$, while \eqref{pb.penmin.Cgrd-Rpt}
and \eqref{pb.penopt.weak} hold true in general, with $C^{\star} \penmult_0(m) = D_m$ and
$\penmult_1 \propto  \penmult_0\,$.

In addition, in the setting of multivariate regression on a fixed design with the least-squares risk and low-rank least-squares estimators, \eqref{pb.penopt.weak} and \eqref{pb.penmin.Cgrd-Rpt} are proved in a general case, while \eqref{pb.penmin.Cpt-Dgrd} is proved only for $\bayes=0$ \citep{Gir:2010};
remark that \eqref{pb.penmin.Cgrd-Dpt} can certainly be proved in a general case, although its proof is not written in the article by \citet{Gir:2010}.
Note also that these results are valid both for additive penalties and for multiplicative penalties as the ones of \citet{Bar_Gir_Hue:2007}.

\subsection{Partial proofs: richer collections of models} \label{sec.theory.rich}
Throughout the article, we assume (at least implicitly) that $\M$ is not too large, that is, $\card(\M)$ grows at most polynomially with the sample size $n$, or $\M$ can be well approximated by such a polynomial set of estimators ---e.g., kernel ridge regression with one continuous parameter $\lambda$ \citep{Arl_Bac:2009:minikernel_long_v2}.
Nevertheless, the case where $\M$ is larger deserves attention, and we review in this subsection the partial results about minimal penalties in such settings.
Note that each of them suffers from some of the limitations emphasized in Sections~\ref{sec.theory.penopt.weak}--\ref{sec.theory.partial-specific}.

Let us consider the fixed-design regression setting, with least-squares risk and estimators on finite-dimensional vector spaces $S_m\,$.
Assuming as \citet{Bir_Mas:2006} that the penalty is a function of the dimension,
the selected estimator
\[
\sh_{\mh} \qquad \text{with} \qquad
\mh \in \argmin_{\mM} \setb{\Remp\parens{\shm} + \pen(D_m)}
\]
can be rewritten as
$\shp_{\Dh}$ where
\begin{gather*}
\forall D \in \N, \quad
\shp_D \in \argmin_{t \in S^{\prime}_D} \setb{ \Remp\parens{t} }
\, ,
\quad
S^{\prime}_D \egaldef \mathop{\bigcup_{\mM}}_{D_m = D} S_m
\, ,
\quad
\text{and} \quad
\Dh \in \argmin_{D \in \N} \setb{\Remp\parens{\shp_D} + \pen(D)}
\, .
\end{gather*}
Then, discarding all models of dimension $D > n$, $\sh_{\mh} = \shp_{\Dh}$ is a penalized empirical risk minimizer over a collection $(S^{\prime}_D)_{0 \leq D \leq n}$ of cardinality at most $n+1$.
The difference with the initial formulation is that the models $S^{\prime}_D$ are not vector spaces (in general), and the complexity of $S^{\prime}_D$ strongly depends on $f(D) \egaldef \card\sets{\mM \, / \, D_m = D}$.
Three cases can be distinguished, following \citet{Bir_Mas:2006}:
\begin{enumerate}
\item[(i)] $\M$ is ``small'' or ``polynomial'' when $f(D) \leq C D^{\omega}$ for some $C, \omega>0$.
Then, $\card(\M)$ grows polynomially with $n$ (since models of dimension $D_m>n$ can safely be discarded), and the complexity of $S^{\prime}_D$ is essentially the same as the one of a $D$-dimensional vector space.
\item[(ii)] $\M$ is ``large'' or ``exponential'' when $f(D)$ grows much faster ---typically of order $\binom{n}{D}$---, which implies in particular that $\card(\M)$ grows exponentially with~$n$. 
Then, $S^{\prime}_D$ is much more complex than a $D$-dimensional vector space.
A typical example is (full) variable selection among $p \geq n$ variables, 
for which $f(D) = \binom{p}{D} \geq \binom{n}{D}$. 
\item[(iii)] $\M$ is ``moderate'' in the intermediate situation, when $D^{-1} \log f(D)$ stays bounded away from 0 and $\infty$ for $n \gg D \gg 1$. 
\end{enumerate}
The current subsection focuses on cases (ii) and (iii); all other results mentioned in this article correspond to case (i).

\subsubsection*{Results for case (ii): large number of models}
In fixed-design regression with least-squares risks and estimators, 
(\pbpenminalt) and \eqref{pb.penopt.weak} are proved by \citet{Bir_Mas:2006} for the 
(full) variable-selection problem with $p$ orthonormal variables, assuming that the noise is Gaussian, 
and defining 
\begin{equation*} 
 \pen_0(m) = \frac{D_m}{n} \parenj{ 1 + 2 \log {\frac{p}{D_m}}}
\, ,
\qquad 
C^{\star} = \sigma^2 
   \, ,
\end{equation*}
and $\pen_1(m) = C \pen_0$ with any $C>1$.
Note that \eqref{pb.penmin.Cpt-Dgrd} can be derived from the proof by \citet[Proposition~2]{Bir_Mas:2006}, 
but it is not written in the article by \citet{Bir_Mas:2006}.

Similar theoretical results are proved by \citet[Chapter~8]{Sor:2017} 
for Gaussian variable selection with $p=n$ and a more general collection of models, 
that can be smaller than full variable selection but still exponentially large. 
Formally, \citet[Section~8.1]{Sor:2017} assumes that $\M$ satisfies a ``completion rule'',  
which holds for instance for the collection of regressograms over a partition 
whose cells are hyperrectangles of $\R^d$. 
Then, \eqref{pb.penmin.Cpt-Rgrd} and \eqref{pb.penopt.weak} hold true, 
with $\pen_0(m)$ proportional to $\frac{D_m}{n} \log \frac{\mathrm{e} n}{D_m} $ 
---showing that a $\log(n)$ factor is still necessary here--- 
and \eqref{pb.penmin.Cpt-Dgrd} is proved when the target signal is null. 
Compared to the results of \citet{Bir_Mas:2006}, a gap of a multiplicative constant remains between 
minimal and sufficient penalties. 

\citet[Chapter~9]{Sor:2017} proves similar theoretical results about histogram selection for 
density estimation by penalized log-likelihood, with the Kullback risk, 
for any large collection of subpartitions of a regular partition of $[0,1]$ into $N+1$ pieces, 
assuming $N \leq n / (\log n)^2$. 
A sufficient penalty $\propto \frac{D_m}{n} \log(N)$ satisfies \eqref{pb.penopt.weak}. 
If the target density is uniform over $[0,1]$, 
\eqref{pb.penmin.Cpt-Rgrd} holds true with a minimal penalty level of the same order of magnitude. 
As a consequence, when $\log(N) \sim \log(n)$, this proves that a $\log(n)$ factor must be 
added to the penalty compared to the case of a polynomial collection~$\M$. 

Two partial results are available with multiplicative penalties,
which are introduced in Section~\ref{sec.theory.partial-specific}.
In the same setting as \citet{Bir_Mas:2006}, \eqref{pb.penmin.Cpt-Dgrd} 
---assuming $\bayes=0$ and a specific ``exponential'' collection $\M$ with $f(D) \approx \binom{n}{D}$---, 
\eqref{pb.penmin.Cgrd-Rpt}, and \eqref{pb.penopt.weak} are proved by \citet{Bar_Gir_Hue:2007}, 
with $\penmult_0(m) = 2 D_m \log(n)$.
For estimation of a Gaussian graph ---that is, in a Gaussian graphical model, predict the value at each vertex of the graph given its neighbors, by linear regression---, 
with least-squares risk and estimators, \eqref{pb.penmin.Cpt-Dgrd} ---assuming that $\bayes=0$ and $\M$ contains some specific ``exponential'' collection with $f(D) \approx \binom{p}{D}$ for some $p>n$---, 
\eqref{pb.penmin.Cgrd-Rpt}, and \eqref{pb.penopt.weak} are proved by \citet{Gir:2008}, with $\penmult_0(m) = 2 D_m \log(p)$.
In both articles by \citet{Bar_Gir_Hue:2007} and \citet{Gir:2008}, \eqref{pb.penopt.weak} holds with
$\penmult_1 = C \penmult_0$ for any $C>1$.

Finally, several other arguments can be found for the necessity of a penalty larger than 
\[ 
\sigma^2 \frac{D_m}{n} \parenj{ 1 + \log { \frac{n}{D_m} } }
\] 
---up to a numerical constant--- for change-point detection, which is an instance of variable selection 
with $p=n-1$. 
Minimax lower bounds \citep[Theorem~2]{Dur_Leb_Toc:2009} 
and general oracle inequalities \citep{Bir_Mas:2006} 
prove that for the true model $m^{\star}$, 
\[ 
\pen(m^{\star}) \geq \kappa \sigma^2 \frac{D_{m^{\star}}}{n} \parenj{ 1 + \log { \frac{n}{D_{m^{\star}}} } }
\] 
is necessary for some constant $\kappa>0$. 
\citet[Section~1.9]{Abr_etal:2006} provide several other reasons why the optimal penalty 
should be close to $2 \sigma^2 \frac{D_m}{n} \log \frac{n}{D_m}$.

\subsubsection*{Results for case (iii): moderate number of models}
In fixed-design regression with least-squares risks and estimators, \eqref{pb.penopt.weak} holds true in general, and (\pbpenminaltgap) is proved assuming that $\bayes=0$ and 
all models of the same dimension $D$ are orthogonal \citep[Proposition~3]{Bir_Mas:2006}, with
\[
\pen_0(m)
= \lambda \frac{D_m}{n} \crochj{ 1 + 2 \sqrt{f(D_m)} + 2 f(D_m)}
\, ,
\qquad
f(D) = a + \frac{b \log(D+1)}{D}
\, ,
\quad \text{and} \quad 
C^{\star} = \sigma^2
\]
under some condition on the constants $\lambda,a,b$.
By (\pbpenminaltgap), we mean (\pbpenmingap) with a jump in the risk instead of the complexity; here, the gap in (\pbpenminaltgap) is $C^{\star}_2/C^{\star}_1 = 6/5$.

Results of the same flavor exist 
for a toy problem close to the above setting 
\citep[Chapter~10]{Sor:2017}, 
and for a Gaussian linear process 
and a $b$-ary tree partition collection 
---with no assumption on $\bayes$ for proving \eqref{pb.penmin.Cpt-Dgrd} and \eqref{pb.penmin.Cpt-Rgrd}--- 
\citep[Chapter~6]{Sor:2017}.
All these results show that ``intermediate'' collections of models can require a penalty 
strictly larger than the minimal penalty $\frac{\sigma^2 D_m}{n}$ of ``polynomial'' collections. 

\section{Towards new theoretical results on minimal penalties} \label{sec.hints}
%
%
We now describe some strategies for proving that Algorithm~\ref{algo.penmingal}
works in other settings.
This section is a bit more abstract and technical than the rest of the article,
so it can be skipped at first reading.
As in Section~\ref{sec.theory.approach},
whose notation is used throughout the section,
we consider separately subproblems (\pbmult), (\pbpenmin), and \eqref{pb.penopt}.

\subsection{Hints for (\pbmulttitre): how to find \texorpdfstring{$\pen_0$}{pen0}, \texorpdfstring{$\pen_1$}{pen1}, and \texorpdfstring{$\C_m$}{Cm}?} \label{sec.theory.pbmult}
Using the notation defined by Eq.~\eqref{def.p1}, \eqref{def.p2}, and~\eqref{def.delta}, Section~\ref{sec.penmingal.penoptmin} suggests
that $p_1(m) + p_2(m)$ or its expectation $\penoptgal(m)$ should be an optimal penalty,
and $p_2(m)$ or its expectation $\penmingal(m)$ should be a minimal penalty.

In both cases, computing (approximately) $\E\crochs{p_i(m)}$, $i=1,2$, or deriving an asymptotic expansion of $p_i(m)$, $i=1,2$, at least for $\C_m$ large enough, can lead to formulas for $\pen_0(m)$ and $\pen_1(m)$.
For instance, for fixed-design regression with the least-squares risk,
exact formulas for $\E\crochs{p_i(m)}$ lead to Algorithm~\ref{algo.OLS.jump} for least-squares estimators, and to Algorithm~\ref{algo.penmin.linear} for linear estimators.
The main difficulty here is to have no unknown quantity inside $\pen_0$ or $\pen_1\,$. 
\\
For general estimators in the fixed-design regression setting, an exact formula for $\penoptgal(m)$ is given by covariance penalties \citep{Efr:2004}, which can be expressed using the degrees of freedom when the noise is Gaussian and the loss is quadratic.
For maximum-likelihood estimators and risk, a partial asymptotic solution is given by the formula of the AIC criterion \citep{Aka:1973}, which derives from some version of the Wilks phenomenon (see also Section~\ref{sec.theory.hint.penmin}).
In both cases, only a formula for $\pen_1$ is available, and $\pen_0$ remains unknown, even if one can sometimes conjecture that $\pen_0=\pen_1 / 2$. 
\\
For random-design regression with the quadratic risk, \citet[Theorem~6.3]{Sau_Nav:2017} 
provide an (approximate) closed-form formula 
for $\E\crochs{p_1(m)}$ and $\E\crochs{p_2(m)}$ 
---by proving that $p_1(m)$ and $p_2(m)$ concentrate around some deterministic 
quantity, which is not necessarily equal to their expectation--- 
that is not directly useful because it depends on the unknown distribution of the $(X_i,Y_i)$. 

Another option is to define $\pen_0(m)$, resp. $\pen_1(m)$, as some resampling-based estimator of $\E\crochs{p_2(m)}$, resp. $\E\crochs{p_2(m)+p_1(m)}$, and to use Algorithm~\ref{algo.penmingal} for estimating the common (unknown) multiplicative factor $C^{\star}$ such that
\[
C^{\star} \E\crochb{ \pen_0(m) } \approx \E\crochb{p_2(m)}
\qquad \text{and} \qquad
C^{\star} \E\crochb{ \pen_1(m) } \approx \E\crochb{p_1(m) + p_2(m)}
\, ,
\]
see Remark~\ref{rk.reech} in Section~\ref{sec.theory.complete}.
In addition to the papers mentioned in Section~\ref{sec.theory.complete}, let us mention here
that a concentration result for the resampling estimate of $\E\crochs{p_2(m)}$ is proved by
\citet[Chapter~7]{Arl:2007:phd}, for empirical risk minimizers and a general bounded risk.

If no natural quantity arises as a complexity measure $\C_m\,$, such as the number of parameters in regression, $\E\croch{p_2(m)}$ (or a resampling-based estimator of it) can be a good guess for $\C_m\,$, see the article by \citet{Ler:2010:iid} and Remark~\ref{rk.reech}.

\subsection{Hints for (\pbpenmintitre): how to prove that \texorpdfstring{$C^{\star} \pen_0$}{Cstar pen0} is a minimal penalty?} \label{sec.theory.hint.penmin}
Following the results mentioned in the previous subsections, two general approaches can be used for proving (\pbpenmin), assuming either that
$C^{\star} \pen_0(m) = \E\croch{p_2(m)}$ as in Theorem~\ref{thm.OLS}, or that
$\pen_0(m) = \C_m = p_2(m)$ as done by \citet{Ler_Tak:2011}.
This section details these two approaches for proving \eqref{pb.penmin.Cpt-Dgrd} and \eqref{pb.penmin.Cgrd-Dpt}, before focusing on the concentration inequalities they both require.
Recall that 
\[ \forall C \geq 0 , \qquad
\mhgalzero(C) \in \argmin_{\mM} \setj{ \Remp\parens{\shm} + C \pen_0(m) }
\, . \]

\subsubsection{Below the minimal penalty: \pbpenminmoinstitre} \label{sec.theory.hint.penmin.below}
\paragraph{Following the proof of Theorem~\ref{thm.OLS}}  
When $C^{\star} \pen_0(m) = \E\crochs{p_2(m)}$, similarly to the proof of Eq.~\eqref{eq.thm.OLS.Cpt-Dgrd} in Theorem~\ref{thm.OLS},
one can prove \eqref{pb.penmin.Cpt-Dgrd} by showing that for some well-chosen $m_1 \in \M$,
\begin{gather*}
\forall C < (1-\etamoins) C^{\star} \, , \qquad 
\inf_{\mM \, / \, \C_m < \C_{\mathrm{overfit}}} G_C(m) > G_C(m_1)
\\
\text{where} \quad
G_C(m) \egaldef \Remp\parens{\shm} + C \pen_0(m)
= \Risk\parens{\bayes_m} - \ovdelta(m) - p_2(m) + \frac{C}{C^{\star}} \E\crochb{p_2(m)}
\end{gather*}
is the quantity minimized by $\mhgalzero(C)$.
Then, in addition to the arguments sketched in Section~\ref{sec.penmingal.algo}, 
we only need here tight concentration inequalities for $\ovdelta(m)-\ovdelta(m_1)$ and for $p_2(m)$, 
see Section~\ref{sec.theory.hint.penmin.conc}. 
A natural choice for $m_1$ is a minimizer of the approximation error 
$\Risk\parens{\bayes_m} - \Risk(\bayes)$ over $\mM$. 

\medbreak

\paragraph{Generalizing the strategy of \citet{Ler_Tak:2011} and \citet{Gar_Ler:2011}} 

When $\pen_0(m) = \C_m \propto p_2(m)$, the approach\footnote{It should be noticed here that the article by \citet{Ler_Tak:2011} was prepublished on June 2011 (arXiv preprint number 1106.2467), a few months \emph{before} the preprint by \citet{Gar_Ler:2011}, despite what the publication date of the former paper suggests. } used by \citet{Ler_Tak:2011} and \citet{Gar_Ler:2011} can be summarized into the following proposition.
\begin{proposition}
\label{pro.pbpenmin.gal.below}
Let us consider the general framework of Section~\ref{sec.penmingal.framework} 
and use the notation of Section~\ref{sec.theory.approach}. 
Let $\varepsilon_{\delta} \in [0,1]$, $\varepsilon^{\prime}_{\delta} \geq 0$,
and assume that for every $\mM$, $\pen_0(m) = p_2(m)$ and
\begin{gather}
\label{eq.altproof.controldelta}
\forall m,m' \in \M, \qquad
\absj{\ovdelta(m) - \ovdelta(m^{\prime})}
\leq \varepsilon_{\delta} \crochb{ \Risk\parens{\bayes_m} - \Risk\parens{\bayes}  + \Risk\parens{\bayes_{m^{\prime}}} - \Risk\parens{\bayes} } +
\varepsilon^{\prime}_{\delta}
\, .
\end{gather}
Then, for every $C \in [0,1)$,
\begin{equation}
\label{eq.proofalt.penmin.Cpt-Dgrd}
p_2 \parenb{\mhgalzero(C)} \geq
\sup_{\mM} \setj{ p_2(m) - \frac{2}{1-C} \crochb{ \Risk\parens{\bayes_{m}}-\Risk\parens{\bayes} } }
- \frac{\varepsilon^{\prime}_{\delta}}{1-C}
\, ,
\end{equation}
and if $\Risk\parens{\bayes_{m_1}} = \Risk\parens{\bayes}$ for some $m_1 \in \M$ with $p_2(m_1)>0$,
for any $\alpha \in (0,1)$,
\begin{equation}
\label{eq.proofalt.penmin.Cpt-Dgrd.2}
\forall C \leq 1 - \eta_{\alpha} \, , \quad
p_2\parenb{\mhgalzero(C)} \geq
(1-\alpha) p_2(m_1)
\quad \text{with} \quad
\eta_{\alpha} = \frac{\varepsilon^{\prime}_{\delta}}{\alpha p_2(m_1)}
\, .
\end{equation}

Assume in addition that the data $\xi_1, \ldots, \xi_n \in \X$ are i.i.d.\@ and 
some contrast function $\gamma: \Xi \times \Set \to \R $ and constants $A,L>0$ exist such that
\begin{align}
\label{hyp.gal-controldelta.contrast}
&\forall t \in \Set \, , \quad \Remp\parens{t} = \frac{1}{n} \sum_{i=1}^n \gamma(\xi_i,t)
\quad \text{and} \quad
\Risk\parens{t} = \E\crochb{\Remp\parens{t}} = \E\crochb{ \gamma(\xi_1,t) }
\, ,
\\
\label{hyp.gal-controldelta.bounded}
&\forall t \in \Set \, , \quad \absb{\gamma(\xi_1 , t )} \leq A \quad \text{a.s.}
\, ,
\\
\label{hyp.gal-controldelta.margin}
\text{and}
\qquad
&\forall \mM \, , \quad
\var\parenb{\gamma\parens{\xi_1,\bayes_m} - \gamma\parens{\xi_1,\bayes} } 
\leq L \crochb{ \Risk\parens{\bayes_m} - \Risk\parens{\bayes}}
\, .
\end{align}
Then, for every $x \geq 0$, with probability at least $1 - 2 \card(\M) \mathrm{e}^{-x}$, 
for any $\theta>0$,
Eq.~\eqref{eq.altproof.controldelta} holds true with
\[ 
\varepsilon_{\delta} = \theta 
\quad \text{and} \quad 
\varepsilon^{\prime}_{\delta} = \paren{ \frac{L}{\theta} + \frac{4 A}{3}} \frac{x}{n} 
\, .  
\]
\end{proposition}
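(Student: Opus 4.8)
The plan is to separate a purely deterministic argument ---which derives Eq.~\eqref{eq.proofalt.penmin.Cpt-Dgrd} and~\eqref{eq.proofalt.penmin.Cpt-Dgrd.2} from the control \eqref{eq.altproof.controldelta} alone--- from the probabilistic argument that establishes \eqref{eq.altproof.controldelta} itself by Bernstein's inequality. For the deterministic part I would first rewrite the criterion minimized by $\mhgalzero(C)$. Since $\pen_0(m)=p_2(m)$, Eq.~\eqref{def.p2} and~\eqref{def.delta} give $\Remp\parens{\shm}=\Remp\parens{\bayes_m}-p_2(m)=\Risk\parens{\bayes_m}-\ovdelta(m)-p_2(m)$, hence
\[ G_C(m):=\Remp\parens{\shm}+C\,p_2(m)=\Risk\parens{\bayes_m}-\ovdelta(m)-(1-C)\,p_2(m)\,. \]
Writing $\mh=\mhgalzero(C)$ and using $G_C(\mh)\leq G_C(m)$ for every $\mM$, I would isolate $(1-C)p_2(\mh)$, replace $\Risk\parens{\bayes_{\mh}}-\Risk\parens{\bayes_m}$ by the difference of the two approximation errors $\Risk\parens{\bayes_{\mh}}-\Risk\parens{\bayes}$ and $\Risk\parens{\bayes_m}-\Risk\parens{\bayes}$, and bound $\absj{\ovdelta(\mh)-\ovdelta(m)}$ with \eqref{eq.altproof.controldelta}. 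Dropping the nonnegative term $(1-\varepsilon_{\delta})\crochb{\Risk\parens{\bayes_{\mh}}-\Risk\parens{\bayes}}$ ---licit because $\varepsilon_{\delta}\leq 1$ and $\bayes$ minimizes $\Risk$--- and using $1+\varepsilon_{\delta}\leq 2$, then dividing by $1-C>0$ and taking the supremum over $\mM$, yields precisely \eqref{eq.proofalt.penmin.Cpt-Dgrd}.

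Eq.~\eqref{eq.proofalt.penmin.Cpt-Dgrd.2} is then immediate: choosing $m=m_1$ in the supremum annihilates the approximation-error term, since by hypothesis $\Risk\parens{\bayes_{m_1}}=\Risk\parens{\bayes}$, and leaves $p_2\parenb{\mhgalzero(C)}\geq p_2(m_1)-\varepsilon'_{\delta}/(1-C)$. Requiring the subtracted term to be at most $\alpha\,p_2(m_1)$ is equivalent to $1-C\geq \varepsilon'_{\delta}/(\alpha\,p_2(m_1))=\eta_{\alpha}$, that is $C\leq 1-\eta_{\alpha}$, which is exactly the claimed threshold.

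For the probabilistic part, the decisive structural observation is that $\ovdelta(m)=\Risk\parens{\bayes_m}-\Remp\parens{\bayes_m}$ involves only the \emph{deterministic} point $\bayes_m$, so no uniform empirical process over a function class is needed: under \eqref{hyp.gal-controldelta.contrast}, $\ovdelta(m)-\ovdelta(m^{\prime})=\E\crochs{Z_1}-n^{-1}\sum_{i=1}^{n}Z_i$ is a centered average of the i.i.d.\@ variables $Z_i:=\gamma(\xi_i,\bayes_m)-\gamma(\xi_i,\bayes_{m^{\prime}})$. I would bound the variance by writing $Z_i$ as a difference of the two contrasts taken relative to $\bayes$, so that $\var(Z_1)\leq 2\var\parenb{\gamma(\xi_1,\bayes_m)-\gamma(\xi_1,\bayes)}+2\var\parenb{\gamma(\xi_1,\bayes_{m^{\prime}})-\gamma(\xi_1,\bayes)}$, which the margin condition \eqref{hyp.gal-controldelta.margin} controls by $2L\crochb{\Risk\parens{\bayes_m}-\Risk\parens{\bayes}+\Risk\parens{\bayes_{m^{\prime}}}-\Risk\parens{\bayes}}$, while \eqref{hyp.gal-controldelta.bounded} gives $\absj{Z_i}\leq 2A$. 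Bernstein's inequality then bounds the deviation, with probability at least $1-2\mathrm{e}^{-x}$, by $\sqrt{2\var(Z_1)x/n}$ plus the boundedness term $\tfrac{4A}{3}\tfrac{x}{n}$; splitting the square-root term with the elementary inequality $2\sqrt{ab}\leq\theta a+\theta^{-1}b$ already invoked in the proof of Theorem~\ref{thm.OLS} produces exactly $\varepsilon_{\delta}=\theta$ and $\varepsilon'_{\delta}=\parens{L/\theta+4A/3}x/n$. A union bound over the finitely many models ---kept linear in $\card(\M)$ by controlling the $\card(\M)$ deviations of $\ovdelta(m)$ against the single fixed reference $\bayes$ and recovering pairwise differences by the triangle inequality--- delivers the probability $1-2\card(\M)\mathrm{e}^{-x}$.

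The main obstacle is this concentration step, and precisely the requirement that the variance term scale with the approximation errors rather than with a crude uniform bound: this is exactly where the margin condition \eqref{hyp.gal-controldelta.margin} is indispensable, because it is what permits $\varepsilon_{\delta}$ to be the free small parameter $\theta$ instead of a fixed constant, and hence what makes $\ovdelta(\mh)-\ovdelta(m)$ absorbable into the approximation errors in the deterministic step. Everything else ---the rewriting of $G_C$, the use of $\varepsilon_{\delta}\leq 1$ together with the nonnegativity of the approximation error, and the substitution $m=m_1$--- is elementary once \eqref{eq.altproof.controldelta} is in hand.
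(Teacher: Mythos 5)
Your proposal is correct and takes essentially the same route as the paper's own proof: the same rewriting of the penalized criterion as $\Risk\parens{\bayes_m} - \ovdelta(m) - (1-C)\,p_2(m)$, the same comparison argument (dropping the nonnegative term $(1-\varepsilon_{\delta})\crochb{\Risk\parens{\bayes_{\mhgalzero(C)}}-\Risk\parens{\bayes}}$ and using $1+\varepsilon_{\delta}\leq 2$) for Eq.~\eqref{eq.proofalt.penmin.Cpt-Dgrd}--\eqref{eq.proofalt.penmin.Cpt-Dgrd.2}. The concentration step is also the paper's: Bernstein plus the margin condition applied to the deviations of each $\ovdelta(m)$ from the single reference $\Risk\parens{\bayes} - \Remp\parens{\bayes}$, with pairwise differences recovered by the triangle inequality so that the union bound stays linear in $\card(\M)$ and yields exactly $\varepsilon_{\delta}=\theta$ and $\varepsilon^{\prime}_{\delta} = \parens{L/\theta + 4A/3}\,x/n$.
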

Proposition~\ref{pro.pbpenmin.gal.below} is proved in Appendix~\ref{app.proof.pro.pbpenmin.gal.below}.
%
Eq.~\eqref{eq.proofalt.penmin.Cpt-Dgrd.2} proves that \eqref{pb.penmin.Cpt-Dgrd} holds true if $p_2(m_1)$ is close to $\sup_{\mM} p_2(m)$, which is a reasonable assumption.
For instance, in the setting of Section~\ref{sec.slopeOLS}, $m_1$ is given by assumption \eqref{hyp.thm.OLS.Id}  leading to $\sh_{m_1}=Y$, hence
\[
p_2(m_1) = \frac{1}{n} \parenb{ \norms{Y - F_{m_1}}^2 - \norms{Y-\Fh_{m_1}}^2}
= \frac{1}{n} \norms{\varepsilon}^2 \approx \sigma^2
\, . \]

The proof of Proposition~\ref{pro.pbpenmin.gal.below} also works when assuming only that 
$p_2(m) \geq 0$ for every $\mM$ and 
\[
(1-\varepsilon_0) p_2(m) \leq \pen_0(m) \leq (1+\varepsilon_0) p_2(m)
\, ,
\]
which can be used when $\pen_0(m)$ is a resampling estimate of $\E\crochs{p_2(m)}$ for instance.
Then, we loose a factor in the ``rate'' $\eta_{\alpha}$ of estimation of $C^{\star}$ in Eq.~\eqref{eq.proofalt.penmin.Cpt-Dgrd.2}, see Appendix~\ref{app.proof.pro.pbpenmin.gal.below}.

Proposition~\ref{pro.pbpenmin.gal.below} is new ---apart from the fact that its proof relies 
heavily on the proof technique proposed by \citet{Ler_Tak:2011}--- but rather abstract in its general form.
Under the additional conditions \eqref{hyp.gal-controldelta.contrast}--\eqref{hyp.gal-controldelta.margin}, 
it can be used for minimum-contrast estimators
\[ \shm \in \argmin_{t \in S_m} \setb{ \Remp\parens{t} } \]
with a bounded contrast $\gamma$, so that one automatically has $p_2(m) \geq 0$ and
Eq.~\eqref{hyp.gal-controldelta.contrast}--\eqref{hyp.gal-controldelta.bounded} as requested.
Then, Eq.~\eqref{hyp.gal-controldelta.margin} is a classical assumption \citep{Mas_Ned:2003} 
which holds for bounded regression with the least-squares contrast 
---with $L = A = 8 M^2$ if data are bounded by~$M$, according to \citet{Arl_Mas:2009:pente}---, 
and for binary classification with the 0--1 loss under 
the margin condition \citep{Mam_Tsy:1999,Mas_Ned:2003}.
Let us emphasize that Algorithm~\ref{algo.penmingal} has never been justified 
for binary classification with the 0--1 loss up to now, 
so Proposition~\ref{pro.pbpenmin.gal.below} is of significant interest 
even if it only provides a partial justification 
---with \eqref{pb.penmin.Cpt-Dgrd} only, in a rather abstract form.

\subsubsection{Above the minimal penalty: \pbpenminplustitre} \label{sec.theory.hint.penmin.above}
Before detailing two approaches for proving \eqref{pb.penmin.Cgrd-Dpt}, let us recall that 
if an oracle inequality like \eqref{pb.penmin.Cgrd-Rpt} is available, a simple way to prove \eqref{pb.penmin.Cgrd-Dpt} is to use the connection from \eqref{pb.penmin.Cgrd-Rpt} to \eqref{pb.penmin.Cgrd-Dpt} explained in Section~\ref{sec.theory.pbpenminalt}.

\medbreak

\paragraph{Following the proof of Theorem~\ref{thm.OLS}} 
When $C^{\star} \pen_0(m) = \E\croch{p_2(m)}$, following the proof of
Eq.~\eqref{eq.thm.OLS.Cgrd-Dpt} in Theorem~\ref{thm.OLS},
\eqref{pb.penmin.Cgrd-Dpt} can be proved by showing that for some well-chosen $m_2 \in \M$,
\[ 
\forall C > (1+\etaplus) C^{\star} \, , \qquad 
\inf_{\mM \, / \, \C_m > \C_{\mathrm{small}}} G_C(m) > G_C(m_2) 
\, , \]
which requires concentration inequalities for $\ovdelta(m)-\ovdelta(m_2)$ and for $p_2(m)$, see Section~\ref{sec.theory.hint.penmin.conc}. 
Two natural choices are 
\[
m_2 \in \argmin_{\mM \, / \, \C_m \leq \C_{\mathrm{small}}} \setb{ \Risk\parens{\bayes_m} - \Risk\parens{\bayes}} 
\qquad \text{and} \qquad 
m_2 \in \argmin_{\mM} \setB{ \E\crochb{ \Risk\parens{\shm} - \Risk\parens{\bayes}}}
\, .
\]

\medbreak

\paragraph{Generalizing the strategy of \citet{Ler_Tak:2011} and \citet{Gar_Ler:2011}} 
When $\pen_0(m) = \C_m \propto p_2(m)$, the approach used by \citet{Ler_Tak:2011} and \citet{Gar_Ler:2011} can be summarized into the following proposition.
\begin{proposition}
\label{pro.pbpenmin.gal.above}
%
Let us consider the general framework of Section~\ref{sec.penmingal.framework} 
and use the notation of Section~\ref{sec.theory.approach}. 
Let $\varepsilon_{\delta}, \varepsilon_p  \in [0,1) $, $\varepsilon_{\delta}^{\prime} \geq 0$,
assume that
Eq.~\eqref{eq.altproof.controldelta} holds true
and that for every $m \in \M$,
$\pen_0(m) = p_2(m)$ and
\begin{gather}
\label{eq.altproof.controlp1p2}
\absb{p_1(m) - p_2(m)} \leq \varepsilon_p p_1(m)
\, .
\end{gather}
Then, for every $C>1$, we have
\begin{align}
\label{eq.proofalt.penmin.Cgrd-Rpt}
\Risk\parenB{\sh_{\mhgalzero(C)}} - \Risk\parens{\bayes}
&\leq
\cteProPenminAbove(C)
\inf_{\mM} \setb{ \Risk\parens{\sh_m} - \Risk\parens{\bayes} }
+ \cteProPenminAbove^{\prime}(C)
\\
\text{and} \qquad
\label{eq.proofalt.penmin.Cgrd-Dpt}
p_2\parenB{\mhgalzero(C)}
&\leq
\cteProPenminAbove(C) (1+\varepsilon_p)
\inf_{\mM} \setb{ \Risk\parens{\sh_m} - \Risk\parens{\bayes} }
+ \cteProPenminAbove^{\prime}(C) (1+\varepsilon_p)
\, ,
\\
\text{where} \qquad
\cteProPenminAbove(C)
&\egaldef
\frac{ \max\setb{ (C-1) (1+\varepsilon_p) \, , \, 1 + \varepsilon_{\delta} }}{ \min\setb{  (C-1) (1-\varepsilon_p)  \, , \, 1 - \varepsilon_{\delta} }}
\notag
\\
\text{and} \qquad
\cteProPenminAbove^{\prime}(C)
&\egaldef
\frac{\varepsilon^{\prime}_{\delta}}{\min\setb{  (C-1) (1-\varepsilon_p)  \, , \, 1 - \varepsilon_{\delta} } }
\, .
\notag
\end{align}
\end{proposition}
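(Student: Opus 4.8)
The plan is to follow the same route as in the proof of Eq.~\eqref{eq.thm.OLS.Cgrd-Dpt}--\eqref{eq.thm.OLS.Cgrd-Rpt}: rewrite the criterion minimized by $\mhgalzero(C)$ in terms of the approximation error, $\ovdelta$, and $p_2$, and then use the two control assumptions \eqref{eq.altproof.controldelta} and \eqref{eq.altproof.controlp1p2} to compare $\mhgalzero(C)$ with an arbitrary competitor $m \in \M$. Note that, unlike Theorem~\ref{thm.OLS}, there is no concentration inequality to establish here: both controls are taken as hypotheses, so the statement will be a purely deterministic consequence of the optimality of $\mhgalzero(C)$.

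First I would abbreviate $a(m) \egaldef \Risk\parens{\bayes_m} - \Risk\parens{\bayes} \geq 0$, the approximation error. Since $\pen_0 = p_2$, the quantity minimized by $\mhgalzero(C)$ is $\Remp\parens{\shm} + C p_2(m)$; using $p_2(m) = \Remp\parens{\bayes_m} - \Remp\parens{\shm}$ and $\ovdelta(m) = \Risk\parens{\bayes_m} - \Remp\parens{\bayes_m}$, this equals $\Risk\parens{\bayes} + a(m) - \ovdelta(m) + (C-1) p_2(m)$. Dropping the constant $\Risk\parens{\bayes}$, the optimality of $\mhgalzero(C)$ against an arbitrary $m \in \M$ reads
\[ a(\mhgalzero(C)) + (C-1) p_2(\mhgalzero(C)) \leq a(m) + (C-1) p_2(m) + \ovdelta(\mhgalzero(C)) - \ovdelta(m) \, . \]
I would then absorb the $\ovdelta$ gap with \eqref{eq.altproof.controldelta}, bounding $\ovdelta(\mhgalzero(C)) - \ovdelta(m)$ by $\varepsilon_{\delta}[a(\mhgalzero(C)) + a(m)] + \varepsilon^{\prime}_{\delta}$, which yields $(1-\varepsilon_{\delta}) a(\mhgalzero(C)) + (C-1) p_2(\mhgalzero(C)) \leq (1+\varepsilon_{\delta}) a(m) + (C-1) p_2(m) + \varepsilon^{\prime}_{\delta}$.

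The key step is to trade $p_2$ against $p_1$ using \eqref{eq.altproof.controlp1p2}, which forces $p_1(m) \geq 0$ and $(1-\varepsilon_p) p_1(m) \leq p_2(m) \leq (1+\varepsilon_p) p_1(m)$, with the correct orientation: I would lower-bound $p_2(\mhgalzero(C))$ on the left and upper-bound $p_2(m)$ on the right. Factoring out $\min\set{1-\varepsilon_{\delta} , (C-1)(1-\varepsilon_p)}$ on the left and $\max\set{1+\varepsilon_{\delta} , (C-1)(1+\varepsilon_p)}$ on the right, and recognizing $a(m') + p_1(m') = \Risk\parens{\sh_{m'}} - \Risk\parens{\bayes}$ for $m' \in \set{\mhgalzero(C) , m}$, gives, after dividing by the left factor (positive since $\varepsilon_{\delta}, \varepsilon_p < 1$ and $C > 1$), precisely the oracle inequality \eqref{eq.proofalt.penmin.Cgrd-Rpt} for this fixed $m$; taking the infimum over $m \in \M$ concludes. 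Finally, Eq.~\eqref{eq.proofalt.penmin.Cgrd-Dpt} follows at once by writing $p_2(\mhgalzero(C)) \leq (1+\varepsilon_p) p_1(\mhgalzero(C)) \leq (1+\varepsilon_p) \crochb{ \Risk\parens{\sh_{\mhgalzero(C)}} - \Risk\parens{\bayes} }$ and substituting \eqref{eq.proofalt.penmin.Cgrd-Rpt}.

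The computation is short, and the step I would check most carefully is the bookkeeping of signs in the $\max$/$\min$ factorization: each occurrence of $p_2$ must be replaced by the $p_1$ bound pointing in the direction that preserves the inequality, and it is the assumption $C > 1$ that simultaneously guarantees $(C-1) > 0$ (so that $(C-1) p_2$ can be traded against $(C-1) p_1$ without reversing) and the positivity of the dividing factor. No other obstacle is expected, so I do not anticipate needing any hypothesis beyond \eqref{eq.altproof.controldelta} and \eqref{eq.altproof.controlp1p2}.
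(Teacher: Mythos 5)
Your proof is correct and follows essentially the same route as the paper's: the same rewriting of the penalized criterion (your $a(m) - \ovdelta(m) + (C-1)\,p_2(m)$ is the paper's $\Risk\parens{\shm} - p_1(m) - \ovdelta(m) + (C-1)\,p_2(m)$ up to the additive constant $\Risk\parens{\bayes}$), the same use of the two hypotheses with the same orientations, the same $\min$/$\max$ factorization yielding exactly $\cteProPenminAbove(C)$ and $\cteProPenminAbove^{\prime}(C)$, and the same final step for Eq.~\eqref{eq.proofalt.penmin.Cgrd-Dpt}. The only (immaterial) difference is that you apply the control \eqref{eq.altproof.controldelta} on $\ovdelta$ before trading $p_2$ for $p_1$ via \eqref{eq.altproof.controlp1p2}, whereas the paper does these two steps in the opposite order.
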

Proposition~\ref{pro.pbpenmin.gal.above} is proved in Appendix~\ref{app.proof.pro.pbpenmin.gal.above}.

Now, assume that Eq.~\eqref{eq.altproof.controldelta} and~\eqref{eq.altproof.controlp1p2} hold on a large-probability event.
Then, taking  $C = 1+\eta$ with $\eta>0$, we get
$\cteProPenminAbove(C) \leq \grandO(1) / \eta$
if $\eta$ is small enough,
and Eq.~\eqref{eq.proofalt.penmin.Cgrd-Rpt} implies \eqref{pb.penmin.Cgrd-Rpt}.
If in addition $p_2(m_1)$ stays bounded away from zero as $n$ tends to infinity, 
Eq.~\eqref{eq.proofalt.penmin.Cpt-Dgrd.2} implies \eqref{pb.penmin.Cpt-Dgrd}. 
If moreover the oracle risk tends to zero and  $\varepsilon^{\prime}_{\delta} = \petito(1)$,
then, Eq.~\eqref{eq.proofalt.penmin.Cgrd-Dpt} implies \eqref{pb.penmin.Cgrd-Dpt}.
Assuming also that $\max\sets{ \varepsilon_{\delta},\varepsilon_{p} } = \petito(1)$, 
then $\cteProPenminAbove(2) = 1+\petito(1)$ hence Eq.~\eqref{eq.proofalt.penmin.Cgrd-Rpt} with $C=2$ 
implies a first-order optimal oracle inequality \eqref{pb.penopt} with $\pen_1 = 2 \pen_0\,$.

The conditions of Proposition~\ref{pro.pbpenmin.gal.above} can be relaxed.
First, $\pen_0(m)=p_2(m)$ can be replaced by $(1-\varepsilon_0) p_2(m) \leq \pen_0(m) \leq (1+\varepsilon_0) p_2(m)$ for some $\varepsilon_0 \geq 0$ with $C(1-\varepsilon_0)>1$, which can be used when $\pen_0(m)$ is a resampling estimate of $\E\croch{p_2(m)}$ for instance.
Second, Eq.~\eqref{eq.altproof.controlp1p2} can be replaced by $\forall m \in \M$,
$-\varepsilon_p^{\prime} + \varepsilon_p^- p_1(m) \leq p_2(m) \leq  \varepsilon_p^+ p_1(m) + \varepsilon_p^{\prime}$ for some $\varepsilon_p^-,\varepsilon_p^+>0$ and $\varepsilon_p^{\prime} \geq 0$.
Then, $\cteProPenminAbove(C)$ and $\cteProPenminAbove^{\prime}(C)$ are slightly enlarged, as well as the bound in Eq.~\eqref{eq.proofalt.penmin.Cgrd-Dpt}, see Appendix~\ref{app.proof.pro.pbpenmin.gal.above}.
In particular, Proposition~\ref{pro.pbpenmin.gal.above} can justify \eqref{pb.penopt} with $\pen_1 = (1+\alpha^{-1}) \pen_0$ provided that
both $\varepsilon_p^-$ and $\varepsilon_p^+$ converge to $\alpha>0$, and $\varepsilon_p^{\prime}$ is small enough.

Assumption~\eqref{eq.altproof.controlp1p2} is strong and we do not expect that it can be proved as generally as
assumption~\eqref{eq.altproof.controldelta} in Proposition~\ref{pro.pbpenmin.gal.below}.
Nevertheless, it holds when $p_1(m)$ and $p_2(m)$ both concentrate around $\E\croch{p_1(m)} \approx \E\croch{p_2(m)}$ for every $\mM$, and it can be satisfied in other cases.
For instance, for least-squares estimators in least-squares fixed-design regression (Section~\ref{sec.slopeOLS}) 
or in least-squares density estimation \citep{Ler:2010:iid}, $p_1(m) = p_2(m)$ almost surely.
Bounding $\abss{p_1(m)-p_2(m)}$ also turns out to be easier to get than a concentration inequality for $p_1$ and $p_2$ separately in some settings where $p_1(m) \neq p_2(m)$ in general \citep{Gar_Ler:2011}.

\medbreak

Whatever the proof technique used ---through \eqref{pb.penmin.Cgrd-Rpt}, 
as in the proof of Theorem~\ref{thm.OLS} or as in the article by \citet{Ler_Tak:2011}---, 
proving that the upper bound on the complexity in \eqref{pb.penmin.Cgrd-Dpt} is much smaller than the lower bound in \eqref{pb.penmin.Cpt-Dgrd}
is done by assuming that the oracle risk tends to zero as $n \to \infty$, 
or at least that some estimator of ``not too large'' complexity has a small approximation error 
(as in Theorem~\ref{thm.OLS}). 
We conjecture that such an assumption is unavoidable in general.

\subsubsection{Concentration inequalities} \label{sec.theory.hint.penmin.conc}
The proof techniques summarized in Sections~\ref{sec.theory.hint.penmin.below}--\ref{sec.theory.hint.penmin.above} require some concentration inequalities for $\ovdelta(m) - \ovdelta(m^{\prime})$ and $p_2(m)$,  
and some deviation inequalities for $\absj{p_1(m)-p_2(m)}/p_1(m)$. 
This section provides some ways to obtain such results. 

\paragraph{Concentration of $\ovdelta(m) - \ovdelta(m^{\prime})$} 
As explained in the proof of Proposition~\ref{pro.pbpenmin.gal.below}, $\ovdelta(m) - \ovdelta(m^{\prime})$ is a sum of independent and identically distributed random variables, 
so it can be concentrated with Bernstein's inequality \citep[Theorem~2.10]{Bou_Lug_Mas:2011:livre}, 
leading to a result like Eq.~\eqref{eq.altproof.controldelta} if a boundedness assumption 
\eqref{hyp.gal-controldelta.bounded} and some margin-type condition \eqref{hyp.gal-controldelta.margin} 
are satisfied. 
\\ 
In the unbounded case, other kinds of concentration inequalities 
can be used. 
For instance, for density estimation with the Kullback risk 
and maximum-likelihood estimators on histogram models 
---that is, the setting of \citet{Sau:2010:MLE} without the boundedness 
assumption on the target density---, 
\citet[Sections~4.2--4.3]{Sau_Nav:2018:arXivv4} use a modified version of 
Bernstein's inequality for controlling $\ovdelta(m) - \ovdelta(m^{\prime})$. 
Note that \citet{Sau_Nav:2018:arXivv4} only prove \eqref{pb.penmin.Cgrd-Dpt} and a first-order optimal oracle inequality \eqref{pb.penopt}, 
but the proofs of \citet{Sau_Nav:2018:arXivv4} can be adapted to get 
a full proof of the slope heuristics, 
that is, a result similar to the one of \citet{Sau:2010:MLE} when the density 
can be unbounded.

\paragraph{Concentration of $p_2(m)$} 
The problem is much harder for $p_2(m)$. 
It can be seen as proving a non-asymptotic version of the Wilks phenomenon \citep{Wil:1938} in a nonparametric setting with model misspecification \citep{Bou_Mas:2004},  
which makes this problem interesting beyond minimal-penalty algorithms. 
In addition to the settings mentioned in Section~\ref{sec.theory.complete}, concentration results for $p_2$ are available in two cases. 
\\
For bounded-contrast minimizers, a concentration inequality is proved in a general setting 
including bounded regression and classification with Vapnik-Chervonenkis classes \citep{Bou_Mas:2004}.
This result can be used for proving that Algorithm~\ref{algo.penmingal} works 
with regressogram estimators \citep{Arl_Mas:2009:pente}. \\
For maximum-likelihood estimators, in a parametric setting \citep{Spo:2012a}, 
in a semiparametric setting \citep{And_Spo:2013:journal}
and in a nonparametric setting with a quadratic penalty \citep{Spo:2012b:journal}, 
$p_2$ is close to some quadratic form with high probability, 
and this quadratic form itself satisfies some concentration properties.
Nevertheless, these results have not been used yet for proving that Algorithm~\ref{algo.penmingal} works.
\\
Note also that a concentration inequality for $p_2(m)$, 
with histogram (maximum-likelihood) density estimators and the Kullback risk, 
have been obtained by \citet{Sau_Nav:2018:arXivv4}, improving previous results by 
\citet{Sau:2010:MLE}. 

\paragraph{Proof of Eq.~\eqref{eq.altproof.controlp1p2}} 
Apart from the specific approaches mentioned in Section~\ref{sec.theory.hint.penmin.above}, 
we do not know any result for bounding directly $\abss{p_1(m)-p_2(m)}/p_1(m)$ 
as required in Eq.~\eqref{eq.altproof.controlp1p2}.

\subsection{Hints for (\pbpenopttitre): how to prove that \texorpdfstring{$C^{\star} \pen_1$}{Cstar pen1} is an optimal penalty?} \label{sec.theory.hint.penopt}
%
When $C^{\star} \pen_1(m) = \E\croch{ p_1(m) + p_2(m)}$, oracle inequalities \eqref{pb.penopt} or \eqref{pb.penopt.weak} usually rely on some concentration inequality for the ideal penalty $\Risk\parens{\shm} - \Remp\parens{\shm} = p_1(m) + \ovdelta(m) + p_2(m)$, as in step~3 of the proof of Theorem~\ref{thm.OLS}.
One actually needs only concentration for
\[
\crochb{ \Risk\parens{\shm} - \Remp\parens{\shm} }
 - \crochb{ \Risk\parens{\shmp} - \Remp\parens{\shmp} }
\]
 for all $m, m^{\prime} \in \M$.
Concentration results for $\ovdelta(m) - \ovdelta(m^{\prime})$ and for $p_2(m)$ are reviewed in Section~\ref{sec.theory.hint.penmin} since they are usually required for proving (\pbpenmin). 
Therefore, we now focus on~$p_1(m)$. 

\paragraph{Concentration of the excess risk $p_1(m)$} 
What remains is to concentrate $p_1(m)$ 
---or equivalently $\Risk\parens{\shm} - \Risk\parens{\bayes}$ or $\Risk\parens{\shm}$--- 
around its expectation, a difficult problem that has not been solved except in a few settings: 
the ones for which a full proof of the slope heuristics exists ---see Section~\ref{sec.theory.complete}---, 
and the ones listed below. 

Several papers recently tackled the case of fixed-design linear regression with the least-squares risk, 
when $\shm$ minimizes a (penalized) least-squares criterion over a convex set, 
assuming that the penalty $\Omega$ is convex. 
Concentration inequalities for 
\[ 
\sqrt{\Risk\parens{\shm} - \Risk\parens{\bayes}} 
\qquad \text{or} \qquad 
\sqrt{\Risk\parens{\shm} - \Risk\parens{\bayes} + \Omega(\shm)} 
\] 
are available under different assumptions on the noise (Gaussian or not) and on $\Omega$ 
\citep{Cha:2014,Bel:2017,Bel_Tsy:2017,Mur_vdG:2018}. 
They apply to various examples such as 
the Lasso (in its constrained formulation) and isotonic regression \citep{Cha:2014}, 
the Lasso and the group Lasso in their usual regularization formulation \citep{Bel:2017,Bel_Tsy:2017}, 
splines and total-variation regularization \citep{Mur_vdG:2018}. 
When $\Omega$ is a semi-norm, \citet{Bel:2018:v4} proves 
upper and lower bounds on $\E[\Risk\parens{\shm}]$. 
The article by \citet{Che_Gun_Zha:2017:journal} also is related to this topic. 

For general losses, high-probability upper and lower bounds on 
$\Risk\parens{\shm} - \Risk\parens{\bayes}$ 
---sometimes plus a regularization term $\Omega(\shm)$--- are proved  
by \citet{Bar_Men:2006a} for general empirical minimizers ---with a rather abstract result---, 
by \citet{Sau:2010:phd} for ``regular'' estimators and losses, 
and by \citet{vdG_Wai:2017} for regularized empirical risk minimizers 
---with precise applications provided for ``linear losses'' such as linearized least-squares regression, 
maximum-likelihood estimators on an exponential model,  
and log-linear regression. 
Note that the general approaches of \citet{Bar_Men:2006a}, \citet{Sau:2010:phd} and \citet{vdG_Wai:2017} are closely related; 
\citet{Cha:2014}, \citet{Bel:2017}, \citet{Bel_Tsy:2017} and \citet{Mur_vdG:2018}, which are mentioned above for linear regression, 
use a similar technique that is exposed clearly by \citet[Section~2]{Bel:2017} for instance. 

\citet{Sau:2017} proves a concentration inequality for the quadratic risk $\Risk\parens{\shm}$ 
of a least-squares estimator over a convex set, in the heteroscedastic random-design regression setting; 
this result requires to handle specifically the quadratic part of the empirical process, 
which cannot be concentrated tightly with the general approach of \citet{vdG_Wai:2017} for instance. 
Note also that the result obtained by \citet{Sau:2017} applies to more general models than 
the ones of \citet{Sau_Nav:2017} for which a full proof of the slope heuristics exist. 

For histogram (maximum-likelihood) estimators and the Kullback risk in density estimation, 
in addition to the result obtained by \citet{Sau:2010:MLE} and mentioned in Section~\ref{sec.theory.complete}, 
concentration inequalities for $\Risk\parens{\shm}$ have been obtained 
by \citet{Cas:1999}, 
and \citet{Sau_Nav:2018:arXivv4} have recently improved them. 
 
\medbreak

Let us also recall that Proposition~\ref{pro.pbpenmin.gal.above} in Section~\ref{sec.theory.hint.penmin.above} provides an alternative approach for proving \eqref{pb.penopt} when $\pen_1(m) \propto p_2(m)$.


\section{Related procedures} \label{sec.related}
Minimal-penalty algorithms are primarily made for model/estimator selection, but in
the fixed-design regression setting (Algorithms~\ref{algo.OLS.jump} and~\ref{algo.penmin.linear}) they also provide an estimator $\Chjumpgal$ of the noise variance $\sigma^2$.
This section compares minimal penalties to its main alternatives for both tasks, 
starting by residual-(co)variance estimation. 

\subsection{Residual-variance estimation} \label{sec.related.variance}
Let us consider the fixed-design regression setting of Sections~\ref{sec.slopeOLS} and~\ref{sec.penmingal.fails}--\ref{sec.penmingal.linear} and their notation.
An example of interest is when
\begin{equation}
\label{eq.ex.Fi=f(xi)}
\forall i \in \set{1, \ldots, n} \, , \quad F_i = f(x_i)
\quad \text{for some smooth $f$ and some design points $x_i \in \R^d$.}
\end{equation}


\paragraph{Literature on residual-variance estimation}
Many estimators exist for the residual variance $\sigma^2$ in nonparametric regression.
An exhaustive list is beyond the scope of this article; for more references, we refer to the articles by \citet{Hal_Kay_Tit:1990}, \citet{Det_Mun_Wag:1998}, \citet{Spo:2002}, \citet{Mul_Sch_Wef:2003}, \citet{Lii_Ver_Cor_Len:2009} and \citet{Ram_Pau:2018:journal}, 
and to the articles by \citet{Bel_Che_Wan:2014}, \citet{Cha:2015b}, \citet{Rei_Tib_Fri:2013:journal} and \citet{Gia_etal:2017} for the high-dimensional variable-selection case. 
A related problem is noise-variance estimation in heteroscedastic regression 
(see \citealp{Bro_Lev:2007}, \citealp{Gen:2008}, and references therein).

This section focuses on minimal-penalty based estimators and on estimators that are quadratic forms of the data vector $Y \in \R^n$, that is,
\begin{equation}
\label{eq.varestim.gal}
\sigh^2_B \egaldef \frac{\prodscal{Y}{BY}}{\tr(B)}
\end{equation}
where $B$ is some $n \times n$ symmetric matrix.
Eq.~\eqref{eq.varestim.gal} actually covers several, if not all, classical residual-variance estimators 
---in particular the ones suggested in the context of model/estimator selection with $C_p$ or $C_L\,$---,  
which allows their common non-asymptotic analysis \citep{Det_Mun_Wag:1998}.

\paragraph{Residual-based estimators}
The most classical variance estimators are based upon the residuals ---through the empirical risk--- 
on some model $S_{m_0}\,$:
\begin{equation}
\label{def.sighm0}
\sigh^2_{m_0} \egaldef \frac{1}{n-D_{m_0}} \normb{Y-\Fh_{m_0}}^2
\, .
\end{equation}
Remark that $\sigh^2_{m_0} = \sigh^2_B$ with $B = \Id_n - \Pi_{m_0}\,$.
When $\sigma^2$ must be estimated in the formula of the $C_p$ penalty, the classical suggestions are of the form $\sigh^2_{m_0}$ \citep{Mal:1973,Efr:1986,Bar:2000}.

The bias of $\sigh^2_{m_0}$ as an estimator of $\sigma^2$ can be derived from Eq.~\eqref{eq.EriskempFhm}:
\begin{equation}
\label{eq.sighm0.biais}
\E\crochb{ \sigh^2_{m_0} } - \sigma^2=  \frac{1}{n-D_{m_0}} \normb{(\Id_n - \Pi_{m_0}) F}^2
\, .
\end{equation}
If $F \in S_{m_0}\,$, then $\sigh^2_{m_0}$ is unbiased, and otherwise it suffers some upward bias, depending on the approximation error and on the dimension of $S_{m_0}\,$.
Proposition~\ref{pro.var-sighm0} in Appendix~\ref{app.var-sighm0} 
provides a general formula for the variance and MSE of $\sigh^2_{m_0}\,$. 
For instance, assuming for simplicity that the noise is Gaussian, 
\begin{equation}
\label{eq.sighm0.MSE.gauss}
\E \crochj{ \parenb{ \sigh^2_{m_0} - \sigma^2 }^2 } 
= \frac{2 \sigma^4}{n-D_{m_0}} 
+ \frac{4 \sigma^2 \normb{(\Id_n - \Pi_{m_0}) F}^2}{\parens{n-D_{m_0}}^2}
+ \frac{\normb{(\Id_n - \Pi_{m_0}) F}^4}{\parens{n-D_{m_0}}^2} 
\, .
\end{equation}

Choosing the model $S_{m_0}$ without prior knowledge is a difficult question.
For minimizing the MSE, one must trade off terms depending on $1/(n-D_{m_0})$ 
and on the approximation error, that vary differently as functions of $S_{m_0}\,$; 
this can be as difficult as the model-selection problem.
When some unbiased model $S_{m_0}$ is known with $D_{m_0} = \petito(n)$, taking it for the estimation of $\sigma^2$ is a reasonable choice.
This matches the suggestion of \citet{Mal:1973} and \citet{Efr:1986}
in the variable-selection setting with a full model of dimension $p = \petito(n)$.
In the setting given by Eq.~\eqref{eq.ex.Fi=f(xi)}, when contiguous $x_i$ are close enough,
a natural choice for $S_{m_0}$ is the linear span of $(e_{2i} + e_{2i-1})_{1 \leq i \leq n/2}$ where $(e_1, \ldots, e_n)$ denotes the canonical basis of $\R^n$.
Then, assuming for simplicity that $n$ is even,
\begin{equation}
\label{eq.sighm0.ex-fsmooth-n/2}
\sigh_{m_0}^2 = \frac{1}{n} \sum_{i=1}^{n/2} (Y_{2i}-Y_{2i-1})^2
\end{equation}
is a consistent estimator of $\sigma^2$ if $f$ is uniformly continuous 
and $\max_{1 \leq i \leq n} \norms{x_i -x_{i+1}} = \petito(1)$.

Note that for high-dimensional variable selection, several residual-based estimators on a data-driven model $m_0$ 
---for instance, chosen by cross-validation--- are available, 
but theoretical guarantees are still lacking for several of them \citep{Rei_Tib_Fri:2013:journal,Gia_etal:2017}.

%
\paragraph{Variance estimation with minimal penalties} 
The problem of choosing $m_0$ for $\sigh^2_{m_0}$ can be solved (bypassed in fact) 
by using the slope heuristics. 
Let us state non-asymptotic risk bounds for $\Chthr$ and $\Chwindow$ as estimators of $\sigma^2$, 
that derive from Theorem~\ref{thm.OLS} and its proof in Section~\ref{sec.slopeOLS}. 
\begin{proposition} \label{pro.variance-estim} 
In the framework described in Section~\ref{sec.slopeOLS.framework}, assume that $\M$ is finite, 
contains at least one model of dimension at most $c_n \in [0,n/3)$, and that 
\eqref{hyp.thm.OLS.Id} and \eqref{hyp.thm.OLS.Gauss} hold true ---see Section~\ref{sec.slopeOLS.math}. 
Let $\Chthr(T_n)$ be defined by Eq.~\eqref{def.Chthr} with $T_n \in (c_n , n)$, 
and $\Chwindow(\eta)$ be defined by Eq.~\eqref{def.Chwin} with $\eta>0$. 
For any $x \geq 0$ and $T \in (c_n , n)$, let us define 
\begin{align*} 
C_1(x ; T) 
&\egaldef 
\sigma^2 \parenj{ 1 - \frac{4 \sqrt{\frac{x}{n}} + 6 \frac{x}{n}}{1 - \frac{T}{n}} }
\\ \text{and} \qquad 
C_2(x ; T ; c_n) 
&\egaldef 
\sigma^2 \crochj{ 1 + \frac{4 n}{T - c_n} \parenj{ \sqrt{\frac{x}{n}} + \frac{2 x}{n} } } 
+ \frac{2 n}{T - c_n} \biaismax (c_n)
\\ 
\text{where} \qquad 
\biaismax (c_n) &\egaldef \inf_{\mM \,/\, D_m \leq c_n} \setj{\frac{1}{n} \normb{ (\Id_n - \Pi_m) F}^2 } 
\, . 
\end{align*}
Then, an event $\Omega_x$ of probability at least $1-4\card(\M) \mathrm{e}^{-x}$ exists on which 
\begin{gather}
\label{eq.bound-Chwindow}
\hspace{-0.3cm} 
\frac{C_1 \parenj{ x; \frac{2n}{3}} }{1+\eta} \leq \Chwindow(\eta) \leq C_2\parenj{ x ; \frac{n}{3}}  (1+\eta)
\qquad 
\text{if } \eta > \sqrt{\frac{C_2\parenj{ x ; \frac{n}{3}}}{C_1 \parenj{ x; \frac{2n}{3}}}} - 1 
\  \text{and} \ 
x \in \crochj{ 0, \frac{n}{180} }
\, , 
\\ 
\label{eq.bound-Chthr}
\text{and} \qquad 
C_1(x ; T_n) \leq \Chthr(T_n) \leq C_2(x ; T_n ; c_n)
\, .
\end{gather}
If we assume in addition that $c_n \leq T_n / 2$, then, 
\begin{equation}
\label{eq.MSE-Chthr}
\begin{split}
\E\crochB{ \parenb{\Chthr - \sigma^2}^2 } 
&\leq 739 \max\setj{ \paren{1 - \frac{T_n}{n}}^{-2} \,,\, \paren{\frac{T_n}{2n}}^{-2} } 
\\ 
&\times \crochj{ \carrej{ \biaismax  \parenj{\frac{T_n}{2}} } + \frac{\sigma^4 \log \parens{ 4\card \M }}{n} + \sigma^4 \carrej{ \frac{\log \parens{ 4\card \M }}{n} } }
\, .
\end{split}
\end{equation}
\end{proposition}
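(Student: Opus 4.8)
The plan is to derive all three statements from the \emph{deterministic} dimension bounds already established inside the proof of Theorem~\ref{thm.OLS}, working throughout on the event $\Omega_x$ but keeping $x$ as a free parameter rather than fixing it to $\gamma\log(n)$; recall that $\Prob(\Omega_x) \geq 1 - 4\card(\M)\mathrm{e}^{-x}$. Specifically, Eq.~\eqref{eq.pr.thm.OLS.Cpt-Dgrd.4} shows that on $\Omega_x$ one has $D_{\mh(C)} \geq a_n$ whenever $C \leq C_1(x; a_n)$, and Eq.~\eqref{eq.pr.thm.OLS.Cpt-Dpt.4} shows that $D_{\mh(C)} \leq b_n$ whenever $C \geq C_2(x; b_n; c_n)$, provided $\M$ contains a model of dimension at most $c_n$. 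The thresholds $C_1$ and $C_2$ in the Proposition are exactly these two quantities, so the entire proof reduces to feeding suitable values of $a_n$, $b_n$, $c_n$ into these facts and using that $C \mapsto D_{\mh(C)}$ is non-increasing.

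First I would prove Eq.~\eqref{eq.bound-Chthr}. By the definition Eq.~\eqref{def.Chthr}, $\Chthr(T_n) = \inf\setj{C \geq 0 : D_{\mh(C)} \leq T_n}$. Applying the upper bound with $b_n = T_n$ (valid since $T_n > c_n$) shows that every $C \geq C_2(x; T_n; c_n)$ lies in this set, whence $\Chthr(T_n) \leq C_2(x; T_n; c_n)$. Applying the lower bound with $a_n = T_n$ shows $D_{\mh(C)} \geq T_n$ for all $C \leq C_1(x; T_n)$; combined with monotonicity of $D_{\mh(\cdot)}$, the value of $C$ at which $D_{\mh(C)}$ first drops to or below $T_n$ cannot occur before $C_1(x; T_n)$, giving $\Chthr(T_n) \geq C_1(x; T_n)$. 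This is the sandwich Eq.~\eqref{eq.bound-Chthr}.

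Next, Eq.~\eqref{eq.bound-Chwindow}. Write $W(C) \egaldef D_{\mh(C/[1+\eta])} - D_{\mh(C[1+\eta])} \geq 0$ and set $\tau_1 \egaldef C_1(x; 2n/3)$ and $\tau_2 \egaldef C_2(x; n/3; c_n)$, so that on $\Omega_x$ one has $D_{\mh(C)} \geq 2n/3$ for $C \leq \tau_1$ and $D_{\mh(C)} \leq n/3$ for $C \geq \tau_2$. For $C < \tau_1/(1+\eta)$ the larger argument satisfies $C(1+\eta) < \tau_1$, forcing $D_{\mh(C[1+\eta])} \geq 2n/3$ and hence $W(C) \leq n/3$; symmetrically $W(C) \leq n/3$ for $C > \tau_2(1+\eta)$. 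The stated condition $\eta > \sqrt{\tau_2/\tau_1} - 1$ is equivalent to $\tau_1(1+\eta) > \tau_2/(1+\eta)$, i.e. the open interval $\parens{\tau_2/(1+\eta), \tau_1(1+\eta)}$ is nonempty; any $C^{\star}$ inside it obeys $C^{\star}/(1+\eta) < \tau_1$ and $C^{\star}(1+\eta) > \tau_2$, so $W(C^{\star}) \geq 2n/3 - n/3 = n/3$. Thus the maximum of $W$ is attained in $\crochb{\tau_1/(1+\eta), \tau_2(1+\eta)}$, which is Eq.~\eqref{eq.bound-Chwindow}; the restriction $x \in [0,n/180]$ merely guarantees $\tau_1 > 0$ so the ratio $\tau_2/\tau_1$ is well-defined.

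Finally, for the MSE bound Eq.~\eqref{eq.MSE-Chthr} I would exploit the freedom in the reference model: since $c_n \leq T_n/2$, a model of dimension at most $T_n/2$ exists, so the upper threshold can be taken as $C_2(x; T_n; T_n/2)$, whose excess over $\sigma^2$ is of order $M\crochb{\sigma^2(\sqrt{x/n} + x/n) + \biaismax(T_n/2)}$ with $M \egaldef \max\setj{(1 - T_n/n)^{-1}, 2n/T_n}$; the deficit $\sigma^2 - C_1(x; T_n)$ is of the same order. Hence on $\Omega_x$, $\absj{\Chthr - \sigma^2} \leq \delta(x)$ with $\delta(x) \leq c\,M\crochb{\sigma^2\sqrt{x/n} + \sigma^2 x/n + \biaismax(T_n/2)}$ for a numeric $c$. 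Writing $Z \egaldef \absj{\Chthr - \sigma^2}$ and using $\E\crochs{Z^2} = \int_0^{\infty}\Prob(Z^2 > t)\,\mathrm{d}t$ together with $\Prob(Z > \delta(x)) \leq 4\card(\M)\mathrm{e}^{-x}$ — bounding the probability trivially by $1$ for $x \leq x_{\star} \egaldef \log(4\card(\M))$ and by the exponential tail beyond — reduces the computation to the Gamma integrals $\int_{x_{\star}}^{\infty} x^k \mathrm{e}^{-x}\,\mathrm{d}x$, which reproduce the three terms $\biaismax(T_n/2)^2$, $\sigma^4\log(4\card(\M))/n$ and $\sigma^4\crochs{\log(4\card(\M))/n}^2$ with prefactor $M^2$. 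The main obstacle is precisely this last step: keeping the many numeric constants under control so that they collapse into the single factor $739$, while correctly handling the regime $x \leq x_{\star}$ where the exponential tail bound is vacuous; the argmax comparison for $\Chwindow$ is the only other delicate point, since it must compare window values rather than invoke a single threshold.
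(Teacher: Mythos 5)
Your proposal follows essentially the paper's own route: keep $x$ free in the intermediate dimension bounds Eq.~\eqref{eq.pr.thm.OLS.Cpt-Dgrd.4} and~\eqref{eq.pr.thm.OLS.Cpt-Dpt.4} from the proof of Theorem~\ref{thm.OLS}, sandwich $\Chthr$ and $\Chwindow$ between $C_1$ and $C_2$ on $\Omega_x\,$, then integrate the deviation bound in $x$ (your splitting of the tail integral at $\log(4 \card \M)$ is the same bookkeeping as the paper's Lemma~\ref{le.dev=>boundE}, applied there with $x = z + \log(4 \card \M)$). But both of your sandwich arguments fail at ties, and these ties are exactly where the work lies. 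For the lower bound on $\Chthr\,$: taking $a_n = T_n$ only yields $D_{\mh(C)} \geq T_n$ for all $C \leq C_1(x;T_n)$, which is compatible with $D_{\mh(C)} = T_n$ for some such $C$; that $C$ then belongs to the set $\setj{C \geq 0 \, / \, D_{\mh(C)} \leq T_n}$ whose infimum defines $\Chthr(T_n)$, so $\Chthr(T_n) \leq C < C_1(x;T_n)$ and your conclusion fails. Monotonicity of $C \mapsto D_{\mh(C)}$ does not rule this out (dimensions are integers and $T_n$ can equal one of them). The paper instead takes $a_n \in (T_n,n)$ strictly, which gives $D_{\mh(C)} \geq a_n > T_n$ hence $\Chthr(T_n) \geq C_1(x;a_n)$, and then lets $a_n \downarrow T_n$ by continuity of $a \mapsto C_1(x;a)$.

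The same problem, in a more damaging form, affects your bound on $\Chwindow\,$. With $a_n = 2n/3$ and $b_n = n/3$ exactly, you obtain $W(C^{\star}) \geq 2n/3 - n/3 = n/3$ inside the window and $W(C) \leq n/3$ outside: both inequalities are non-strict. Since $\Chwindow(\eta)$ is an \emph{arbitrary} element of the argmax, nothing prevents it from being a point outside $\crochb{\tau_1/(1+\eta) , \tau_2(1+\eta)}$ at which $W = n/3$ --- for instance a small $C$ with $D_{\mh(C/[1+\eta])} = n$ and $D_{\mh(C[1+\eta])} = 2n/3$, which is perfectly possible when the overall maximum of $W$ equals $n/3$. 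In that case Eq.~\eqref{eq.bound-Chwindow} does not follow from your argument. This is precisely why the paper's Lemma~\ref{le.Chwindow} demands the strict gap condition $a_n - b_n > \max\sets{n - a_n , b_n}$ (violated by $a_n = 2n/3$, $b_n = n/3$, where both sides equal $n/3$), and why its proof perturbs to $a_n = \frac{2n}{3}(1+\epsilon)$ and $b_n = \frac{n}{3}(1-\epsilon)$, applies the lemma for each small $\epsilon>0$, and only then lets $\epsilon \to 0$; accordingly, the hypothesis $x \leq n/180$ is used to keep $C_1\parenj{x;\frac{2n}{3}(1+\epsilon)}$ positive for small $\epsilon > 0$, not merely at $\epsilon=0$ as you state. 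Your MSE step is correct in outline (including the replacement of $c_n$ by $T_n/2$); once the two tie-breaking repairs above are made, the rest of your proposal goes through.
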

Proposition~\ref{pro.variance-estim} is proved in Appendix~\ref{app.MSE-Chthr}. 
Note that the constant $739$ in Eq.~\eqref{eq.MSE-Chthr} 
can be strongly reduced under mild additional assumptions, see Appendix~\ref{app.MSE-Chthr}. 
Proposition~\ref{pro.variance-estim} can also be extended to $(\phi^2 \sigma^2)$-sub-Gaussian noise, 
at the price of replacing $x$ by $L \phi^2 x$ in Eq.~\eqref{eq.bound-Chwindow}--\eqref{eq.bound-Chthr} 
and $\log \parens{ 4\card \M }$ by $L \phi^2 \log \parens{ 4\card \M }$ 
in Eq.~\eqref{eq.MSE-Chthr}, 
where $L$ is a numerical constant; 
see Remark~\ref{rk.thm.OLS.subgaussian}  in Section~\ref{sec.slopeOLS.math}. 

If $\biaismax (c_n)$ tends to $0$ as $n$ tends to $+\infty$ 
---which is a mild assumption---, 
by Proposition~\ref{pro.variance-estim} with $x = 2 \log(n) + \log(4 \card \M )$, 
we get that 
$\Chthr (T_n)$ and $\Chwindow (\eta)$ estimate consistently $\sigma^2$, 
with deviation bounds of order 
\[ 
\biaismax (c_n) + \sigma^2 \sqrt{\frac{\log(n) + \log\parens{\card \M}}{n}} 
\, , 
\]
provided that $c_n \leq \tau n$ with $\tau < 1/3$, 
\[ 
T_n = \rho n  \quad \text{with}  \quad 
\rho \in (0,1) \, , 
\qquad \text{and} \qquad 
\sigma^2 \eta \propto \biaismax  (c_n) + \sigma^2 \sqrt{\frac{\log(n) + \log\parens{\card \M }}{n}}
\, . 
\]
These deviation bounds for $\Chthr$ and $\Chwindow$ can be interpreted as an oracle inequality, 
since they coincide with the best possible risk of $\sigh^2_{m_0}$ with $D_{m_0} \leq c_n\,$, 
without any prior knowledge except the choice of $(S_m)_{\mM}\,$. 
Indeed, Eq.~\eqref{eq.sighm0.MSE.gauss} shows that when $D_{m_0} \leq c_n \leq \tau n$ with $\tau <1$, 
up to constants depending on $\tau$ only, 
\[ 
\sqrt{ 
\E\crochj{ \parens{ \sigh^2_{m_0} - \sigma^2 }^2 } } 
\gtrsim 
\frac{1}{n} \normb{(\Id_n - \Pi_{m_0}) F}^2 
+ \frac{\sigma^2}{\sqrt{n}} 
\geq \biaismax (c_n) + \frac{\sigma^2}{\sqrt{n}} 
\, . 
\]

The bound \eqref{eq.MSE-Chthr} on the mean squared error (MSE) of $\Chthr$ can be compared to the minimax optimal rate $\propto n^{-\min\sets{1,8/d}}$ for the MSE in the setting of Eq.~\eqref{eq.ex.Fi=f(xi)} 
when $f$ has a bounded second-order derivative \citep{Spo:2002}. 
Assuming that $\log\parens{\card \M} = \petito(n^{1/9})$, 
that the approximation error  $\biaismax (c_n)$ is minimax optimal hence of order $c_n^{-4/d}$, 
and that $c_n \leq \tau n$ with $\tau < 1$, 
we get that $\Chthr$ is optimal when $d \geq 9$ up to constants and within a factor $\log(\card \M)$ of the minimax risk when $d \leq 8$. 
Similar risk bounds can easily be obtained from Proposition~\ref{pro.variance-estim} 
under different assumptions on the signal.  
For instance, when Eq.~\eqref{eq.ex.Fi=f(xi)} holds true with $f$ that is $\alpha$-H\"olderian for some $\alpha>0$, 
an approximation error of order $D_m^{-2\alpha / d}$ can be obtained with local polynomials 
of maximal degree $\lfloor \alpha \rfloor$. 
We conjecture that these residual-variance estimation bounds are minimax-optimal up to logarithmic factors 
provided that $(S_m)_{\mM}$ has a cardinality at most polynomial in $n$ 
and achieves the minimax approximation error bounds. 
These consequences of Proposition~\ref{pro.variance-estim} have the flavor of adaptive risk 
bounds derived from oracle inequalities \citep{Bir_Mas:1997}, 
which is new for residual-variance estimation to the best of our knowledge. 
Therefore, the additional $\log(\card \M)$ factor 
---coming from the union bound over $\mM$ and typically of order $\log(n)$--- seems a mild price 
for the versatility of $\Chthr$ and $\Chwindow\,$.

\paragraph{Residual-based estimators vs. the slope heuristics}  
In addition to the risk bounds comparison above, 
we can compare the definition of $\sigh^2_{m_0}$ 
with the one of $\Chslope$ in Algorithm~\ref{algo.OLS.slope}. 
On the one hand, $\sigh^2_{m_0}$ estimates the asymptotic slope of $-\norms{Y - \Fhm}^2$ as a function of $D_m$ from two points: 
$m_0$ and $m_1$ such that $S_{m_1}=\R^n$. 
On the other hand, Algorithm~\ref{algo.OLS.slope} makes a (robust) linear regression over, 
say, all $m$ such that $D_m \in [n/2,n]$. 
An illustration is provided by Figure~\ref{fig.OLS.slope-vs-residuals.easy-hard} in Appendix~\ref{app.morefig}. 
This confirms that minimal penalties ---here, the slope heuristics--- allow to avoid 
the choice of a single $m_0 \in \M$ by making use of the full collection $(S_m)_{\mM}$ for estimating the residual variance. 
Intuitively, this difference makes Algorithm~\ref{algo.OLS.slope} more stable and less dependent on some strong assumption about~$S_{m_0}\,$. 
The numerical experiments of Figure~\ref{fig.OLS.dist-Ch}b in Section~\ref{sec.practical.jump-vs-slope} 
and Figure~\ref{fig.OLS.slope-vs-residuals.easy-hard} in Appendix~\ref{app.morefig} 
indeed show that when $S_{m_0}$ happens to be a bad model, 
$\sigh_{m_0}^2$ can suffer from a large error, 
whereas $\Chthr$ is much more robust. 

\paragraph{Residuals of linear estimators}  
Residual-based estimators have also been proposed with several other linear estimators $\Fh_{m_0} = A_{m_0} Y$ of $F$, that is, defined as $\sigh^2_B$ in Eq.~\eqref{eq.varestim.gal} 
with a matrix $B = (\Id_n - A_{m_0})^{\top} (\Id_n - A_{m_0})$.
For instance, $A_{m_0}$ can correspond to some Nadaraya-Watson fit \citep[also known as kernel-based estimator;][]{Hal_Mar:1990} or to spline smoothing \citep{Car_Eag:1992}; the article by \citet{Det_Mun_Wag:1998} provides more references.
In the setting given by Eq.~\eqref{eq.ex.Fi=f(xi)}, the challenging case $d >1$ can be tackled with $k$-nearest neighbors \citep{Lii_Cor_Len:2010} or a local linear fit of~$f$ \citep{Spo:2002}.
All these estimators suffer from the same drawback as $\sigh^2_{m_0}\,$, that is, they rely on the choice of a \emph{single} matrix $A_{m_0}\,$, hence requiring to specify the regularization parameter $m_0\,$.
On the contrary, the minimal-penalty approach of Algorithm~\ref{algo.penmin.linear} avoids this choice in a principled way.

\paragraph{Difference-based estimators}
Difference-based estimators are an important family of residual-variance estimators, which are designed for the setting of Eq.~\eqref{eq.ex.Fi=f(xi)} when $\norms{x_i-x_{i+1}}=\petito(1)$ and $f$ is smooth, often assuming~$d=1$.
The first example has been proposed by \citet{Ric:1984}, 
\[ 
\sigh^2_{\mathrm{Rice}} \egaldef \frac{1}{2 (n-1)} \sum_{i=1}^{n-1} (Y_{i+1}-Y_i)^2 
\, , 
\]
which is close to the residual-based estimator defined by Eq.~\eqref{eq.sighm0.ex-fsmooth-n/2}.
More generally, difference-sequence estimators of order $m \geq 1$ are defined by
\[ \sigh^2_{(d_0, \ldots, d_m)} = \frac{1}{n-m} \sum_{i=1}^{n-m} \carrej{ \sum_{j=0}^m d_j Y_{i+j} }
\quad \text{where} \quad
\sum_{j=0}^m d_j = 0 \quad \text{and} \quad \sum_{j=0}^m d_j^2 = 1
\, . \]
The only admissible sequence $(d_j)_{j=0, 1}$ for $m=1$ leads to $\sigh^2_{\mathrm{Rice}}\,$.
For a general order $m$, when $x_i \in \R$, the optimal sequence $(d_j)_{j=0, \ldots, m}$ in terms of MSE does not depend on $f$ asymptotically and can be computed explicitly \citep{Hal_Kay_Tit:1990}, although the picture can be quite different in a non-asymptotic setting \citep{Det_Mun_Wag:1998}.

Assuming $x_i \in \R$, difference-based estimators of order $m \geq 1$ are suboptimal by a constant factor $1+1/(2m)$ in terms of MSE for normal data \citep{Hal_Kay_Tit:1990}, while for instance the residual-based estimator of \citet{Hal_Mar:1990} attains the optimal rate $\var(\varepsilon_1^2)/n$.
This issue is corrected for instance with covariate matching \citep{Mul_Sch_Wef:2003,Du_Sch:2009}, which in the case of order $m=1$ consists in replacing  $\sigh^2_{\mathrm{Rice}}$ by
\[ \frac{1}{2 n(n-1)} \sum_{i \neq j} W_{i,j} (Y_i - Y_j)^2 \]
with some well-chosen weights $W_{i,j} \geq 0$ \citep{Mul_Sch_Wef:2003}.
Another variant of difference-based estimators, which is asymptotically optimal, is studied by \citet{Ton_Ma_Wan:2013}.

Choosing $m$ and the sequence $(d_j)_{j=0, \ldots, m}$ is also a difficult problem with no prior knowledge \citep{Det_Mun_Wag:1998}.
But the main drawback of such estimators is that, when $f$ is not continuous, 
they can be severely biased in an unpredictable way.
An empirical method for detecting whether the bias is small enough is proposed by \citet{Buc_Eag:1989} and might be useful.

\subsection{Estimation of the residual covariance matrix}
\label{sec.related.covmat}
Assume now that several regression problems such as \eqref{eq.OLS.model} must be solved simultaneously, 
a framework known as ``multi-task regression'', ``multivariate regression'', ``multiple linear regression'', 
and ``seemingly unrelated regression''; 
the article by \citet{Sol_Arl_Bac:2011:multitask} and the PhD dissertation of \citet{Sol:2013:phd} provide references on this topic.  
One observes $Y^j = F^j + \varepsilon^j \in \R^n$ for $j=1,\ldots, p$, 
assuming that the noise vectors $\mathcal{E}_i \egaldef (\varepsilon^j_i)_{j=1, \ldots, p} \in \R^p$ are independent and identically distributed, with zero mean and covariance matrix $\Sigma \in \mathcal{M}_p(\R)$.
Then, a natural extension of the residual-variance estimation problem is the estimation of $\Sigma$ with as few prior knowledge on the $F^j$ as possible, which is often required for the multi-task problem of estimating $(F^j)_{j=1,\ldots,p}\,$.
For instance, \citet{Sol_Arl_Bac:2011:multitask} make use of the prior knowledge that the $F^j$ are close,
in combination with kernel ridge regression,
and selects regularization parameters with a penalty generalizing $C_L$ which depends on the full matrix $\Sigma$.

An estimator $\widehat{\Sigma}$ of $\Sigma$ based upon minimal penalties is proposed by \citet{Sol_Arl_Bac:2011:multitask}.
It satisfies $(1-\eta) \Sigma \preceq \widehat{\Sigma} \preceq (1+\eta) \Sigma$ with large probability, with $\eta \propto p \sqrt{\log(n)/n} c(\Sigma)^2$ where $c(\Sigma)$ is the condition number of $\Sigma$.
The construction of $\widehat{\Sigma}$ goes as follows:
\begin{itemize}
\item[(i)] For every $j \in \sets{1, \ldots, p}$, apply Algorithm~\ref{algo.penmin.linear} to the one-dimensional regression problem $Y^j = F^j + \varepsilon^j$, and store $\widehat{a}_j = \Chjumpgal$ which estimates $a_j \egaldef \Sigma_{j,j}\,$.
\item[(ii)] For every $i \neq j \in \sets{1, \ldots, p}$, apply Algorithm~\ref{algo.penmin.linear} to the one-dimensional regression problem $(Y^i + Y^j) = (F^i + F^j) + (\varepsilon^i + \varepsilon^j)$, and store $\widehat{a}_{i,j} = \Chjumpgal$ which is an estimator of $a_{i,j} \egaldef \Sigma_{i,i} + \Sigma_{j,j} + 2 \Sigma_{i,j}\,$.
\item[(iii)] Denote by $J$ the linear map on $\R^{p(p+1)/2}$ that sends $((a_j)_{1 \leq j \leq p}, (a_{i,j})_{1 \leq i \neq j \leq p})$ to $\Sigma$, 
and define $\widehat{\Sigma} = J((\widehat{a}_j)_{1 \leq j \leq p}, (\widehat{a}_{i,j})_{1 \leq i \neq j \leq p})$.
\end{itemize}
This construction can actually be generalized to any other one-dimensional residual-variance estimator $\sigh^2$ that satisfies $(1-\eta_0) \sigma^2 \leq \sigh^2 \leq (1+\eta_0) \sigma^2$ with large probability for all the above one-dimensional problems, leading to a similar result with $\eta \propto c(\Sigma) p \eta_0\,$.
The remarkable property of the minimal-penalty-based estimator $\widehat{\Sigma}$ 
of \citet{Sol_Arl_Bac:2011:multitask} is that it suffices to assume 
that an estimator of small complexity has a small approximation error 
---with slightly stronger constraints compared to Theorem~\ref{thm.OLS}, 
see the exact assumptions of \citet{Arl_Bac:2009:minikernel_long_v2}--- 
for each one-dimensional problem $Y^j$ to get this assumption automatically satisfied for all the  $Y^i + Y^j$, $i \neq j$.

\subsection{Model/estimator-selection procedures based on \texorpdfstring{$C_p$/$C_L$}{Cp/CL}}
\label{sec.related.Cp}
Let us go back to the model/estimator-selection problem in the fixed-design regression setting.
For selecting among linear estimators with the least-squares risk, a popular penalization approach is Mallows' $C_L$ \citep{Mal:1973}, as described in Section~\ref{sec.penmingal.fails}:
\begin{equation}
\label{def.CL}
\mh_{C_L} \in \argmin_{\mM} \set{ \frac{1}{n} \norm{ \Fhm - Y }^2 + \pen_{C_L} (\sigma^2, m)  }
\quad \text{with} \quad
\pen_{C_L} (\sigma^2, m) \egaldef \frac{2 \sigma^2 \tr(A_m)}{n}
\, .
\end{equation}
In the particular case of projection estimators, $\tr(A_m) = D_m$ the dimension of the corresponding model,
and $C_L$ reduces to $C_p$ which is described in Section~\ref{sec.slopeOLS.optimal}.
Both $C_p$ and $C_L$ penalties assume that the noise-level $\sigma^2$ is known, so in general it must be replaced by some data-driven estimation of it.
Minimal-penalty algorithms provide an estimator of $\sigma^2$ specially built for this estimator-selection task (Algorithms~\ref{algo.OLS.jump} and~\ref{algo.penmin.linear}), which can be plugged into Eq.~\eqref{def.CL} and for which theoretical guarantees can be proved, 
as shown by Theorem~\ref{thm.OLS} and \citet{Arl_Bac:2009:minikernel_long_v2}.
This subsection reviews some classical ways to estimate $\sigma^2$ inside Eq.~\eqref{def.CL}, as well as other estimator-selection procedures that are closely related, in the framework of Section~\ref{sec.penmingal.fails} (linear estimator selection) and with its notation.

\paragraph{Fixed variance estimator}
A first option is to replace $\sigma^2$ by some fixed estimator $\sigh^2$ of this quantity, for instance chosen among the estimators described in Section~\ref{sec.related.variance}.
The most classical choice is to take a residual-based estimator $\sigh^2_{m_0}$ as defined by Eq.~\eqref{def.sighm0}, for some $m_0 \in \M$ \citep{Mal:1973,Efr:1986,Bar:2000}.
For projection estimators, this option is often named ``$C_p$'' and called ``$C_p(\mathcal{L}_0,\mathcal{L})$'' by \citet[Table~4]{Efr:1986}.
As discussed in Section~\ref{sec.related.variance}, choosing $m_0$ can then be as difficult as the original estimator-selection problem. 

When using the penalty $\pen_{C_L} (\sigh^2, m)$, theoretical guarantees can be obtained if one can prove that
$\sigh^2$ is close to $\sigma^2$ with large probability, in combination with Theorem~\ref{thm.OLS} or its analogous for linear estimators.
For instance, \citet[Theorem~6.1]{Bar:2000} does it for projection estimators with $\sigh^2_{m_0}$ such that $D_{m_0}=n/2$; 
the approximation error of $m_0$ then appears as an additive term in the right-hand side of the oracle inequality.

Nevertheless, even if such guarantees imply some asymptotic-optimality result ---provided that
the approximation error of $m_0$ tends to zero---, they might not help for choosing the best possible estimator $\sigh^2$ in terms of estimator selection, since these only are \emph{upper bounds}.
Indeed, the best bound is obtained when $\sigh^2$ is well concentrated around $\sigma^2$, but it is known that overpenalizing a bit ---that is, taking $\sigh^2$ slightly larger than $\sigma^2$--- empirically improves the estimator-selection performance (see Section~\ref{sec.empirical.overpenalization}).
Residual-based estimators do overpenalize, because of the approximation error of $m_0\,$, but the overpenalization factor is unknown in practice and cannot be controlled without strong assumptions on the target $F$;
the consequences of a bad choice of $m_0$ with $\sigh^2_{m_0}$ are illustrated in the
numerical experiments of Section~\ref{sec.practical.jump-vs-slope}, see Figures~\ref{fig.OLS.dist-Ch}--\ref{fig.OLS.dist-risk-ratio}.

On the contrary, minimal-penalty algorithms are more than a simple ``plug in'' of an estimator of $\sigma^2$ 
---independent of the estimator-selection problem--- inside $\pen_{C_L}\,$. 
As detailed in Section~\ref{sec.empirical.overpenalization}, 
minimal-penalty algorithms seem to overpenalize slightly, by design, 
but a formal proof of this phenomenon remains an open problem.

\paragraph{Variance estimator depending on $m$}
Another approach is to plug into Eq.~\eqref{def.CL} a different variance estimator for each $\mM$, by considering the residuals on the model $m$ for which the penalty is computed.
In other words, assuming that $D_m<n$ for all $\mM$,
$\mh$ is chosen by penalization with the penalty $\pen_{C_L} (\sigh^2_m, m)$.
Let us consider projection estimators for simplicity.
Then $\mh$ minimizes over $\mM$ the criterion
\begin{align}
\critFPE(m) \egaldef
\frac{1}{n} \norm{ \Fhm - Y }^2 + \frac{2 \sigh^2_m D_m}{n}
&=
\frac{1}{n} \norm{ \Fhm - Y }^2  \paren{ 1 + \frac{2 D_m}{n - D_m} }
\label{eq.critFPE}
\end{align}
which has been proposed by \citet{Aka:1969a,Aka:1969} under the name FPE ---final prediction error--- and is called ``naive $C_p$'' or ``$C_p (\mathcal{L}_0,\mathcal{L}_0)$'' by \citet[Table~4]{Efr:1986}.
The FPE criterion \eqref{eq.critFPE} actually is a particular case of the multiplicative penalties defined by Eq.~\eqref{def.penmult} in Section~\ref{sec.theory.partial-specific}. 
More references and non-asymptotic oracle inequalities satisfied by such multiplicative penalties can be found in the article by \citet{Bar_Gir_Hue:2007}, which explains in particular how the FPE criterion \eqref{eq.critFPE} should be enlarged, depending on the size of the collection $\M$.
The main drawback of multiplicative penalties is probably that they need to deal carefully with the largest models.
For instance, for FPE, the results of \citet[Theorem~1]{Bar_Gir_Hue:2007} assume that for all $\mM$, $D_m \leq 0.39 (n+2) - 1$,  an assumption that can be weakened into $D_m \leq \rho n$ for some $\rho<1$ when considering a modified multiplicative penalty \citep[Corollary~1]{Bar_Gir_Hue:2007}.

\paragraph{Generalized cross-validation}
For choosing the regularization parameter of some smoothing methods, \citet{Wah:1977} proposed the criterion called ``generalized cross-validation'' (GCV) defined as a rotationally invariant form of the cross-validation estimate, that is,
\begin{align}
\critGCV(m) \egaldef
\frac{1}{n} \norm{ \Fhm - Y }^2 \paren{ \frac{1}{n} \tr(\Id_n - A_m) }^{-2}
\, .
\label{eq.critGCV}
\end{align}
GCV can also be seen as a reweighted cross-validation estimate,
which takes into account the asymmetry of the design \citep{Cra_Wah:1979}.
Nevertheless, 
as remarked by \citet[Remark~W]{Efr:1986}, 
``Despite its name, GCV is (nearly) a member of the $C_p$ family of estimates''.
Indeed, considering projection estimators only,
\begin{align}
\critGCV(m) =
\frac{1}{n} \norm{ \Fhm - Y }^2 \paren{ \frac{n}{n-D_m} }^2
\approx \frac{1}{n} \norm{ \Fhm - Y }^2 \frac{n+D_m}{n-D_m} = \critFPE(m)
\label{eq.critGCV.approx}
\end{align}
where the approximation holds true when $D_m \ll n$.
Theoretical guarantees for GCV are available in various settings \citep{KCLi:1985,KCLi:1987,Cao_Gol:2006}, with the same limitation as the ones of FPE and other multiplicative penalties.
For instance, \citet[Theorem~2]{Cao_Gol:2006} consider a truncated version of GCV where all $\mM$ such that $\tr(A_m) > \sqrt{n}$ are discarded, and some examples exist where GCV is not asymptotically optimal \citep{KCLi:1986}.
Let us finally mention that an empirical comparison of GCV and minimal penalties (Algorithm~\ref{algo.penmin.linear}) is done by \citet{Arl_Bac:2009:minikernel_nips,Arl_Bac:2009:minikernel_long_v2} for several kinds of linear estimators, showing that either Algorithm~\ref{algo.penmin.linear} clearly outperforms GCV or the two methods perform similarly, depending on the setting.

\subsection{L-curve, corner, and elbow heuristics} \label{sec.related.elbow}
Minimal-penalty algorithms, in particular Algorithms~\ref{algo.OLS.jump}--\ref{algo.gal.slope.naif}, 
can be related to some ``L-curve'', ``corner'' or ``elbow'' heuristics, which are often used
for choosing hyperparameters in the statistics and machine-learning communities.

%
The L-curve is defined as a plot of the size of the residuals versus the size 
or the estimator complexity. 
Using the notation of Section~\ref{sec.penmingal}, when the goal is to select
an estimator among $(\shm)_{\mM}\,$, the L-curve can be defined as $(\C_m , \Remp\parens{\shm})_{\mM}\,$.
For instance, the right part of Figure~\ref{fig.OLS.algo} in Section~\ref{sec.slopeOLS.algo} shows an L-curve (the black dots); 
Figure~\ref{fig.ridge.Lcurve} in Appendix~\ref{app.morefig} provides another illustration.
%
%
The practical use of the L-curve has been suggested by \citet{Mil:1970} and \citet[Chapters~25--26]{Law_Han:1974}.
Some precise heuristic choice of a regularization parameter ---often called ``the L-curve method''--- has first been proposed by \citet{Han:1992} and \citet{Han_OLe:1993} for some inverse problem with Tikhonov regularization.
The main idea is that the L-curve has three main parts:
\begin{itemize}
\item[(i)] a straight part where the residuals $\Remp\parens{\shm}$ decrease fastly while $\C_m$ is almost constant, where the regularization is too strong,
\item[(ii)] a flat part where $\C_m$ increases much while the residuals $\Remp\parens{\shm}$ decrease slowly, where some overfitting occurs,
and
\item[(iii)] in between, a ``\emph{corner}'' or ``\emph{elbow}'', where the regularization parameter is of the correct order.
\end{itemize}
Therefore, the L-curve is L-shaped ---hence its name, given by \citet{Han_OLe:1993}--- and the L-curve method suggests to choose $m$ corresponding to the corner of the~``L''.

%
Several definitions of the corner can be proposed, as well as several measures of ``size'' and ``complexity'' can be considered when plotting the L-curve \citep{Han_OLe:1993}.
The most common choice is to define the corner as the location where the L-curve has a maximal curvature \citep{Han_OLe:1993} ---hence the name ``maximum-curvature criterion'' \citep{Gro_Wol:2009} often used for this heuristics--- and to look at the L-curve in log-log scale \citep{Han_OLe:1993}, although several variants exist \citep{Reg:1996}. 
Additional practical problems need also to be solved, especially when $\M$ is discrete (how to define the curvature of a finite set of points?) and when some computational issues arise, for instance because computing every single point of the L-curve is expensive 
\citep{Han_OLe:1993,Cas_Gom_Gue:2002,Han_Jen_Rod:2007,Hen_Lu_Mha_Per:2010}.
Another option, proposed for a change-point detection problem \citep{Lun_Lev_Cap:2015}, 
relies on performing two linear regressions on the L-curve in order to identify its parts (i) and (ii); 
it can therefore be directly related with the ``slope'' formulation of 
minimal-penalty algorithms.

%
Empirical or theoretical studies of L-curve algorithms are available mostly for inverse problems 
with Tikhonov regularization \citep{Han:1992}, truncated SVD \citep{Han_OLe:1993,Rei_Rod:2013}, 
or conditional-gradient regularization \citep{Cas_Gom_Gue:2002}, showing reasonably good empirical performance. 
Several of the papers mentioned in this subsection show that L-curve algorithms compare favorably to generalized cross-validation (GCV) on simulated examples; for instance, \citet{Han:1992} show some similarity between GCV and L-curve algorithms, and report a tendency of GCV to overregularize.
Nevertheless, the L-curve method is proved to be not consistent in several families of realistic examples \citep{Eng_Gre:1994,Vog:1996,Han:1996}, when the noise tends to zero or when the sample size tends to infinity.
According to \citet{Han:1996}, the reason for this inconsistency is that the corner seems to correspond to the minimal level of regularization ---the minimal penalty, with the words of the present survey--- more than to the optimal level; hence choosing $m$ at the corner leads to some overfitting.
%

%
The L-curve can also be used similarly in unsupervised learning for choosing the number of clusters, where it is defined as a plot of the within-cluster dispersion as a function of the number of clusters.
Indeed, as written by \citet{Tib_Wal_Has:2001}, ``Statistical folklore has it that the location of such an `elbow' indicates the appropriate number of clusters''.
Various methods actually use a similar idea \citep{Tib_Wal_Has:2001,Sug_Jam:2003,Mat_Mie:2017}, although they are not straightforward applications of the method of \citet{Han_OLe:1993}.
Remark that procedures choosing the number of clusters using the slope heuristics 
show good experimental results, 
according to \citet[Section~4.4]{Bau:2009:phd}, 
\citet{Mau_Mic:2010}, \citet{Bon_Tou:2010} and \citet{Bau:2012:ejs}.

\medbreak

\paragraph{Comparison with minimal-penalty algorithms} 
%
Let us start with their common points. 
%
%
Both corner/elbow heuristics and minimal-penalty algorithms are based on the L-curve: 
directly in Algorithm~\ref{algo.OLS.slope}, indirectly in Algorithm~\ref{algo.OLS.jump} since $(D_{\mh(C)})_{C \geq 0}$ can be seen as a reparametrization of the convex hull of the L-curve.
Both rely on the idea of detecting a sharp variation of an observable quantity 
(the curvature / the selected dimension in $D_{\mh(C)}$) in some region of interest 
(the optimal value of regularization parameters / the minimal value of the constant in front of the penalty).
For both methods, a visual check (of the presence of an elbow / a jump) is possible, and strongly encouraged \citep{Han_OLe:1993,Bau_Mau_Mic:2010}.
The strength of the connection between elbow heuristics and minimal penalties is emphasized in the following three works.
For choosing the constant in front of the penalty for change-point detection, 
\citet[Remark~2]{Lav:2005} suggests an algorithm close to (but slightly different from) 
the slope-heuristics algorithm of \citet{Leb:2005}, which can be formulated as a maximal-curvature criterion on the L-curve.
For Hawkes-process estimation via model selection, when the model collection is large, 
\citet{Rey_Sch:2010} remark that the true model generally corresponds to a sharp angle of the L-curve, 
hence propose an algorithm between the slope and elbow heuristics, 
which consists in choosing $\mh(\Ch)$ with $\pen(m) = \C_m$ 
and $\Ch$ equal to the opposite of the slope of the segment joining the first and the last point of the L-curve. 
For choosing the bandwidth of a Gaussian kernel for quantile estimation with one-class support vector machines, 
\citet[Section~6.2.2]{Vert:2006:phd} points out an ``elbow effect'' and locates the elbow region 
with a ``maximal jump'' procedure similar to Algorithm~\ref{algo.OLS.jump}.

%
Nevertheless, several important differences must be pointed out between the two approaches.
%
%
First, the elbow heuristics tries to localize directly the optimal $m$ whereas the slope heuristics localizes it in two steps: 
first, it estimates the minimal penalty, then, it uses an estimated optimal penalty for selecting some~$m$.
%
%
Second, the assumptions made on the shape of the L-curve are different.
The L-curve must be \emph{exactly} L-shaped for elbow heuristics, 
since otherwise the curvature can be large far from the ``true'' elbow.
On the contrary, the slope heuristics assumes a linear behavior of the empirical risk as a function of $D_m$ (or $\C_m$) 
only for large models (Algorithms~\ref{algo.OLS.slope} and~\ref{algo.penmingal.slope}), and makes an even milder assumption with its jump formulation 
(Algorithms~\ref{algo.OLS.jump} and~\ref{algo.penmingal}; see Theorem~\ref{thm.OLS}).
%
%
Third, theoretical grounds are much stronger for the slope heuristics (with strong optimality results like Theorem~\ref{thm.OLS} in several settings) than for the elbow heuristics which is even proved inconsistent in some realistic cases  \citep{Eng_Gre:1994,Vog:1996,Han:1996}.

Overall, we consider the slope heuristics and its generalization (minimal penalties) as 
a simple and principled way to localize an elbow on the L-curve (when there is one), 
and to make use of it for optimal model/estimator selection.
In particular, a natural answer to the problem of choosing the scale at which the L-curve should be considered on the $x$-axis is given by Section~\ref{sec.penmingal.penoptmin}: it should be (the shape of) the minimal penalty.

\subsection{Scree test and related methods} \label{sec.related.scree}

For choosing the number of factors in factor analysis,
or the number of components in principal components analysis,
some classical methods can be related to minimal-penalty and L-curve algorithms.

\paragraph{Scree test} 
The most popular one ---named the scree test--- has been proposed by Cattell for factor analysis \citep{Cat:1966,Cat_Vog:1977}.
It is based upon the \guil{scree plot}, that is, a plot of the eigenvalues
versus their rank (in decreasing order), which can be seen as an L-curve for factor analysis.
The key remark made by \citet{Cat:1966} is that the scree plot ends with a linear part ---a scree---,
and that the beginning of the linear part corresponds to the ``correct'' number of factors.
Overall, the scree test chooses a number of factors equal to the rank of the
starting point of the linear part at the end of the scree plot.

This idea is close to the ``slope'' formulation of minimal-penalty algorithms
(Algorithms~\ref{algo.OLS.slope} and~\ref{algo.penmingal.slope}).
By analogy, we can say that the starting point of the linear part in the scree plot
is a ``minimal regularization level'' ---an upper bound on the number of factors that
should be kept at the end.
This fits well the goal of the initial paper by \citet{Cat:1966},
which is not to find the exact true number of factors ---a quantity which might be impossible to define formally---,
but only to keep a number of factors which explain 95\% to 99\% of the ``substantive variance''.
Nevertheless, the scree test seems to be often used for estimating the ``true number of factors''
itself \citep{Jac:1993}.

Similarly to minimal-penalty and L-curve algorithms,
making use of the scree test requires to overcome practical issues:
Cattell remarks that \guil{even a test as simple as this requires the acquisition of \emph{some} art in administering it} \citep{Cat:1966}.
For instance, it seems important to normalize the data \citep{Cat:1966}, 
and sometimes the scree plot ends with two or three linear parts ---then, one should
cut at the beginning of the first linear part \citep{Cat:1966}.

\medbreak

\paragraph{Variants} 
Several variants of the scree test exist.
The number of factors can for instance be given by the intersection of the scree plot
with some reference curve, which corresponds to some ``average scree plot'' obtained with
``pure noise'', as proposed by \citet{Hor:1965} and \citet{Hor_Eng:1979}, and by Frontier's broken stick method \citep{Fro:1976}.

Another variant exists for estimating the intrinsic dimension of some data set,
inside a classification procedure \citep{Bou_Fau_Gir:2014}.
Given a decreasing sequence of eigenvalues $(\lambda_{(j)})_{1 \leq j \leq n}$,
the estimated intrisic dimension is the smallest $j$ such that
$\lambda_{(j)}-\lambda_{(j+1)} \leq T$  for some threshold $T$,
which can be chosen by cross-validation in the article by \citet{Bou_Fau_Gir:2014}.
The underlying assumption is that the scree plot is L-shaped,
and that the point where the discrete derivative goes below $T$
corresponds to the ``elbow'', or to the beginning of the linear part.

\medbreak

\paragraph{Results} 
All these methods are only validated by numerical experiments \citep{Cat_Vog:1977}.
For instance, for principal component analysis, \citet{Jac:1993}
concludes that the broken stick method is one of the two best methods
for choosing the number of components.
The scree test tends to overestimate by one the number of components according to
\citet{Jac:1993},
which is consistent with our remark above that it corresponds to a
``minimal regularization level'' ---not an optimal one.

Note however that some theoretical results are proved for the closely related problem
of low-rank matrix recovery from noisy data by hard-thresholding of singular values.
In an asymptotic framework,
when the goal is to minimize the asymptotic mean squared error in some specific asymptotic regime,
\citet{Gav_Don:2014} show that the optimal hard threshold can be written
$\lambda_{\star}(m/n) \sqrt{n} \sigma$ when the matrix to recover is of size $m \times n$.
Interestingly, the \guil{minimal threshold}, which corresponds to the largest singular value obtained from pure noise,
is asymptotically equal to $\parens{  1 +  \sqrt{m/n} } \sqrt{n} \sigma$.
In the framework of \citet{Gav_Don:2014}, the scree test would correspond to
using the \guil{minimal hard threshold},
and it seems indeed reasonable to use it for estimating the rank (the number of factors).
On the contrary, when the goal is to minimize some quadratic error, the optimal threshold
is a bit larger:
\citet[Figure~4]{Gav_Don:2014} show that
\[
\forall m \leq n, \qquad
\lambda_{\star} \parenj{ \frac{m}{n} } > 1 +  \sqrt{\frac{m}{n}}
\, .
\]

\subsection{Thresholding under the null} \label{sec.related.thresh-null}
A related approach, for choosing the threshold $\lambda$ of thresholding estimators, 
starts by considering the minimal value $\widehat{\lambda}_{\min}$ of the threshold 
such that the estimator is equal to zero. 
Under the null hypothesis ---that is, when the true signal is zero---, 
$\widehat{\lambda}_{\min} = \widehat{\lambda}_{\min}^{\mathrm{null}}$ corresponds to the \emph{minimal} thresholding level, 
and any good threshold must be larger than $\widehat{\lambda}_{\min}^{\mathrm{null}}$. 
For instance, in the setting of the previous paragraph ---that is, singular-values hard thresholding---, 
$\widehat{\lambda}_{\min}^{\mathrm{null}}$ is of order $(1 +  \sqrt{m/n}) \sqrt{n} \sigma$,  
and \citet{Gav_Don:2014} provide an explicit formula for the optimal threshold, 
which can be written $c(m/n) \widehat{\lambda}_{\min}^{\mathrm{null}}$ for some $c(m/n)>1$. 

In a general setting, \citet{Gia_etal:2017} define the quantile universal threshold (QUT) 
$\lambda^{\mathrm{QUT}}$ as the $(1-\alpha)$-quantile of $\widehat{\lambda}_{\min}^{\mathrm{null}}$ 
for some $\alpha \in (0,1)$. 
It turns out that QUT corresponds to the universal threshold 
proposed for wavelet thresholding \citep{Don_etal:1995}, 
and it can be used much more generally, beyond hard or soft thresholding. 
For instance, for choosing the regularization parameter $\lambda$ of the Lasso and related procedures 
in high-dimensional regression, 
good performance can be obtained with $\lambda = c \lambda^{\mathrm{QUT}}$ for some $c>1$ 
\citep[Section~4.2]{Gia_etal:2017}. 

\medbreak

QUT and Algorithm~\ref{algo.OLS.jump} have common points: 
they both start by identifying a minimal value for the parameter of interest ($\lambda$ or $C$), 
then multiply it by a constant factor to get an optimal value of the parameter. 
Their main difference lies in the definition of the minimal parameter value: 
it is obtained from data under the null-hypothesis for QUT,  
hence requiring to know ---or at least to approximate--- the null-hypothesis distribution, 
while Algorithm~\ref{algo.OLS.jump} defines it directly from the data, whatever their distribution. 

Note that Section~\ref{sec.practical.variants-chpt} details a procedure by \citet{Roz:2012}, 
that is a variant of minimal-penalty algorithms for change-point detection, 
and can also be seen as a null-hypothesis based calibration procedure, hence similar to QUT.

\subsection{Other model/estimator-selection procedures}
Many other model/estimator-selection procedures exist and are studied. 
A detailed account on these is far beyond the scope of this survey.
This subsection only mentions a few of them, that are of interest in relation with minimal-penalty algorithms.

\paragraph{Unknown variance}
First, in addition to the procedures based upon $C_p$ and $C_L$ that are listed in Section~\ref{sec.related.Cp}, some procedures are specially built for dealing with the problem of not knowing the noise variance in regression, which is also what minimal-penalty algorithms do in the regression case.
For instance, \citet{Bar:2011} provides an abstract general-purpose estimator-selection procedure, 
which leads to the model-selection procedure of \citet{Bar_Gir_Hue:2010} 
for Gaussian model selection with unknown variance. 
We refer to the article by \citet{Gir_Hue_Ver:2011} for a detailed survey on high-dimensional variable-selection methods when the variance is unknown.

\paragraph{Cross-validation and resampling}
An important family of general-purpose estimator-selection procedures is 
cross-validation \citep{Arl_Cel:2010:surveyCV}, and more generally all resampling-based selection procedures 
---e.g., resampling-based penalties \citep[see][and references therein]{Arl:2009:RP}.
Comparing them to minimal-penalty algorithms is interesting at least in two distinct situations.

First, when Algorithm~\ref{algo.penmingal} works with $\pen_0$ and $\pen_1$ known but $C^{\star}$ is
unknown ---for instance, for linear estimators in regression with the least-squares risk--- resampling-based
procedures are natural competitors, that can be used either for choosing the constant in front
of $\pen_1\,$, or directly for the initial estimator-selection problem.
Then, minimal-penalty algorithms have a clear advantage over resampling, because of their much
smaller computational cost (see Section~\ref{sec.practical.cost}), while they have comparable or better
statistical performance according to both theoretical and experimental results, as shown for instance
by \citet{Arl_Bac:2009:minikernel_long_v2}.

Second, when Algorithm~\ref{algo.penmingal} works with some unknown $\pen_0$ and/or $\pen_1\,$, an option mentioned in Remark~\ref{rk.reech} in Section~\ref{sec.theory.complete} is to estimate them by resampling.
Then, the computational cost of Algorithm~\ref{algo.penmingal} is comparable to that of cross-validation and other resampling strategies applied to the initial estimator-selection problem.
In such cases, the interest of using minimal penalties is the precise non-asymptotic calibration of the constant in front of the resampling-based penalty, which is not guaranteed when using the theoretical value for this constant, since it is often based upon asymptotic considerations.
In addition, the conjecture detailed in Section~\ref{sec.empirical.overpenalization} suggests another reason for combining resampling and minimal penalties in such frameworks.


\section{Some practical remarks} \label{sec.practical}
This section discusses several practical questions about the use of minimal-penalty algorithms.
A more detailed study of some of them can be found in the survey by \citet{Bau_Mau_Mic:2010}.

\subsection{Several definitions for \texorpdfstring{$\Ch$}{hat(C)}} \label{sec.practical.jump-vs-slope}
%
%
The minimal-penalty estimator $\Ch$ of the constant that should be put in front of the penalty $\pen_1$ can be defined in several ways, which leads to the practical issue of choosing one among these definitions.
Two main approaches are proposed in the previous sections. 

\paragraph{Jump approach} 
First, $\Ch$ can be defined 
as the position $\Chjumpgal$ of ``the unique large jump'' of $C \mapsto \C_{\mhgalzero(C)}\,$, as in Algorithms \ref{algo.OLS.jump}, \ref{algo.gal.slope.naif}, \ref{algo.penmin.linear}, and~\ref{algo.penmingal}. 
Section~\ref{sec.slopeOLS.math} suggests two ways to formally define $\Chjumpgal\,$: 
choosing the maximal jump $\Chwindow (\eta)$ over a geometric window $[C/(1+\eta) ,C(1+\eta)]$, 
and choosing the value $\Chthr (T_n)$ of $C$ for which $\C_{\mhgalzero(C)}$ goes under some threshold $T_n\,$.
Another natural option is to choose the position of the maximal jump
\[
\Chmaxjump \in \argmax_{C \geq 0} \setj{ \C_{\mhgalzero(C^-)}  - \C_{\mhgalzero(C^+)}}
\, ,
\]
that is, taking $\lim_{\eta \to 0} \Chwindow (\eta)$.

\paragraph{Slope approach} 
Second, $\Ch$ can be defined as $\Chslope\,$, the opposite of the estimated value of the slope of the empirical risk as a function of $\pen_0\,$, as in Algorithms \ref{algo.OLS.slope} and~\ref{algo.penmingal.slope}.
This approach can be formalized in several ways, using ordinary or robust linear regression, either over a fixed range $[p_{\min},p_{\max}]$ of values of $\pen_0\,$, or with the method $\mhcapushe$ proposed 
by \citet[Section~4.2]{Bau_Mau_Mic:2010}, which is based upon a stability study of the selected estimator and depends on some parameter $pct \in (0,1]$.

\medbreak

Note that $\Chwindow\,$, $\Chthr\,$, and $\Chslope$ all depend on some hyperparameter ($\eta$, $T_n\,$, $p_{\min}$ and $p_{\max}\,$, $pct$).
We refer to Appendix~\ref{app.details-simus.proc} for more details on each definition of $\Ch$
considered in this section. 

\begin{figure}
\begin{center}
\includegraphics[width=.68\textwidth]{\pathfig/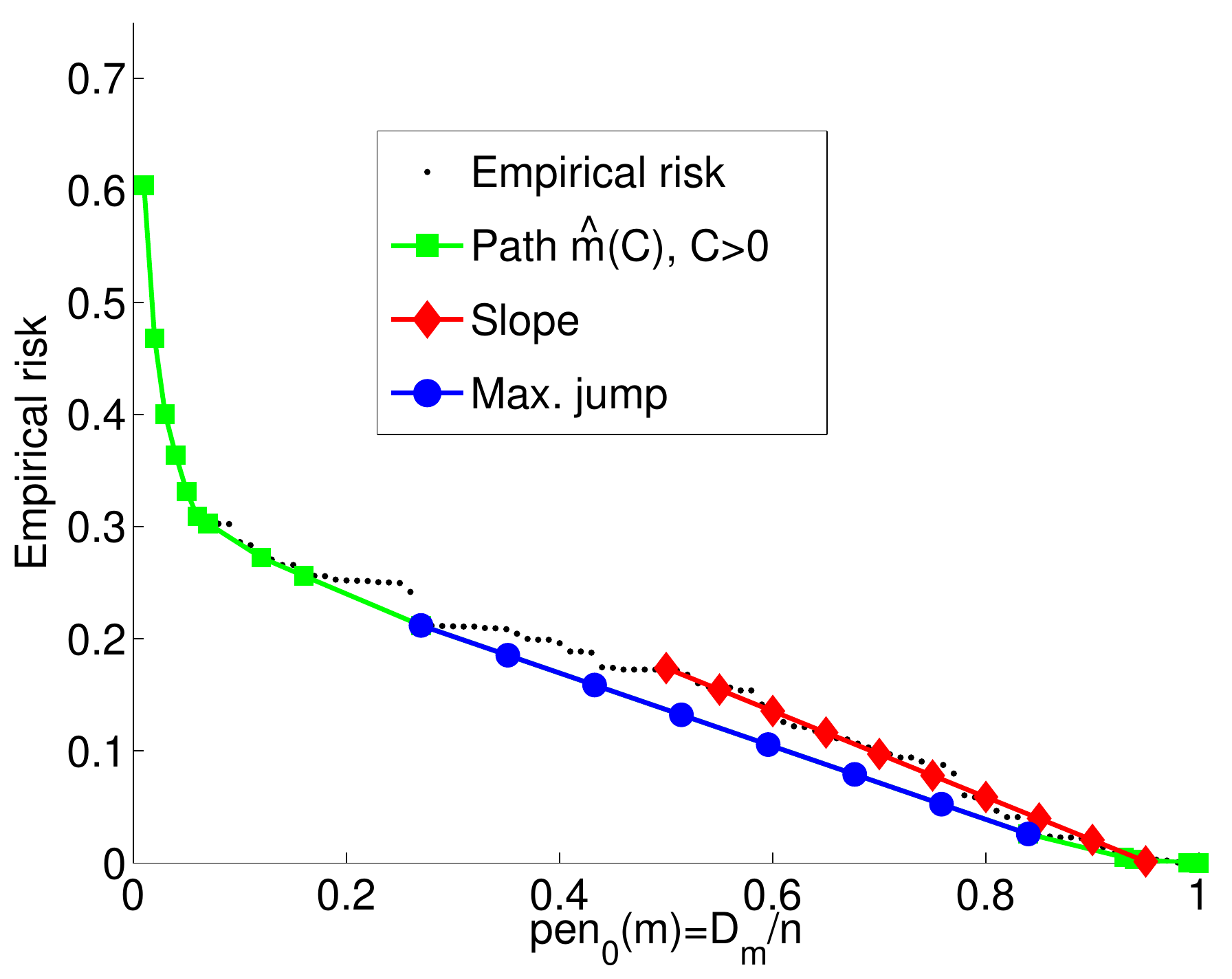} \hfill
\caption{\label{fig.Lcurve.easy.ech6}
Connection between Algorithms \ref{algo.OLS.jump} and~\ref{algo.OLS.slope}\textup{:}
Plot of $D_m \mapsto n^{-1}\norms{Y-\Fh_m}^2$ and visualization of $\Chslope\,$.
Setting called `easy' in Appendix~\ref{app.details-simus}.
}
\end{center}
\end{figure}

\begin{figure}
\begin{center}
\includegraphics[width=.68\textwidth]{\pathfig/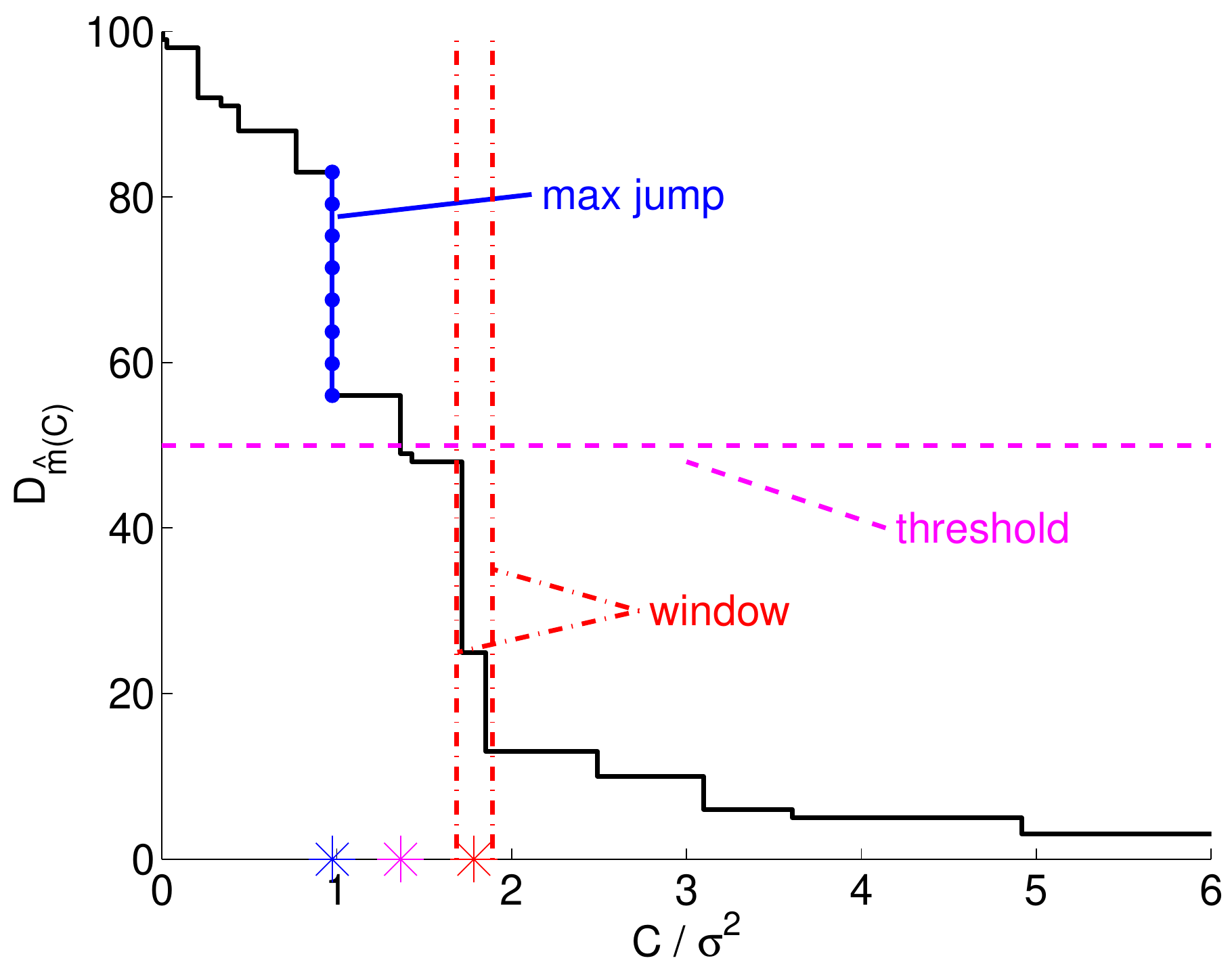}
\caption{\label{fig.DmhC.easy.ech5}
Plot of $C \mapsto D_{\mh(C)}$ \textup{(}black solid lines\textup{)} 
and visualization of the three versions of Algorithm~\ref{algo.OLS.jump}
on the same sample.  
The maximal jump is shown by blue dots. 
The threshold value $T_n = n/2$ is materialized by an horizontal dashed line. 
The largest jump over a window is shown by vertical dash-dot lines. 
The values of $\Chmaxjump\,$, $\Chthr$ and $\Chwindow$ are shown by stars on the $x$-axis \textup{(}in this order\textup{)}. 
The sample chosen here is not typical at all \textup{(}see Table~\ref{tab.stats-comp-mh.easy}\textup{)}  
but illustrates well the differences between $\Chmaxjump\,$, $\Chwindow\,$, and~$\Chthr\,$. 
Setting called `easy' in Appendix~\ref{app.details-simus}.
}
\end{center}
\end{figure}

\paragraph{Theoretical comparison}
%
Let us first compare theoretically the various definitions of $\Ch$.
For $\Chjumpgal\,$, when there is a single large jump in 
\[ C \mapsto \C_{\mhgalzero(C)} \, , \]  
as illustrated by Figure~\ref{fig.OLS.algo} in Section~\ref{sec.slopeOLS.algo}, 
reasonable choices for $T_n$ and $\eta$ 
make $\Chwindow(\eta)$ very close to $\Chmaxjump=\Chthr(T_n)$.
On the contrary, when the phase transition around the minimal penalty yields several jumps of medium size in 
\[ C \mapsto \C_{\mhgalzero(C)} \, , \] 
as in Figure~\ref{fig.DmhC.easy.ech5} for instance, $\Chmaxjump\,$, $\Chthr(T_n)$, and $\Chwindow(\eta)$ can take quite different values and lead to selecting different models.
Theoretical guarantees such as Theorem~\ref{thm.OLS} in Section~\ref{sec.slopeOLS.math} do not exclude such a situation, even asymptotically, so they only apply to $\Chthr (T_n)$ and $\Chwindow (\eta)$ with $T_n$ and $\eta$ of the correct order of magnitude.

Yet, the maximal jump and threshold definitions with 
$T_n = \overline{\C} \egaldef (\max_m \C_m + \min_m \C_m)/2$ 
coincide when the largest jump is of size at least $(\max_m \C_m - \min_m \C_m)/2$. 
This condition always holds true if no $m \in \M$ has a complexity $\C_m \in (\overline{\C} ; \max_m \C_m)$,
which often occurs for computational reasons, 
since estimators with complexity $\C_m > \overline{\C}$ usually are hard to compute and known to be suboptimal. 

For the slope approach, no theoretical guarantee is available, but the linear behavior of $\Remp\parens{\shm}$ as a function of $\pen_0(m)$ is supported theoretically from expectation computations, as detailed in Sections~\ref{sec.slopeOLS.optimal}--\ref{sec.slopeOLS.penmin} and~\ref{sec.penmingal.fails}--\ref{sec.penmingal.linear}.

The jump and slope approaches can seem quite different at first sight, but they actually are the two sides of the same coin.
Section~\ref{sec.slopeOLS} shows that reasoning from the same computations, Eq.~\eqref{eq.EriskFhm}--\eqref{eq.EriskempFhm}, can lead to a heuristic justification of both approaches.
Another argument enlightens the similarity of the jump and slope approaches.
By Proposition~\ref{pro.algo.path} and its proof in Appendix~\ref{app.algos.path},
the path $(\mhgalzero(C))_{C>0}$ is piecewise constant, $\mhgalzero(C)=m_i$ for $C \in [C_i,C_{i+1})$,
and the sequences $(m_i)_{0 \leq i \leq i_{\max}}$ and $(C_i)_{0 \leq i \leq i_{\max}}$ can be visualized on the L-curve $(\pen_0(m) , \Remp\parens{\shm})_{\mM}\,$: 
the angles of the lower convex envelope of the L-curve exactly correspond to 
the $m_i\,$, $0 \leq i \leq i_{\max}\,$, and
\[
C_i = \frac{\Remp\parens{\sh_{m_i}} - \Remp\parens{\sh_{m_{i-1}}}}{ \pen_0(m_{i-1}) - \pen_0(m_i)}
\]
is the opposite of the slope of the segment joining $m_{i-1}$ to $m_i$ on the L-curve.
So, $\Chmaxjump$ can be visualized on the L-curve, 
as illustrated by Figure~\ref{fig.Lcurve.easy.ech6}.  
Given the L-curve (black dots), draw its (piecewise linear) lower convex envelope (green squares), localize the widest segment 
---in terms of values of $\C_m\,$, which is usually proportional to $\pen_0(m)$---:  
its slope is $-\Chmaxjump\,$.
Then, one clearly see why $\Chmaxjump$ is often close to $\Chslope$ in the setting of Figure~\ref{fig.Lcurve.easy.ech6}:
for a random point cloud with a linear trend of slope $\approx - C^{\star}$ for large abscissa values, 
estimating its slope by linear regression is almost equivalent to looking at the slope 
of the longest segment of its lower convex envelope.
Note that $\Chthr$ can be visualized on the L-curve similarly to $\Chmaxjump\,$.

This direct comparison emphasizes the respective drawbacks of $\Chmaxjump$ and $\Chslope\,$.
When the amplitude of the largest jump is small, $\Chmaxjump$ is not a reliable estimation of $C^{\star}$, see Figure~\ref{fig.DmhC-Lcurve.easy.ech540}b in Appendix~\ref{app.morefig}.

When some large models have a significantly positive approximation error,
as in the `hard' setting described in Appendix~\ref{app.details-simus} 
---see the right of Figure~\ref{fig.OLS.slope-vs-residuals.easy-hard}--- 
they pollute the slope estimation and make $\Chslope$ biased, unless only a few such models are present and robust regression is used.
In the latter case, since $\Chjumpgal \in \sets{\Chmaxjump,\Chwindow,\Chthr}$ only depends on the \emph{lower convex envelope} of the L-curve, even a large number of ``polluting'' models will not influence $\Chjumpgal$ at all, making it more robust.

This difference between $\Chjumpgal$ and $\Chslope$ also appears in the assumptions made for their theoretical and heuristic justifications. 
In Section~\ref{sec.slopeOLS}, $\Chslope$ requires the approximation error to be almost constant over \emph{all} large models 
---which makes sense when $(S_m)_{\mM}$ is a family of models with increasing complexity, for instance, but can be violated in some other contexts---, 
whereas $\Chjumpgal$ only assumes that
\emph{two} models exist with a small approximation error, one of moderate complexity 
and one of large complexity. 

\paragraph{Experimental comparison}
%
In addition to the above theoretical comparison, 
we report the results of new simulation experiments for variable selection in least-squares regression.
We consider two settings: 
in the `easy' setting, the order between variables is known, while in the `hard' setting, two possible orders (the correct one and its converse) are considered alternatively, making half of the models very bad. 
The `hard' setting is the archetype of a setting where 
the approximation error is \emph{not} constant over large models; 
it does not aim to be realistic. 
All details about simulation experiments ---data generation, model collection, and exact implementation for each definition of $\Ch$--- are given in Appendix~\ref{app.details-simus}.

\begin{table}
\begin{center}
\begin{tabular}{l@{\hspace{0.050\textwidth}}c@{\hspace{0.025\textwidth}}c@{\hspace{0.025\textwidth}}c@{\hspace{0.025\textwidth}}c@{\hspace{0.050\textwidth}}c@{\hspace{0.025\textwidth}}c}
Configuration & All equal & Exactly $4$ & At least $3$ 
& All different 
& $\Chmaxjump=\Chthr$ & Max, thr, and win 
\\
 & & equal & equal 
 &  
 &  & different 
 \\[0.1cm]
Frequency (`easy') & 0.524 & 0.238 & 0.967 
& $<10^{-3}$ 
& 0.777 & 0.009 
\\
Frequency (`hard') & 0.134 & 0.118 & 0.894 
& $< 10^{-3}$ 
& 0.769 & 0.008 
\end{tabular}
\vspace{0.2cm}
\caption{\label{tab.stats-comp-mh.easy} Frequency of various configurations for the set of five models $\mh$ respectively selected by Algorithm~\ref{algo.OLS.jump} with $\Chmaxjump$ \textup{(}`max'\textup{)}, $\Chthr$ \textup{(}`thr'\textup{)}, $\Chwindow$ \textup{(}`win'\textup{)}, by Algorithm~\ref{algo.OLS.slope} \textup{(}$\Chslope$\textup{)} and by $\mhcapushe\,$. 
`Easy' and `hard' settings,  
see Appendix~\ref{app.details-simus} for details.
}
\end{center}
\end{table}

\medbreak

First, since the beginning of this section outlines strong theoretical connections 
between the different definitions of $\Ch$, a natural question is: 
how different are the models finally selected, depending on the definition taken for $\Ch$?
Table~\ref{tab.stats-comp-mh.easy} shows that they all coincide most of the time in the `easy' setting ---with a clear single large jump, as for the sample of Figures~\ref{fig.OLS.algo} and~\ref{fig.Lcurve.easy.ech6}---, and they globally agree at least $90\%$ of the time or more in both settings.
The probability of a total disagreement is very small (less than $0.1\%$) even if it sometimes occurs, as illustrated by Figure~\ref{fig.DmhC.easy.ech5}, 
where $\Chmaxjump\,$, $\Chthr\,$, and $\Chwindow$ respectively lead to selecting $\mh=14$, $11$, and~$7$;
Figure~\ref{fig.DmhC-Lcurve.easy.ech540}b in Appendix~\ref{app.morefig} shows a similar configuration.
Similar conclusions are obtained by \citet[Section~3.3]{Arl_Mas:2009:pente} about $\Chmaxjump$ and $\Chthr\,$.

\begin{figure}
\begin{center}
\begin{minipage}[c]{.49\linewidth}
\includegraphics[width=\textwidth]{\pathfig/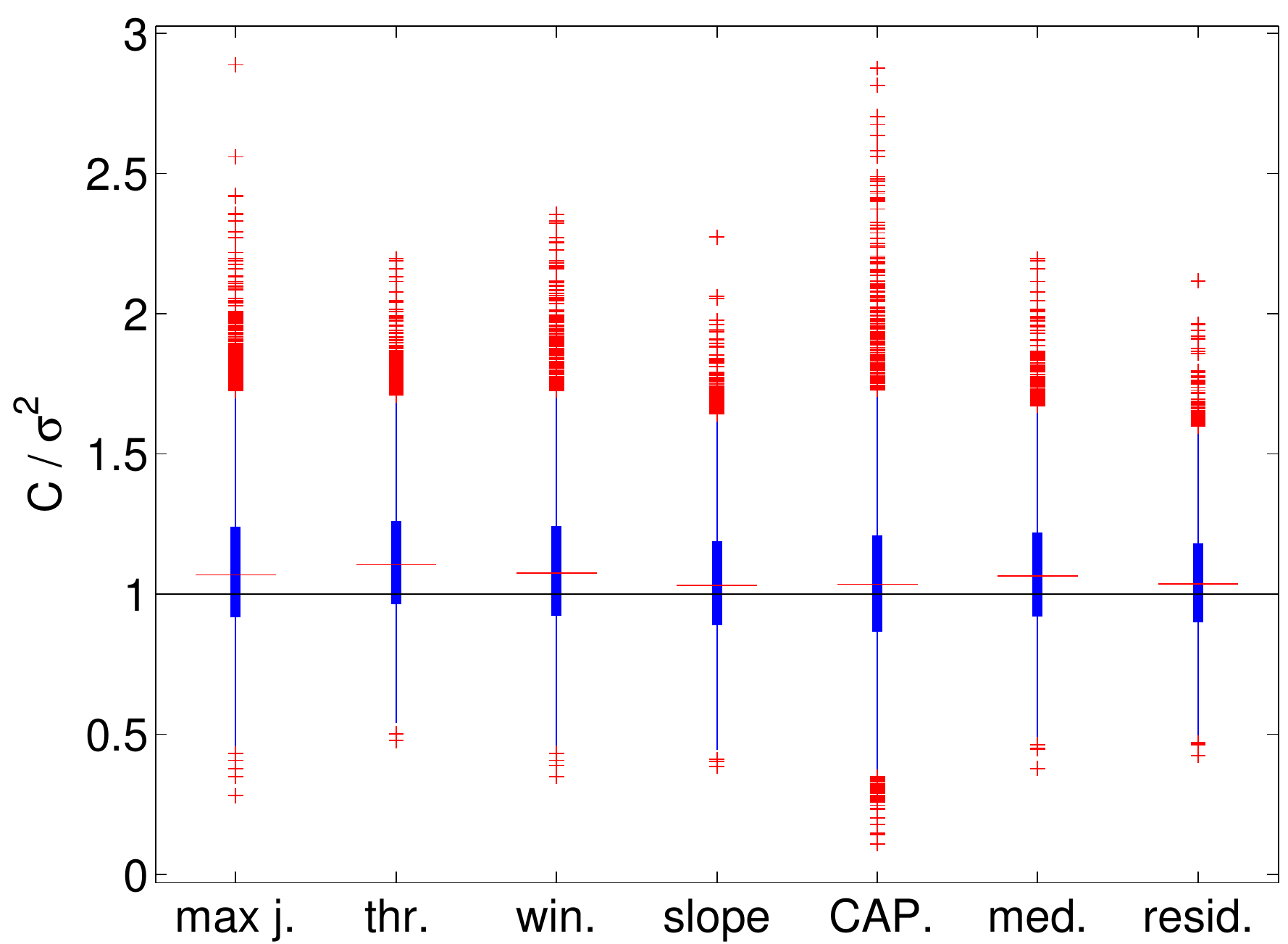}
\\ \centerline{(a) `Easy' setting.}
\end{minipage}
\hfill
\begin{minipage}[c]{.49\linewidth}
\includegraphics[width=\textwidth]{\pathfig/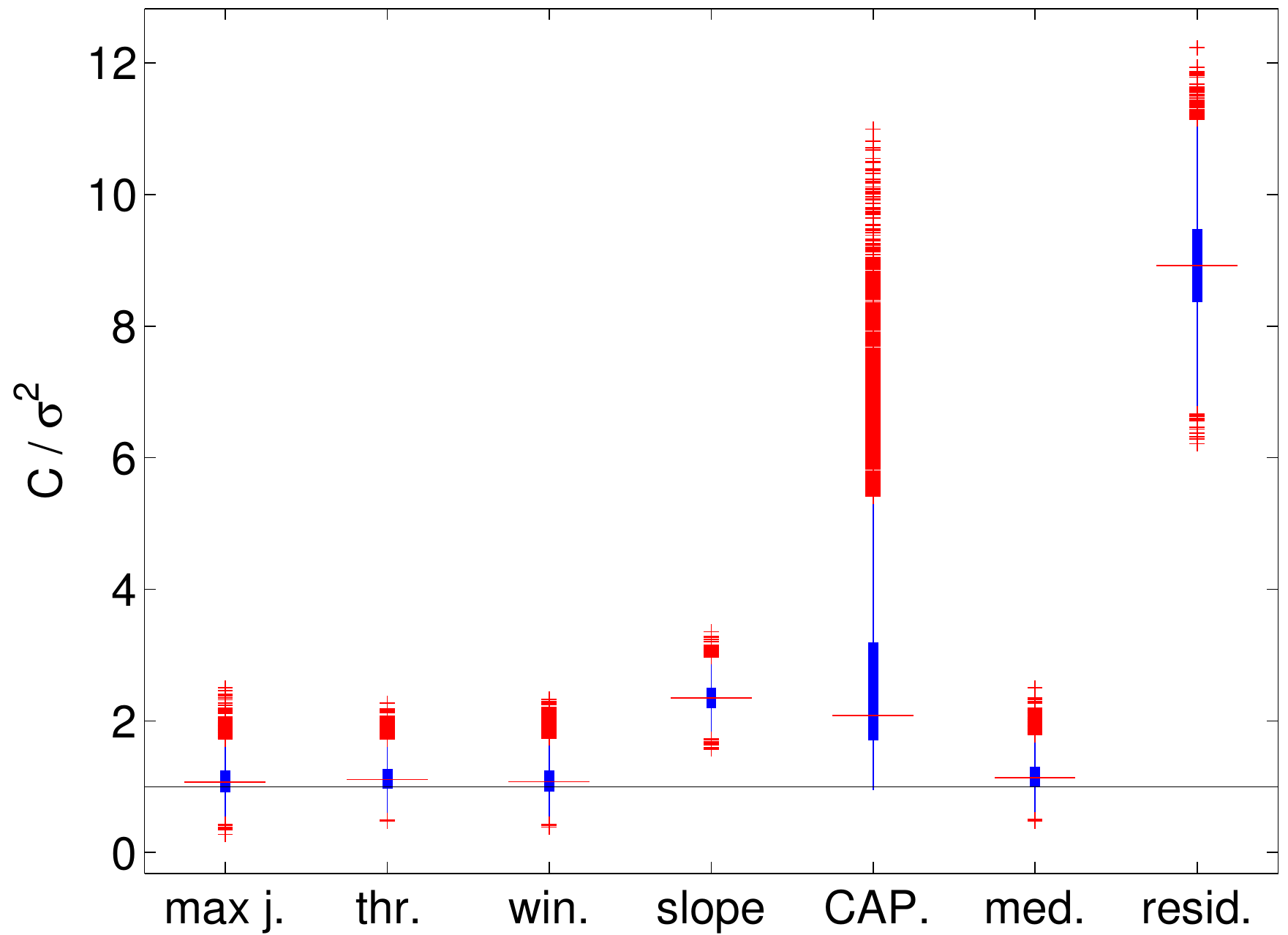}
\\ \centerline{(b) `Hard' setting.}
\end{minipage}
\caption{\label{fig.OLS.dist-Ch} 
Distribution over $10\,000$ independent samples of $\widehat{C}/\sigma^2$ for seven estimators $\widehat{C}$ of $\sigma^2$\textup{:} 
$\Chmaxjump$ \textup{(}`max j.'\textup{)}, $\Chthr$ \textup{(}`thr.'\textup{)}, $\Chwindow$ \textup{(}`win.'\textup{)}, $\Chslope$ in Algorithm~\ref{algo.OLS.slope} \textup{(}`slope'\textup{)}, $\Chcapushe$ \textup{(}`CAP.'\textup{)}, the median of $\sets{\Chmaxjump,\Chthr,\Chwindow,\Chslope,\Chcapushe}$ \textup{(}`med.'\textup{)}, and $\sigh^2_{m_0} $ defined by Eq.~\eqref{def.sighm0} \textup{(}`resid.'\textup{)}.
See Appendix~\ref{app.details-simus} for details.
}
\end{center}
\end{figure}

\medbreak

Second, since our experiments consider projection estimators in least-squares regression,
all minimal-penalty based $\Ch$ estimate $\sigma^2$, 
so they can be compared to $\sigh^2_{m_0}$ ---defined by Eq.~\eqref{def.sighm0}--- 
as estimators of the residual variance $\sigma^2$.
Results are provided in Figure~\ref{fig.OLS.dist-Ch}, as well as Tables~\ref{tab.dist-Ch.easy}--\ref{tab.dist-Ch.hard} in Appendix~\ref{app.morefig}, where several values of parameters of the $\Ch$ are compared.
In the `easy' setting (Figure~\ref{fig.OLS.dist-Ch}a), all methods behave similarly and as expected from theoretical arguments: the distribution of $\Ch$ is asymmetric around $\sigma^2$, with smaller deviations below $\sigma^2$ than above $\sigma^2$, as in the bounds of Proposition~\ref{pro.variance-estim}.
Such an asymmetry is a good property in terms of model-selection performance, as suggested by Figure~\ref{fig.surpen} in Section~\ref{sec.empirical.overpenalization} for instance.
The order of magnitude of the deviations of $\Chjumpgal/\sigma^2$ from Proposition~\ref{pro.variance-estim} is $\biaismax (c_n) / \sigma^2 +  \sqrt{\log(n)/n}$ with $c_n = n/3$ (for $\Chwindow$) or $T_n/2$ (for $\Chthr$); 
in our experiments, with $n=100$ and $T_n=n/2$, we get $\biaismax (c_n) / \sigma^2 \in [0.04 , 0.08]$, 
and $\sqrt{\log(n)/n} \approx 0.2$, so the constants appearing in Proposition~\ref{pro.variance-estim} here are pessimistic.

The most variable $\Ch$ clearly is $\Chcapushe\,$, but to be completely fair, we must notice that the procedure proposed by \citet{Bau_Mau_Mic:2010} only outputs a selected model $\mhcapushe$ and we make an arbitrary choice for defining some $\Chcapushe$ from the definition of $\mhcapushe$ (see Appendix~\ref{app.details-simus.proc}).
Among other definitions of $\Ch$, $\Chmaxjump\,$, and $\Chwindow$ are slightly more variable than the others but the difference is mild.

Interesting differences occur in the `hard' setting, which is designed as a case example for difficult situations for $\Chslope\,$, $\Chcapushe\,$, and $\sigh^2_{m_0}\,$.
As expected, $\Chslope$ completely fails because of the wide amplitude of the approximation error among large models, and $\sigh^2_{m_0}$ behaves totally differently depending on the parity of $m_0\,$: $\sigh^2_{m_0}$ is worse than $\Chslope$ when $S_{m_0}$ is one of the `bad' models, while it works well when $S_{m_0}$ is one of the `good' models 
(see Figure~\ref{fig.OLS.slope-vs-residuals.easy-hard} and Table~\ref{tab.dist-Ch.hard} in Appendix~\ref{app.morefig}). 
This failure of $\Chslope$ and $\Chcapushe$ ---when $(S_m)_{m \in \M}$ is the union of 
subcollections having different approximation properties--- 
is also reported by \citet[Section~4.5]{Bau:2009:phd}, 
\citet[Figures~3--4]{Dev:2017} and \citet[Figure~6]{Dev_Gou_Pog:2015:journal}  
in realistic settings. 
The nested algorithm presented in Section~\ref{sec.practical.nested} might be a way to fix this issue, 
even if it has not been tested yet in such situations. 

More generally, depending on the setting, choosing the parameter for one definition of $\Ch$ can be a big practical issue.
For instance, Tables~\ref{tab.dist-Ch.easy}--\ref{tab.dist-Ch.hard} in Appendix~\ref{app.morefig} show that $\Chthr (T_n)$ is sensitive to the choice of $T_n\,$.
Even if $T_n=n/2$ works well for the `easy' and `hard' settings,
it is certainly not a universally good choice, and changing $F$, $n$ or $\sigma^2$ could easily make it fail compared to other definitions of $\Ch$.
Similarly, the performance of $\Chslope$ strongly depends on the parameters $p_{\min},p_{\max}$ and choosing them from data is not an easy task, a problem also reported in the change-point detection setting \citep[Chapter~4]{Leb:2002}.
A reasonable option is given by $\mhcapushe$ \citep[Section~4.2]{Bau_Mau_Mic:2010}, and it works reasonably well in the `easy' setting, but it fails in the `hard' setting as expected.

\begin{figure}
\begin{center}
\begin{minipage}[c]{.49\linewidth}
\includegraphics[width=\textwidth]{\pathfig/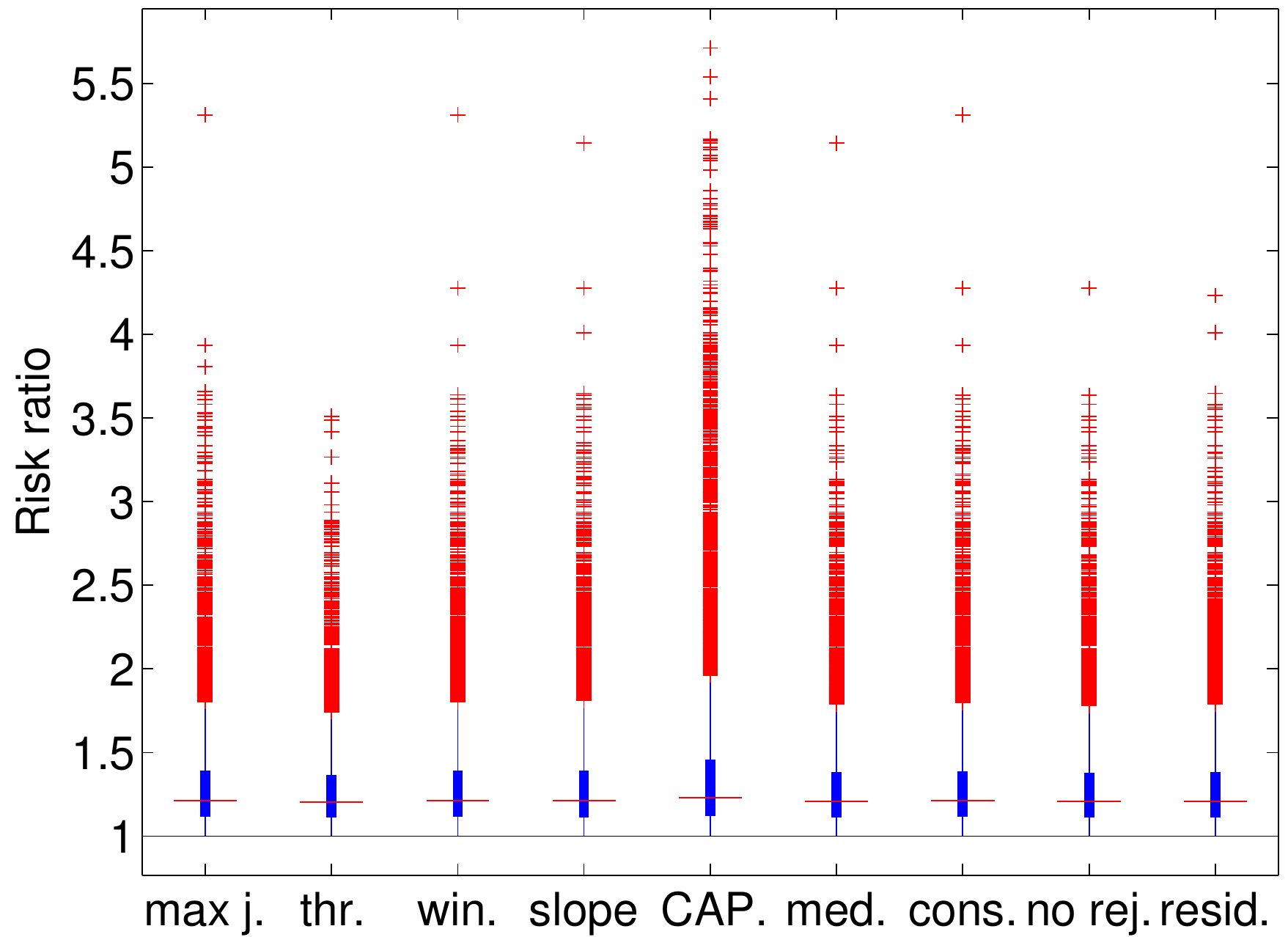}
\\ \centerline{(a) `Easy' setting.}
\end{minipage}
\hfill
\begin{minipage}[c]{.49\linewidth}
\includegraphics[width=\textwidth]{\pathfig/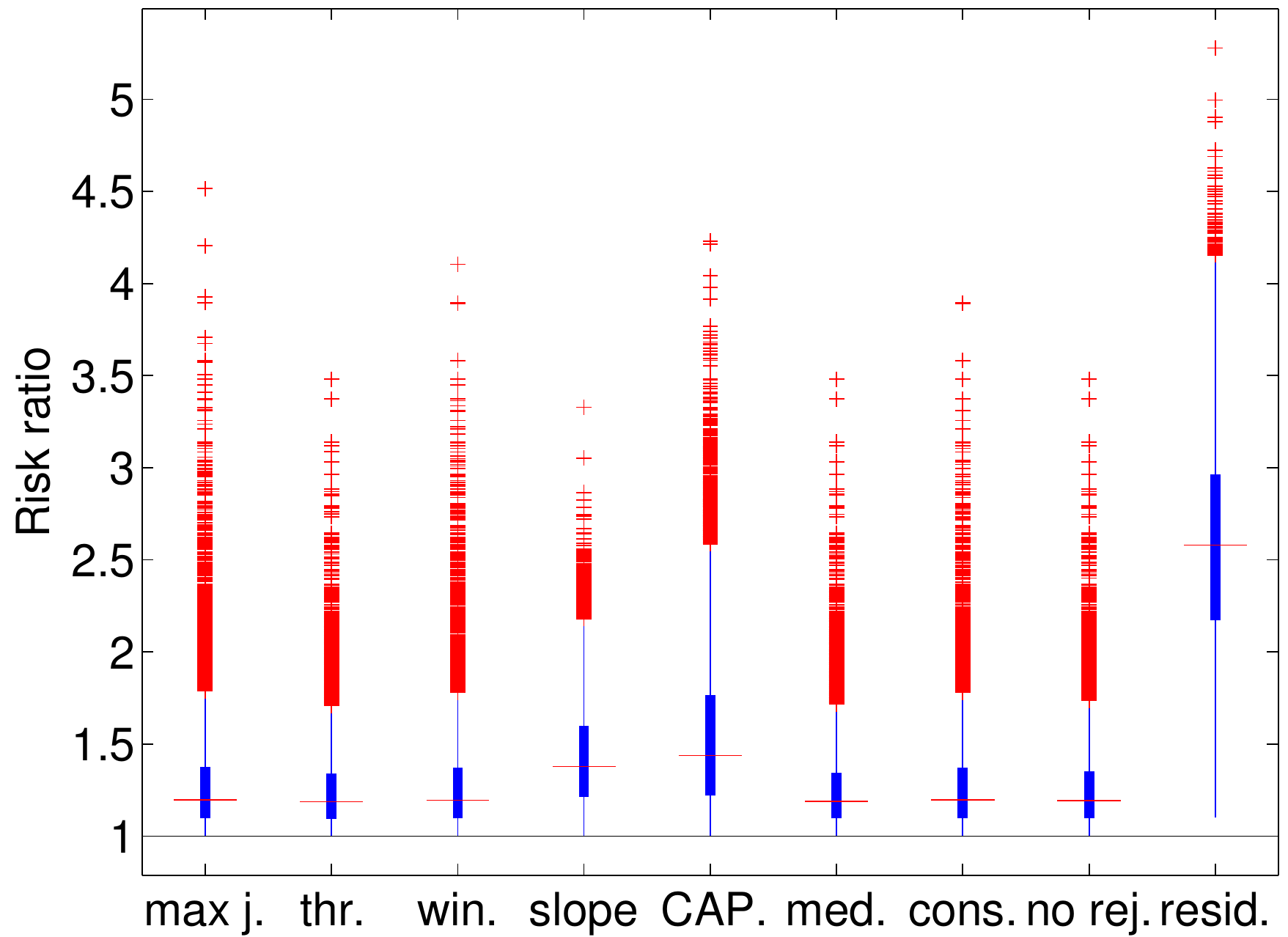}
\\ \centerline{(b) `Hard' setting.}
\end{minipage}
\caption{\label{fig.OLS.dist-risk-ratio}
Distribution over $10\,000$ independent samples of $\norms{ \Fh_{\mh} - F}^2 / \inf_{\mM} \norms{ \Fh_m - F}^2 $ for $\mh=\mh(2\Ch)$ with $\Ch$ among the seven estimators $\Ch$ compared 
in Figure~\ref{fig.OLS.dist-Ch}, and for $\mh$ obtained by majority vote among $\sets{\mh(2\Chmaxjump),\mh(2\Chthr),\mh(2\Chwindow),\mh(2\Chslope),\mhcapushe}$ with $\mh(2\Chwindow)$ as a default choice \textup{(}`cons.'\textup{)} or considering only the samples on which such a majority exists \textup{(}`no rej.'\textup{)}.
See Appendix~\ref{app.details-simus} for details.
}
\end{center}
\end{figure}

\medbreak

%
Third, the model-selection performance of all these
procedures is assessed by Figure~\ref{fig.OLS.dist-risk-ratio} and by Tables~\ref{tab.dist-Ch.easy}--\ref{tab.dist-Ch.hard} in Appendix~\ref{app.morefig}.

At first order, the conclusions are similar to the
ones obtained for estimating $\sigma^2$.
All definitions of $\Ch$ work well in the `easy' setting.
In the `hard' setting, $\sigh^2_{m_0}$ completely fails,
while $\mhcapushe$ and $\Chslope$ do slightly worse than
the other formulations of the slope-heuristics algorithm.

The detailed comparison of the procedures that work well
is a bit different:
the model-selection performance (risk ratios)
are not ordered exactly as the mean-squared errors
in Tables~\ref{tab.dist-Ch.easy}--\ref{tab.dist-Ch.hard}.
The main reason is that risk estimation ---which reduces to estimating $\sigma^2$ in our setting--- is different from model selection \citep{Bre_Spe:1992}.
Figure~\ref{fig.surpen} in Section~\ref{sec.empirical.overpenalization} shows at least one reason for this difference:
overpenalizing slightly, that is, overestimating $\sigma^2$ a bit, improves the model-selection performance.
According to Figure~\ref{fig.surpen}, the best overpenalization factor is $1.12$ in the `easy' setting.
For instance, Table~\ref{tab.dist-Ch.easy} shows that
taking $D_{m_0}=n/10$ for $\sigh^2_{m_0}$
leads to better model-selection performance than
$D_0 = n/2$ in the `easy' setting,
even if $D_0 = n/2$ yields a much better estimator of $\sigma^2$.

Note however that for a given bias (as an estimator of $\sigma^2$),
the best model-selection performance is obtained when the variance is the smallest:
compare for instance $\Chslope$ with $D_0=n/2$ and
\textsc{Capushe} in the `easy' setting (Table~\ref{tab.dist-Ch.easy} in Appendix~\ref{app.morefig}).

\medbreak

%
Let us finally mention that previous simulation experiments in various settings have compared 
some of the definitions of $\Ch$. 
In short, almost all of them report that $\Chmaxjump$ is less reliable 
---because of the event on which there is not a single large jump 
(\citealp[Figure~8.11]{Mau:2008:phd}; \citealp[Section~5]{Bau_Mau_Mic:2010}), 
which happens more or less often--- compared to 
$\Chthr$ \citep{Arl_Bac:2009:minikernel_long_v2,Sol_Arl_Bac:2011:multitask}, 
$\Chwindow$ \citep{Bon_Tou:2010}, and $\Chslope$ or $\Chcapushe$ 
(\citealp[Section~3.2]{Bau:2009:phd}; \citealp{Mau_Mic:2010,Con:2011:phd,Bau_Mau_Mic:2010};  \citealp[Table~2.1]{Roc:2014}).   
Only \citet{Dev_Gal:2016} report similar performances for $\Chmaxjump$ and $\Chcapushe\,$. 
Nevertheless, $\Chmaxjump$ remains useful for confirming the choice made with another definition of $\Ch$ 
\citep{Con:2011:phd,Bau_Mau_Mic:2010}, 
with a visual check that there is a single large jump. 
The slope approach can also fail for reasons detailed previously in this subsection 
\citep{Bau:2009:phd,Dev:2017,Dev_Gou_Pog:2015:journal}. 
\citet{Leb:2005} even shows that $\Chmaxjump$ and $\Chslope$ can both fail, which motivates a modified algorithm 
---called ``calibrated method''--- for change-point detection; 
note that \citet{Arl_Cel_Har:2012:kernelchpt} fix this precise failure by using the slope heuristics 
with a penalty shape depending on two constants, as detailed in Section~\ref{sec.practical.nappe}.

\paragraph{Conclusion on the choice of $\Ch$}
%
First, it is not surprising to have to choose among several definitions of $\Ch$ 
or to choose some hyperparameter such as $\eta$, $T_n$ or $p_{\min}\,$, because of no free lunch theorems: 
no fully automatic estimation procedure can work uniformly well 
over all statistical problems \citep[Chapter~7]{Dev_Gyo_Lug:1996}. 
An expert advice is always necessary at some point.
For minimal-penalty algorithms, our suggests join the ones of \citet{Bau_Mau_Mic:2010} and \citet{Con:2011:phd}:
never use a single definition of $\Ch$ in a blind way, 
either by considering several definitions for $\Ch$ 
or by checking visually that there is a clear complexity jump 
and/or that the L-curve exhibits a clear linear trend on the data. 
When computing all values of $(\Remp(\shm), \pen_0(m), \pen_1(m), \C_m)_{m \in \M}$ is too expensive, 
one should also take into account the computational cost of the procedure, 
as discussed in Section~\ref{sec.practical.cost}.

We propose the following (semi-automatic) approach for using several definitions 
$\Chmaxjump\,$, $\Chthr\,$, $\Chwindow\,$, $\Chslope\,$, and $\Chcapushe$ of $\Ch$ simultaneously. 
If the goal is to estimate $\sigma^2$, take their median.
If the goal is estimator selection, compute the five corresponding estimator choices $\mh$, and make a majority vote: if at least three over five coincide, take their common value, otherwise, output a warning and ask the user to look at the complexity jump and the L-curve.
When the five methods disagree, using a completely different approach remains a good option, for instance, cross-validation.
The results of using this strategy (in a fully automatic way since our experiments need to be reproducible) are reported in Figures~\ref{fig.OLS.dist-Ch}--\ref{fig.OLS.dist-risk-ratio} above as well as Tables~\ref{tab.dist-Ch.easy}--\ref{tab.dist-Ch.hard} in Appendix~\ref{app.morefig}, showing good performance in all settings.

Finally, the above comparison also points out several risky choices for $\Ch$ 
(in addition to $\sigh^2_{m_0}$): 
$\Chmaxjump$ without checking that there is indeed a single large jump,
$\Chthr$ with a bad choice for $T_n\,$,
the ``naive'' version $\Chslope$ of the slope approach,  
and $\Chslope$ or $\Chcapushe$ when selecting among a union of subcollection of estimators 
that may have different approximation properties.

\subsection{Algorithmic cost} \label{sec.practical.cost}
\paragraph{When all empirical risks can be computed} 
Let us assume that the values of the empirical risk $\Remp\parens{\shm}$, 
the minimal and optimal penalty shapes $\pen_0(m)$ and $\pen_1(m)$, 
and the complexity $\C_m$ for all $\mM$ are stored in memory. 
Then, the computational complexity of minimal-penalty algorithms is the following.

\smallbreak

For Algorithm~\ref{algo.penmingal}, computing the full path $(\mhgalzero(C))_{C \geq 0}$ 
requires at most $\grandO([\card \M]^2)$ operations ---as shown in Appendix~\ref{app.algos.path}--- 
and much less in practice.
Indeed, denoting by $i_{\max}+2$ the cardinality of this path 
---which must be smaller than $\card(\M)$---, it can be computed with $\grandO( i_{\max} \card \M)$ operations.

Furthermore, depending on the definition of $\Chjumpgal\,$, it might not be necessary to compute the full path.
For instance, with the threshold approach, using the notation of Appendix~\ref{app.algos.path}, if $i(T_n)$ is such that $\Chthr(T_n)=C_{i(T_n)}\,$, only $\grandO( i(T_n) \card \M)$ operations are necessary, and usually we have $i(T_n) \ll i_{\max} \ll \card(\M)$.

Computing $\Chwindow$ as defined in Algorithm~\ref{algo.penmingal} might seem costly at first sight.
Appendix~\ref{app.algos.window} shows that given the path $(\mhgalzero(C))_{C \geq 0}\,$, 
of cardinality $i_{\max}+2$, 
computing $\Chwindow$ can be done with at most $\grandO(i_{\max} \log i_{\max} )$ operations.

Finally, step~3 of Algorithm~\ref{algo.penmingal} requires at most $\grandO(\card \M )$ operations.
Overall, Algorithm~\ref{algo.penmingal} always has a complexity $\grandO( i_{\max} \card \M ) \leq \grandO([\card \M ]^2)$.

\smallbreak

For Algorithm~\ref{algo.penmingal.slope}, step~1 is a (robust) linear regression ---hence it has a computational cost $\grandO(\card \M)$--- and step~2 can be done with $\grandO(\card \M )$ operations.
%
Note that $\mhcapushe$ has a larger computational cost since it requires to run 
$\grandO(\card \M )$ times 
Algorithm~\ref{algo.penmingal.slope}, hence a total cost of $\grandO([\card \M ]^2)$.

\paragraph{When computing all empirical risks is not tractable} 
In general, most of the computational complexity of computing $\mhAlgE$ or $\mhAlgF$ corresponds to computing $\Remp\parens{\shm}$ for \emph{all} $\mM$.
For instance, for density estimation with Gaussian mixture models \citep{Mau_Mic:2010}, performing maximum-likelihood estimation in several large models involves a large computational cost,
while we know that all corresponding estimators are always bad. 
Can we remove from the collection $(\shm)_{\mM}$ most estimators with $\C_m$ ``large'', 
without degrading too much the performance of Algorithms~\ref{algo.penmingal}--\ref{algo.penmingal.slope}? 

For the jump approach, two estimators having a small approximation error are needed to get a jump, 
as with the assumptions of Theorem~\ref{thm.OLS}: 
one of large complexity, one much less complex. 
If we are not sure of which estimators have a small enough approximation error, 
considering more than two of them can be helpful; 
otherwise, this does not hurt ---and we conjecture that this slightly decreases the 
variance of $\Chjumpgal\,$---, without being mandatory. 

For the slope approach, the picture is different. 
Having only two estimators with a small approximation error implies making a linear regression over 
the corresponding two points, 
which is very close to the residual-based estimator $\sigh^2_{m_0}$ defined by Eq.~\eqref{def.sighm0}, 
as shown by Figure~\ref{fig.OLS.slope-vs-residuals.easy-hard} in Appendix~\ref{app.morefig}.
Therefore, $\Chslope$ with only a few large-complexity estimators faces the risk that some of them have a large approximation error, to which it will be quite sensitive, unlike $\Chjumpgal$ 
(see Figure~\ref{fig.OLS.slope-vs-residuals.easy-hard}b).
Using a robust regression in $\Chslope$ decreases the risk but does not exclude it totally, 
as shown by the poor results of $\mhcapushe$ in our experiments in the `hard' setting in Section~\ref{sec.practical.jump-vs-slope}.

A more reliable strategy for the slope approach ---at least for settings a bit less difficult than our `hard' setting--- 
is to consider only estimators 
of complexity up to $\C_{\max}\,$, and to carefully check that $\C_{\max}$ is large enough 
by visualizing the linear relation between the empirical risk and $\pen_0\,$.
This can be done easily with the \textsc{Capushe} package \citep{Bau_Mau_Mic:2010}. 
The experiments of \citet[Section~5]{Bau_Mau_Mic:2010} show as expected that $\Chslope$ is 
better ---more stable--- when $\C_{\max}$ is large enough. 
Similarly, for change-point detection, 
\citeauthor{Leb:2002} (\citeyear{Leb:2002}, Chapter~4; \citeyear{Leb:2005}, Section~4.2) 
studies the influence of such a bound $\C_{\max}$ on $\Chmaxjump$ and propose a heuristic method 
---called ``calibrated''--- for choosing $\C_{\max}$ from data. 

Note finally that in some frameworks, 
well-chosen large-complexity estimators are easy to compute. 
For instance, in fixed-design regression, the estimator equal to the original data always 
has an empirical risk and an approximation error equal to zero 
---see assumption~\eqref{hyp.thm.OLS.Id} in Theorem~\ref{thm.OLS}.

\subsection{Nested minimal-penalty algorithm} \label{sec.practical.nested}
In a framework where $\M$ is a cartesian product $\M_1 \times \M_2\,$, 
\citet{Dev_Gal_Per:2017} propose a 
``nested slope heuristics'' algorithm, that we here generalize to 
Algorithms~\ref{algo.penmingal}--\ref{algo.penmingal.slope}. 
The idea is to choose $\mh = (\mh_1,\mh_2) \in \M_1 \times \M_2$ in two steps. 
First, for every $m_1 \in \M_1\,$, select one estimator among $( \sh_{(m_1,m_2)} )_{m_2 \in \M_2}$ 
with a minimal-penalty algorithm; 
the selected index is denoted by $\mh_2(m_1)$. 
Then, select one estimator among $(\sh_{( m_1, \mh_2(m_1) )} )_{m_1 \in \M_1}$ 
with a minimal-penalty algorithm. 
The numerical experiments of \citet{Dev_Gal_Per:2017} on some transcriptomic data-analysis problem 
show that 
such a nested algorithm can work, for choosing a number $m_1$ of clusters (of individuals) 
and a partitioning $m_2$ of the features (the genes) used for inferring 
a cluster-dependent gene regulatory network. 

\subsection{Estimation of several unknown constants in the penalty}
\label{sec.practical.nappe}
%
%
When the optimal penalty involves several unknown constants, that is,
\begin{equation} \label{eq.practical.nappe}
\forall \mM \, , \qquad 
\penopt(m) = C^{\star}_1 \pen_1^{(1)}(m) + \cdots + C^{\star}_k \pen_1^{(k)}(m)
\end{equation}
for some known $\pen_1^{(1)}, \ldots, \pen_1^{(k)}$,
the slope approach can be generalized, 
using linear regression for estimating simultaneously $C^{\star}_1, \ldots, C^{\star}_k\,$.
The idea has first been proposed with Algorithm~\ref{algo.OLS.slope} 
by \citet[Section~4.3.2]{Leb:2002} 
in the case of change-point detection, 
where the optimal penalty depends on $k=2$ constants. 

%
It has since been used ---with good numerical performance--- in several settings: 
change-point detection \citep{Arl_Cel_Har:2012:kernelchpt}, 
joint variable selection and clustering via Gaussian mixture models \citep{Mey_Mau:2012}, 
principal curves estimation \citep{Bia_Fis:2011}, 
and 
unsupervised segmentation of spectral images 
via piecewise-constant Gaussian mixture models \citep{Coh_LeP:2014}.

%
Nevertheless, no theoretical guarantees are currently available for such an algorithm. 
In addition to the practical issues already mentioned for the slope approach, 
this procedure is difficult to apply when there is not a single natural complexity measure $\C_m$ 
but several of them ---$\pen_1^{(1)}(m), \ldots, \pen_1^{(k)}(m)$ can be $k$ complexity measures---, 
which have to be combined wisely for defining what are the ``complex enough'' $m \in \M$ 
over which the (robust) linear regression should be done. 
Another major difficulty is when $\card(\M)$ or $n$ are not large enough to allow 
a good estimation of several constants $C^{\star}_1, \ldots , C^{\star}_k$ simultaneously. 

\medbreak

%
Another option is to make use of a simplified penalty shape 
---depending on a single multiplicative constant---, 
even when we know that it differs from the optimal shape given by Eq.~\eqref{eq.practical.nappe}. 
%
%
Several articles make use of such a simplified penalty, 
instead of trying to calibrate $k=2$ constants, 
with satisfactory numerical results: 
for density estimation / clustering with Gaussian mixture models (\citealp{Mau_Mic:2010}; see also \citealp[App.~C.2]{Mic:2008:phd}) 
or multinomial mixture models \citep{Der_LeP:2017}, 
for choosing a simplicial complex in the computational geometry field \citep{Cai_Mic:2009}, 
and for selecting jointly the rank and a set of variables 
in a high-dimensional finite mixture regression model \citep{Dev:2017:JMVA}.

%
A numerical comparison between simplified penalty shape and calibration of two constants 
is done in a few other papers, with various conclusions: 
favorable to the simplified shape 
\citep[Section~4.3.2, for change-point detection with Gaussian noise]{Leb:2002}, 
similar for both methods 
\citep[for inference of a high-dimensional Gaussian graphical model]{Dev_Gal:2016}, 
or favorable to the calibration of two constants 
---for change-point detection with Laplace noise and a simplified 
shape derived from experiments with Gaussian noise \citep[Section~4.6.4]{Leb:2002},  
for change-point detection with positive-definite kernels \citep{Arl_Cel_Har:2012:kernelchpt},  
and for curve clustering \citep[Figure~5]{Mey_Mau:2012}. 
%
%
As a conclusion, choosing between these two strategies should be done carefully, 
depending on the framework.

\subsection{Variants for change-point detection} \label{sec.practical.variants-chpt}
For change-point detection seen as a model-selection problem, 
three approaches closely related to minimal penalties have been proposed, 
without being exactly of the form of Algorithm~\ref{algo.penmingal}.
The first is the ``calibrated method'' \citep[Section~4.2]{Leb:2005} mentioned in Section~\ref{sec.practical.cost}.

%
Second, \citet[Section~2.3]{Lav:2005} remarks that $\Chmaxjump$ often leads to 
underestimating the number of changes.
Then, using the notation of Appendix~\ref{app.algos.path},
it is proposed instead to define $\mh$ as the largest $m_i = \mh(C_i)$
corresponding to a jump whose height $C_i - C_{i-1}$ is much larger than 
the one of the largest subsequent jump, that is, $\max_{j>i} \sets{ C_j - C_{j-1} }$.
This approach might be closer to an elbow heuristics (see Section~\ref{sec.related.elbow}) 
than to a minimal-penalty algorithm.

\paragraph{Statistical Base Jumping} 
%
Third, an unpublished idea by \citet{Roz:2012} is the following.
Assume that change-point detection is cast as a model-selection problem in fixed-design regression, 
so that we can use the notation of Section~\ref{sec.slopeOLS}.
Take some $D$ ``large'' but, say, smaller than $n/2$, for instance $D=n/\log(n)$ or $D=\sqrt{n}$.
Compute $\Fh_D$ the empirical risk minimizer over the set of piecewise-constant signals with $D$ pieces 
(which is a union of $\binom{n-1}{D-1}$ vector spaces of dimension~$D$). 
Consider the residual vector $\widetilde{Y} = Y - \Fh_D$ and apply the penalization approach to this pseudo-data, that is, compute
\begin{equation} \label{eq.penmin.Roz}
\forall C >0 \, , \quad
\widetilde{m}(C) \in \argmin_{m \in \M_n^{\mathrm{chpt}}} \set{ \frac{1}{n} \norm{\widetilde{Y} - \Pi_m \widetilde{Y}}^2 + C \pen_0(m) + \pen'_0(m) }
\end{equation}
where the model collection $\M_n^{\mathrm{chpt}}$ is the one adapted to change-point detection,  
and the penalty shape $C \pen_0(m) + \pen'_0(m)$ is a simplified version 
of the penalties proposed by \citet{Com_Roz:2004}, \citet{Lav:2005} and \citet{Leb:2005}, 
such as 
\[ 
C D_m \, , 
\qquad 
C D_m + \log \binom{n-1}{D_m-1} 
\qquad \text{or} \qquad 
C \log \binom{n-1}{D_m-1} + D_m 
\, . 
\]
Define $\ChRoz$ as the minimal value of $C>0$ such that $D_{\widetilde{m}(C)} = 1$, and finally select
\[
\mhRoz \in \argmin_{m \in \M_n^{\mathrm{chpt}}} \set{ \frac{1}{n} \norm{Y - \Pi_m Y}^2 + \frac{2 \ChRoz}{1 + \frac{D}{n}} \pen_0(m)  + \pen'_0(m) }
\, .
\]
Note that $\Fh_D\,$, $(\widetilde{m}(C))_{C \geq 0}$ and $\mhRoz$ can all be computed efficiently, in particular using dynamic programming.
The heuristics behind this method is that if $D$ is large enough to catch 
all true change-points of $F$ in $\Fh_D\,$, 
then $\widetilde{Y}$ does not contain any signal anymore, 
and $\ChRoz \pen_0$ is the minimal penalization level 
needed to recover with Eq.~\eqref{eq.penmin.Roz} the unique model of dimension one (constant signal).
The factor $1 + \frac{D}{n}$ dividing $\ChRoz$ corrects for the variance 
of the pseudo-sample~$\widetilde{Y}$.
Unpublished experiments \citep{Roz:2012} suggest that $\mhRoz$ provides very good segmentations, 
much better than with the original slope heuristics ---that is, 
Algorithm~\ref{algo.OLS.jump} or~\ref{algo.OLS.slope}, as done by \citet{Leb:2005} for instance.

\subsection{Other uses of minimal penalties} \label{sec.practical.other-uses}
Let us finish this section by mentioning two other uses of a minimal-penalty algorithm 
in the literature. 

\paragraph{Choice of a penalty shape} 
%
Algorithm~\ref{algo.penmingal} can be used for choosing among several penalty shapes, 
by detecting bad ones, which are the ones that do not lead to a clear dimension jump, 
as illustrated by \citet[Section~3 of supplementary material]{Bau_Mau_Mic:2010} 
in the setting of \citet{Cai_Mic:2009}.

\paragraph{Minimal-penalty assisted experiments} 
%
For estimating a good deterministic constant $\widetilde{C}^{\star}$ to be put in front of the penalty, 
\citet{Cha:2011:esaim} computes on 100 samples the constant $\Ch$ chosen by a slope heuristics algorithm, 
and defines $\widetilde{C}^{\star}$ as the maximal value of $\Ch$ observed over the 100 samples. 
The main interest of this approach is to require less computations than the standard one 
---which would be to compute, for every $C$ in a grid, 
the average over the 100 samples of the risk of the estimator selected with the penalty $C \pen_1\,$, 
and then to take $\widetilde{C}^{\star}$ that minimizes the average risks over $C$ in the grid---,  
even if the value $\widetilde{C}^{\star}$ might not be the optimal one.


\section{Conclusion, conjectures, and open problems} \label{sec.empirical}
As a conclusion of this survey, we sketch what is known theoretically for minimal-penalty algorithms, 
as well as several conjectures and open problems of high interest. 
Let us recall that Section~\ref{sec.hints} provides some hints for tackling many of these conjectures 
and open problems. 
\subsection{Settings and losses where minimal-penalty algorithms apply}
Let us sketch the set of frameworks for which minimal-penalty algorithms
are theoretically justified, at least partially
(see Section~\ref{sec.theory}).

The typical situation is a polynomial collection of
minimum-contrast estimators
(also called empirical risk minimizers)
with a regular contrast \citep{Sau:2010:phd}
(for instance, the least-squares contrast), 
the corresponding risk (expected value of the contrast), 
and (almost) i.i.d.\@ data.
Then, all obtained results show that $\penopt \approx 2 \penmin\,$.
In other terms, the slope heuristics holds true in several frameworks ``close to'' 
choosing among a polynomial collection of projection estimators 
with the least-squares risk and i.i.d.\@ data.

The general result proved by \citet[Chapters~7--8]{Sau:2010:phd}
for regular contrasts suggests that the slope heuristics
is probably valid in all frameworks that are close enough
to this ideal situation ---e.g., least-squares estimators in 
regression or (conditional) density estimation with the least-squares risk---, under appropriate assumptions. 

\medbreak

Two results, in regression \citep{Arl_Bac:2009:minikernel_nips} 
and in density estimation \citep{Ler_Mag_Rey:2016}, 
show that linear estimators can be considered
instead of minimum-contrast estimators,
at the price of changing the slope heuristics (Section~\ref{sec.slopeOLS})
into a minimal-penalty heuristics (Section~\ref{sec.penmingal}).
A similar extension can probably be done in other settings for estimators 
that are (close to) linear functions of (part of) the data.

More generally, using the notation introduced in Section~\ref{sec.theory.approach}, 
it is probably often true that $\E[p_2(m)]$ is a minimal penalty 
and $\E[p_1(m) + p_2(m)]$ an optimal penalty.
But unless these expectations are (approximately) known up to a multiplicative constant,
applying such results requires to estimate $\E[p_2(m)]$ and $\E[p_1(m)+p_2(m)]$ 
by resampling (see Remark~\ref{rk.reech} in Section~\ref{sec.theory.complete}),
and we then loose a nice feature of Algorithm~\ref{algo.penmingal} 
which is its small computational cost compared to cross-validation.

\subsection{Unavoidable assumptions}
Even in settings for which a full proof of a minimal-penalty algorithm is known,
a natural question to ask is which assumptions are unavoidable
for this algorithm to work.

Based upon existing proofs ---in particular the one of
Theorem~\ref{thm.OLS}, which is typical---,
we conjecture that at least three assumptions are (almost) needed.

First, a ``complex'' estimator should be present in the collection, similarly to \eqref{hyp.thm.OLS.Id}.
Note that such an estimator can often be added on purpose
to a predefined collection.

Second, one ``less complex'' but ``good'' estimator should also be present in the collection, 
similarly to what Theorem~\ref{thm.OLS} assumes implicitly. 
The exact definition of ``good'' can depend on the context. 
In general, being consistent should suffices; 
note that assuming that the oracle estimator is consistent is a mild assumption for 
estimator selection, since otherwise the problem is not much interesting. 
In the setting of Theorem~\ref{thm.OLS}, it suffices to have a model with a small 
approximation error, even if the corresponding estimator is not consistent. 
Note however that such an assumption can be violated in practice, 
for instance when the estimator collection has not been well chosen, 
so that the approximation error never vanishes. 

Third, it seems reasonable to make some mild moment assumption on the data
so that the key quantities $p_1$ and $p_2$ concentrate around their expectations, 
at least when using deterministic penalty shapes.
Nevertheless, a Gaussian assumption such as \eqref{hyp.thm.OLS.Gauss}
is not necessary  \citep{Arl_Mas:2009:pente,Sau:2010:Reg}; see also Remark~\ref{rk.thm.OLS.subgaussian}  in Section~\ref{sec.slopeOLS.math}.
Independence of data is not necessary either \citep{Ler:2010:mixing,Gar_Ler:2011}.
Risk bounds could be obtained under much weaker moment assumptions 
---for instance, when the noise only has a finite moment of order two---, 
for empirical risk minimizers \citep{Men:2018} 
or for robust estimators \citep[for instance]{Aud_Cat:2011}. 
Nevertheless, we are not aware of any theoretical result on minimal penalties in such a setting. 

\subsection{Other settings, losses, estimators}
\label{sec.empirical.conjectures}
Numerical experiments show that minimal-penalty algorithms can be used fruitfully 
in many other settings such as 
supervised classification \citep{Zwa:2005:phd}, 
model-based clustering \citep{Mau_Mic:2010,Bau:2012:ejs}, 
high-dimensional inference \citep{Dev_Gal:2016}, 
change-point detection \citep{Leb:2005,Bar_Ken_Win:2010}, 
topological data analysis \citep{Cai_Mic:2009}, 
functional linear models \citep{Roc:2014} 
or Hawkes-process intensity estimation \citep{Rey_Sch:2010}, 
with applications in various domains such as 
biology ---genomics \citep{Aka:2011,Rey_Sch:2010}, 
transcriptomics \citep{Rau_Mau_Mag_Cel:2015,Dev_Gal:2016}, 
quantitative trait prediction from genomic data \citep{Dev_Gal_Per:2017}, 
population genetics \citep{Bon_Tou:2010}---, 
energy ---electricity consumption prediction \citep{Dev_Gou_Pog:2015:journal}, 
oil production modelization \citep[Chapter~6]{Mic:2008:phd}---, 
hyperspectral image segmentation \citep{Coh_LeP:2014}, 
text analysis \citep{Der_LeP:2017},  
and bike sharing systems \citep{Bou_Com_Jac:2015,God_Mau_Rau:2018}. 
Combining these numerical experiments 
---especially the ones showing that the empirical risk is indeed close to a linear function of 
some known $\pen_0$ for large-complexity estimators--- 
with the partial theoretical results available (Section~\ref{sec.theory}), 
several settings can be identified where we conjecture that
a minimal-penalty algorithm such as Algorithms~\ref{algo.penmingal}--\ref{algo.penmingal.slope} 
could be used fruitfully. 
Among them, we select below the most challenging ones,
in terms of both practical applications and theoretical interest.

\subsubsection{Supervised classification} 
\label{sec.empirical.conjectures.classif-superv} 
A classical setting where minimal-penalty algorithms would be quite useful is 
supervised classification with the 0--1 loss and corresponding empirical 
risk minimizers. 
No theoretical result is available up to now, 
and it seems tough to prove any because the 0--1 contrast is far from being regular. 
Nevertheless, \citet{Bou_Mas:2004} provide a key ingredient of the proof, 
that is, a concentration inequality for $p_2(m)$ that applies easily to 
0--1 classification, even when fast learning rates are possible. 
Given Proposition~\ref{pro.pbpenmin.gal.below} and 
the general strategy detailed in Section~\ref{sec.hints}, 
what remains is to prove a similar concentration inequality for $p_1(m)$, 
and to be able to estimate (up to the same unknown constant) 
$\E[p_1(m)]$ and $\E[p_2(m)]$.  

A probably easier open problem is to provide theory for the case of 
classification with a convex loss, 
such as the logistic loss ---at the basis of logistic regression--- 
or the hinge loss ---at the basis of support vector machines. 
At least, for the hinge loss, the numerical experiments reported by  
\citet[Section~6.4.3]{Zwa:2005:phd} suggest 
that minimal-penalty algorithms can work with 
$\pen_0(m) = \C_m = D_m$ 
and $\pen_1(m) = 2 D_m\,$. 

\subsubsection{Model-based clustering, choice of the number of clusters} 
\label{sec.empirical.conjectures.clustering} 
Estimating the number of clusters for (unsupervised) clustering 
is another problem where fine tuning of penalties is a major challenge. 
A classical approach ---called model-based clustering--- 
is to estimate the data density by maximum-likelihood on a mixture model, 
and to define clusters by a maximum a posteriori rule. 
Then, the number of clusters can be chosen by maximizing the penalized log-likelihood. 
Minimal-penalty algorithms with $\pen_0(m) = \C_m = D_m$ 
and $\pen_1(m) = 2 D_m$ are shown 
successful by experiments on synthetic and real data, 
for various problems following this strategy (up to modifications that are specified below): 
\begin{itemize}
%
\item clustering with 
Gaussian \citep{Bau:2012:ejs}, 
Poisson \citep{Rau_Mau_Mag_Cel:2015}, 
multinomial \citep{Der_LeP:2017}, 
or some functional \citep{Bou_Com_Jac:2015}
mixture models. 
%
%
\item clustering with Gaussian mixtures 
and the \emph{conditional} log-likelihood 
instead of the log-likelihood, 
which leads to slightly different kinds of clusters 
\citep{Bau:2012:ejs}. 
%
%
\item joint clustering and variable selection ---that is, identifying which features are relevant for clustering--- 
with mixtures of 
Gaussian \citep{Mau_Mic:2010} 
or multinomial \citep{Bon_Tou:2010} 
variables. 
\item efficient joint clustering and high-dimensional variable selection 
with maximum-likelihood estimators trained on \emph{data-driven models} 
obtained by a first step of $L^1$ penalization \citep{Mey_Mau:2012}; 
here, the shape of the minimal penalty seems to be close to a linear combination of 
$D_m$ and $D_m \log \frac{p}{D_m}$ \citep[Figure~6]{Mey_Mau:2012}, 
and the algorithm described in Section~\ref{sec.practical.nappe} can be used. 
\item when an additional feature vector is provided for each observation, 
clustering a mixture of Gaussian regression models, jointly done with 
feature selection \citep{Dev:2017,Dev_Gou_Pog:2015:journal} 
or partitioning \citep{Dev_Gal_Per:2017}, 
via two-steps procedures similar to the one of \citet{Mey_Mau:2012}. 
\end{itemize}

%
We conjecture that minimal-penalty algorithms indeed work in these settings, 
that is, as one can observe on synthetic or real data: 
(i) $p_2(m)$ is (close to) a linear function of the number of parameters $D_m\,$, 
(ii) Algorithm \ref{algo.penmingal} or~\ref{algo.penmingal.slope} with $\pen_0(m)=D_m$ 
and $\pen_1(m) = \alpha D_m$ provides an estimator with a small 
Kullback-Leibler risk, for some $\alpha>1$ to be determined, 
and (iii) the number of clusters selected by this algorithm is equal to the true one $K^{\star}$ with large probability 
when $n$ is large and the data distribution is close to a mixture with $K^{\star}$ components. 
Note that (ii) is a density estimation guarantee 
---hence, slightly different from clustering, but classical for justifying theoretically 
a penalty shape \citep{Mau_Mic:2008,Mey_Mau:2012,Bon_Tou:2010,Der_LeP:2017,Dev:2017:JMVA}--- 
and that (ii) and (iii) may require different values of $\alpha$ 
since estimation and model identification are different goals for model selection, 
see Section~\ref{sec.empirical.conjectures.identif}. 
When variable selection is done jointly with clustering, 
(iii) can be completed by the fact that the true set of relevant variables 
is selected with large probability. 

Up to now, only oracle inequalities with theoretical penalties are available in 
some of these settings. 
Of course, proving the above conjecture would be less difficult for pure model-based clustering 
\citep{Bau:2012:ejs,Bou_Com_Jac:2015,Rau_Mau_Mag_Cel:2015,Der_LeP:2017} 
than for the two-steps algorithm using $L^1$-penalized maximum-likelihood for defining 
data-driven models 
\citep{Mey_Mau:2012,Dev:2017,Dev_Gou_Pog:2015:journal}. 

Note also that variable selection, without any order among variables, 
means that the model collection considered is large 
---with the terminology of Section~\ref{sec.theory.rich}---, 
at least implicitly; 
this fact raises specific issues that are addressed in Section~\ref{sec.empirical.conjectures.rich}. 

\subsubsection{High-dimensional statistics} 
\label{sec.empirical.conjectures.high-dim} 
Hyperparameter tuning is a major issue for high-dimensional statistics \citep[Chapter~5]{Gir:2014},  
which can often be addressed by penalization. 
Despite several positive numerical results, 
providing a full theoretical proof of a minimal-penalty algorithm in this context remains an open problem. 

\paragraph{Clustering} 
Numerical results about minimal-penalty algorithms for joint (model-based) clustering and variable selection 
are reviewed in Section~\ref{sec.empirical.conjectures.clustering}. 
For block-diagonal estimation of the covariance matrix of a high-dimensional Gaussian vector 
(graphical model), \citet{Dev_Gal:2016} provide positive numerical results for a similar algorithm 
---maximum likelihood on data-driven models obtained by thresholding the empirical covariance matrix. 
Nevertheless, given the difficulty of proving an oracle inequality 
\citep[for instance]{Dev_Gal:2016}, 
it seems hard to obtain a full theoretical validation of the minimal-penalty algorithms of 
\citet{Mey_Mau:2012}, \citet{Dev:2017} and \citet{Dev_Gal:2016}. 

\medbreak

\paragraph{Regression: the Lasso and related algorithms} 
One of the most classical high-dimensional statistics problem is variable selection in linear regression, 
for which the (group) Lasso and related algorithms are popular. 
Penalized least-squares can be used for choosing their parameters, 
thanks to covariance penalties \citep{Efr:2004}, 
which have a simple expression of the form $\sigma^2 \mathrm{df} / n$ when the noise is Gaussian, 
where $\sigma^2$ denotes the residual noise-level 
and $\mathrm{df}$ are the degrees of freedom. 
Easy-to-compute estimators of $\mathrm{df}$ exist for 
the Lasso \citep{Tib_Tay:2011b:journal,Dos_etal:2011:journal} 
and group Lasso \citep{Vai_etal:2012}, among others. 
The remaining issue is to estimate $\sigma^2$  
without knowing any small correct model, 
for which minimal penalties are a natural approach 
(see Section~\ref{sec.related.variance}). 

%
The PhD dissertation of 
\citet{Con:2011:phd} provides an extensive numerical study of minimal penalties 
for calibrating either the Lasso or least-squares estimators trained on models selected by the Lasso 
(`Lasso+LS'). 
In short, the major difficulty is that the natural candidate for $\pen_0(m)$, 
that is $\E[p_2(m)]/\sigma^2$, cannot be used because it depends on the (unknown) signal $\beta^{\star}$. 
Simplified penalty shapes ---that is, $\E[p_2(m)]/\sigma^2$ for a zero signal, or for a zero signal and 
an identity design matrix--- often work for Lasso+LS, and sometimes for the Lasso, 
depending on the signal-to-noise ratio and on the sparsity of the signal. 
This is not satisfactory because these minimal-penalty algorithms sometimes fail for the Lasso or Lasso+LS, 
and \citet{Con:2011:phd} proposes ``antidotes'' for detecting the failure 
but nothing for correcting it, except using cross-validation. 

%
It nevertheless seems possible to solve the case of an orthogonal design matrix,
when the Lasso is soft thresholding 
and Lasso+LS is hard thresholding. 
Taking the number of selected variables $k$ as tuning parameter, 
\citet[Section~2]{Lou_Mas:2004} conjecture that for soft thresholding, 
$\E[p_2(k)] \approx \frac{3}{2} \frac{\sigma^2 k }{n}$.  
If one could prove this conjecture, a minimal-penalty algorithm could be used for estimating $\sigma^2$ 
---hence for calibrating soft (or hard) thresholding. 
Section~\ref{sec.empirical.conjectures.rich} discusses the case of hard thresholding. 

%
For a general design matrix and for other estimators, 
we think that the key question is to find the good parametrization of the 
estimator to be calibrated. 
For instance, the Lasso can be parametrized by the regularization parameter 
or by the number of selected variables, 
and \citet{Con:2011:phd} shows that the theoretical minimal penalty $\E[p_2(m)]$ 
and the performance of minimal-penalty algorithms 
strongly depend on the chosen parametrization. 
We also conjecture that the solution might not come from a direct application of 
Algorithms~\ref{algo.penmingal}--\ref{algo.penmingal.slope}, 
but by the more general approach of identifying an observable phase transition 
---with respect to some well-chosen parameter--- 
that provides the key information for an optimal calibration 
of the algorithm considered ---for instance, an estimation of $\sigma^2$ for the (group) Lasso. 
This idea has already been proposed in a few settings that are detailed in Section~\ref{sec.empirical.outside}. 
For high-dimensional regression, theoretical results prove the existence of 
phase transitions in the risk of the Lasso (\citealp[Section~4]{Bel:2017}; \citealp[Section~4.3]{Bel:2018:v4}) 
and of the constrained formulation of the Lasso \citep[Section~2.1]{Cha:2014}. 
Nevertheless, it is not clear whether these phase transitions are observable, 
so they might not be useful for choosing hyperparameters. 
The approach of Section~\ref{sec.related.thresh-null} seems to be another promising direction 
for tackling this problem.

Let us finally mention that concentration inequalities for $p_1(m)$ are available  
for the Lasso and some related algorithms ---see Section~\ref{sec.theory.hint.penopt} for details. 
They can be useful for validating minimal-penalty algorithms.

\subsubsection{Large collection of models}
\label{sec.empirical.conjectures.rich} 
As recalled in Section~\ref{sec.theory.rich}, the nature of the model-selection problem 
depends on the size of the model collection. 
All full proofs and almost all partial results available for minimal-penalty algorithms are for small 
collections, for which optimal model selection can be obtained with the unbiased risk estimation heuristics. 
For large collections, only a few partial theoretical results are available, 
as reported in Section~\ref{sec.theory.rich}. 
Therefore, the case of large collections remains a widely open problem of major interest.

%
The most classical situation is variable selection among $p \gtrsim n$ variables, 
which amounts to select among a collection of $2^p$ models. 
Let us start by focusing on the two settings where minimal-penalty algorithms are best understood: 
(i) variable selection with $p=n$ and an orthonormal design 
---so that penalizing the least-squares criterion by a function of the number of variables 
is equivalent to hard thresholding---, and 
(ii) change-point detection ---finding the locations of abrupt changes 
in the distribution of a sequence of $n$ observations--- 
which can be casted as a variable-selection problem with $p=n-1$ variables 
---the possible breakpoint locations--- and solved by penalized least-squares. 

Let us also recall that the notation 
(\pbpenmin), \eqref{pb.penmin.Cpt-Dgrd}, \eqref{pb.penmin.Cgrd-Dpt}, 
(\pbpenminalt), \eqref{pb.penopt}, and \eqref{pb.penopt.weak} 
refer to partial results about minimal-penalty algorithms; 
they are defined in Sections~\ref{sec.theory.approach}--\ref{sec.theory.pbpenminalt}. 

\medbreak

\paragraph{Orthonormal variable selection by hard thresholding} 
%
%
For variable selection with an orthonormal design and Gaussian noise, 
\citet[Proposition~2]{Bir_Mas:2006} prove (\pbpenminalt) and \eqref{pb.penopt.weak} 
with a minimal penalty of order 
$2 \sigma^2 \frac{D_m}{n} \log \frac{n}{D_m}$ ---at least for $1 \ll D_m \ll n$. 
We conjecture that (\pbpenmin) holds true in the same setting: 
a proof of \eqref{pb.penmin.Cpt-Dgrd} derives from the proof written by \citet[Proposition~2]{Bir_Mas:2006}, 
and \eqref{pb.penmin.Cgrd-Dpt} seems easy to get given the results 
obtained by \citet{Bir_Mas:2006}. 
Then, (\pbpenmin) and \eqref{pb.penopt.weak} would prove that 
Algorithm~\ref{algo.penmingal}, 
with $\pen_0(m) \approx D_m \log \frac{n}{D_m}$ 
and $\pen_1(m)$ given by \citet[Section~3.1.3]{Bir_Mas:2006},  
provides a good data-driven variable-selection procedure, 
satisfying an oracle inequality close to being optimal. 

%
The main remaining challenge for having a full proof of a first-order optimal procedure 
is problem \eqref{pb.penopt}: 
find a first-order optimal penalty of the form $\sigma^2 \pen_1(m)$ with $\pen_1(m)$ known. 
We think that this is a hard problem, 
whose resolution would have a great impact on model-selection theory in general, 
since even the \emph{value} of the optimal 
excess risk of such a variable-selection procedure is not exactly known at first order. 
We only know by minimax arguments that it should be of order 
$\log \frac{n}{D_{m^{\star}}}$ times the oracle excess risk, up to a constant factor. 

%
Another open problem for orthonormal variable selection 
is to determine the minimal penalty for non-Gaussian noise. 
Contrary to small model collections, this is not a straightforward extension 
of Gaussian results since the experiments of \citet[Section~5]{Leb:2005} for 
another large collection problem ---change-point detection--- 
suggest that the minimal penalty then is different for Laplace and for Gaussian noise. 

\medbreak

\paragraph{Change-point detection} 
Penalized least-squares is a classical approach to change-point detection, 
for which an oracle inequality \eqref{pb.penopt.weak} holds true for Gaussian noise  
and a penalty of the form $\sigma^2 \frac{D_m}{n} \bigl[ c_1 \log \frac{n}{D_m} + c_2 \bigr]$ 
where $c_1,c_2 >0$ are two numerical constants and $\sigma^2$ is the residual noise-level \citep{Leb:2005}; 
a similar result in a slightly different setting is proved by \citet{Arl_Cel_Har:2012:kernelchpt}. 
The numerical experiments of \citet{Leb:2005}, \citet{Sor:2017}, 
\citet[Section~4.1]{Gar:2017:phd},  
\citet[Figure~4, for instance]{Cab_etal:2018} and  \citet{Arl_Cel_Har:2012:kernelchpt}, 
as well as the partial theoretical results of \citet{Sor:2017} ---see Section~\ref{sec.theory.rich}---, suggest 
that a minimal-penalty algorithm 
with $\C_m=D_m$ and $\pen_0(m)$ proportional to a linear combination of $D_m$ and $D_m \log \frac{n}{D_m}$
(or close to it, according to \citealp{Bir_Mas:2006}, and \citealp{Sor:2017}) should work well in this setting. 
Given the numerical experiments of \citet{Leb:2005} and \citet{Arl_Cel_Har:2012:kernelchpt}, 
we conjecture that the ratio between the optimal and minimal penalty belongs to $(1,2]$;  
it may be model-dependent. 

Proving these conjectures formally would require to solve two open problems: 
(a) prove the existence of a dimension jump for some known $\pen_0$ 
---or, equivalently, prove that step~2 of Algorithm~\ref{algo.penmingal.slope},
or its generalization of Section~\ref{sec.practical.nappe}, works well---, 
and (b) prove an optimal oracle inequality for a penalty $C^{\star} \pen_1$ 
that can be derived from the minimal penalty. 
Problem (b) is very hard, as for orthonormal variable selection. 
Problem (a) seems less difficult: 
\citet[Chapter~8]{Sor:2017} is close to proving it, but there is still a gap between 
``large enough'' and ``too small'' penalties, which leaves open the possibility of having 
no clear dimension jump. 
It remains a challenge, as emphasized by the fact that the shape of the minimal penalty seems to depend 
on the noise distribution not only through its variance \citep[Section~5]{Leb:2005}. 
Note that proving (a) with $C^{\star} = \sigma^2$ would be sufficient to get a 
good data-driven penalty for change-point detection, 
since an oracle inequality ---maybe suboptimal--- is already available 
for a penalty depending on $\sigma^2$ and known quantities. 

%
To conclude on change-point detection via penalized least-squares, let us recall that 
\citet{Roz:2012} proposes a related but different approach to penalty tuning 
---see Section~\ref{sec.practical.variants-chpt}--- that might be even more efficient than 
minimal-penalty algorithms for change-point detection with penalized least-squares. 
Justifying it theoretically would therefore be of great interest.

%
Beyond penalized least-squares, slope-heuristics algorithms empirically work well 
for change-point detection with dependent data in two settings: 
causal processes with the maximizer of a penalized log-likelihood \citep{Bar_Ken_Win:2010},   
and long-memory processes with the minimizer of a local Whittle contrast \citep{Bar_Gue:2018:preprint}. 
\citet{Bar_Ken_Win:2010} even show the remarkable fact that minimal penalties numerically 
adapt to variations of the optimal constant~$C^{\star}$ ---which can be of order $\log(n)$ or $\sqrt{n}$--- 
when the dependence structure varies. 
A theoretical validation of these results seems quite a challenge, 
since handling small model collections 
in the same settings already is an open problem. 

\medbreak

\paragraph{General setting} 
Understanding minimal-penalty algorithms for more general variable-selection problems seems  
a too high theoretical challenge for the next few years. 
We nevertheless conjecture that minimal-penalty algorithms work well beyond 
orthonormal variable selection and change-point detection, 
given the successful experiments of \citet{Mau_Mic:2010} and \citet{Bon_Tou:2010} 
for joint variable selection and model-based clustering  
(see Section~\ref{sec.empirical.conjectures.clustering}). 

%
Minimal-penalty algorithms experimentally work well in several other settings mentioned previously, 
where large model collections are implicitly considered 
through $L^1$ penalization or thresholding: 
model-based clustering \citep{Mey_Mau:2012}, 
multivariate regression with a mixture of linear models \citep{Dev:2017,Dev_Gal_Per:2017},  
and Gaussian graphical model estimation \citep{Dev_Gal:2016}. 

\subsubsection{Infinite estimator collections}
Throughout the article, the estimator collection $(\shm)_{\mM}$ is assumed to be finite. 
Nevertheless, Algorithms~\ref{algo.penmingal}--\ref{algo.penmingal.slope} 
can still be used (at least theoretically, due to computational issues) for some infinite collections 
that behave as small (finite) collections, with the terminology of Sections 
\ref{sec.theory.rich} and~\ref{sec.empirical.conjectures.rich}. 

Indeed, \citet{Arl_Bac:2009:minikernel_long_v2} prove that 
Algorithm~\ref{algo.penmingal} can be used for selecting a tuning parameter 
within a \emph{continuous set}~$\M$. 
The proof of \citet{Arl_Bac:2009:minikernel_long_v2} for kernel ridge regression 
mostly relies on two facts: 
(i) Algorithm~\ref{algo.penmingal} would work for a collection of $L_1 n^{L_2}$ 
such estimators for any fixed $L_1, L_2>0$, 
(ii) the collection of kernel ridge regressors can be well approached by a finite 
collection of $L_1 n^{L_2}$ estimators for some fixed $L_1, L_2>0$. 
We conjecture that a similar approach can be used for proving that 
Algorithm~\ref{algo.penmingal} works with some other continuous collections, 
starting by multiple-kernel ridge regression with a fixed number of kernels.

\subsubsection{Model selection for identification of the true model}
\label{sec.empirical.conjectures.identif} 
Throughout this survey, we assume that the goal is to choose a data-driven $\mh \in \M$ such that 
the risk of $\sh_{\mh}$ is minimal, that is, satisfies a non-asymptotic oracle inequality \eqref{eq.oracle}.
Model selection can target a different goal, which is to identify the smallest true model $\mo \in \M$ 
with probability one asymptotically, assuming that some true model exists; 
a procedure $\mh$ achieving this goal is said ``model-consistent''. 
Then, the exact same procedure cannot achieve the two goals in general \citep{Yan:2005a}. 
Can minimal-penalty algorithms still be useful for identification? 
Some experiments and theoretical arguments suggest a positive answer.

\paragraph{Role of the size of the model collection} 
%
%
For large collections ---see Section~\ref{sec.empirical.conjectures.rich}---, 
it turns out that both estimation and identification require to overpenalize 
compared to the unbiased risk estimation principle. 
Therefore, the minimal-penalty algorithms suggested in Section~\ref{sec.empirical.conjectures.rich} 
should also work for identification. 
This conjecture is supported by the experiments of 
\citet[Figure~2]{Bon_Tou:2010} about variable selection in multinomial mixture models, 
and by those of \citet[Figure~2]{Arl_Cel_Har:2012:kernelchpt} and \citet[Figure~5--6]{Gar_Arl:2015} 
about change-point detection.

%
For small collections, the picture is different. 
A typical example is least-squares fixed-design regression 
with projection estimators, as in Section~\ref{sec.slopeOLS}.  
Let us focus on this setting here for simplicity. 
The slope heuristics then leads to a model-selection procedure 
equivalent to $C_p\,$, which is first-order optimal for estimation  
but inconsistent for identification \citep[Theorem~1]{Sha:1997}. 
A~simple way to fix this failure is to replace the factor $2$ 
between minimal and optimal penalties in the slope heuristics 
by, say, a $\log(n)$ factor, in order to get a BIC-type penalty, 
hence consistent for identification \citep[Theorem~2]{Sha:1997}. 
Such a correction of the slope heuristics may seem unsatisfactory, 
so one may consider to combine it with a procedure choosing from data 
between AIC and BIC-type penalties \citep{Yan:2005a,Erv_Gru_Roo:2008}. 

\paragraph{Change-point detection}
%
%
For change-point detection with well-chosen small collections of models, 
\citet{Gey_Leb:08}, \citet{Dur_Leb_Toc:2009} and \citet{Aka:2011} propose specific hybrid procedures 
for identification of the change-point locations. 
In short, they consist of two-steps procedures, with minimal-penalty algorithms in both steps. 
The first step selects within a small model collection, providing an oversegmentation of the data sequence. 
The second step removes the unnecessary change-points. 
Proving the consistency of these procedures ---including the minimal-penalty algorithms--- 
is an open problem. 
Another natural question is to generalize such two-steps procedures to other model-selection 
problems with an identification goal.

\paragraph{Minimal penalties for consistent identification} 
Let us finally mention some theoretical results about the minimal level of penalization 
needed for model consistency, that is, for having $\mh = \mo$ a.s. asymptotically.
Even without a corresponding calibration algorithm ---since the minimal penalty is not observable here---, 
this question remains of interest for theory.

In least-squares regression, \citet[Theorem~1 (iii)]{Sha:1997} shows that
$C_p$ is not model consistent, assuming only that some true model $m^{\prime} \in \M$ exists with $D_{\mo} < D_{m^{\prime}} \leq D_{\mo} + \kappa$ for some fixed $\kappa>0$.
Such a result actually holds for any penalty of the form $C D_m / n$ with $C \geq 0$ fixed as $n$ grows, which can be proved from arguments used in the proof of Theorem~\ref{thm.OLS}.
Conversely, using a penalty of the form $\lambda_n D_m / n$ with $\lambda_n \to +\infty$ and $\lambda_n / n \to 0$ as $n$ tends to infinity provides a model-consistent procedure \citep[Theorem~2]{Sha:1997}.
Therefore, the minimal level of penalization for identification is of the form $\lambda_n D_m / n$ with $\lambda_n \to + \infty$.

For maximum-likelihood estimators, at least two results are available.
BIC-type penalties are minimal for estimating the order of a Markov chain 
without any prior upper bound on its order \citep{vHa:2011}. 
For density estimation with i.i.d.\@ data, 
identifying the true model among a nested family 
by minimizing the log-likelihood penalized by $\pen(m) = f(m) g(n)$ 
requires that $g(n) > C^{\star}(\bayes) \log \log n$ for some constant $C^{\star}(\bayes)>0$ \citep{Gas_vHa:2013}.

\subsubsection{Miscellaneous}
\paragraph{Model selection} 
Numerical experiments suggest that minimal-penalty algorithms work well 
for several other model-selection problems. 
We list them below, in order to 
help identifying settings where new theoretical results could be proved: 
\begin{enumerate}
\item \emph{Heteroscedastic regression} when the residual variance is known up to a constant 
---which can occur for inverse problems---, with least-squares risk and estimators 
\citep[Section~2.6.2]{Vil:2007:these}, 
beyond regressograms and strongly localized bases for which theoretical results are 
already known for a random design \citep{Arl_Mas:2009:pente,Sau_Nav:2017}. 
The fixed-design case can be handled similarly to the results of \citet{Arl_Bac:2009:minikernel_long_v2}.  
The random-design case with general models is clearly more challenging.

\item Estimation of two kinds of \emph{geometrical objects}, 
with least-squares risk and estimators: 
simplicial complices \citep{Cai_Mic:2009}   
and principal curves \citep{Bia_Fis:2011}.

\item \emph{Least-squares risk and estimators} for 
clustering of compositional data \citep{God_Mau_Rau:2018},  
Hawkes-process intensity estimation \citep{Rey_Sch:2010}, 
and in a functional linear model \citep[Section~2.4]{Roc:2014}.

\item \emph{Maximum-likelihood estimators with the Kullback-Leibler loss} for 
semiparametric regression with censored data via the Cox model \citep{Let:2000}, 
point-process intensity estimation \citep[Section~6.3 and Appendix~D.1.2]{Mic:2008:phd}, 
regression for counting processes under a proportional-hazard assumption \citep{Oue_Lop:2013},  
and segmentation of spectral images via spatialized Gaussian mixtures \citep{Coh_LeP:2014}. 

\end{enumerate}

\paragraph{Estimator selection} 
We emphasize in this survey that minimal-penalty algorithms can be useful for estimator 
selection in general. 
Beyond the few theoretical results pointed out in Section~\ref{sec.theory.complete}, 
we conjecture that Algorithms~\ref{algo.penmingal}--\ref{algo.penmingal.slope} 
work for several estimator-selection problems. 

First, numerical experiments suggest that they can be used for selecting among 
maximum-likelihood or least-squares estimators trained on 
\emph{data-driven models}, 
obtained by $L^1$ penalization in a variable-selection setting 
\citep{Mey_Mau:2012,Dev:2017}, 
by thresholding the empirical covariance matrix 
\citep{Dev_Gal_Per:2017,Dev_Gal:2016}, 
by $k$-means \citep{Cai_Mic:2009}, 
or by (kernel) PCA in classification \citep[Section~6.4.3]{Zwa:2005:phd}
or functional data analysis \citep[Section~2.4]{Roc:2014}. 
Some of these results are detailed above in Sections~\ref{sec.empirical.conjectures.clustering} 
and~\ref{sec.empirical.conjectures.high-dim}. 

Second, minimal-penalty algorithms can be used successfully for the pruning step of 
CART in regression \citep{Gey_Leb:08} 
and of a spatial variant of CART \citep{Bar_Gey_Pog:2018}, 
according to numerical experiments. 

Finally, Algorithms~\ref{algo.penmingal}--\ref{algo.penmingal.slope} 
certainly cannot succeed for 
any kind of estimator collection. 
Section~\ref{sec.empirical.outside} describes 
a natural and promising way to generalize the minimal-penalty approach 
beyond Algorithms~\ref{algo.penmingal}--\ref{algo.penmingal.slope}.

\subsection{Overpenalization} \label{sec.empirical.overpenalization}
It is known empirically that a better model-selection performance
can be obtained by overpenalizing a bit: 
the penalty $C \penoptgal(m) = C \E\crochb{ \Risk\parens{\shm} - \Remp\parens{\shm} }$ 
%
has optimal performance when $C$ is slightly above~$1$,
as shown for instance by \citet{Arl_Bau:2002}, 
\citet[Chapter~11]{Arl:2007:phd} and
\citet[Section~6.3.2]{Arl:2009:RP} in the regression setting, 
and by \citet[Figure~3]{Arl_Ler:2012:penVF:JMLR} in least-squares density estimation. 
A similar phenomenon holds in the experiments of Section~\ref{sec.practical.jump-vs-slope},
as shown by Figure~\ref{fig.surpen}.
\begin{figure}
\begin{center}
\includegraphics[width=0.49\textwidth]{\pathfig/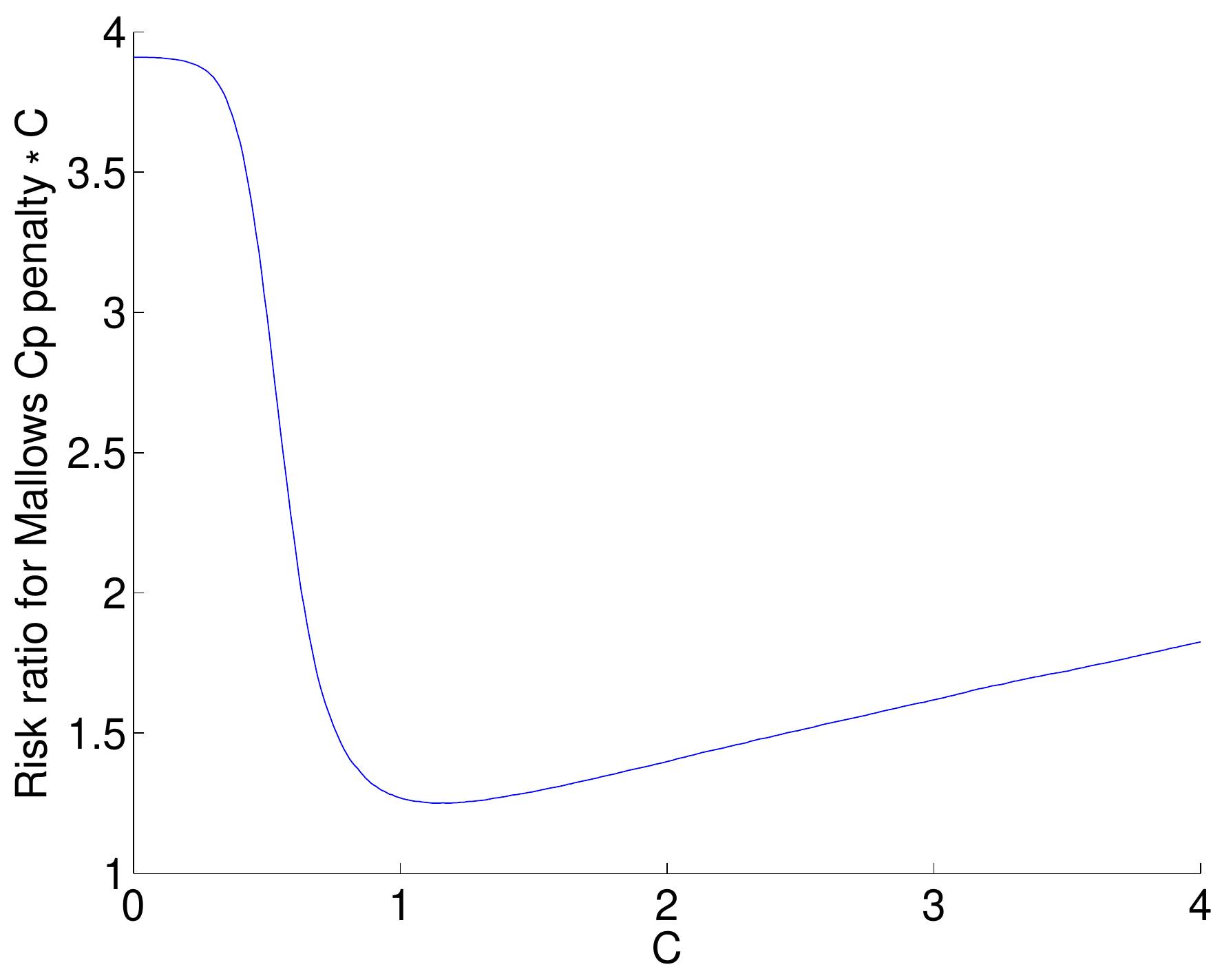}
\end{center}
\caption{\label{fig.surpen} Overpenalization with Mallows' $C_p$ penalty, `easy' setting 
\textup{(}see Appendix~\ref{app.details-simus.proc} for details\textup{)}. 
Error bars are so small that they would not be visible on the graph.
The optimal overpenalization factor is $C=1.12$, leading to an improvement by a factor $1.015$ compared to taking $C=1$.}
\end{figure}
For histogram selection in density estimation, \citet{Sau_Nav:2018:arXivv4} 
propose a natural way to overpenalize automatically, 
which leads to a new corrected version of the AIC criterion. 
Nevertheless, choosing from data an appropriate overpenalization factor remains an open problem.

\medbreak

For reasons detailed below, we conjecture that minimal penalties can help solving this issue.
More precisely, when Algorithms~\ref{algo.penmingal}--\ref{algo.penmingal.slope} 
are known to be first-order optimal, we conjecture that they 
automatically overpenalize ---by a factor close to 1 when $n$ is large---, 
and that this overpenalization decreases the risk of the final estimator 
compared to penalization by $\penoptgal(m)$. 
Another way to formulate this conjecture, following 
\citet{Lac_Mas:2015:journal} and \citet[Section~2.4]{Lac_Mas_Riv:2017}, 
is to state that the optimal constant $C^{\star}$ depends on $n$ differently 
from what first-order asymptotics suggest, 
and that Algorithm~\ref{algo.penmingal} estimates well the 
finite-sample value of~$C^{\star}$. 
Section~\ref{sec.slopeOLS} certainly provides the less difficult setting 
for proving this conjecture, 
even if the challenge is high: it requires to analyze penalization 
procedures at a precision level an order of magnitude higher than ever.

Several results support the above conjecture. 
%
First, several simulation experiments show that minimal-penalty algorithms
overpenalize slightly in most settings:
this is reported by \citet[Section~2.6.2.4]{Vil:2007:these}, 
\citet{Arl_Bac:2009:minikernel_long_v2}  
and \citet[Section~6]{Sol_Arl_Bac:2011:multitask}, 
and this holds for the experiments of 
Section~\ref{sec.practical.jump-vs-slope} 
(see Figure~\ref{fig.OLS.dist-Ch} in Section~\ref{sec.practical.jump-vs-slope} 
and Tables~\ref{tab.dist-Ch.easy}--\ref{tab.dist-Ch.hard} in Appendix~\ref{app.morefig}).

%
Second, Theorem~\ref{thm.OLS} is consistent with the fact that $\Chjumpgal$ might overestimate $\sigma^2$.
Taking for instance $T_n = n/2$,
Theorem~\ref{thm.OLS} implies that on a large-probability event,
\[
(1-\etamoins) \sigma^2 \leq \Chthr (T_n) \leq (1+\etaplus) \sigma^2
\]
with $\etaplus > \etamoins\,$; 
Proposition~\ref{pro.variance-estim} in Section~\ref{sec.related.variance} 
provides a precise statement. 
If these bounds are tight, it means that $\Chthr$ is slightly biased 
upwards as an estimator of $\sigma^2$, 
which corresponds to overpenalization.

%
Third, minimal-penalty algorithms take into account the full collection of estimators $(\shm)_{\mM}$ in their definition,
and Section~\ref{sec.empirical.conjectures.rich} details why they 
should automatically adapt to the richness of the collection $\M$. 
We claim that the need for overpenalization might be mostly related 
to the richness of $\M$, 
so that the conjectures of Section~\ref{sec.empirical.conjectures.rich} 
could help solving the above overpenalization conjecture. 
Let us explain briefly why, by considering fixed-design regression with projection estimators, using some results and the vocabulary of Section~\ref{sec.theory.rich}.
When $\M$ is small ---say, one model per value of the dimension---, 
$\pen_{C_p}(m) = 2 \sigma^2 D_m / n$ is an (asymptotically) optimal penalty 
and the minimal penalty is $\sigma^2 D_m/n$. 
When $\M$ is large ---say, $\binom{n}{D}$ models of dimension $D$---, 
the minimal amount of penalization required is multiplied by $1 + 2 \log(n/D_m)$, 
which is of order $2\log(n) \gg 1$ (except for the largest models), 
and good performance can be obtained with some penalties of the same order of magnitude.
For a given sample size $n$, between these two extreme settings, 
there is a continuum of collections $\M$ of increasing sizes, 
for which the optimal amount of penalization is $C^{\star}_n(\M) \pen_{C_p}$ 
for some $C^{\star}_n(\M)$ between $1$ and $2 \log(n)$, approximately: 
this is an instance of the overpenalization phenomenon.
So, if a minimal-penalty algorithm adapts to the size of $\M$, 
it would capture the need for overpenalization by $C^{\star}_n(\M)$ in the constant $\Chjumpgal\,$.
At least, $\Chjumpgal$ would be asymptotically of the correct order for both small and large $\M$, 
which cannot be done with some estimator $\sigh^2$ that does not take into account the collection $\M$.

\subsection{Beyond Algorithms~\ref{algo.penmingal}--\ref{algo.penmingal.slope}: phase transitions for estimator selection} \label{sec.empirical.outside}
Most, if not all, nonparametric estimators depend on one or several parameters, 
whose optimal data-driven choice is often a challenge.
In this survey, we focus on a single parameter $C$ that is a multiplicative constant in front of a penalty,
and we show that in several settings:  
\begin{enumerate}
\item[(i)] an observable phase transition occurs 
for the estimator $\sh_{\mhgalzero(C)}$ around $C=C^{\star}$, 
and 
\item[(ii)] $C^{\star}$ can be used for the optimal calibration of $\sh_{\mhoptun(C)}$ 
through Algorithm~\ref{algo.penmingal}. 
\end{enumerate} 
If a similar phenomenon occurs for other types of tuning parameters, 
this would lead to highly interesting 
generalizations of Algorithm~\ref{algo.penmingal}. 
This subsection collects partial theoretical results ---using the notation 
(\pbpenmin), (\pbpenminalt), \eqref{pb.penopt} and \eqref{pb.penopt.weak} 
defined in Sections~\ref{sec.theory.approach}--\ref{sec.theory.pbpenminalt}--- 
and experiments going into this direction, 
as well as several conjectures and open problems. 

\subsubsection{Goldenshluger-Lepski's and related procedures} 
Goldenshluger-Lepski's method \citep{Gol_Lep:2011:AoS,Ber_Lac_Riv:2013} 
is a classical estimator-selection procedure, 
which does not rely on penalization of an empirical risk 
but on pairwise comparisons between estimators. 

\paragraph{Goldenshluger-Lepski's method} 
%
For choosing the bandwidth $h$ of kernel density estimators with a fixed kernel, 
in order to minimize the least-squares risk, 
\citet{Lac_Mas:2015:journal} study a slightly simplified version of 
Goldenshluger-Lepski's method, that depends on a single parameter $a$ that can be 
interpreted as a constant in front of a penalty. 
If we define the complexity by $\C_h = \lVert K \rVert^2 / (nh)$, as usual 
for kernel density estimation, 
\citet[Theorem~3]{Lac_Mas:2015:journal} prove an equivalent of 
\eqref{pb.penmin.Cpt-Dgrd} and \eqref{pb.penmin.Cpt-Rgrd} if $a<1$, 
and \eqref{pb.penopt.weak} if $a>1$. 
Simulation experiments suggest that there is indeed a phase transition for the selected bandwidth 
around some value $a^{\star}$ of $a$ ---hence, \eqref{pb.penmin.Cgrd-Dpt} should also hold true--- 
and that the optimal value of $a$ is slightly above~$a^{\star}$. 
Despite the theoretical results showing that $a^{\star} \to 1$ as $n \to +\infty$, 
for a finite sample size $a^{\star}$ is not necessarily close to~$1$. 
This leads to a minimal-penalty algorithm for estimating~$a^{\star}$, 
hence calibrating (a simplified version of) Goldenshluger-Lepski's method. 
The numerical experiments of two papers show the interest of this algorithm,  
with $\widehat{a}$ defined similarly to~$\Chmaxjump\,$:  
for estimating the stationary distribution of a bifurcating Markov chain on $\R^d$ \citep{Bit_Roc:2017}, 
and for state-by-state inference of the emission densities of a hidden Markov model \citep{Leh:2018}. 
Although \citet{Leh:2018} takes a penalty multiplied by $a=2 \widehat{a}$ for defining the final estimator 
---similarly to the slope heuristics---, 
\citet{Bit_Roc:2017} take $a$ just above $\widehat{a}$, 
by selecting the estimator immediately after the jump of~$\C_h\,$. 
This choice is supported by the fact that the minimal and optimal penalties are almost equal 
in the results of \citet{Lac_Mas:2015:journal}. 

\paragraph{Penalized comparison to overfitting (PCO)} 
%
In the same framework, with a possibly multivariate bandwidth $h \in \R^d$, 
\citet{Lac_Mas_Riv:2017} propose a new procedure called 
penalized comparison to overfitting (PCO), 
which lies between penalization and Goldenshluger-Lepski's method. 
PCO depends on a parameter $\lambda$ which is a multiplicative factor in front of 
one of the two terms of some kind of penalty. 
Considering again the complexity $\C_h = \lVert K \rVert^2 / (nh)$, 
PCO satisfies an equivalent of 
\eqref{pb.penmin.Cpt-Dgrd} if $\lambda <0$ \citep[Theorems~3--4]{Lac_Mas_Riv:2017}, 
\eqref{pb.penmin.Cgrd-Rpt} if $\lambda >0$, 
and \eqref{pb.penopt} around $\lambda = \lambda^{\star}=1$ 
\citep[Theorems~2 and~5]{Lac_Mas_Riv:2017}. 
The optimality of $\lambda^{\star}=1$ is assessed by numerical experiments on synthetic data \citep{Var_Lac_Mas_Riv:2019}.

%
Although PCO does not seem to require a data-driven calibration of $\lambda$  
according to the above result, 
$\lambda = 1$ may not always be a good choice outside the least-squares density estimation setting. 
Therefore, the theoretical results of \citet{Lac_Mas_Riv:2017} suggest the following minimal-penalty 
algorithm for calibrating PCO: 
first, detect $\widehat{\lambda}$ around which $\lambda \mapsto \C_{\widehat{h}(\lambda)}$ jumps, 
then, take 
\[ 
(a) \quad \lambda = \widehat{\lambda}+1 
\qquad \text{or} \qquad 
(b) \quad \lambda = 2 (\widehat{\lambda}+1) - 1 = 2\widehat{\lambda}+1 
\]  
for defining the final estimator. 
Option (a) is suggested by the fact that the difference between the minimal $\lambda$ ---zero--- 
and the optimal $\lambda$ ---one--- is equal to~$1$  \citep[Remark~3.1]{Var_Lac_Mas_Riv:2019}. 
Option (b) is suggested by the slope heuristics, 
since the penalty $\lambda \pen_0$ is equivalent to $C \pen_0 - \pen_0$ with $C=\lambda+1$, 
for which the minimal penalty occurs at $C=1$ and the (asymptotically) optimal penalty occurs for $C=2$ 
---hence a factor~$2$ between the minimal and the optimal penalty.

%
The experiments of \citet[Section~5]{Com_Pri_Sam:2017} suggest that a similar way to calibrate PCO 
works well for 
selecting the bandwidth of a kernel estimator of the stationary density of the solution 
of a stochastic differential equation. 
For state-by-state inference of the emission densities of a hidden Markov model, 
\citet[Section~4.3.2]{Leh:2018} proposes a variant of PCO 
that can be well calibrated by a minimal-penalty algorithm according to numerical experiments.

A full theoretical validation of this calibration strategy remains an open problem.  
Providing theoretical guidelines for choosing between options (a) and (b) 
would also be interesting. 
Another natural question is to generalize PCO to other settings 
where Goldenshluger-Lepski's method applies, 
such as density estimation with the $L^p$ risk or regression; 
to the best of our knowledge, this remains an open problem.

\subsubsection{Choice of a threshold} 
For some thresholding estimators, with a threshold depending (non-linearly) on some parameter 
$\gamma \in (0,+\infty)$, an equivalent of (\pbpenminalt) is proved ---for a particular basis 
and assuming that $\bayes$ is equal to $\un_{[0,1]}$---, 
as well as an equivalent of \eqref{pb.penopt.weak} in the general case, in two settings:
density estimation on $\R$ \citep{Rey_Riv_Tul:2009}
and estimation of a Poisson intensity on $\R$ \citep{Rey_Riv:2010}.

For some Dantzig estimator (given some dictionary), with a parameter $\gamma>0$ appearing 
in the Dantzig constraints, \citet{Ber_LeP_Riv:2009} prove an equivalent of (\pbpenminalt) 
---for a particular dictionary and assuming that $\bayes = \un_{[0,1]}$---, 
as well as an equivalent of \eqref{pb.penopt.weak} in the general case.

In all the above results \citep{Rey_Riv:2010,Ber_LeP_Riv:2009,Rey_Riv_Tul:2009}, 
an equivalent of Algorithm~\ref{algo.penmingal} is proposed for a data-driven choice of $\gamma$,
and numerical experiments suggest that
the optimal $\gamma$ is often very close to the minimal $\gamma$.
Therefore, a generalization of the slope heuristics ($\gamma_{\opt} \approx 2 \gamma_{\min}$) probably does not hold here.

\subsubsection{Generalization} 
The above results, obtained for two different kinds of problems, 
suggest that phase transitions could be used much more generally for 
estimator selection, 
including the optimal calibration of learning algorithms. 
Section~\ref{sec.empirical.conjectures.high-dim} 
proposes it for the Lasso and related procedures. 
We conjecture that the same idea can be used fruitfully in several other settings. 

The key question is to find the good parametrization of the estimator collection. 
The successes of minimal-penalty algorithms rely on the parametrization 
by the constant $C$ in front of a well-chosen penalty shape $\pen_0$ 
---that is, chosen following the theoretical guidelines 
of Section~\ref{sec.theory.pbmult}, 
possibly combined with the practical hints referenced 
in Section~\ref{sec.practical.other-uses}. 

Finding an appropriate parametrization for the Lasso, 
for instance, remains an open problem to the best of our knowledge.

\subsection{Related challenges in probability theory}
Addressing the statistical open problems listed above 
mostly relies on a few corresponding open problems in probability theory.
As detailed in Section~\ref{sec.hints},
for each estimator $\shm$ considered, 
given some deterministic $\bayes_m$ 
---which can be the best estimator in the associated model, 
or the expectation of $\shm\,$, for instance---, 
the key theoretical quantities are 
the following: 
\begin{itemize}
\item 
the excess risk 
$p_1(m) = \Risk\parens{\shm} - \Risk\parens{\bayes_m}$, 
\item 
the excess empirical risk 
$p_2(m) = \Remp\parens{\bayes_m} - \Remp\parens{\shm}$, 
\item 
the empirical process at $\bayes_m\,$, 
$\ovdelta(m) = \Risk\parens{\bayes_m} - \Remp\parens{\bayes_m}$.
\end{itemize}
Since the empirical process is well understood in general, 
the main challenges are about $p_1(m)$ and $p_2(m)$. 
One either has to show that $\lvert p_1(m) - p_2(m) \rvert / p_1(m)$ is small 
on a large-probability event 
---using Proposition~\ref{pro.pbpenmin.gal.above} in Section~\ref{sec.theory.hint.penmin.above}---, 
or to show non-asymptotic concentration inequalities for $p_1(m)$ and $p_2(m)$ 
around deterministic quantities that are known up to a multiplicative factor.

We strongly encourage further work on these questions, 
especially on the concentration of the excess risk $p_1(m)$ and the excess empirical risk $p_2(m)$, 
which are difficult theoretical problems of interest for statisticians beyond minimal penalties. 
\\ 
Indeed, concentrating the excess risk provides lower bounds on the risk of the estimator $\shm$ 
for a given statistical problem ---and not in the minimax sense, as most statistical lower bounds---, 
which can be much informative for practicioners. 
This problem has attracted some attention in the last few years, 
and we review the recent work on this topic in Section~\ref{sec.theory.hint.penopt}. 
\\ 
When $\shm$ is the empirical risk minimizer over some model $S_m\,$, 
the excess empirical risk can be rewritten as the supremum of an empirical process 
\[ 
p_2(m) = \sup_{t \in S_m} \bigl\{ \Remp(\bayes_m) - \Remp(t) \bigr\} 
\, , 
\]
which is an object of interest for empirical process theory in general. 
Its concentration can also be seen as a non-asymptotic version of the Wilks phenomenon, 
which is another reason for tackling the theoretical challenge of proving that 
$p_2(m)$ concentrates around some deterministic quantity. 
Section~\ref{sec.theory.hint.penmin.conc} reviews such theoretical results.

Handling large collections of estimators 
---see Sections \ref{sec.theory.rich} and~\ref{sec.empirical.conjectures.rich}--- 
induces additional issues, 
since we cannot expect $p_1(m)$, $p_2(m)$, and $\delta(m)$ to concentrate tightly 
\emph{uniformly} over all $m \in \M$. 
This raises the challenge of understanding precisely their uniform deviations 
among such large collections, 
with high-probability upper \emph{and lower} bounds on these deviations. 
In the case of model selection, by grouping models of the same dimension 
as explained in Section~\ref{sec.theory.rich}, 
this problem reduces to concentrating $p_1(m)$ and $p_2(m)$ for empirical risk minimizers 
over models that are \emph{unions} of a large number of vector spaces of the same dimension. 
Note that the same probabilistic challenge arises in the problem of understanding 
the overpenalization phenomenon ---see Section~\ref{sec.empirical.overpenalization}---, 
for both large and small collections of estimators.

\section*{Acknowledgments}
The author 
acknowledges the support of the French Agence Nationale de la Recherche (Blanc SIMI 1 2011 projet Calibration). 
Part of this work was done while the author was financed by CNRS and 
member of the Sierra team 
in the D\'epartement d'Informatique de l'\'Ecole normale sup\'erieure 
(CNRS / ENS / Inria UMR 8548), 45 rue d'Ulm, 75005 Paris, France. 

I thank the successive editors of the Journal de la Soci\'et\'e Fran\c{c}aise de Statistique, Philippe Besse and Gilles Celeux, 
for their patience and kindness as they waited for me to submit this invited paper. 

I warmly thank the numerous colleagues who kindly answered 
my questions about their own works related with minimal penalties, 
during the (long) period of preparation of this article or before. 
Special thanks to Matthieu Lerasle and Pascal Massart for many inspiring discussions on the topic, 
to Yves Rozenholc for a long discussion at a CIRM workshop 
about his ``statistical base jumping'' idea that is described in Section~\ref{sec.practical.variants-chpt}, 
and to Gilles Celeux and Adrien Saumard for a careful reading of a preliminary version of this work. 

Finally, I deeply thank Patricia Reynaud-Bouret who introduced me to the topic 
quite early (at the beginning of 2002!) ---I owe her my first numerical experiments on the slope heuristics \citep{Arl_Bau:2002}---, 
and my coauthors on the minimal-penalty related articles I wrote: 
Pascal Massart, 
Francis Bach, 
Matthieu Solnon, 
Alain Celisse, Za\"id Harchaoui, 
and Damien Garreau.

\bibliography{survey_penmin}


\appendix


\section{Some proofs}

\subsection{Proof of Proposition~\ref{pro.pbpenmin.gal.below}}
\label{app.proof.pro.pbpenmin.gal.below}
\paragraph{Proof of Eq.~\eqref{eq.proofalt.penmin.Cpt-Dgrd} and~\eqref{eq.proofalt.penmin.Cpt-Dgrd.2}}
By definition of $\mhgalzero(C)$, for every $\mM$,
\begin{align*}
\Risk\parenB{\bayes_{\mhgalzero(C)}}  - \ovdelta\parenB{ \mhgalzero(C) } + (C-1) p_2\parenB{ \mhgalzero(C) }
&\leq
\Risk\parens{\bayes_{m}} - \ovdelta(m) + (C-1) p_2(m)
\end{align*}
hence
\begin{align*}
(1-C) p_2\parenB{ \mhgalzero(C) }
&\geq
\Risk\parenB{\bayes_{\mhgalzero(C)}}-\Risk\parens{\bayes}
- \crochb{ \Risk\parens{\bayes_{m}}-\Risk\parens{\bayes} }
\\
&\qquad
+ \ovdelta(m) - \ovdelta\parenB{ \mhgalzero(C) }
+ (1-C) p_2(m)
\end{align*}
which implies, using Eq.~\eqref{eq.altproof.controldelta}, that
\begin{align*}
(1-C) p_2\parenB{ \mhgalzero(C) }
\geq
- 2 \crochb{ \Risk\parens{\bayes_{m}}-\Risk\parens{\bayes} }
+ (1-C) p_2(m)
- \varepsilon^{\prime}_{\delta}
\end{align*}
hence Eq.~\eqref{eq.proofalt.penmin.Cpt-Dgrd} by dividing by $(1-C)$.
Eq.~\eqref{eq.proofalt.penmin.Cpt-Dgrd.2} is a straightforward consequence of Eq.~\eqref{eq.proofalt.penmin.Cpt-Dgrd}
since $p_2(m_1)>0$.

\paragraph{General proof of Eq.~\eqref{eq.altproof.controldelta}}
Let us assume that $\xi_1, \ldots, \xi_n \in \X$ are i.i.d.\@ and some contrast function $\gamma: \Xi \times \Set \to \R $ and constants $A,L>0$ exist such that Eq.~\eqref{hyp.gal-controldelta.contrast}--\eqref{hyp.gal-controldelta.margin} hold true. 
Then, for every $x \geq 0$, with probability at least $1 - 2 \card(\M) \mathrm{e}^{-x}$,
for every $\theta >0$,
Eq.~\eqref{eq.altproof.controldelta} holds with
\[
\varepsilon_{\delta} = \theta
\quad \text{and} \quad
\varepsilon^{\prime}_{\delta} = 2 \paren{ \frac{L}{2 \theta} + \frac{2 A}{3}} \frac{x}{n}
\, .
\]

Indeed, for every fixed $\mM$,
\[
\ovdelta(m) - \crochb{ \Risk\parens{\bayes} - \Remp\parens{\bayes} }
= \frac{1}{n} \sum_{i=1}^n \parenb{ X_{i,m} - \E\crochs{X_{i,m}} }
\quad \text{where} \quad
X_{i,m} = \gamma\paren{\xi_i , \bayes} - \gamma\paren{\xi_i , \bayes_m}
\]
are i.i.d.\@ random variables satisfying
$\absj{X_{i,m}} \leq 2 A$ almost surely. 
Therefore, by Bernstein's inequality \citep[Theorem~2.10]{Bou_Lug_Mas:2011:livre}, for every $x \geq 0$, with probability at least $1-2 \mathrm{e}^{-x}$,
\begin{align*}
\absB{\ovdelta(m) - \crochb{ \Risk\parens{\bayes} - \Remp\parens{\bayes} } }
&\leq
\sqrt{ \frac{2 x \var(X_{i,m})}{n} } + \frac{2 A x}{3 n}
\leq \theta \parenb{ \Risk\parens{\bayes_m} - \Risk\parens{\bayes}} + \paren{ \frac{L}{2 \theta} + \frac{2 A}{3}} \frac{x}{n}
\end{align*}
and the result follows by the union bound.
\qed

\medbreak

\paragraph{Extension} 
Note that assuming only that $(1-\varepsilon_0) p_2(m) \leq \pen_0(m) \leq (1+\varepsilon_0) p_2(m)$ 
for some $\varepsilon_0 \in [0,1)$ with $C(1-\varepsilon_0) < 1$ 
---which implies $p_2(m) \geq 0$---, 
instead of Eq.~\eqref{eq.proofalt.penmin.Cpt-Dgrd} we get that
\begin{align*}
p_2\parenB{ \mhgalzero(C) } \geq \sup_{\mM} \setj{ \frac{ 1 - (1+\varepsilon_0) C}{ 1 - (1-\varepsilon_0) C} p_2(m) - \frac{2 \crochb{ \Risk\paren{\bayes_{m}}-\Risk\parens{\bayes} }}{ 1 - (1-\varepsilon_0) C}  } - \frac{\varepsilon^{\prime}_{\delta}}{ 1 - (1-\varepsilon_0) C}
\, .
\end{align*}
If in addition some $m_1 \in \M$ exists such that
$\Risk\parens{\bayes_{m_1}} = \Risk\parens{\bayes}$ and $p_2(m_1)>0$,
we get that for any $\alpha \in (0,1)$, 
\begin{align*}
p_2\parenB{ \mhgalzero(C) }
&\geq (1-\alpha) p_2(m_1)
\end{align*}
if
\[
C \leq 1 - \eta_{\alpha}
\qquad \text{where} \qquad
 \eta_{\alpha} \egaldef
1 - \parenj{ 1 - \frac{\varepsilon^{\prime}_{\delta}}{\alpha p_2(m_1)} }
\mathopen{}\croch{ 1 + \varepsilon_0 \parenj{ \frac{2}{\alpha} - 1 }}^{-1} \mathclose{}
\, .
\]

\subsection{Proof of Proposition~\ref{pro.pbpenmin.gal.above}}
\label{app.proof.pro.pbpenmin.gal.above}
By definition of $\mhgalzero(C)$, for every $\mM$ and $C>1$,
\begin{align*}
& \quad
\Risk\parenB{\sh_{\mhgalzero(C)}} - p_1\parenB{\mhgalzero(C)} - \ovdelta\parenB{\mhgalzero(C)} + (C-1) p_2\parenB{\mhgalzero(C)}
\\
&\leq \Risk\parens{\sh_m} - p_1(m) - \ovdelta(m) + (C-1) p_2(m)
\end{align*}
hence, using Eq.~\eqref{eq.altproof.controlp1p2}, 
\begin{align*}
& \quad
\Risk\parenB{\sh_{\mhgalzero(C)}} - \Risk\parens{\bayes}
+ \crochb{ (C-1) (1-\varepsilon_p) - 1} p_1\parenB{\mhgalzero(C)}
\\
&\leq
\Risk\parens{\sh_m} - \Risk\parens{\bayes}
+ \crochb{ (C-1) (1+\varepsilon_p) - 1 } p_1(m)
+ \ovdelta\parenB{\mhgalzero(C)}- \ovdelta(m)
\, .
\end{align*}
By Eq.~\eqref{eq.altproof.controldelta},
we get that for every $\mM$ and $C>1$, 
\begin{align*}
& \quad
\Risk\parenB{\sh_{\mhgalzero(C)}} - \Risk\parens{\bayes}
+ \crochb{ (C-1) (1-\varepsilon_p) - 1} p_1\parenB{\mhgalzero(C)}
- \varepsilon_{\delta} \crochbb{ \Risk\parenB{\bayes_{\mhgalzero(C)}} - \Risk\parens{\bayes}  }
\\
&\leq
\Risk\parens{\sh_m} - \Risk\parens{\bayes}
+ \crochb{ (C-1) (1+\varepsilon_p) - 1 } p_1(m)
+ \varepsilon_{\delta} \crochb{ \Risk\parens{\bayes_{m}} - \Risk\parens{\bayes}  }
+ \varepsilon^{\prime}_{\delta}
\end{align*}
that is,
\begin{align*}
& \quad
\parenB{ 1 - \max\setb{  1 - (C-1) (1-\varepsilon_p)  \, , \, \varepsilon_{\delta} }}
\crochbb{ \Risk\parenB{\sh_{\mhgalzero(C)}} - \Risk\parens{\bayes} }
\\
&\leq
\parenB{ 1 + \max\setb{ (C-1) (1+\varepsilon_p) - 1 \, , \, \varepsilon_{\delta} }}
\inf_{\mM} \setb{ \Risk\parens{\sh_m} - \Risk\parens{\bayes} }
+ \varepsilon^{\prime}_{\delta}
\end{align*}
which proves Eq.~\eqref{eq.proofalt.penmin.Cgrd-Rpt}.
Using again Eq.~\eqref{eq.altproof.controlp1p2}, we get
\begin{align*}
p_2\parenB{\mhgalzero(C)}
&\leq (1+\varepsilon_p) p_1\parenB{\mhgalzero(C)}
\leq (1+\varepsilon_p)  \crochbb{ \Risk\parenB{\sh_{\mhgalzero(C)}} - \Risk\parens{\bayes} }
\\
&\leq
\cteProPenminAbove(C) (1+\varepsilon_p)
\inf_{\mM} \setb{ \Risk\parens{\sh_m} - \Risk\parens{\bayes} }
+ \cteProPenminAbove^{\prime}(C) (1+\varepsilon_p)
\, .
\qed
\end{align*}

\paragraph{Extension}
Note that if $\pen_0(m)=p_2(m)$ is replaced by 
\[ 
(1-\varepsilon_0) p_2(m) \leq \pen_0(m) \leq (1+\varepsilon_0) p_2(m) 
\] 
for some $\varepsilon_0 \geq 0$ with $C(1-\varepsilon_0)>1$,
and if Eq.~\eqref{eq.altproof.controlp1p2} is replaced by
\begin{equation}
\label{eq.altproof.controlp1p2.alt}
\forall m \in \M \, , \quad
-\varepsilon_p^{\prime} + \varepsilon_p^- p_1(m) \leq p_2(m) \leq  \varepsilon_p^+ p_1(m) + \varepsilon_p^{\prime}
\end{equation}
for some $\varepsilon_p^-,\varepsilon_p^+>0$ and $\varepsilon_p^{\prime} \geq 0$, the same proof shows that  Eq.~\eqref{eq.proofalt.penmin.Cgrd-Rpt} holds true with
\begin{align*}
\cteProPenminAbove(C)
&\egaldef \frac{ \max\set{ [ C (1+\varepsilon_0) -1 ] \varepsilon_p^+  \, , \, 1 + \varepsilon_{\delta} } }{ \min\set{  [ C (1-\varepsilon_0) -1 ] \varepsilon_p^-  \, , \, 1 - \varepsilon_{\delta} }   }
\notag
\\
\text{and} \qquad
\cteProPenminAbove^{\prime}(C)
&\egaldef \frac{ \varepsilon^{\prime}_{\delta} + 2(C-1) \varepsilon_p^{\prime} }{ \min\set{  [ C (1-\varepsilon_0) -1 ] \varepsilon_p^-  \, , \, 1 - \varepsilon_{\delta} }  }
\, ,
\notag
\end{align*}
and Eq.~\eqref{eq.proofalt.penmin.Cgrd-Dpt} replaced by
\begin{equation}
\label{eq.proofalt.penmin.Cgrd-Dpt.alt}
p_2\parenB{\mhgalzero(C)}
\leq
\cteProPenminAbove(C) \varepsilon_p^+
\inf_{\mM} \setb{ \Risk\parens{\sh_m} - \Risk\parens{\bayes} }
+ \cteProPenminAbove^{\prime}(C) \varepsilon_p^+ + \varepsilon_p^{\prime}
\, .
\end{equation}

\subsection{Proof of Proposition~\ref{pro.variance-estim}}
\label{app.MSE-Chthr}
We first state two general lemmas for $\Chthr$ and $\Chwindow\,$, 
as defined by Eq. \eqref{def.Chthr} and~\eqref{def.Chwin} in Section~\ref{sec.slopeOLS.math}, respectively. 
These lemmas do not assume a specific definition for $\mh(C)$ and $D_m\,$, 
so they apply to Algorithms \ref{algo.OLS.jump}, \ref{algo.gal.slope.naif}, \ref{algo.penmin.linear}, 
and~\ref{algo.penmingal} (possibly up to a rescaling of $\C_m$ for Lemma~\ref{le.Chwindow} 
and Algorithm~\ref{algo.penmingal}). 
\begin{lemma}\label{le.Chthr}
Let $T_n \in \R$, 
$\mh: [0,+\infty) \to \M$ some function, 
and $\Chthr(T_n) \egaldef \inf \setj{ C \geq 0 \, / \, D_{\mh(C)} \leq T_n }$. 
\\ 
If $\Gamma_1 \geq 0$ and $D_{\mh(C)} > T_n$ for all $C < \Gamma_1\,$, then, $\Chthr(T_n) \geq \Gamma_1\,$.
\\ 
If $\Gamma_2 \geq 0$ and $D_{\mh(\Gamma_2)} \leq T_n\,$, then, $\Chthr(T_n) \leq \Gamma_2\,$.
\end{lemma}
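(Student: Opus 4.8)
The statement is elementary, and the plan is simply to unwind the definition of $\Chthr(T_n)$ as the infimum of the set $\mathcal{S} \egaldef \setj{ C \geq 0 \, / \, D_{\mh(C)} \leq T_n }$, adopting throughout the usual convention $\inf \emptyset = +\infty$.

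To prove the first implication, I would argue that $\mathcal{S}$ has no element strictly smaller than $\Gamma_1$. Indeed, the assumption $D_{\mh(C)} > T_n$ for all $C < \Gamma_1$ means precisely that no $C < \Gamma_1$ satisfies the membership condition $D_{\mh(C)} \leq T_n$ defining $\mathcal{S}$; hence every element of $\mathcal{S}$ is at least $\Gamma_1$, so that $\Chthr(T_n) = \inf \mathcal{S} \geq \Gamma_1$. This argument remains valid in the degenerate case $\mathcal{S} = \emptyset$, where $\Chthr(T_n) = +\infty \geq \Gamma_1$.

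For the second implication, the plan is to exhibit $\Gamma_2$ as a member of $\mathcal{S}$: since $\Gamma_2 \geq 0$ and $D_{\mh(\Gamma_2)} \leq T_n$ by hypothesis, we have $\Gamma_2 \in \mathcal{S}$, and the infimum of a set is bounded above by any one of its elements, which gives $\Chthr(T_n) \leq \Gamma_2$. No step presents any genuine difficulty here; the only point deserving a word of care is the empty-set convention used in the first implication, which is what allows the bound $\Chthr(T_n) \geq \Gamma_1$ to hold vacuously when no model ever drops below the threshold.
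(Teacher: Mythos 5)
Your proof is correct and matches the paper's approach: the paper simply states that the result is straightforward from the definition of $\Chthr\,$, and your unwinding of that definition (no element of the defining set lies below $\Gamma_1\,$, while $\Gamma_2$ belongs to the set) is precisely the intended argument, with the empty-set convention handled appropriately.
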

\begin{proof}
The proof is straightforward from the definition of $\Chthr\,$.
\end{proof}
\begin{lemma}\label{le.Chwindow} 
Let $\eta>0$, $\mh: [0,+\infty) \to \M$ some function, 
and 
\[ 
\Chwindow(\eta) \in \argmax_{C \geq 0} \set{ D_{\mh(C/[1+\eta])} - D_{\mh(C[1+\eta])} }
\, . 
\]
Assume that $0 \leq D_m \leq n$ for every $\mM$. 
Assume in addition that $a_n, b_n \in \R$ and $\Gamma_2>\Gamma_1>0$ exist such that 
$a_n - b_n > \max\sets{ n-a_n, b_n }$, 
\[
\forall C \leq \Gamma_1, \quad D_{\mh(C)} \geq a_n
\, ,
\qquad
\forall C \geq \Gamma_2, \quad D_{\mh(C)} \leq b_n
\, ,
\qquad \text{and} \qquad
(1+\eta)^2 \geq \frac{\Gamma_2}{\Gamma_1}
\, .
\]
Then, we have 
\[
\frac{\Gamma_1}{1+\eta} < \Chwindow(\eta) < \Gamma_2 (1+\eta)
\, .
\]
\end{lemma}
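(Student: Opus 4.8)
The plan is to study the window‑jump function $J(C) \egaldef D_{\mh(C/[1+\eta])} - D_{\mh(C[1+\eta])}$, which $\Chwindow(\eta)$ maximizes over $[0,+\infty)$ by definition. First I would establish a lower bound on the maximal value $J(\Chwindow(\eta))$ by evaluating $J$ at a single well‑chosen point; then I would rule out both $\Chwindow(\eta) \leq \Gamma_1/(1+\eta)$ and $\Chwindow(\eta) \geq \Gamma_2(1+\eta)$ by contradiction, each time deriving an upper bound on $J(\Chwindow(\eta))$ incompatible with that lower bound.

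For the lower bound, the natural test point is the geometric mean $C_0 \egaldef \sqrt{\Gamma_1 \Gamma_2}>0$. The assumption $(1+\eta)^2 \geq \Gamma_2/\Gamma_1$, i.e. $1+\eta \geq \sqrt{\Gamma_2/\Gamma_1}$, is exactly what is needed to push both endpoints of the window past the thresholds: it gives $C_0/(1+\eta) \leq \sqrt{\Gamma_1\Gamma_2}\,\sqrt{\Gamma_1/\Gamma_2} = \Gamma_1$ and $C_0(1+\eta) \geq \sqrt{\Gamma_1\Gamma_2}\,\sqrt{\Gamma_2/\Gamma_1} = \Gamma_2$. The hypotheses on $\mh$ then yield $D_{\mh(C_0/[1+\eta])} \geq a_n$ and $D_{\mh(C_0[1+\eta])} \leq b_n$, so that $J(C_0) \geq a_n - b_n$. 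Since $\Chwindow(\eta)$ is a maximizer of $J$, this gives $J(\Chwindow(\eta)) \geq a_n - b_n$.

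For the two endpoint bounds I would argue as follows. Suppose $\Chwindow(\eta) \leq \Gamma_1/(1+\eta)$; then $\Chwindow(\eta)(1+\eta) \leq \Gamma_1$, hence $D_{\mh(\Chwindow(\eta)[1+\eta])} \geq a_n$, while the universal bound $D_m \leq n$ gives $D_{\mh(\Chwindow(\eta)/[1+\eta])} \leq n$, so $J(\Chwindow(\eta)) \leq n - a_n$. This contradicts $J(\Chwindow(\eta)) \geq a_n - b_n$, because $a_n - b_n > \max\sets{n-a_n,b_n}$ forces $a_n - b_n > n - a_n$. Symmetrically, $\Chwindow(\eta) \geq \Gamma_2(1+\eta)$ gives $\Chwindow(\eta)/(1+\eta) \geq \Gamma_2$, hence $D_{\mh(\Chwindow(\eta)/[1+\eta])} \leq b_n$, and using $D_m \geq 0$ we get $J(\Chwindow(\eta)) \leq b_n$, again contradicting $J(\Chwindow(\eta)) \geq a_n - b_n$ since $a_n - b_n > b_n$. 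As the two excluded cases are closed half‑lines, their exclusion yields the strict inequalities $\Gamma_1/(1+\eta) < \Chwindow(\eta) < \Gamma_2(1+\eta)$.

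The argument is self‑contained and uses nothing beyond the stated hypotheses. There is no real obstacle here; the only delicate point is the choice of the test point $C_0$. Evaluating $J$ at the geometric mean of $\Gamma_1$ and $\Gamma_2$ is precisely what makes the assumption $(1+\eta)^2 \geq \Gamma_2/\Gamma_1$ do its work, simultaneously sending the left endpoint below $\Gamma_1$ and the right endpoint above $\Gamma_2$. Everything else is the bookkeeping that compares $a_n - b_n$ against $n - a_n$ and against $b_n$, which is exactly what the single hypothesis $a_n - b_n > \max\sets{n-a_n,b_n}$ encodes.
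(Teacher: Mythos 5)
Your proof is correct and follows essentially the same route as the paper's: evaluate the window-jump at the geometric mean $\sqrt{\Gamma_1\Gamma_2}$ to get the lower bound $a_n-b_n$, then bound the jump by $n-a_n$ (resp. $b_n$) on the left (resp. right) half-line and invoke $a_n-b_n>\max\sets{n-a_n,b_n}$. The only difference is presentational — you phrase the endpoint exclusions as proofs by contradiction where the paper states the bounds directly for all $C$ in those regions — which changes nothing of substance.
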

\begin{proof}
First, for $C = \sqrt{\Gamma_1 \Gamma_2}$, we have 
$C/(1+\eta) \leq \Gamma_1$ and $C(1+\eta)\geq \Gamma_2\,$,
hence we obtain $D_{\mh \parens{C/[1+\eta] }} - D_{\mh\parens{ C[1+\eta] }} \geq a_n - b_n\,$.
Second, for every $C \leq \Gamma_1 / (1+\eta)$,
we have the inequality  $D_{\mh \parens{C/[1+\eta] }} - D_{\mh\parens{ C[1+\eta] }} \leq n-a_n\,$.
Third, for any $C \geq \Gamma_2 (1+\eta) $,
$D_{\mh \parens{C/[1+\eta] }} - D_{\mh\parens{ C[1+\eta] }} \leq b_n\,$.
The result follows since $a_n - b_n > \max\sets{ n-a_n, b_n }$. 
\end{proof}

\medbreak

Let us now prove Proposition~\ref{pro.variance-estim}.

By Eq.~\eqref{eq.pr.thm.OLS.Cpt-Dgrd.4} and Eq.~\eqref{eq.pr.thm.OLS.Cpt-Dpt.4} 
in the proof of Theorem~\ref{thm.OLS}, 
for all $x \geq 0$, on the event $\Omega_x$ which has a probability larger than
$1-4\card(\M) \mathrm{e}^{-x}$,
we have
\begin{equation} 
\label{eq.pr.pro.variance-estim.1}
\forall C \leq C_1(x ; a_n), \quad D_{\mh(C)} \geq a_n
\qquad \text{and} \qquad
\forall C \geq C_2(x ; b_n ; c_n), \quad D_{\mh(C)} \leq b_n
\, ,
\end{equation} 
whatever $0 \leq c_n < b_n \leq n$ and $0 \leq a_n < n$, 
provided that $\M$ contains at least one model of dimension at most $c_n\,$. 

\paragraph{Proof of Eq.~\eqref{eq.bound-Chwindow}}
First, note that 
\[ 
C_1 \parenj{ x;\frac{2n}{3} }
= \sigma^2 \crochj{ 1 - \parenj{12 \sqrt{\frac{x}{n}} + 18 \frac{x}{n} } }
> 0 
\]
since $\sqrt{x/n} \leq (\sqrt{6}-2)/6$. 
Therefore, by continuity of $C_1$ and using that $c_n < n/3$, 
there exists some $\epsilon_1 \in (0, \min\{ 1/2 , 1 - 3 c_n/ n \} )$ such that 
$C_1( x ; \frac{2n}{3} (1+\epsilon))>0$ 
for any $\epsilon \in [0,\epsilon_1]$. 
Let us take any $\epsilon \in (0,\epsilon_1]$, $a_n = \frac{2n}{3} (1+\epsilon) \in (2n/3,n)$, and $b_n = \frac{n}{3} (1-\epsilon) \in (c_n,n/3)$. 
By Lemma~\ref{le.Chwindow} with $a_n,b_n$ as above, 
$\Gamma_1 = C_1(x;a_n)$, and $\Gamma_2 = C_2(x ; b_n ; c_n)$, 
on $\Omega_x\,$, 
since Eq.~\eqref{eq.pr.pro.variance-estim.1} holds true, 
we get that 
\[ 
\forall \eta \geq \sqrt{ \frac{C_2\parenj{ x ; \frac{n}{3} (1-\epsilon) ; c_n}}{C_1\parenj{ x; \frac{2n}{3} (1+\epsilon) }} } - 1 \, , 
\quad 
\frac{C_1 \parenj{ x ; \frac{2n}{3} (1+\epsilon) }}{1+\eta} 
< \Chwindow (\eta)
< C_2 \parenj{ x ; \frac{n}{3} (1-\epsilon) ; c_n } (1+\eta) 
\, . 
\]
Now, since $z \mapsto C_1(x;z)$ is continuous and different from zero at $z=2n/3$,  
and since the mapping $z \mapsto C_2(x;z;c_n)$ is continuous at $z=n/3$ 
(using that $c_n < n/3$), 
for any 
\[ 
\eta > \sqrt{ \frac{C_2\parenj{ x ; \frac{n}{3} ; c_n}}{C_1\parenj{ x; \frac{2n}{3} }} } - 1
\, , 
\]
some $\epsilon_2 \in (0,\epsilon_1]$ exists such that 
\[ 
\forall \epsilon \in (0,\epsilon_2] \, , \qquad 
\eta \geq \sqrt{ \frac{C_2\parenj{ x ; \frac{n}{3} (1-\epsilon) ; c_n}}{C_1\parenj{ x; \frac{2n}{3} (1+\epsilon) }} } - 1
\, . 
\]
So, for such $\eta$, on $\Omega_x\,$, for every $\epsilon \in (0,\epsilon_2]$, 
\[ 
\frac{C_1 \parenj{ x ; \frac{2n}{3} (1+\epsilon) }}{1+\eta} 
< \Chwindow(\eta) 
< C_2 \parenj{ x ; \frac{n}{3} (1-\epsilon) ; c_n } (1+\eta) 
\, . 
\]
Making $\epsilon$ tend to zero in the above inequality yields the result.

\paragraph{Proof of Eq.~\eqref{eq.bound-Chthr}}
For any $a_n \in (T_n,n)$, by Eq.~\eqref{eq.pr.pro.variance-estim.1}, 
on $\Omega_x\,$, we have $D_{\mh(C)} \geq a_n > T_n$ for every $C \leq C_1(x;a_n)$, 
hence $\Chthr(T_n) \geq C_1(x;a_n)$ by Lemma~\ref{le.Chthr} with $\Gamma_1 = C_1(x;a_n)$. 
So, on $\Omega_x\,$, 
\[ 
\Chthr(T_n) \geq \sup_{a_n \in (T_n,n)} \setb{ C_1(x;a_n) } = C_1(x;T_n) 
\, . 
\]
By Eq.~\eqref{eq.pr.pro.variance-estim.1} with $b_n = T_n > c_n\,$, on $\Omega_x$ we have 
$D_{\mh(C)} \leq T_n$ for every $C \geq C_2(x ; T_n ; c_n)$, 
hence $\Chthr(T_n) \leq C_2(x ; T_n ; c_n)$ by Lemma~\ref{le.Chthr} with $\Gamma_2 = C_2(x;T_n;c_n)$.

\paragraph{Proof of Eq.~\eqref{eq.MSE-Chthr}} 
Let $\alpha = \log(4 \card \M ) \geq \log(8)$,
since $\card(\M) \geq 2$ under the assumptions of Proposition~\ref{pro.variance-estim}.
For every $z \geq 0$, by Eq.~\eqref{eq.bound-Chthr} with $x = z+\alpha$ and $c_n$ replaced by $T_n/2$, 
\begin{equation} \label{eq.dev-Chthr}
\begin{split}
\Proba 
\Biggl(
\paren{\Chthr - \sigma^2}^2 
&\geq 4 \max\setj{ \paren{1-\frac{T_n}{n}}^{-2} \, , \, \paren{\frac{T_n}{2n}}^{-2} } 
\\ 
&\qquad \times  
\left.
\croch{ \biaismax \parenj{\frac{T_n}{2}} + 2 \sigma^2 \parenj{ \sqrt{\frac{z + \alpha}{n}} + \frac{2 (z + \alpha)}{n} } }^2 
\right)
\leq \mathrm{e}^{-z}
\, . 
\end{split}
\end{equation}
Then, integrating Eq.~\eqref{eq.dev-Chthr}
with respect to $z$ ---that is, using Lemma~\ref{le.dev=>boundE} below--- 
we get that Eq.~\eqref{eq.MSE-Chthr} holds true.
Note that much smaller constants can be obtained by assuming that $\card(\M)$ is large enough, 
or that $\log( \card \M)/n$ is small enough. 
For instance, under the assumption that  
$100 \leq \card(\M) \leq \exp(n/100)$, 
we get 
\begin{align*}
\E\croch{ \paren{\Chthr - \sigma^2}^2 }
&\leq
\max\setj{ \paren{1-\frac{T_n}{n}}^{-2} \, , \, \paren{\frac{T_n}{2n}}^{-2} } 
\crochj{ 12 \biaismax  \paren{\frac{T_n}{2}}^2 + \frac{102 \sigma^4 \log\parens{ \card \M }}{n} }
\, . 
\end{align*}
%
\qed

\begin{lemma} \label{le.dev=>boundE}
For a real-valued random variable $Z$, 
if some $a,b,c \geq 0$ exist such that for every $z \geq 0$,
\begin{align*}
&\Proba\parenj{ Z \geq a + b z + c z^2} \leq \mathrm{e}^{-z} \, ,
\\
\text{then,} \qquad
&\E\crochs{ Z } \leq a + 2 b + 4 c 
\, .
\end{align*}
\end{lemma}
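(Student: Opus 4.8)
The plan is to bound $\E[Z]$ via the positive part of $Z-a$, using the layer-cake (Fubini) representation of its expectation, and then to convert the quadratic-in-$z$ tail hypothesis into an integrable bound in the natural deviation variable.

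First I would write $Z \leq a + (Z-a)_+$, so that $\E\crochs{Z} \leq a + \E\crochs{(Z-a)_+}$, and then use $\E\crochs{(Z-a)_+} = \int_0^{\infty} \Proba\parenj{Z - a \geq s}\, \mathrm{d}s$. The task then reduces to extracting from the hypothesis a bound on $\Proba(Z-a \geq s)$ for each fixed $s \geq 0$.

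The key step is the choice of $z$ as a function of $s$: I would take $z = \min\{ s/(2b) , \sqrt{s/(2c)} \}$, which guarantees both $bz \leq s/2$ and $cz^2 \leq s/2$, hence $a + bz + cz^2 \leq a + s$ and therefore $\Proba(Z \geq a+s) \leq \Proba(Z \geq a + bz + cz^2) \leq \mathrm{e}^{-z}$. Using the elementary inequality $\mathrm{e}^{-\min\{u,v\}} \leq \mathrm{e}^{-u} + \mathrm{e}^{-v}$, this splits into $\Proba(Z-a \geq s) \leq \mathrm{e}^{-s/(2b)} + \mathrm{e}^{-\sqrt{s/(2c)}}$. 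Integrating over $s \in [0,\infty)$, the first term contributes $\int_0^{\infty} \mathrm{e}^{-s/(2b)}\,\mathrm{d}s = 2b$ and the second contributes $\int_0^{\infty} \mathrm{e}^{-\sqrt{s/(2c)}}\,\mathrm{d}s = 4c$ after the substitution $s = 2c w^2$; adding the leading $a$ yields $\E\crochs{Z} \leq a + 2b + 4c$. The degenerate cases are handled by the same computation: when $b=0$ or $c=0$ the corresponding term is simply dropped, and when $b=c=0$ the hypothesis forces $\Proba(Z \geq a)=0$, i.e.\@ $Z \leq a$ almost surely.

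There is essentially no obstacle here; the argument is routine once the substitution is set up. The only point requiring mild care is precisely the choice that produces the stated constants $2b$ and $4c$: a single substitution $s = bz + cz^2$ (integrating $\mathrm{e}^{-z}(b + 2cz)$) would in fact give the slightly sharper bound $a + b + 2c$, so the constants in the statement are chosen for convenience rather than optimality, and the above split is the simplest route to them.
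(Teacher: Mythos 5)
Your proof is correct, and it is essentially the approach the paper intends: the paper writes down no argument at all, dismissing the lemma as ``a classical integration exercise'', and your layer-cake integration of the tail bound $\Proba\parens{Z - a \geq s} \leq \mathrm{e}^{-s/(2b)} + \mathrm{e}^{-\sqrt{s/(2c)}}$ is precisely such an exercise, with all steps (the choice $z = \min\sets{s/(2b), \sqrt{s/(2c)}}$, the bound $\mathrm{e}^{-\min\sets{u,v}} \leq \mathrm{e}^{-u}+\mathrm{e}^{-v}$, and the two integrals giving $2b$ and $4c$) checking out. Your closing observation is also accurate: the monotone substitution $s = bz + cz^2$ yields $\E\crochs{Z} \leq a + \int_0^{\infty} \mathrm{e}^{-z}(b+2cz)\,\mathrm{d}z = a + b + 2c$, so the constants in the statement are not optimal, and your split is simply a slightly lossier but equally valid route to them.
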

Lemma~\ref{le.dev=>boundE} is a classical integration exercise.

\subsection{Computations about \texorpdfstring{$\sigh^2_{m_0}$}{hat(sigma2)(m0)}}
\label{app.var-sighm0}
The following proposition gives a general formula for the variance and MSE of the residual-variance estimator $\sigh^2_{m_0}$ defined by Eq.~\eqref{def.sighm0} in Section~\ref{sec.related.variance}.
Note that Proposition~\ref{pro.var-sighm0} and
Lemma~\ref{le.var-sighm0} below are classical results 
(see for instance \citealp[Eq.~(5)]{Ull_Zin:1992}, 
or \citealp[Eq.~(6)]{Det_Mun_Wag:1998}).
We state and prove them here for completeness.

In this subsection, for any matrix $M \in \mathcal{M}_n(\R)$, $\diag(M)$ denotes the diagonal matrix of the diagonal elements of $M$ and $\vectun = (1, \ldots, 1) \in \R^n$.

\begin{proposition}
\label{pro.var-sighm0} 
Let $\Pi \in \mathcal{M}_n(\R)$ some orthogonal projection matrix 
such that $D= \tr(\Pi) < n$, 
$\varepsilon \in \R^n$ some random vector with independent components, 
and $F \in \R^n$. 
Assume that for all $i \in \sets{1, \ldots, n}$, 
\[ \E\crochs{\varepsilon_i} = 0 
\, ,
\qquad 
\E\crochj{\varepsilon_i^2} = \sigma^2
\, ,
\qquad 
\E\crochj{\varepsilon_i^3} = m_3 
\, ,
\qquad \text{and} \qquad \E\crochj{\varepsilon_i^4} = m_4
\, .  \]
Let
\[ \sigh^2 \egaldef \frac{1}{n-D} \normb{ (\Id_n - \Pi) (F + \varepsilon)}^2 \, . \]
Then,
\begin{align}
\label{eq.pro.var-sighm0.var.gal}
\var\parenj{\sigh^2}
&= V
+ \frac{4 \normb{(\Id_n - \Pi)F}^2 \sigma^2}{(n-D)^2}
+ \frac{4 \prodscalb{F}{(\Id_n - \Pi) \diag(\Id_n - \Pi) \vectun}}{(n-D)^2} m_3
\\
\notag \text{where} \quad
V &\egaldef \frac{1}{(n-D)^2} \parenj{ \sum_{i=1}^n (1-\Pi_{i,i})^2 } \paren{ m_4 - 3 \sigma^4}
+ \frac{2}{n-D} \sigma^4 
\, , 
\\
\label{eq.pro.var-sighm0.risquequad.gal}
\text{and} \quad
\E\crochj{ \parenb{\sigh^2 - \sigma^2}^2 } &=
V
+ \frac{4 \normb{(\Id_n - \Pi)F}^2 \sigma^2}{(n-D)^2}
+ \frac{\normb{ (\Id_n - \Pi) F}^4 }{(n-D)^2}
\\
\notag
&\qquad
+ \frac{4 \prodscalb{F}{(\Id_n - \Pi) \diag(\Id_n - \Pi) \vectun}}{(n-D)^2} m_3
\, . 
\end{align}
In particular, if the $\varepsilon_i$ are Gaussian,
\begin{align}
\label{eq.pro.var-sighm0.var.gauss}
\var\paren{\sigh^2} &= \frac{2 \sigma^4}{n-D} + \frac{4 \normb{(\Id_n - \Pi)F}^2}{(n-D)^2}  \sigma^2
\\
\label{eq.pro.var-sighm0.risquequad.gauss}
\E\crochj{ \parenb{\sigh^2 - \sigma^2}^2 }
&= \frac{2 \sigma^4}{n-D} + \frac{4\normb{(\Id_n - \Pi)F}^2}{(n-D)^2}  \sigma^2 + \frac{\normb{ (\Id_n - \Pi) F}^4}{(n-D)^2}
\, .
\end{align}
\end{proposition}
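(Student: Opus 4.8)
The plan is to decompose $(n-D)\sigh^2$ as the sum of a deterministic term, a centred linear form in $\varepsilon$, and a quadratic form in $\varepsilon$, and to treat each contribution separately. Writing $P \egaldef \Id_n - \Pi$, which is an orthogonal projection matrix with $\tr(P) = n-D$ and diagonal entries $P_{ii} = 1-\Pi_{ii}$, and using $P = P^{\top} = P^2$, the expansion
\[
(n-D) \sigh^2 = \normb{PF}^2 + 2 \prodscalb{PF}{\varepsilon} + \varepsilon^{\top} P \varepsilon
\]
isolates the constant $\norms{PF}^2$, the linear form $L \egaldef 2\prodscals{PF}{\varepsilon}$, and the quadratic form $Q \egaldef \varepsilon^{\top} P \varepsilon$. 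First I would take expectations: since $\E[\varepsilon^{\top} P \varepsilon] = \sigma^2 \tr(P) = \sigma^2(n-D)$, this recovers the bias formula $\E[\sigh^2] - \sigma^2 = \norms{(\Id_n-\Pi)F}^2/(n-D)$ already given in Eq.~\eqref{eq.sighm0.biais}. Consequently $\E[(\sigh^2-\sigma^2)^2] = \var(\sigh^2) + (\norms{PF}^2/(n-D))^2$, which explains why Eq.~\eqref{eq.pro.var-sighm0.risquequad.gal} is exactly Eq.~\eqref{eq.pro.var-sighm0.var.gal} augmented by the single extra term $\norms{(\Id_n-\Pi)F}^4/(n-D)^2$; so it suffices to compute the variance.

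Next I would write $\var((n-D)\sigh^2) = \var(L) + \var(Q) + 2\,\mathrm{cov}(L,Q)$ and evaluate the three pieces. The linear part is immediate from independence: $\var(L) = 4\sigma^2\sum_i (PF)_i^2 = 4\sigma^2\norms{PF}^2$, yielding the term $4\norms{(\Id_n-\Pi)F}^2\sigma^2/(n-D)^2$ after dividing by $(n-D)^2$. The quadratic part rests on the elementary identity, which I would isolate as Lemma~\ref{le.var-sighm0}, that for any symmetric matrix $A$ and independent centred coordinates with common moments $\sigma^2, m_4$,
\[
\var\parenb{\varepsilon^{\top} A \varepsilon} = (m_4 - 3\sigma^4) \sum_i A_{ii}^2 + 2\sigma^4 \tr(A^2) \, .
\]
Applying this with $A = P$ and using $\tr(P^2) = \tr(P) = n-D$ together with $P_{ii} = 1-\Pi_{ii}$ produces precisely the quantity $V$ of the statement.

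The remaining work is the cross term $\mathrm{cov}(L,Q) = \E[LQ]$ (as $\E[L]=0$), which is where the third moment enters. Expanding $\E[LQ] = 2\sum_k (PF)_k \sum_{i,j} P_{ij}\E[\varepsilon_k\varepsilon_i\varepsilon_j]$ and noting that, by independence and centring, the only surviving terms are those with $i=j=k$ (each contributing $m_3$), I would obtain $\E[LQ] = 2m_3 \sum_k (PF)_k P_{kk}$. The identity $\sum_k (PF)_k P_{kk} = \prodscals{F}{P\diag(P)\vectun}$ then rewrites this as $2m_3\prodscals{F}{(\Id_n-\Pi)\diag(\Id_n-\Pi)\vectun}$, giving after normalisation the last term of Eq.~\eqref{eq.pro.var-sighm0.var.gal}. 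Collecting $\var(L)$, $\var(Q)$ and $2\,\mathrm{cov}(L,Q)$ and dividing by $(n-D)^2$ proves Eq.~\eqref{eq.pro.var-sighm0.var.gal}, and adding the squared bias gives Eq.~\eqref{eq.pro.var-sighm0.risquequad.gal}; the Gaussian formulas \eqref{eq.pro.var-sighm0.var.gauss}--\eqref{eq.pro.var-sighm0.risquequad.gauss} follow at once by substituting $m_3 = 0$ and $m_4 = 3\sigma^4$, which kills both the $(m_4-3\sigma^4)$ and the $m_3$ contributions. The only genuinely delicate point is the index bookkeeping behind the quadratic-form variance and the cross term: since the noise is merely assumed independent with prescribed third and fourth moments rather than Gaussian, one cannot invoke Wick's formula and must instead carefully enumerate which index coincidences among $\E[\varepsilon_i\varepsilon_j\varepsilon_k\varepsilon_l]$ and $\E[\varepsilon_k\varepsilon_i\varepsilon_j]$ are non-zero and track the diagonal corrections $\sum_i A_{ii}^2$ they generate.
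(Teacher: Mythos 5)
Your proposal is correct and follows essentially the same route as the paper's proof in Appendix~\ref{app.var-sighm0}: the same decomposition of $(n-D)\sigh^2$ into a constant, a linear form, and a quadratic form in $\varepsilon$, the same index-coincidence bookkeeping giving $\var\parenb{\varepsilon^{\top} A \varepsilon} = (m_4-3\sigma^4)\sum_i A_{ii}^2 + 2\sigma^4\tr(A^2)$ and the third-moment cross term $2 m_3 \sum_k (PF)_k P_{kk}\,$, and the same passage from variance to MSE via the squared bias of Eq.~\eqref{eq.sighm0.biais}. The only difference is organizational (you split the variance as $\var(L)+\var(Q)+2\,\mathrm{cov}(L,Q)$, while the paper's Lemma~\ref{le.var-sighm0} computes $\E[Z^2]-\E[Z]^2$ for $Z=\prodscalb{F+\varepsilon}{M(F+\varepsilon)}$ wholesale), which yields identical computations.
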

\begin{proof}[Proof of Proposition~\ref{pro.var-sighm0}]
Applying Lemma~\ref{le.var-sighm0} below with $M=\Id_n - \Pi$ yields Eq.~\eqref{eq.pro.var-sighm0.var.gal}, 
since we have $\sigh^2 = Z / (n-D) $ and $M$ is an orthogonal projection matrix with $\tr(M)=n-D$.
Eq.~\eqref{eq.pro.var-sighm0.risquequad.gal} follows, in combination with Eq.~\eqref{eq.sighm0.biais}, since
\[ \E\crochB{ \parenb{\sigh^2 - \sigma^2}^2 } = \parenB{ \E\crochb{ \sigh^2 } - \sigma^2}^2 + \var\parenb{\sigh^2} \, . \]
In the Gaussian case, $m_3 = 0$ and $m_4 = 3 \sigma^4$,
hence
\[ V = \frac{2 \sigma^4}{n-D} \, , \]
which leads to Eq.~\eqref{eq.pro.var-sighm0.var.gauss}
and~\eqref{eq.pro.var-sighm0.risquequad.gauss}.
\end{proof}

\begin{lemma}
\label{le.var-sighm0}
Let $F \in \R^n$, $M \in \mathcal{M}_n(\R)$ a symmetric matrix, $\varepsilon \in \R^n$ some random vector with independent components such that for all $i \in \set{1, \ldots, n}$,
\[ 
\E\crochs{\varepsilon_i} = 0 \, ,
\qquad 
\E\crochj{\varepsilon_i^2} = \sigma^2
\, ,
\qquad 
\E\crochj{\varepsilon_i^3} = m_3 
\, ,
\qquad \text{and} \qquad
\E\crochj{\varepsilon_i^4} = m_4
\, .  \]
Then, if $Z = \prodscalb{F+\varepsilon}{M (F+\varepsilon)}$,
\begin{align}
\label{eq.le.var-sighm0.var.gal}
\var\paren{Z}
&= W + 4 \norm{ M F }^2 \sigma^2
+ 4 \prodscalb{F}{M \diag(M) \vectun} m_3
\\
\notag
\text{where} \qquad W &\egaldef
\tr\paren{\diag(M)^2}
\paren{ m_4 - 3 \sigma^4}
+ 2 \tr(M^2) \sigma^4
\\
\label{eq.le.var-sighm0.var.gal.encadrement}
\text{satisfies} \qquad
0 \leq W
&\leq
\parenj{ 2 \sigma^4 + m_4 } \tr(M^2)
\, .
\end{align}
\end{lemma}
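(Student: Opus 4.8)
The plan is to isolate the randomness in $Z$ by expanding around its deterministic part. Since $M$ is symmetric, I would write
\[
Z = \prodscal{F}{MF} + L + Q \, , \qquad
L \egaldef 2\prodscal{F}{M\varepsilon} \, , \quad
Q \egaldef \prodscal{\varepsilon}{M\varepsilon} \, ,
\]
so that $\prodscal{F}{MF}$ is constant and $\var(Z) = \var(L) + 2\,\mathrm{cov}(L,Q) + \var(Q)$. The linear term is immediate: as the $\varepsilon_i$ are independent, centered, with variance $\sigma^2$, one has $L = 2\sum_i (MF)_i \varepsilon_i$ and therefore $\var(L) = 4\sigma^2 \sum_i (MF)_i^2 = 4\sigma^2 \norm{MF}^2$, which is the second term of Eq.~\eqref{eq.le.var-sighm0.var.gal}.

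For the cross term I would compute $\mathrm{cov}(L,Q) = \E\crochs{LQ}$, since $\E\crochs{L}=0$. Expanding gives $\E\crochs{LQ} = 2\sum_{i,j,k} (MF)_k M_{ij}\, \E\crochs{\varepsilon_i\varepsilon_j\varepsilon_k}$, and by independence and centering the third moment $\E\crochs{\varepsilon_i\varepsilon_j\varepsilon_k}$ survives only when $i=j=k$, contributing $m_3$. This collapses the triple sum to $2 m_3 \sum_k (MF)_k M_{kk}$, and a short index manipulation using the symmetry of $M$ identifies $\sum_k (MF)_k M_{kk}$ with $\prodscal{F}{M \diag(M) \vectun}$. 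Hence $2\,\mathrm{cov}(L,Q) = 4 m_3 \prodscal{F}{M \diag(M) \vectun}$, matching the third term of Eq.~\eqref{eq.le.var-sighm0.var.gal}.

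The main work is $\var(Q)$, the variance of a quadratic form in independent, not-necessarily-Gaussian variables, and this fourth-moment bookkeeping is the only delicate step. The key ingredient is the identity
\[
\E\crochb{\varepsilon_i\varepsilon_j\varepsilon_k\varepsilon_l}
= \sigma^4 \parenb{\delta_{ij}\delta_{kl} + \delta_{ik}\delta_{jl} + \delta_{il}\delta_{jk}}
+ (m_4 - 3\sigma^4)\,\delta_{ijkl} \, ,
\]
where $\delta_{ijkl}=1$ iff $i=j=k=l$; it separates the ``Gaussian'' pairings from the excess-kurtosis correction and is checked by enumerating the cases of coinciding indices. Substituting into $\E\crochs{Q^2} = \sum_{i,j,k,l} M_{ij}M_{kl}\,\E\crochs{\varepsilon_i\varepsilon_j\varepsilon_k\varepsilon_l}$, the three pairing terms contribute $\sigma^4\crochs{(\tr M)^2 + 2\tr(M^2)}$ (using that $M$ is symmetric, so $\sum_{i,j} M_{ij}^2 = \tr(M^2)$), while the diagonal term contributes $(m_4-3\sigma^4)\tr(\diag(M)^2)$. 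Subtracting $(\E\crochs{Q})^2 = \sigma^4(\tr M)^2$ then yields exactly $W$, completing Eq.~\eqref{eq.le.var-sighm0.var.gal}.

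Finally, for the two-sided bound Eq.~\eqref{eq.le.var-sighm0.var.gal.encadrement} the nonnegativity $W \geq 0$ is free, since $W = \var(Q)$. For the explicit bounds I would combine $0 \leq \tr(\diag(M)^2) = \sum_i M_{ii}^2 \leq \sum_{i,j} M_{ij}^2 = \tr(M^2)$ with $m_4 \geq \sigma^4$ (Cauchy--Schwarz), splitting on the sign of $m_4 - 3\sigma^4$: when it is nonnegative, bounding $\tr(\diag(M)^2)$ above by $\tr(M^2)$ gives $W \leq (m_4 - \sigma^4)\tr(M^2) \leq (2\sigma^4 + m_4)\tr(M^2)$; when it is negative, the diagonal term only helps, so $W \leq 2\sigma^4\tr(M^2) \leq (2\sigma^4 + m_4)\tr(M^2)$ since $m_4 \geq 0$. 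The same case split also recovers $W \geq 0$ directly if one prefers not to invoke $W = \var(Q)$.
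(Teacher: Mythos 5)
Your proof is correct and follows essentially the same route as the paper's: the same decomposition of $Z$ into its constant, linear, and quadratic parts, and the same fourth-moment bookkeeping (your pairing identity is exactly what the paper computes inline for $\E\crochb{\prodscals{\varepsilon}{M\varepsilon}^2}$), yielding the identical expression for $W$. The only cosmetic differences are that you organize the calculation via bilinearity of the variance rather than $\E\crochs{Z^2}-\E\crochs{Z}^2$, and that for Eq.~\eqref{eq.le.var-sighm0.var.gal.encadrement} you get $W \geq 0$ from $W = \var\parenb{\prodscals{\varepsilon}{M\varepsilon}}$ plus a case split on the sign of $m_4 - 3\sigma^4$ for the upper bound, whereas the paper regroups $W = \tr\parenb{\diag(M)^2}\parenb{m_4 - \sigma^4} + 2\crochb{\tr(M^2) - \tr\parenb{\diag(M)^2}}\sigma^4$ as a sum of two manifestly nonnegative terms, which delivers both bounds at once without any case distinction.
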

\begin{proof}[Proof of Lemma~\ref{le.var-sighm0}]
First note that
\[ 
Z 
= \prodscals{F}{MF} + \prodscals{\varepsilon}{M \varepsilon}  + 2 \prodscals{M F}{\varepsilon}
\, . \]
Then,
\[ 
\E\crochs{Z} 
= \prodscals{F}{MF} + \E\crochb{\prodscals{\varepsilon}{M \varepsilon}} 
= \prodscals{F}{MF} + \sigma^2 \tr(M) 
\, . \]
Furthermore,
\begin{align*}
\E\crochj{Z^2} &= \prodscals{F}{MF}^2 + \E\crochj{ \prodscals{\varepsilon}{M \varepsilon}^2 } + 4 \E\crochj{\prodscals{M F}{\varepsilon}^2}
\\
&\qquad + 2 \prodscals{F}{MF} \sigma^2 \tr(M) + 4 \E\crochb{ \prodscals{\varepsilon}{M \varepsilon} \prodscals{M F}{\varepsilon}}
\\
&= \prodscals{F}{MF}^2
+ \parenj{ \sum_{i=1}^n M_{i,i}^2 } \parenj{ m_4 - 3 \sigma^4}
+ \crochj{ \tr(M)^2 + 2 \tr(M^{\top} M) } \sigma^4
\\
&\qquad
+ 4 \norms{ M F }^2 \sigma^2
+ 2 \prodscals{F}{MF} \sigma^2 \tr(M)
+ 4 \parenj{ \sum_{i,j=1}^n M_{i,i} M_{i,j} F_j } m_3
\end{align*}
where we used that
\begin{align*}
\E\crochj{ \prodscals{\varepsilon}{M \varepsilon}^2 }
&= \E\crochj{ \paren{ \sum_{i,j} \varepsilon_i M_{i,j} \varepsilon_j }^2 }
= \sum_{i,j,k,\ell} M_{i,j} M_{k,\ell} \E\crochj{ \varepsilon_i \varepsilon_j \varepsilon_k \varepsilon_{\ell}}
\\
&= \parenj{ \sum_{i=1}^n M_{i,i}^2 } \parenj{ m_4 - 3 \sigma^4}
+ \parenj{ \tr(M)^2 + \tr(M^2) + \tr(M^{\top} M) } \sigma^4
\end{align*}
and for any $G \in \R^n$ (here, $G=M F$),
\begin{align*}
\E\crochj{ \prodscals{G}{\varepsilon}^2 } 
&= \norms{G}^2 \sigma^2 
\\
\text{and} \qquad 
\E\crochb{ \prodscals{\varepsilon}{M\varepsilon} \prodscals{G}{\varepsilon} }
&= \E\crochj{ \sum_{i,j,k} \varepsilon_i M_{i,j} \varepsilon_j G_k \varepsilon_k }
= \parenj{ \sum_{i=1}^n M_{i,i} G_i } m_3
\, .
\end{align*}
Eq.~\eqref{eq.le.var-sighm0.var.gal} follows since $M$ is symmetric, 
$\var(Z) = \E[Z^2]-\E[Z]^2$,
\[
\sum_{i,j=1}^n M_{i,i} M_{i,j} F_j  
= \prodscalj{F}{M^{\top} \diag(M) \vectun}
\qquad \text{and} \qquad
\sum_{i=1}^n M_{i,i}^2  
= \tr\paren{\diag(M)^2} 
\, .
\]
For proving Eq.~\eqref{eq.le.var-sighm0.var.gal.encadrement},
we remark that
\begin{gather*}
W = \tr\parenj{\diag(M)^2} \parenj{ m_4 - \sigma^4 }
+ 2 \crochB{ \tr(M^2) - \tr\parenb{\diag(M)^2}} \sigma^4
\, ,
\\ 
 m_4 \geq \sigma^4
\, ,
\quad \text{and} \quad
0 \leq \tr\paren{\diag(M)^2} \leq \tr(M^2) 
\end{gather*}
since $M$ is symmetric. 
\end{proof}

\section{Algorithms}
\label{app.algos}

\subsection{Computation of the full path \texorpdfstring{$(\mh(C))_{C \geq 0}$}{( hat(m)(C), C geq 0)} in Algorithms~\ref{algo.OLS.jump}, \ref{algo.gal.slope.naif}, \ref{algo.penmin.linear}, and~\ref{algo.penmingal}}
\label{app.algos.path}
One can formulate the first step in Algorithms~\ref{algo.OLS.jump}, \ref{algo.gal.slope.naif}, \ref{algo.penmin.linear}, and~\ref{algo.penmingal} as computing, for every $C \geq 0$,
\begin{equation} \label{eq.mhC.abstrait.algo}
\mh(C) 
\in \argmin_{\mM} \setb{ f(m) + C g(m) }
\end{equation}
for some functions $f,g: \M \to \R$,
where $\M$ is assumed to be finite.
In the most general case (Algorithm~\ref{algo.penmingal}), $f(m) = \Remp\parens{\shm}$ and $g(m) = \pen_0(m)$.
Particular cases (Algorithms~\ref{algo.OLS.jump}, \ref{algo.gal.slope.naif}, and~\ref{algo.penmin.linear}) follow.

This subsection explains how to compute the full path $(\mh(C))_{C \geq 0}$ defined 
by Eq.~\eqref{eq.mhC.abstrait.algo}, given $\parens{f(m)}_{\mM}$ and $\parens{g(m)}_{\mM}\,$, 
with at most $\grandO([\card \M]^2)$ operations, and much less in practice.
The material presented here is adapted from \citet[Section~3.2]{Arl_Mas:2009:pente}. 
Similar results ---with a bit less details and formulated in specific frameworks where $\M \subset \mathbb{N}$--- 
have been proved earlier by \citet[Lemma~4.4.1]{Leb:2002}, 
\citet[Proposition~2.1]{Lav:2005} and \citet[Section~6.4.3]{Zwa:2005:phd}. 

\medbreak

First, remark that the definition \eqref{eq.mhC.abstrait.algo} of $\mh(C)$ can be ambiguous.
Let us choose a strict total order $\prec$ on $\M$ such that $g$ is non-decreasing, which is always possible since $\M$ is finite.
Then, by convention, for every $C \geq 0$, $\mh(C)$ is defined as
\begin{equation} \label{eq.EC.abstrait.algo}
\mh(C) = \min_{\prec} \cE(C)
\quad \text{where} \quad
\cE(C) \egaldef \argmin_{\mM} \setb{ f(m) + C g(m) }
\, .
\end{equation}
The main reason why the whole trajectory $\parens{\mh(C)}_{C\geq 0}$ can be computed efficiently is its particular shape.
Indeed, the proof of Proposition~\ref{pro.algo.path} below shows that $C \flapp \mh(C)$ is piecewise constant and non-increasing for $\prec$.
Then, the whole trajectory $\parens{\mh(C)}_{C\geq 0}$ can be written as
\begin{equation}
\label{eq.algo.path.description}
\forall i \in \set{0, \ldots, i_{\max}}, \quad \forall C \in \left[ C_i , C_{i+1} \right) , \qquad \mh(C) = m_i
\end{equation}
where
$i_{\max} \in \set{0, \ldots, \card(\M) - 1}$ is the number of jumps,
$(C_i)_{0 \leq i \leq i_{\max}+1}$ is an increasing sequence of non-negative reals (the location of the jumps)
with $C_0=0$ and $C_{i_{\max}+1}=+\infty$, and
$(m_i)_{0 \leq i \leq i_{\max}}$ is a non-increasing sequence of elements of $\M$.
\begin{algo}
\label{algo.path}
\algoinput\textup{:} $\parens{f(m)}_{\mM}\,$, $\parens{g(m)}_{\mM}\,$, and $\prec$ some strict total order on $\M$ such that $g$ is non-decreasing.
\\
Initialization\textup{:}
$C_0 \egaldef 0$ and $m_0 \egaldef  \min_{\prec} \argmin_{\mM} \setb{ f(m) }$.
\\
Step $i$, $i \geq 1$\textup{:}
Let
\[ \cG(m_{i-1}) \egaldef \setb{\mM \telque f(m) > f(m_{i-1}) \quad \text{ and } \quad g(m) < g(m_{i-1})}
\, . \]
If $\cG(m_{i-1})=\emptyset$, then put $C_i = + \infty$, $i_{\max}=i-1$ and stop. 
Otherwise, define 
\begin{gather} \label{def.algo.path.Ci}
C_{i} \egaldef \min \setj{\frac{f(m) - f(m_{i-1})} {g(m_{i-1}) - g(m)} \telque m \in \cG(m_{i-1}) } \\
\text{and} \qquad
m_i \egaldef \min_{\prec} \cF_i
\qquad \text{with} \quad
\cF_i \egaldef \argmin_{m \in \cG(m_{i-1})} \setj{\frac{f(m) - f(m_{i-1})} {g(m_{i-1}) - g(m)} }
\, . \notag
\end{gather}
\\
\algooutput\textup{:} $(C_i)_{0 \leq i \leq i_{\max}+1}$ and $(m_i)_{0 \leq i \leq i_{\max}}$, which describe according to Eq.~\eqref{eq.algo.path.description} the full trajectory $(\mh(C))_{C \geq 0}$ defined by Eq.~\eqref{eq.EC.abstrait.algo}.
\end{algo}
%
%
\begin{proposition}[Correctness of Algorithm~\ref{algo.path}] \label{pro.algo.path}
For every $C \geq 0$, let $\mh(C)$ be defined by Eq.~\eqref{eq.EC.abstrait.algo}.
Assume $\M$ is finite.
Then, Algorithm~\ref{algo.path} terminates and $i_{\max} \leq \card(\M) - 1$.
Furthermore, Algorithm~\ref{algo.path} is correct, that is, $(C_i)_{0 \leq i \leq i_{\max}+1}$ is increasing and $\forall i \in \set{0, \ldots, i_{\max}-1}$, $\forall C \in [C_i, C_{i+1})$, $\mh(C) = m_i\,$.
\end{proposition}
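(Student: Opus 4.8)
The plan is to exploit that for each fixed $\mM$ the map $C \mapsto f(m) + C g(m) =: \varphi_m(C)$ is affine, so that $V(C) := \min_{\mM} \varphi_m(C)$ is the lower envelope of finitely many lines and $\cE(C) = \set{\mM \telque \varphi_m(C) = V(C)}$ is the set of active lines; geometrically the points $(g(m),f(m))_{\mM}$ have a lower-left convex hull whose vertices are visited, in order of decreasing $g$, as $C$ grows. First I would record the elementary \emph{exchange inequality}: if $0 \leq C < C'$, $m \in \cE(C)$ and $m' \in \cE(C')$, then summing the two optimality inequalities gives $(C-C')(g(m)-g(m')) \leq 0$, hence $g(m) \geq g(m')$, with equality forcing $f(m)=f(m')$; in particular $C \mapsto g(\mh(C))$ is non-increasing. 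I would also use the upper semicontinuity of $\cE$: since the $\varphi_m$ and $V$ are continuous, $m \in \cE(C)$ whenever $m \in \cE(C_k)$ for some $C_k \uparrow C$.

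The core is an induction on $i$ proving $\mh(C)=m_i$ for all $C \in [C_i,C_{i+1})$. The base case $m_0 = \min_{\prec}\cE(0)=\mh(C_0)$ is the definition. For the inductive step, the hypothesis $\mh \equiv m_{i-1}$ on $[C_{i-1},C_i)$ together with upper semicontinuity gives $m_{i-1}\in\cE(C_i)$; the defining identity $C_i=\bigl(f(m_i)-f(m_{i-1})\bigr)/\bigl(g(m_{i-1})-g(m_i)\bigr)$ rewrites as $\varphi_{m_{i-1}}(C_i)=\varphi_{m_i}(C_i)$, whence $\varphi_{m_i}(C_i)=V(C_i)$ and $m_i\in\cE(C_i)$. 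I would then show $m_i\in\cE(C)$ for every $C\in[C_i,C_{i+1})$ by a three-way comparison with each competitor $m$: if $g(m)\geq g(m_i)$, the difference $\varphi_m-\varphi_{m_i}$ has non-negative slope and is $\geq 0$ at $C_i$, so stays $\geq 0$; if $m\in\cG(m_i)$, its crossing abscissa with $m_i$ is $\geq C_{i+1}$ by definition of $C_{i+1}$ as a minimum, so $\varphi_m\geq\varphi_{m_i}$ on $[C_i,C_{i+1}]$; and the remaining case $g(m)<g(m_i)$, $f(m)\leq f(m_i)$ cannot occur, since it would give $\varphi_m(C_i)<\varphi_{m_i}(C_i)$ as soon as $C_i>0$, contradicting $m_i\in\cE(C_i)$.

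To fix the tie-break I would verify that $m_i$ realizes the \emph{smallest} value of $g$ over $\cE(C_i)$: any minimizer with $g<g(m_{i-1})$ must lie in $\cF_i$, because $\varphi_m(C_i)=V(C_i)$ and $C_i>0$ force $f(m)>f(m_{i-1})$ and crossing value exactly $C_i$, and $m_i=\min_{\prec}\cF_i$ has minimal $g$ there since $g$ is $\prec$-non-decreasing; hence $m_i=\min_{\prec}\cE(C_i)=\mh(C_i)$, and for $C\in(C_i,C_{i+1})$ the only co-minimizers of $m_i$ share its $f$ and $g$ values and lie above it for $\prec$, so $\mh(C)=m_i$. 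The same smallest-$g$ property yields the strict increase $C_i<C_{i+1}$, since every $m\in\cG(m_i)$ has crossing abscissa $\geq C_i$ (from $m_i\in\cE(C_i)$) and equality would produce a minimizer at $C_i$ with $g<g(m_i)$, which is excluded. Termination and $i_{\max}\leq\card(\M)-1$ then follow immediately, as $g(m_0)>g(m_1)>\cdots$ is strictly decreasing inside the finite set $\M$ (each $m_i\in\cG(m_{i-1})$ forces $g(m_i)<g(m_{i-1})$), so the $m_i$ are pairwise distinct. The main obstacle I anticipate is precisely this bookkeeping around ties---guaranteeing $C_i<C_{i+1}$ strictly and that the convention $\mh(C)=\min_{\prec}\cE(C)$ selects exactly $m_i$ at the breakpoint $C=C_i$---which is where the assumptions that $\prec$ makes $g$ non-decreasing and that $C_i>0$ for $i\geq 1$ are indispensable.
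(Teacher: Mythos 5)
Your strategy is the same as the paper's: an induction on $i$, driven by the exchange inequality (which is exactly the paper's Lemma~\ref{le.algo.path}), upper semicontinuity of $\cE$ at breakpoints to get $m_{i-1} \in \cE(C_i)$ from the inductive hypothesis, tie-breaking handled through the $\prec$-monotonicity of $g$, and termination from the strict decrease of $g(m_0) > g(m_1) > \cdots$ within the finite set $\M$. The organization differs only superficially: you show directly that $m_i$ remains a minimizer on $[C_i, C_{i+1})$ via a three-way comparison of the affine functions $\varphi_m\,$, where the paper instead argues by contradiction on $\mh(C)$ using the dichotomy ``$\mh(C') = \mh(C)$ or $\mh(C') \in \cG(\mh(C))$''; your treatment of the breakpoint $C_i$ (showing $m_i = \min_{\prec} \cE(C_i)$, hence smallest $g$ there) and of the strict inequality $C_i < C_{i+1}$ is sound.

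The one step that fails as written is the base case. Your induction statement is ``$\mh(C) = m_i$ for all $C \in [C_i, C_{i+1})$'', but you dispose of $i=0$ with ``the definition'', which only yields $\mh(C_0) = m_0$ at the single point $C_0 = 0$; you still have to propagate $m_0$ across all of $[0, C_1)$. Your three-way comparison would do this, except that its third case --- a competitor $m$ with $g(m) < g(m_0)$ and $f(m) \leq f(m_0)$ --- is excluded in your argument only ``as soon as $C_i > 0$'', and $C_0 = 0$. The exclusion at $i = 0$ requires a different argument: since $m_0$ minimizes $f$ over $\M$, such an $m$ would satisfy $f(m) = f(m_0)$, hence $m \in \cE(0)$; and $g(m) < g(m_0)$ together with the $\prec$-monotonicity of $g$ forces $m \prec m_0\,$, contradicting $m_0 = \min_{\prec} \cE(0)$. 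This is precisely what case (c) of the paper's Lemma~\ref{le.algo.path} is engineered to supply (its conclusion ``$m \neq \mh(0)$''), and it is why that lemma carries a separate $C = 0$ clause rather than only the dichotomy your exchange inequality records. The repair uses only tools you already invoke, so the gap is minor, but as stated your induction does not get off the ground.
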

Proposition~\ref{pro.algo.path} also gives an upper bound on the computational complexity of Algorithm~\ref{algo.path}:
since the complexity of each step is $\grandO(\card \M)$, the complexity of Algorithm~\ref{algo.path} is upper-bounded by $\grandO(i_{\max} \card \M) \leq \grandO([\card \M]^2)$.
In general, this upper bound is pessimistic since we usually have $i_{\max} \ll \card(\M)$ in practice.
\begin{proof}[Proof of Proposition~\ref{pro.algo.path}]
First, since $\M$ is finite, $\cG(m_{i-1})$ is also finite 
and $m_i$ is well-defined as soon as $\cG(m_{i-1}) \neq \emptyset$, 
which holds for every $i \leq i_{\max}\,$.
Moreover, by construction, $g(m_i)$ decreases with $i$, so that all the $m_i \in \M$ are different; 
hence, Algorithm~\ref{algo.path} terminates and we have the inequality $i_{\max} +1 \leq \card(\M)$.
Notice also that $C_i$ can always be defined by Eq.~\eqref{def.algo.path.Ci} with the convention $\min \emptyset=+\infty$.
\\
We now prove by induction that the following property holds true for every $i \in \set{0, \ldots, i_{\max}  }$,
which implies that Proposition~\ref{pro.algo.path} holds true:
\[ \mathcal{P}_i : \qquad C_i < C_{i+1} \quad \text{ and } \quad \forall C \in [C_i, C_{i+1}), \quad \mh(C) = m_i \, . \]

\paragraph{$\mathcal{P}_0$ holds true}
By definition of $C_1\,$, since $\M$ is finite, $C_1>0$. 
Note that $C_1$ may be equal to $+\infty$ if $\cG(m_0)=\emptyset$. 
For $C=C_0=0$, the definition of $m_0$ is the one of $\mh(0)$, so that $\mh(C) = m_0\,$.
For $C \in (0,C_1)$, Lemma~\ref{le.algo.path} below shows that either $\mh(C)=\mh(0)=m_0$ or $\mh(C) \in \cG(m_0)$.
In the latter case, by definition of $C_1\,$,
\[ \frac{f\parenb{ \mh(C) } - f(m_0)} {g(m_0) - g\parenb{ \mh(C) }} \geq C_1 > C \]
hence
\[ f\parenb{ \mh(C) } + C g\parenb{ \mh(C) } > f(m_0) + C g(m_0) \] which contradicts the definition of $\mh(C)$.
Therefore, $\mathcal{P}_0$ holds true.

\paragraph{$\mathcal{P}_i \Rightarrow \mathcal{P}_{i+1}$ for every $i \in \set{0, \ldots, i_{\max}-1}$} 
Assume that $\mathcal{P}_i$ holds true.
First, we have to prove that $C_{i+2}>C_{i+1}\,$. 
If $i=i_{\max}-1$, this is clear since $C_{i_{\max} + 1}=+\infty$. 
Otherwise, $C_{i+2}<+\infty$ and $m_{i+2}$ exists. Then, by definition of $m_{i+2}$ and $C_{i+2}$ (resp. $m_{i+1}$ and $C_{i+1}$), we have
\begin{align}
\label{eq.pro.algo.path.1}
f(m_{i+2}) - f(m_{i+1}) = C_{i+2} \crochb{ g(m_{i+1}) - g(m_{i+2}) }
\\
\label{eq.pro.algo.path.2}
f(m_{i+1}) - f(m_{i}) = C_{i+1} \crochb{ g(m_{i}) - g(m_{i+1}) } \, .
\end{align}
Moreover, $m_{i+2} \in \cG(m_{i+1}) \subset \cG(m_i)$ and $m_{i+2} \prec m_{i+1}$ (because $g$ is non-decreasing). Using again the definition of $C_{i+1}\,$, we have 
\begin{equation}
\label{eq.pro.algo.path.3}
f(m_{i+2}) - f(m_i) > C_{i+1} \crochb{ g(m_i) - g(m_{i+2}) }
\end{equation}
(the inequality is strict: otherwise, we would have $m_{i+2} \in \cF_{i+1}$ and $m_{i+2} \prec m_{i+1} = \min_{\prec} \cF_{i+1}\,$, 
which is not possible).
The difference of Eq.~\eqref{eq.pro.algo.path.3} and \eqref{eq.pro.algo.path.2} yields 
\[ 
f(m_{i+2}) - f(m_{i+1}) 
> C_{i+1} \crochb{ g(m_{i+1}) - g(m_{i+2}) } 
\, . 
\]
By Eq.~\eqref{eq.pro.algo.path.1}, we deduce that 
\[ 
C_{i+2} \crochb{ g(m_{i+1}) - g(m_{i+2}) } 
> C_{i+1} \crochb{ g(m_{i+1}) - g(m_{i+2}) } 
\, , \]
hence $C_{i+2} > C_{i+1}$ since $g(m_{i+1}) > g(m_{i+2})$.

Second, we prove that $\mh(C_{i+1})=m_{i+1}\,$.
From $\mathcal{P}_i\,$, we know that for every $\mM$, for every $C\in[C_i, C_{i+1})$, $f(m_i) + C g(m_i) \leq f(m) + C g(m)$.
Taking the limit when $C$ tends to $C_{i+1}\,$, it follows that $m_i \in \cE(C_{i+1})$.
By Eq.~\eqref{eq.pro.algo.path.2}, we then have $m_{i+1} \in \cE(C_{i+1})$.
Now, let $m'$ be any element of $\cE(C_{i+1})$.
By Lemma~\ref{le.algo.path} with $C=C_i\,$, $m=m_i = \mh(C_i) \in \cE(C_i)$ and $C'=C_{i+1}>C_i\,$,
we have either 
(a) $f(m') = f(m_i)$ and $g(m') = g(m_i)$ 
or (b) $m' \in \cG(m_i)$; 
case (c) is excluded since $m_i = \mh(C_i)$. 
In case (a), $g(m')= g(m_i) > g(m_{i+1})$, hence $m_{i+1} \prec m'$  
because $g$ is non-decreasing. 
In case (b), notice that $m_i , m' \in \cE(C_{i+1})$ implies 
$f(m') + C_{i+1} g(m') = f(m_i) + C_{i+1} g(m_i)$. 
Since $m' \in \cG(m_i)$, we get that $m' \in \cF_{i+1}\,$. 
Then, by definition of $m_{i+1}\,$, we have $m_{i+1} \preceq m'$. 
Overall, we have proved that $m_{i+1}$ belongs to $\cE(C_{i+1})$ and is smaller than any element $m'$ of $\cE(C_{i+1})$, 
which proves that $m_{i+1} = \min_{\prec} \cE(C_{i+1}) = \mh(C_{i+1})$.

Let $C' \in (C_{i+1},C_{i+2})$. 
It remains to prove $\mh(C')=m_{i+1}\,$.
From the last statement of Lemma~\ref{le.algo.path} with $C=C_{i+1}\,$,
we have either $\mh(C')=\mh(C_{i+1})=m_{i+1}$ or $\mh(C') \in \cG(\mh(C_{i+1})) = \cG(m_{i+1})$.
In the latter case (in which $\cG(m_{i+1}) \neq \emptyset$ hence $C_{i+2} < \infty$), by definition of $C_{i+2}\,$,
\[ \frac{f\parenb{ \mh(C') } - f(m_{i+1})} {g(m_{i+1}) - g\parenb{ \mh(C') }} \geq C_{i+2} > C' \]
so that
\[ f\parenb{ \mh(C') } + C' g\parenb{ \mh(C') } > f(m_{i+1}) + C' g(m_{i+1}) \] 
which contradicts the definition of $\mh(C')$. 
Therefore, $\mh(C') = m_{i+1}\,$, which ends proving $\mathcal{P}_{i+1}\,$. 
\end{proof}
The following lemma is used in the proof of Proposition~\ref{pro.algo.path} above.
\begin{lemma} \label{le.algo.path}
With the notation of Proposition~\ref{pro.algo.path} and its proof, if we have $0  \leq C < C^{\prime}$, $m \in \cE(C)$,  and $m^{\prime} \in \cE(C^{\prime})$, then one of the following statements holds true\textup{:}
\begin{enumerate}
\item[\textup{(}a\textup{)}] $f(m) = f(m^{\prime})$ and $g(m) = g(m^{\prime})$.
\item[\textup{(}b\textup{)}] $f(m) < f(m^{\prime})$ and $g(m)>g(m^{\prime})$.
\item[\textup{(}c\textup{)}] $C=0$, $f(m) = f(m^{\prime})$ and $g(m) > g(m^{\prime})$, hence $m \neq \mh(0)$.
\end{enumerate}
In particular, for any $0 \leq C < C'$, we have 
either $\mh(C)=\mh(C^{\prime})$ or $\mh(C^{\prime}) \in \cG(\mh(C))$.
\end{lemma}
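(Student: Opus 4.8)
The plan is to exploit the two defining optimality inequalities of $\cE(C)$ and $\cE(C')$ and combine them. Since $m \in \cE(C)$ and $m' \in \cE(C')$, I would start from
\begin{align*}
f(m) + C g(m) &\leq f(m') + C g(m') \, ,
\\
f(m') + C' g(m') &\leq f(m) + C' g(m) \, .
\end{align*}
Adding these and cancelling the $f$-terms leaves $(C-C') g(m) \leq (C-C') g(m')$; since $C < C'$, dividing by $C - C' < 0$ reverses the inequality and yields $g(m) \geq g(m')$ \emph{in all cases}. This single observation is the engine of the whole argument.

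Next I would split according to whether $g(m) = g(m')$ or $g(m) > g(m')$. If $g(m) = g(m')$, substituting this back into each displayed inequality gives $f(m) \leq f(m')$ and $f(m') \leq f(m)$, hence $f(m) = f(m')$, which is case (a). If $g(m) > g(m')$, the first inequality rearranges to $f(m) - f(m') \leq C\,[g(m') - g(m)]$, where the bracket is negative. When $C > 0$ the right-hand side is strictly negative, forcing $f(m) < f(m')$, i.e.\ case (b); when $C = 0$ it only gives $f(m) \leq f(m')$, so I would further distinguish $f(m) < f(m')$ (again case (b)) from $f(m) = f(m')$ (case (c)).

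To finish case (c) I must show $m \neq \mh(0)$. Here $C = 0$, so $\cE(0) = \argmin_{\mM} f$; since $f(m) = f(m')$ and $m \in \cE(0)$, also $m' \in \cE(0)$. Because $\prec$ is chosen so that $g$ is non-decreasing, $g(m') < g(m)$ forces $m' \prec m$ (equality is excluded as the $g$-values differ), so $m$ is not the $\prec$-minimum of $\cE(0)$, i.e.\ $m \neq \mh(0)$.

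For the ``in particular'' clause I would apply the trichotomy with $m = \mh(C)$ and $m' = \mh(C')$. Case (c) is then automatically impossible, since for $C = 0$ it would assert $\mh(0) = m \neq \mh(0)$; this exclusion is precisely what makes the clean dichotomy work. In case (b), $f(\mh(C')) > f(\mh(C))$ and $g(\mh(C')) < g(\mh(C))$ are exactly the two conditions defining $\cG(\mh(C))$, so $\mh(C') \in \cG(\mh(C))$. In case (a), the common $f$- and $g$-values show that $m$ and $m'$ have the same penalized value for every constant, hence $m \in \cE(C')$ and $m' \in \cE(C)$; minimality of $\mh(C) = \min_{\prec} \cE(C)$ and $\mh(C') = \min_{\prec} \cE(C')$ then gives $m \preceq m'$ and $m' \preceq m$, whence $\mh(C) = \mh(C')$. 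The computations are elementary, so the only delicate points are the bookkeeping with $\prec$ and the non-decreasingness of $g$ in cases (a) and (c), together with the observation that specialising to $m = \mh(C)$ kills case (c).
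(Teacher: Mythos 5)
Your proof is correct and follows essentially the same route as the paper's: sum the two optimality inequalities to get $g(m') \leq g(m)$, split on equality versus strict inequality of the $g$-values, isolate the degenerate $C=0$ case, and then specialise to $m=\mh(C)$, $m'=\mh(C')$ for the final dichotomy. One small point in your favour: in case (a) of the ``in particular'' clause, you argue via mutual membership ($m \in \cE(C')$, $m' \in \cE(C)$) and $\prec$-minimality, which is more careful than the paper's shortcut claim that $\cE(C)=\cE(C')$ --- that set equality can actually fail (e.g.\ two points with equal criterion value at $C$ but not at $C'$), whereas your argument only uses what is true.
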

\begin{proof}[Proof of Lemma~\ref{le.algo.path}]
By definition of $\cE(C)$ and $\cE(C^{\prime})$,
\begin{align}
\label{eq.le.algo.path.1}
f(m) + C g(m) 
&\leq f(m^{\prime}) + C g(m^{\prime}) 
\\
\text{and} \qquad 
f(m^{\prime}) + C^{\prime} g(m^{\prime}) 
&\leq f(m) + C^{\prime} g(m) 
\, .
\label{eq.le.algo.path.2}
\end{align}
Summing Eq.~\eqref{eq.le.algo.path.1} and \eqref{eq.le.algo.path.2} gives $(C^{\prime} - C) g(m^{\prime}) \leq (C^{\prime} - C) g(m)$ so that
\begin{equation}
\label{eq.le.algo.path.3} 
g(m^{\prime}) 
\leq g(m) 
\, . 
\end{equation}
Since $C \geq 0$, Eq.~\eqref{eq.le.algo.path.1} and \eqref{eq.le.algo.path.3} give $f(m) + C g(m) \leq f(m^{\prime}) + C g(m)$, that is
\begin{equation}
\label{eq.le.algo.path.4} 
f(m) \leq f(m^{\prime}) 
\, .
\end{equation}

If $g(m) = g(m^{\prime})$, Eq.~\eqref{eq.le.algo.path.2} and \eqref{eq.le.algo.path.4} imply 
$f(m^{\prime})=f(m)$ hence (a) is satisfied.
Otherwise, $g(m) > g(m^{\prime})$ by Eq.~\eqref{eq.le.algo.path.3}, 
and Eq.~\eqref{eq.le.algo.path.1} implies $f(m) < f(m^{\prime})$ or $C=0$.
If $f(m) < f(m')$, (b) holds true. 
Otherwise, $f(m)=f(m')$ and $C=0$. 
Since $g(m^{\prime}) < g(m)$, we get $m^{\prime} \prec m$ hence $m \neq \mh(0)$.

The last statement follows by taking $m=\mh(C)$ and $m^{\prime}=\mh(C^{\prime})$, which excludes case (c).
In case (a), $\cE(C) = \cE(C^{\prime})$ hence $\mh(C) = \mh(C^{\prime})$.
In case (b), $\mh(C^{\prime}) \in \cG(\mh(C))$.
\end{proof}

\subsection{Computation of \texorpdfstring{$\Chwindow$}{hat(C)(window)} in step~2 of Algorithms~\ref{algo.OLS.jump}, \ref{algo.gal.slope.naif}, \ref{algo.penmin.linear}, and~\ref{algo.penmingal}}
\label{app.algos.window}
Step~2 of Algorithms~\ref{algo.OLS.jump}, \ref{algo.gal.slope.naif}, \ref{algo.penmin.linear}, and~\ref{algo.penmingal} require to localize a jump in the trajectory $(\C_{\mh(C)})_{C \geq 0}\,$, given the path $(\mh(C))_{C \geq 0}$ and some complexity measure $(\C_m)_{\mM}\,$.
Although the maximal jump is straightforward to localize, Theorem~\ref{thm.OLS} suggests to look for the largest jump over a geometrical window of values of $C$, that is, $\Chwindow$ as defined by Eq.~\eqref{def.Chwin} in Section~\ref{sec.slopeOLS.math}.
This section explains how $\Chwindow$ can be computed efficiently given the path $(\C_{\mh(C)})_{C \geq 0}\,$, 
with a complexity $\grandO(i_{\max} \log i_{\max} ) = \grandO( \card \M  \log (\card \M ) )$.

\medbreak

Let us consider a slightly more general problem:
given some $\alpha > \beta >0$, compute
\begin{equation}
\label{eq.Chwindow.general}
\Chwindowgal \egaldef \argmax_{C \geq 0} \set{ \C_{\mh( \beta C)} - \C_{\mh( \alpha C)} }
\, .
\end{equation}
Note that $\Chwindowgal$ is usually not reduced to a singleton, but can be an interval or a finite union of intervals.

From Eq.~\eqref{eq.algo.path.description} in Appendix~\ref{app.algos.path}, the path $(\C_{\mh(C)})_{C \geq 0}$ is piecewise constant and can be fully described with a small number of parameters: writing $\C_i = \C_{m_i}\,$,
\begin{equation}
\label{eq.algo.path.description.C}
\forall i \in \set{0, \ldots, i_{\max}}, \quad \forall C \in \left[ C_i , C_{i+1} \right) , \qquad \C_{\mh(C)} = \C_i
\, .
\end{equation}
Given this description of $(\C_{\mh(C)})_{C \geq 0}\,$, Algorithm~\ref{algo.window} below determines the set $\Chwindowgal\,$, as proved by Proposition~\ref{pro.algo.window} .
\begin{algo}
\label{algo.window}
\algoinput\textup{:} $(C_i)_{0 \leq i \leq i_{\max}+1}$ an increasing sequence of non-negative reals with $C_0=0$ and $C_{i_{\max}+1}=+\infty$, and $(\C_i)_{0 \leq i \leq i_{\max}}$ a sequence of real numbers.
\begin{enumerate}
\item If $i_{\max} = 0$, define $\Chwindowgal \egaldef [0,+\infty)$ and stop.
\item Otherwise, proceed and compute 
$\overline{C} = ( C_1 / \beta , \ldots, C_{i_{\max}}/\beta , C_1 / \alpha , \ldots, C_{i_{\max}} / \alpha ) \in \R^{2 i_{\max}}$ 
and $\overline{\Delta} = ( \C_1 - \C_0 , \ldots, \C_{i_{\max}} - \C_{i_{\max}-1} , \C_0 - \C_1, \ldots, \C_{i_{\max}-1} - \C_{i_{\max}} ) \in \R^{2 i_{\max}}$. 
\item Sort $\overline{C}$ and $\overline{\Delta}$ according to $\overline{C}$, that is, 
find some permutation $\sigma$ of $\sets{1 , \ldots, 2 i_{\max}}$ such that
$\overline{C}_{\sigma(1)} \leq \dots \leq \overline{C}_{\sigma(2 i_{\max})}$ 
and compute
$\overline{C^{\sigma}} = (\overline{C}_{\sigma(i)})_{1 \leq i \leq 2 i_{\max}}$ and
$\overline{\Delta^{\sigma}} = (\overline{\Delta}_{\sigma(i)})_{1 \leq i \leq 2 i_{\max}}$. 
%
\item Compute $W \egaldef \mathrm{cumsum}(\overline{\Delta^{\sigma}}) \in \R^{2 i_{\max}}$, that is, for every $i \in \sets{1 , \ldots, 2 i_{\max}}$,
\[ W_i = \sum_{j=1}^i \overline{\Delta^{\sigma}_j} \, . \]
\item Compute $V\in \R^{2 i_{\max}}$ such that,
for every $i \in \sets{1 , \ldots, 2 i_{\max}}$,
$V_i = W_i$ if $\overline{C^{\sigma}_{i}}<\overline{C^{\sigma}_{i+1}}$,
and otherwise $V_i = - \infty$.
\item Determine $\mathcal{K} \egaldef \argmax_{i \in \sets{1 , \ldots, 2 i_{\max}}} V_i\,$.
\item Define $\Chwindowgal \egaldef \bigcup_{k \in \mathcal{K}} [\overline{C^{\sigma}_k} , \overline{C^{\sigma}_{k+1}} )$ with $\overline{C^{\sigma}_{2i_{\max}+1}} = + \infty$.
\end{enumerate}
\algooutput\textup{:} $\Chwindowgal\,$.
\end{algo}
\begin{proposition}[Correctness of Algorithm~\ref{algo.window}] \label{pro.algo.window}
Algorithm~\ref{algo.window} is correct, that is, it terminates and its output $\Chwindowgal$ actually satisfies Eq.~\eqref{eq.Chwindow.general} provided Eq.~\eqref{eq.algo.path.description.C} holds true.
\end{proposition}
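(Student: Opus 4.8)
The plan is to recognize the function being maximized as piecewise constant, read off its breakpoints and jump heights explicitly, and verify that Algorithm~\ref{algo.window} evaluates it interval-by-interval through a single cumulative sum. First I would dispose of the trivial case $i_{\max}=0$: then $\mh(C)=m_0$ for all $C\geq 0$ by Eq.~\eqref{eq.algo.path.description.C}, so $\C_{\mh(\beta C)}-\C_{\mh(\alpha C)}\equiv 0$, every $C\geq 0$ is a maximizer, and this matches the output $[0,+\infty)$. From now on I assume $i_{\max}\geq 1$ and write $h(C)\egaldef \C_{\mh(\beta C)}-\C_{\mh(\alpha C)}$, the function whose argmax defines $\Chwindowgal$ in Eq.~\eqref{eq.Chwindow.general}.

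Next I would make the piecewise-constant structure of $h$ explicit. By Eq.~\eqref{eq.algo.path.description.C} the map $C\mapsto\C_{\mh(C)}$ equals $\C_i$ on $[C_i,C_{i+1})$, hence as its argument increases across $C_i$ (for $1\leq i\leq i_{\max}$) it jumps by $\C_i-\C_{i-1}$. Consequently $C\mapsto\C_{\mh(\beta C)}$ jumps by $\C_i-\C_{i-1}$ at $C=C_i/\beta$, and $C\mapsto-\C_{\mh(\alpha C)}$ jumps by $\C_{i-1}-\C_i$ at $C=C_i/\alpha$. These $2i_{\max}$ abscissae and jump heights are exactly the entries of the vectors $\overline{C}$ and $\overline{\Delta}$ assembled in step~2, and all abscissae are positive since $C_1>0$. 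For $C$ below every breakpoint both arguments lie in $[0,C_1)$, so $h\equiv 0$ on the leftmost interval $[0,\overline{C^{\sigma}_1})$.

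I would then relate the values of $h$ to the cumulative sums computed by the algorithm. After reindexing the breakpoints in increasing order via the permutation $\sigma$ of step~3, crossing the first $k$ of them adds precisely the first $k$ reordered jump heights, so $h$ equals $W_k=\sum_{j=1}^{k}\overline{\Delta^{\sigma}_j}$ on $\bigl[\overline{C^{\sigma}_k},\overline{C^{\sigma}_{k+1}}\bigr)$ whenever that interval is nonempty, i.e.\ whenever $\overline{C^{\sigma}_k}<\overline{C^{\sigma}_{k+1}}$. This is exactly why step~5 retains $V_k=W_k$ for such indices and sets $V_k=-\infty$ otherwise: the partial sums attached to coincident breakpoints are never attained by $h$ on a set of positive length, so discarding them loses no maximizer, while each genuine constant piece of $h$ is represented by exactly one surviving index. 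The telescoping identity $W_{2i_{\max}}=\sum_i(\C_i-\C_{i-1})+\sum_i(\C_{i-1}-\C_i)=0$ shows that the terminal interval $[\overline{C^{\sigma}_{2i_{\max}}},+\infty)$ carries the value $0$, matching the value on $[0,\overline{C^{\sigma}_1})$; hence $\max_{C\geq 0}h=\max_k V_k$, and taking $\mathcal{K}=\argmax_k V_k$ and forming $\bigcup_{k\in\mathcal{K}}\bigl[\overline{C^{\sigma}_k},\overline{C^{\sigma}_{k+1}}\bigr)$ returns the set of maximizers of $h$, which is Eq.~\eqref{eq.Chwindow.general}. Termination is immediate, and the only superlinear operation is the sort in step~3 of a length-$2i_{\max}$ vector, giving the announced $\grandO(i_{\max}\log i_{\max})$ cost.

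The main obstacle I expect is the bookkeeping around coincident breakpoints: one must prove carefully that the indices killed by $V_k=-\infty$ correspond to empty intervals and that the surviving indices give a bijection with the nonempty constant pieces of $h$, so that set equality (not merely equality of the maximal \emph{value}) holds. A secondary point needing care is the two extremal flat pieces: since $h\equiv 0$ on $[0,\overline{C^{\sigma}_1})$ and $W_{2i_{\max}}=0$, the value $0$ is always a candidate maximum and is recovered through the surviving index $2i_{\max}$; the proof should note that the leftmost piece is not itself emitted as an output interval, which is harmless except in the degenerate case $\max h=0$ and is immaterial in the intended setting where a genuine complexity jump makes $\max h>0$. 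Everything else is a routine verification once the explicit breakpoint/jump description of $h$ is established.
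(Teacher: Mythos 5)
Your proposal is correct and takes essentially the same approach as the paper's proof: both rewrite $\C_{\mh(\beta C)} - \C_{\mh(\alpha C)}$ as a piecewise-constant function whose breakpoints and jump heights are exactly the entries of $\overline{C}$ and $\overline{\Delta}$, identify its value on each nonempty interval $[\overline{C^{\sigma}_{k}} , \overline{C^{\sigma}_{k+1}})$ with the partial sum $W_k\,$, and argue that replacing $W$ by $V$ is harmless because coincident breakpoints produce empty intervals that no value of $C$ occupies. If anything, you are more careful than the paper on two points it leaves implicit: the telescoping identity $W_{2 i_{\max}} = 0$, which guarantees $\max_k V_k \geq 0$ so that the flat piece of value $0$ on $[0 , \overline{C^{\sigma}_{1}})$ cannot exceed the values attained on the algorithm's intervals, and the degenerate case where the maximum equals $0$, in which the output omits that leftmost piece and is only a proper subset of the exact argmax set (a caveat that also applies to the paper's claim of exact attainment).
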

\begin{proof}[Proof of Proposition~\ref{pro.algo.window}]
If $i_{\max} = 0$, $C \mapsto \C_{\mh(C)}$ is constant over $[0,+\infty)$ so Algorithm~\ref{algo.window} is correct.
Otherwise, Eq.~\eqref{eq.algo.path.description.C} can be rewritten as
\[
\forall C \geq 0 \, , \quad \C_{\mh(C)} = \C_0 + \sum_{i=1}^{i_{\max}} (\C_i - \C_{i-1}) \un_{C \geq C_i}
\]
hence, for every $C \geq 0$, using the notation of Algorithm~\ref{algo.window},
\begin{align}
\notag
\C_{\mh( \beta C)} - \C_{\mh( \alpha C)}
&=
\sum_{i=1}^{i_{\max}} (\C_i - \C_{i-1}) \un_{\beta C \geq C_i}
- \sum_{i=1}^{i_{\max}} (\C_i - \C_{i-1}) \un_{\alpha C \geq C_i}
\\
&=
\sum_{i=1}^{2 i_{\max}} \overline{\Delta}_i \un_{C \geq \overline{C}_i}
=
\sum_{i=1}^{2 i_{\max}} \overline{\Delta^{\sigma}_i} \un_{C \geq \overline{C^{\sigma}_i}}
=
\sum_{i=1}^{2 i_{\max}} W_i  \un_{C \in [\overline{C^{\sigma}_i} , \overline{C^{\sigma}_{i+1}} )}
\notag
\\
&=
\sum_{i=1}^{2 i_{\max}} V_i  \un_{C \in [\overline{C^{\sigma}_i} , \overline{C^{\sigma}_{i+1}} )}
\label{eq.pr.pro.algo.window.1}
\end{align}
with the conventions $\overline{C^{\sigma}_{2 i_{\max}+1}} = +\infty$ and $\infty \un_{C \in \emptyset} = 0$.
For the last equality, we use the fact that when
$[\overline{C^{\sigma}_i} , \overline{C^{\sigma}_{i+1}} )$ is empty ---which corresponds to
values of $\overline{C^{\sigma}_i}$ that are equal to $C_j / \beta = C_k / \alpha$ for some $j,k \in \sets{1 , \ldots, i_{\max}}$---,
the value of $W_i  \un_{C \in [\overline{C^{\sigma}_i} , \overline{C^{\sigma}_{i+1}} )}$ is zero whatever $W_i\,$, hence $W_i$ can be changed into $V_i\,$.

By Eq.~\eqref{eq.pr.pro.algo.window.1},
\[ 
\sup_{C \geq 0} \set{ \C_{\mh( \beta C)} - \C_{\mh( \alpha C)} } 
= \max_{1 \leq i \leq 2i_{\max}} V_i 
\]
and the supremum is attained exactly at the values of $C$ belonging to some interval $[\overline{C^{\sigma}_k} , \overline{C^{\sigma}_{k+1}} )$ with $k \in \mathcal{K} = \argmax_i V_i\,$.
In other words, Algorithm~\ref{algo.window} is correct.
\end{proof}

\section{More figures and experimental results} \label{app.morefig}

\begin{figure}
\begin{center}
\begin{minipage}[b]{.49\linewidth}
\includegraphics[width=\textwidth]{\pathfig/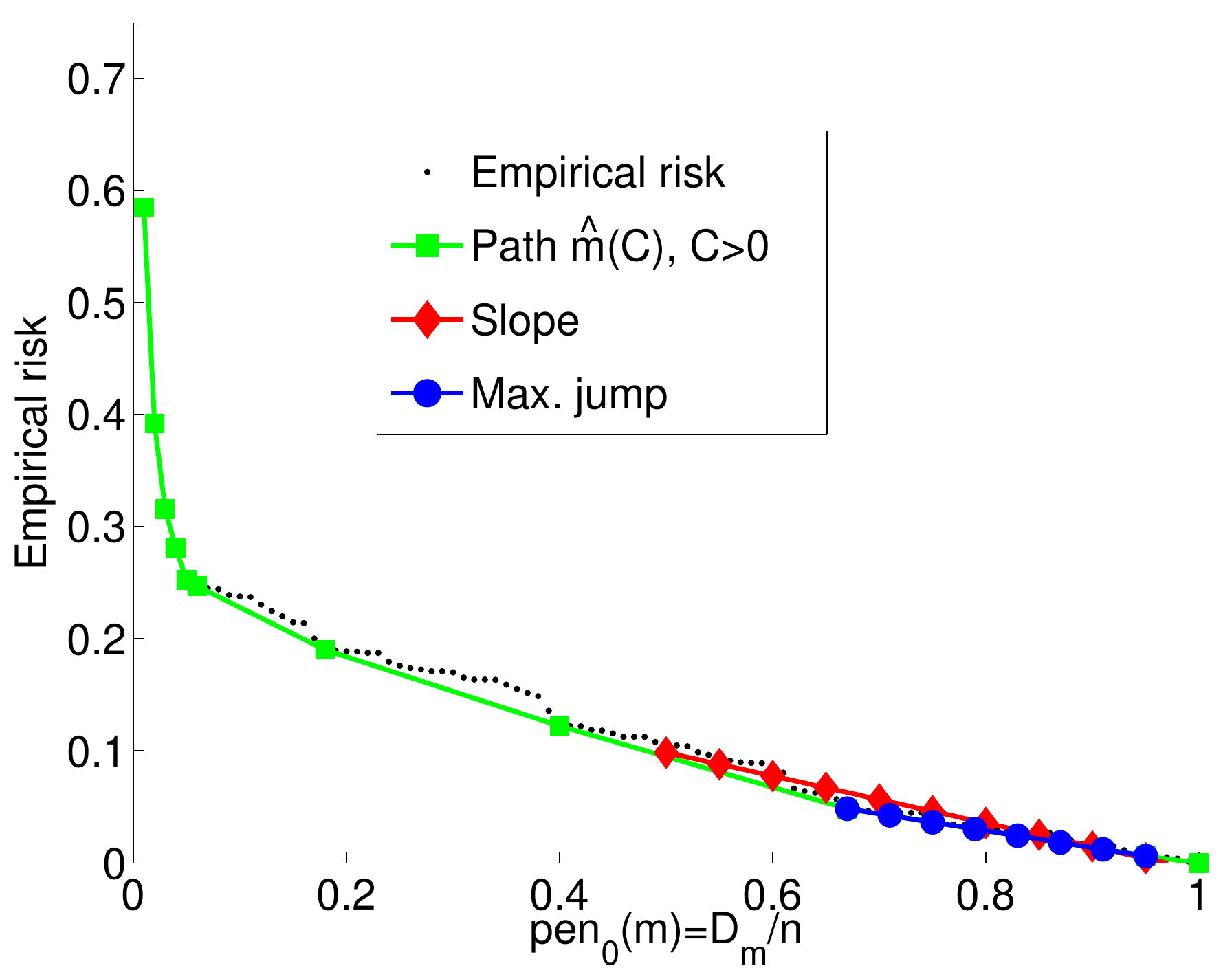} 
\\ \centerline{(a) L-curve (as in Figure~\ref{fig.Lcurve.easy.ech6}). }
\end{minipage}
\begin{minipage}[b]{.49\linewidth}
\includegraphics[width=\textwidth]{\pathfig/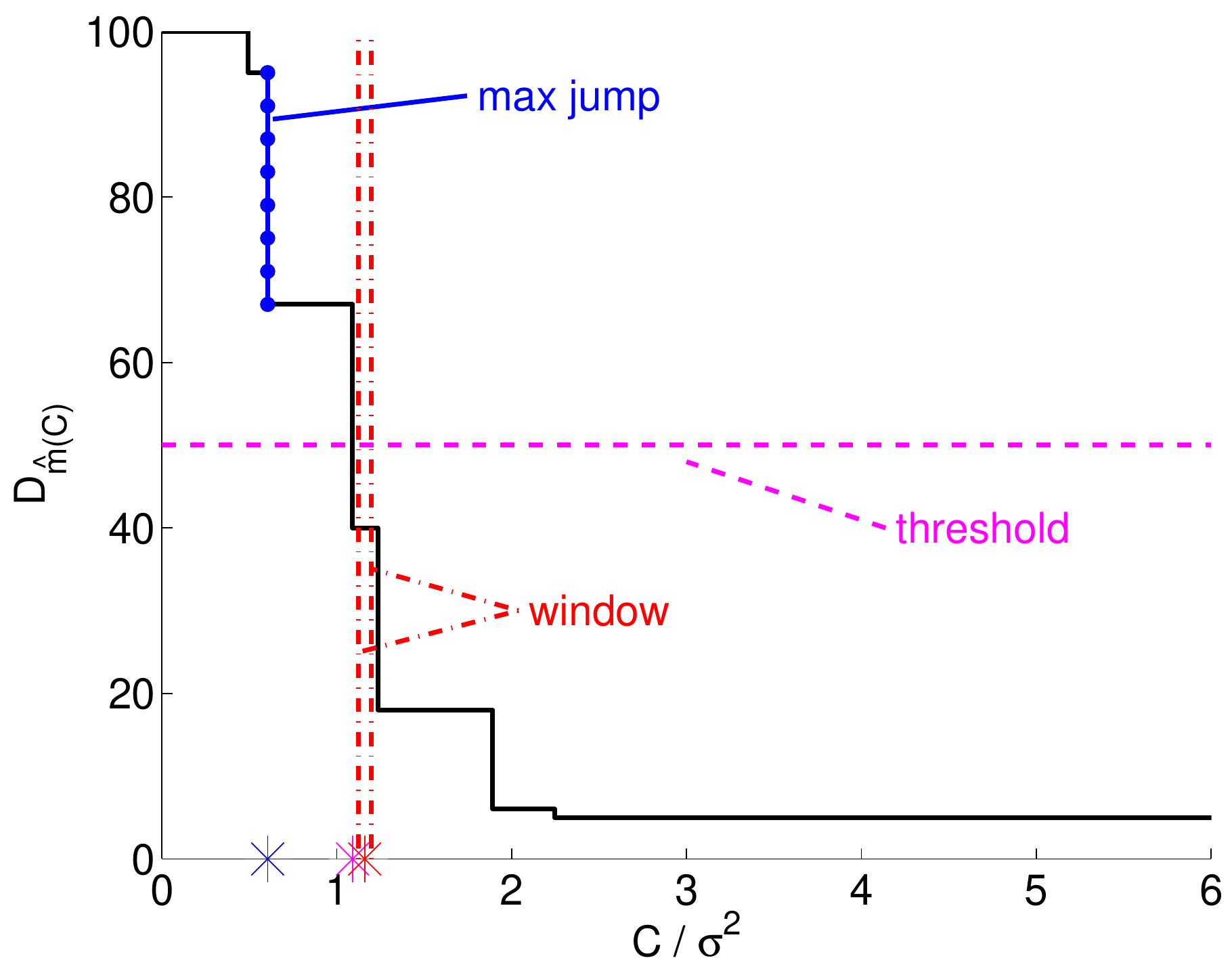}
\\ \centerline{(b) Plot of $C \mapsto D_{\mh(C)}\,$. }
\end{minipage}
\caption{\label{fig.DmhC-Lcurve.easy.ech540}
On the same sample, visualization of the three versions of Algorithm~\ref{algo.OLS.jump}
and of $\Chslope\,$. 
`Easy' setting,
see Appendix~\ref{app.details-simus} for details.
}
\end{center}
\end{figure}

\begin{figure}
\begin{center}
\begin{minipage}[c]{.49\linewidth}
\includegraphics[width=\textwidth]{\pathfig/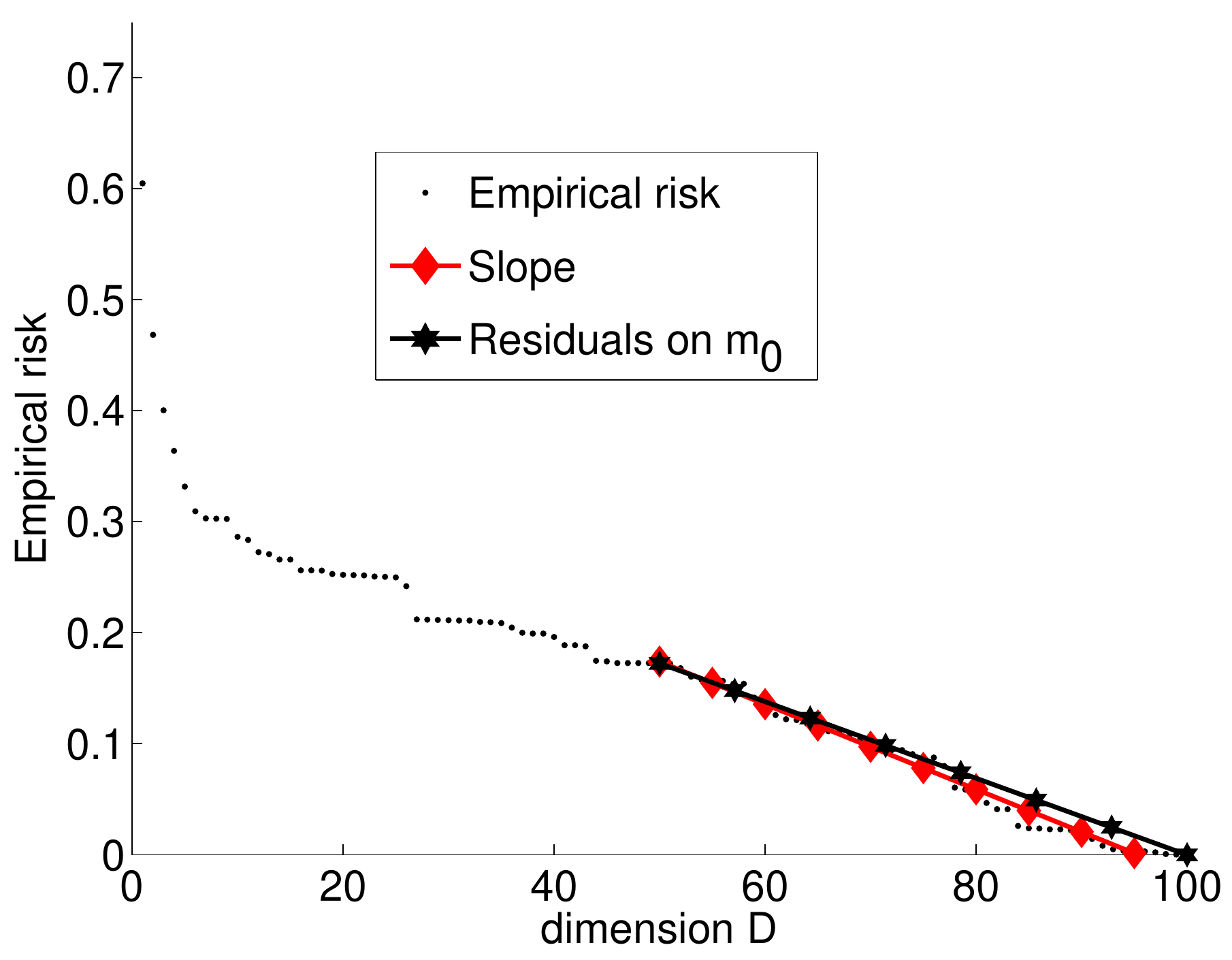}
\\ \centerline{(a) `Easy' setting.}
\end{minipage}
\hfill
\begin{minipage}[c]{.49\linewidth}
\includegraphics[width=\textwidth]{\pathfig/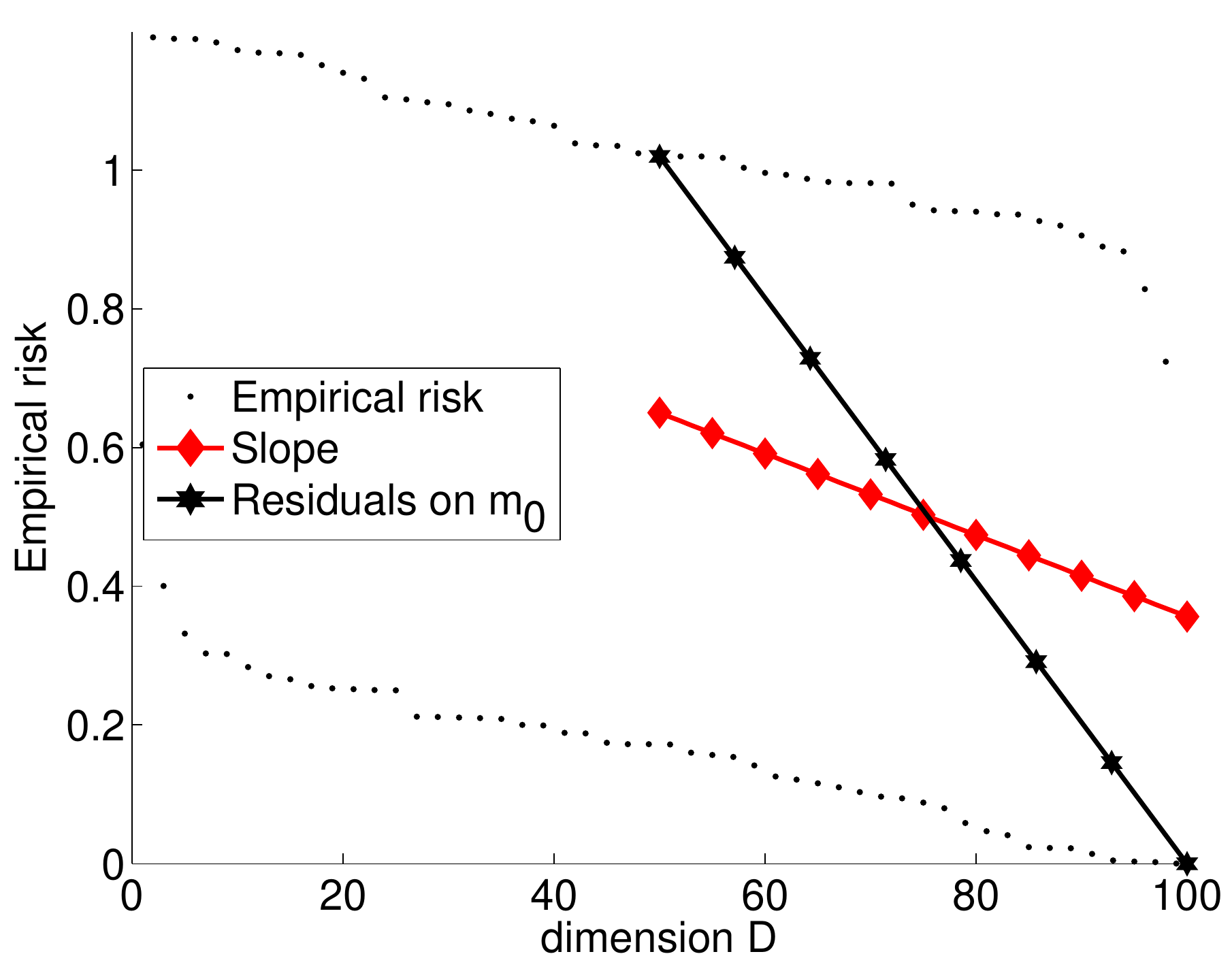}
\\ \centerline{(b) `Hard' setting.}
\end{minipage}
\caption{\label{fig.OLS.slope-vs-residuals.easy-hard}
Slope estimation $\Chslope$ vs. residual-based variance estimator $\sigh^2_{m_0}\,$.
See Appendix~\ref{app.details-simus} for details.
}
\end{center}
\end{figure}

\begin{table}
\begin{center}
\begin{tabular}{lcccc}
$\Ch$        & $\E \crochb{ \Ch/\sigma^2 }$ & $\sqrt{\var(\Ch)}/\sigma^2$ & $\E\crochb{ (\Ch - \sigma^2)^2 }/\sigma^4$ & risk ratio\\
\tabespvert
\hline
\tabespvert
$\Chmaxjump$                           & 1.09  & 0.257 & 0.0749 & 1.309 $\pm$ 0.003 \\
\tabespvert
\hline
\tabespvert
$\Chthr\,$, $T_n = n/10$                 & 3.12  & 1.281 & 6.140\ch & 1.647 $\pm$ 0.004 \\ 
$\Chthr\,$, $T_n = n/\log(n)$            & 1.60  & 0.469 & 0.584\ch & 1.310 $\pm$ 0.002 \\ 
$\Chthr\,$, $\mathbf{T_n = n/2}$         & 1.13  & 0.229 & 0.0683 & 1.278 $\pm$ 0.003 \\ 
$\Chthr\,$, $T_n = 9n/10$                & 0.84  & 0.239 & 0.0826 & 1.621 $\pm$ 0.083 \\ 
\tabespvert
\hline
\tabespvert
$\Chwindow\,$, $\eta = 1/n$              & 1.09  & 0.257 & 0.0745 & 1.309 $\pm$ 0.003 \\ 
$\Chwindow\,$, $\mathbf{\eta = 1/\sqrt{n}}$& 1.10& 0.256 & 0.0752 & 1.308 $\pm$ 0.003 \\ 
$\Chwindow\,$, $\eta = 1.5/\sqrt{n}$     & 1.10  & 0.258 & 0.0776 & 1.307 $\pm$ 0.003 \\ 
$\Chwindow\,$, $\eta = \sqrt{\log(n)/n}$ & 1.12  & 0.263 & 0.0829 & 1.304 $\pm$ 0.003 \\ 
$\Chwindow\,$, $\eta = 2\sqrt{\log(n)/n}$& 1.17  & 0.286 & 0.110\ch & 1.294 $\pm$ 0.003 \\ 
\tabespvert
\hline
\tabespvert
$\Chslope\,$, $D_{0} = n/10$             & 1.15  & 0.181 & 0.0544 & 1.243 $\pm$ 0.002 \\ 
$\Chslope\,$, $D_{0} = n/\log(n)$        & 1.09  & 0.188 & 0.0437 & 1.260 $\pm$ 0.002 \\ 
$\Chslope\,$, $\mathbf{D_{0} = n/2}$     & 1.05  & 0.228 & 0.0543 & 1.313 $\pm$ 0.003 \\ 
$\Chslope\,$, $D_{0} = 9n/10$            & 1.02  & 0.478 & 0.229\ch & 1.672 $\pm$ 0.009 \\ 
\tabespvert
\hline
\tabespvert
\textsc{Capushe}                          & 1.05  & 0.291 & 0.0873 & 1.410 $\pm$ 0.005 \\
\tabespvert
\hline
\tabespvert
median                                 & 1.08  & 0.229 & 0.0588 & 1.301 $\pm$ 0.003 \\
\tabespvert
\hline
\tabespvert
consensus                              & --    & --    & --     & 1.306 $\pm$ 0.003 \\
consensus when no reject               & --    & --    & --     & 1.298 $\pm$ 0.003 \\
\tabespvert
\hline
\tabespvert
$\sigh^2_{m_0}\,$, $D_{m_0} = n/10$      & 1.23  & 0.180 & 0.0862 & 1.237 $\pm$ 0.002 \\ 
$\sigh^2_{m_0}\,$, $D_{m_0} = n/\log(n)$ & 1.12  & 0.176 & 0.0443 & 1.241 $\pm$ 0.002 \\ 
$\sigh^2_{m_0}\,$, $D_{m_0} = n/2$       & 1.05  & 0.211 & 0.0469 & 1.304 $\pm$ 0.003 \\ 
$\sigh^2_{m_0}\,$, $D_{m_0} = n/2+1$     & 1.05  & 0.213 & 0.0478 & 1.305 $\pm$ 0.003 \\ 
$\sigh^2_{m_0}\,$, $D_{m_0} = 9n/10$     & 1.02  & 0.455 & 0.2080 & 1.641 $\pm$ 0.008 \\ 
\tabespvert
\hline
\tabespvert
$C_p$ (known $\sigma^2$)               & --    & --    & --     & 1.269 $\pm$ 0.003 \\
$C_p$ $\times 1.12$ (known $\sigma^2$) & --    & --    & --     & 1.251 $\pm$ 0.002 \\
\end{tabular}
\medbreak
\caption{\label{tab.dist-Ch.easy}
Algorithms~\ref{algo.OLS.jump}--\ref{algo.OLS.slope},
`easy' setting\textup{:} distribution of $\Ch$ and model-selection performance,
with various definitions for $\Ch$ and various parameters for each definition.
The risk ratio is $\E\crochs{\norms{\Fh_{\mh} - F}^2 / \inf_{\mM} \norms{\Fh_{m} - F}^2}$.
Reported values are empirical estimates obtained from $N=10\,000$ independent samples.
For the risk ratio, error bars are equal to the standard deviation of the ratio $\norms{\Fh_{\mh} - F}^2 / \inf_{\mM} \norms{\Fh_{m} - F}^2$ divided by $\sqrt{N}$.
}
\end{center}
\end{table}

\begin{figure}
\begin{center}
\begin{minipage}[c]{.47\linewidth}
\includegraphics[width=\textwidth]{\pathfig/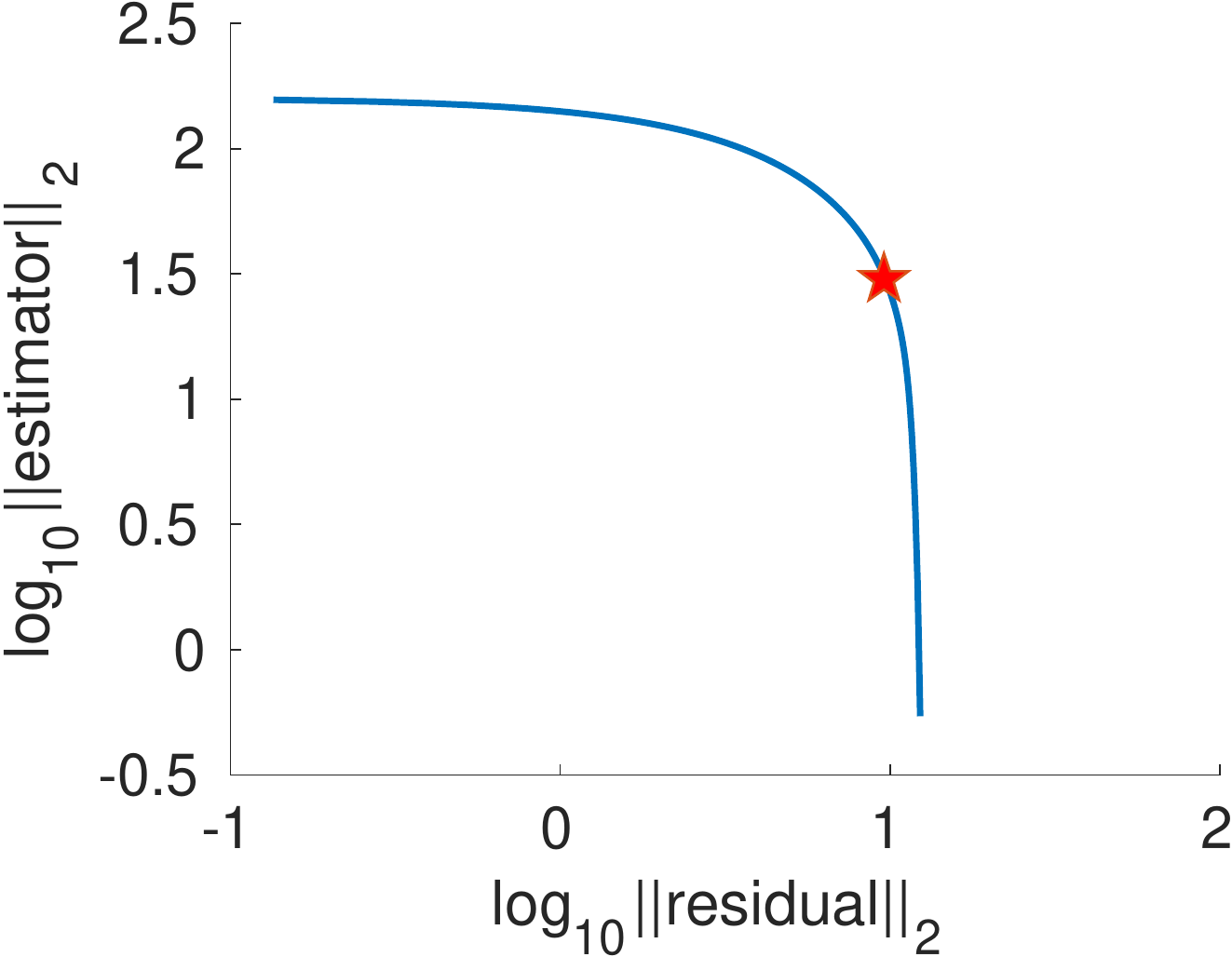}
\\ \centerline{(a) $n=100$}
\end{minipage}
\hfill
\begin{minipage}[c]{.47\linewidth}
\includegraphics[width=\textwidth]{\pathfig/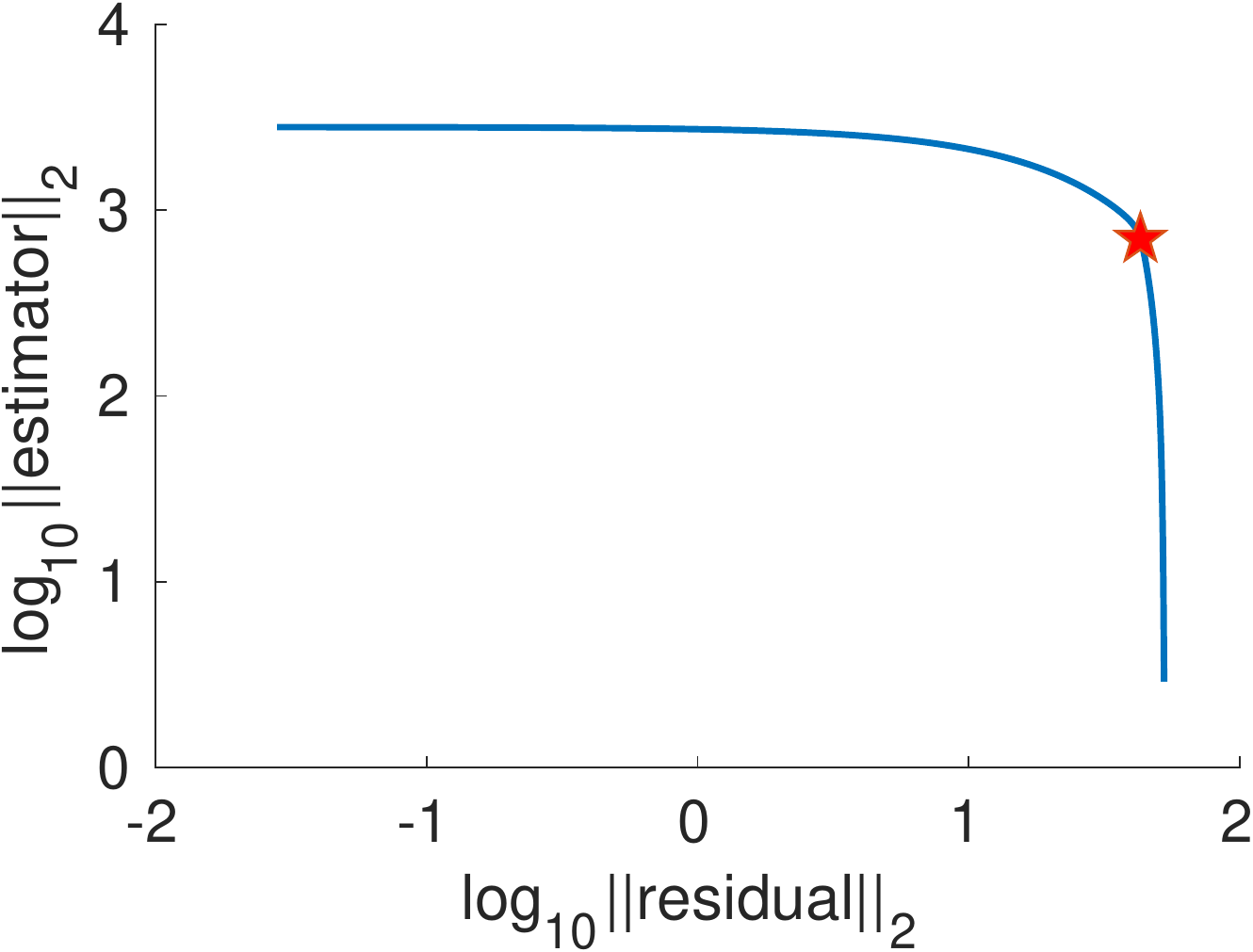}
\\ \centerline{(b) $n=2000$}
\end{minipage}
\caption{\label{fig.ridge.Lcurve} 
L-curve $(\log_{10} \norms{Y - \Fh_{\lambda}} , \log_{10} \norms{\Fh_{\lambda}})_{\lambda>0}$ 
in the `kernel ridge' framework. 
See Appendix~\ref{app.details-simus} for details. 
The red star shows the position of the oracle \textup{(}minimum of the risk $n^{-1} \norms{F - \Fh_{\lambda}}^2$\textup{)}.
The ``elbow'' is clearly localized for $n=2\,000$ \textup{(}and close to the oracle\textup{)}, 
but not for $n=100$.
}
\end{center}
\end{figure}

\begin{table}
\begin{center}
\begin{tabular}{lcccc}
$\Ch$        & $\E \crochb{ \Ch/\sigma^2 }$ & $\sqrt{\var(\Ch)}/\sigma^2$ & $\E\crochb{ (\Ch - \sigma^2)^2 }/\sigma^4$ & risk ratio\\
\tabespvert
\hline
\tabespvert
$\Chmaxjump$                           & 1.10  & 0.259   & 0.076\ch & 1.291 $\pm$ 0.003 \\
\tabespvert
\hline
\tabespvert
$\Chthr\,$, $T_n = n/10$                 & 3.38  & 1.390   & 7.57\ch\ch & 1.661 $\pm$ 0.004 \\ 
$\Chthr\,$, $T_n = n/\log(n)$            & 1.59  & 0.462   & 0.563\ch & 1.285 $\pm$ 0.002 \\ 
$\Chthr\,$, $\mathbf{T_n = n/2}$            & 1.13  & 0.231   & 0.0703 & 1.258 $\pm$ 0.002 \\ 
$\Chthr\,$, $T_n = 9n/10$                & 0.86  & 0.236   & 0.077\ch & 1.566 $\pm$ 0.008 \\ 
\tabespvert
\hline
\tabespvert
$\Chwindow\,$, $\eta = 1/n$              & 1.09  & 0.257   & 0.0746 & 1.292 $\pm$ 0.003 \\ 
$\Chwindow\,$, $\mathbf{\eta = 1/\sqrt{n}}$& 1.10& 0.257   & 0.0762 & 1.288 $\pm$ 0.003 \\ 
$\Chwindow\,$, $\eta = 1.5/\sqrt{n}$     & 1.11  & 0.258   & 0.078\ch & 1.288 $\pm$ 0.003 \\ 
$\Chwindow\,$, $\eta = \sqrt{\log(n)/n}$ & 1.12  & 0.263   & 0.0827 & 1.287 $\pm$ 0.003 \\ 
$\Chwindow\,$, $\eta = 2\sqrt{\log(n)/n}$& 1.17  & 0.285   & 0.109\ch & 1.275 $\pm$ 0.003 \\ 
\tabespvert
\hline
\tabespvert
$\Chslope\,$, $D_{0} = n/10$             &  1.54 & 0.188   & 0.328\ch & 1.268 $\pm$ 0.002 \\ 
$\Chslope\,$, $D_{0} = n/\log(n)$        &  1.65 & 0.193   & 0.46\ch\ch & 1.291 $\pm$ 0.002 \\ 
$\Chslope\,$, $\mathbf{D_{0} = n/2}$     &  2.36 & 0.231   & 1.89\ch\ch & 1.437 $\pm$ 0.003 \\ 
$\Chslope\,$, $D_{0} = 9n/10$       & 20.2\ch\ch & 2.07\ch & 374\phantom{.000000} & 3.68\ch{} $\pm$ 0.016 \\ 
\tabespvert
\hline
\tabespvert
\textsc{Capushe}                          & 2.77  & 1.66\ch & 5.87\ch\ch & 1.562 $\pm$ 0.005 \\
\tabespvert
\hline
\tabespvert
median                                 & 1.16  & 0.253   & 0.0911 & 1.260 $\pm$ 0.002 \\
\tabespvert
\hline
\tabespvert
consensus                              & --    & --      & --     & 1.285 $\pm$ 0.003 \\
consensus when no reject               & --    & --      & --     & 1.266 $\pm$ 0.003 \\
\tabespvert
\hline
\tabespvert
$\sigh^2_{m_0}\,$, $D_{m_0} = n/10$      &  5.44 & 0.473   & 19.9\phantom{0000} & 2.055 $\pm$ 0.006 \\ 
$\sigh^2_{m_0}\,$, $D_{m_0} = n/\log(n)$ &  1.12 & 0.176   & 0.0443 & 1.223 $\pm$ 0.002 \\ 
$\sigh^2_{m_0}\,$, $D_{m_0} = n/2$       &  8.94 & 0.828   & 63.7\phantom{0000} & 2.577 $\pm$ 0.006 \\ 
$\sigh^2_{m_0}\,$, $D_{m_0} = n/2+1$     &  1.05 & 0.213   & 0.0478 & 1.285 $\pm$ 0.003 \\ 
$\sigh^2_{m_0}\,$, $D_{m_0} = 9n/10$& 38.9\ch\ch & 3.95\ch & 1450\phantom{.0000000} & 6.11\ch{} $\pm$ 0.011 \\ 
\tabespvert
\hline
\tabespvert
$C_p$ (known $\sigma^2$)               & --    & --      & --     & 1.252 $\pm$ 0.003 \\
$C_p$ $\times 1.12$ (known $\sigma^2$) & --    & --      & --     & 1.232 $\pm$ 0.002 \\
\end{tabular}
\medbreak
\caption{\label{tab.dist-Ch.hard}
Same as Table~\ref{tab.dist-Ch.easy} for the `hard' setting.
}
\end{center}
\end{table}

\clearpage

\section{Detailed information about figures and simulation experiments}
\label{app.details-simus}

This section provides all details necessary to reproduce the figures and simulation experiments reported throughout the article.

\subsection{Data and estimators} \label{app.details-simus.data}

All experiments are made within the fixed-design regression framework described in Section~\ref{sec.slopeOLS.framework}, with two main kinds of estimator collections and data.

\paragraph{Least-squares framework (`easy'/`hard')} 
All figures and tables, except Figures~\ref{fig.linear.jump} and \ref{fig.ridge.Lcurve}, consider data and estimators as follows.
Data satisfy
\[ Y = F + \varepsilon \in \R^n \]
with independent Gaussian noise $\varepsilon \sim \mathcal{N}(0,\sigma^2 \Id_n)$, $\sigma^2=1/4$, $n=100$,
\begin{gather}
\notag 
F_i = \frac{C_n}{i}
\quad \text{and} \quad
C_n = \paren{ \sum_{i=1}^n \frac{1}{i^2} }^{-1/2}
\, .
\end{gather}
The choice of $C_n$ ensures that $n^{-1} \norm{F}^2 = 1$.

The estimators considered are least-squares (projection) estimators with one among the following two collections of models $(S_m)_{1 \leq m \leq n}\,$:
\begin{itemize}
\item `easy' setting: for every $m \in \set{1, \ldots, n}$, $S_m = \Seasy_m$ is the linear span of the first $m$ vectors of the canonical basis of $\R^n$.
\item `hard' setting: for every $m \in \set{1, \ldots, n}$, $S_m = \Shard_m$ is the linear span of the first $m$ vectors of the canonical basis of $\R^n$ if $m$ is odd, and $S_m = \Shard_m$ is the linear span of the last $m$ vectors of the canonical basis of $\R^n$ if $m$ is even.
\end{itemize}
Both settings correspond to (ordered) variable selection with an orthogonal design, after having transformed the data conveniently according to the design matrix.
In the easy case, the variables are ordered by decreasing order of magnitude.
In the hard case, some uncertainty remains about the correct order (ascending or descending), and the two options are considered alternatively (depending on the parity of $n$).
Of course, models $\Shard_m$ with $m$ even are very poor, but this can be unknown before seeing the data.

\paragraph{Kernel ridge framework} 
Figures~\ref{fig.linear.jump} and \ref{fig.ridge.Lcurve} consider data and estimators as follows.
Data satisfy
\[ Y = F + \varepsilon \in \R^n \]
with independent Gaussian noise $\varepsilon \sim \mathcal{N}(0,\sigma^2 \Id_n)$, $\sigma^2=1$,
\[ F_1 = \frac{1}{2}
\qquad \text{and} \qquad
\forall i \in \set{2, \ldots, n} \, , \qquad
F_i = \sin\paren{ 25 \pi x_i^3}
\quad \text{with} \quad
x_i = \frac{i-1}{n-1} \, .
\]
The family of estimators considered is the family of kernel ridge estimators
$(\Fh_{\lambda})_{\lambda > 0}$
where for every $\lambda > 0$,
\[ \Fh_{\lambda} = K (K + n\lambda \Id_n)^{-1} Y
\, ,
\quad
K = \parenb{ k(x_i,x_j) }_{1 \leq i,j \leq n}
\, ,
\quad \text{and} \quad
\forall x,x^{\prime} \in \R , \,
k(x,x^{\prime}) = \exp \parenb{- \alpha \abss{ x - x^{\prime} } }
\]
the Laplace kernel, with $\alpha = 8$.
In the experiments, only a finite set $\sets{\lambda_0 , \ldots, \lambda_n}$ of values of $\lambda$ is considered, chosen such that the degrees of freedom $\tr( K (K + n\lambda_i \Id_n)^{-1})$ are equal to $i$ for every $i=0, \ldots, n$.

\subsection{Procedures} \label{app.details-simus.proc}

The exact definitions of all procedures considered in the experiments for computing some $\Ch$ or choosing some model $\mh$ are the following.
For the procedures depending on some parameter, its default value is used everywhere except in Tables~\ref{tab.dist-Ch.easy}--\ref{tab.dist-Ch.hard}.
Note that the choice of the default values was made \emph{prior to the simulations}: 
we can check afterwards on Tables~\ref{tab.dist-Ch.easy}--\ref{tab.dist-Ch.hard}
that these choices provide reasonably good results (which fortunately happened), 
so that results using only the default values of the parameters (for instance, Table~\ref{tab.stats-comp-mh.easy}) are meaningful.

\paragraph{Maximal jump  ($\Chmaxjump\,$, `Max. jump', `max j.' or `max')} 
In Section~\ref{sec.practical.jump-vs-slope}, we define
\[
\Chmaxjump \in \argmax_{C \geq 0} \set{ D_{\mhgalzero(C^-)}  - D_{\mhgalzero(C^+)}}
\, ,
\]
that is, the location of the maximal jump of $C \mapsto D_{\mhgalzero(C)}\,$, assuming it is unique.
In our experiments, when the argmax contains several values of $C$, we choose the largest one, that is, the last largest jump; 
this choice is natural, since it means taking the less complex model 
among those corresponding to a maximal jump, 
and it matches the choice made by \citet{Ler_Tak:2010}. 

Note that for change-point detection, \citet[Section~4.2]{Leb:2005} suggests an opposite convention ---taking the smallest value of $C$ in the $\argmax$---, arguing from simulation experiments that otherwise too small models are selected.
Nevertheless, \citet[Section~4.2]{Leb:2005} also reports that the latter convention can lead to taking $\Chmaxjump$ too small, so a rather complicated method is suggested for choosing some threshold $\alpha_{\mathrm{thr}}$ and imposing $\Chmaxjump \geq \alpha_{\mathrm{thr}}\,$.

\paragraph{Threshold ($\Chthr$ or `thr')} 
Eq.~\eqref{def.Chthr} in Section~\ref{sec.slopeOLS.math} defines 
\[ 
\Chthr (T_n)
\egaldef \min \set{ C \geq 0 \, / \, D_{\mh(C)} \leq T_n } 
\, ,
\]
which depends on some parameter $T_n\,$.
The default value of $T_n$ is $n/2$.

Note that Theorem~\ref{thm.OLS} suggests that $T_n = \rho n$ works for any $\rho \in (0,1)$,
and previous theoretical results \citep[Section~3.3]{Arl_Mas:2009:pente} suggest to take
$T_n \propto n / \log(n)$ or $n / (\log n)^2$.
Nevertheless, all these theoretical results involve pessimistic constants (as shown by the simulation experiments), so they cannot be used for a fine tuning of $T_n\,$.
It turns out that $n/2$ does very good in the experiments of Tables~\ref{tab.dist-Ch.easy}--\ref{tab.dist-Ch.hard}, while other choices lead to much worse performance.

\paragraph{Window ($\Chwindow$ or `win')} 
Eq.~\eqref{def.Chwin} in Section~\ref{sec.slopeOLS.math} defines 
\[ 
\Chwindow (\eta)
\in \argmax_{C \geq 0} \setj{ D_{\mh(C/[1+\eta])} - D_{\mh(C[1+\eta])} } 
\, , 
\]
which depends on some parameter $\eta >0$.
Similarly to $\Chmaxjump\,$, the $\argmax$ is usually not reduced to a single point, so a more precise definition must be given for $\Chwindow\,$.
Actually, when $\eta>0$, Appendix~\ref{app.algos.window} shows that
$\argmax_{C \geq 0} \sets{ D_{\mh(C/[1+\eta])} - D_{\mh(C[1+\eta])} }$ is a finite union of intervals.
Denoting by $[\Chwindow^{(1)} , \Chwindow^{(2)})$ the last of these intervals ---that is, the one corresponding to the largest values of $C$--- we define
\[ \Chwindow = \sqrt{ \Chwindow^{(1)} \Chwindow^{(2)} } \, . \]
Of course, other choices could be possible and we do not claim that our (arbitrary) choice is the best one.

In Figures~\ref{fig.DmhC.easy.ech5} and~\ref{fig.DmhC-Lcurve.easy.ech540}b, the interval represented by the two red vertical lines is $[\Chwindow^{(1)} , \Chwindow^{(2)})$.
Note that this interval often looks like $[ \Chwindow / (1+\eta) , \Chwindow (1+\eta) )$ but it can also be quite different.

Taking the limit $\eta \to 0^+$ in the definition of $\Chwindow\,$, we recover $\Chmaxjump\,$.
Theorem~\ref{thm.OLS} suggests to take $\eta \propto \etaplus \geq \sqrt{\log(n)/n}$, that we consider in our experiments (see Tables~\ref{tab.dist-Ch.easy}--\ref{tab.dist-Ch.hard}).
In our experiments, the default value for $\eta$ is $n^{-1/2}$, a choice made to get a slightly smaller value than $\sqrt{\log(n)/n} \approx 0.22$ (we recall that $n=100$ in the least-squares framework).

\paragraph{Slope ($\Chslope$ or `slope')} 
In Algorithm~\ref{algo.OLS.slope}, the definition of $\Chslope$ is rather vague;
it is a bit more precise in Section~\ref{sec.practical} where the range of models considered in the regression is defined by $\pen_0(m) \in [p_{\min} , p_{\max}]$ for some $p_{\min}<p_{\max}$ to be chosen.
In the experiments, since $\pen_0(m)$ is equal to $D_m/n$ in the least-squares framework, we choose $p_{\min} = D_0/n$ and $p_{\max}=1$ for some parameter $D_0 \in [1,n)$.

In other words, given $D_0 \in [1,n)$, we consider only models of dimension $D_m \geq D_0$ and we perform a (standard) linear regression of the empirical risk $n^{-1} \norms{\Fh_m - F}^2$ against $- D_m / n$, that is, we solve
\[
(\widehat{a},\widehat{b}) \in \argmin_{(a,b) \in \R^2} \sum_{\mM \, / \, D_m \geq D_0} \paren{a - b \frac{D_m}{n} - \frac{1}{n} \norm{\Fh_m - F}^2}^2
\, ,
\]
and we define $\Chslope$ as the resulting slope~$\widehat{b}$.
The default value of $D_0$ is $n/2$.

\paragraph{Capushe ($\Chcapushe\,$, $\mhcapushe$ or `CAP')} 
The procedure called `\textsc{Capushe}' throughout this article is the one proposed by \citet[Section~4.2]{Bau_Mau_Mic:2010} and implemented in the \textsc{Capushe} package for Matlab and~R.
For completeness, let us recall its definition ---which depends on some parameter $pct \in (0,1)$--- in the least-squares framework.
\begin{itemize}
\item Step 1: If several models have the same dimension $D$, keep only the one with the smallest empirical risk.
This step does not change anything in our experimental setting since there is exactly one model per dimension.
\item Step 2: for all $D \in [1,n-2]$, compute by robust linear regression the slope $\Chslopecapushe(D)$ of 
the empirical risk $n^{-1} \norms{\Fh_m - F}^2$ against $- D_m / n$, 
among models of dimension $D_m \geq D$.

\item Step 3: for all $D \in [1,n-2]$, compute the corresponding selected model
\[
\mh_D = \mh\parenb{ 2 \Chslopecapushe(D) }
= \argmin_{\mM} \set{ \frac{1}{n} \norm{\Fh_m - Y}^2 + \frac{2 \Chslopecapushe(D) D_m}{n} } \, .
\]
Then, $\mh_1, \ldots, \mh_{n-2}$ is piecewise constant, and some $1 = D_1 < \ldots < D_{I+1}=n-1$ and $m_1, \ldots, m_{I+1} \in \M$ exist such that
\[
\forall i \in \set{1, \ldots , I} \, , \,
\forall D \in [ D_i , D_{i+1} - 1] \, , \,
\quad
\mh_D = m_i
\]
with $m_1 \neq m_2\,$, $\ldots$, $m_{I} \neq m_{I+1}\,$.
The intervals $[D_i, D_{i+1} - 1]$ are called ``plateau'' (platforms) by \citet[Section~4.2]{Bau_Mau_Mic:2010} and their size is denoted by $N_i = D_{i+1}-D_i\,$.
\item Step 4: Keep only the platforms of size $N_i$ larger than $pct$ times the total size $\sum_{\ell} N_{\ell} = n-2$, and among these, define $\ih$ the last platform, that is,
\[
\ih = \max \setb{ i \in \set{1, \ldots , I} \, / \, N_i > pct \times (n-2) }
\]
and select
\[
\mhcapushe = m_{\ih} \, .
\]
\end{itemize}
Note that at step 4, it can happen that no platform is large enough.
In such cases, we consider the last platform among the ones of largest size, that is,
\[
\ih = \max \set{ i \in \set{1, \ldots , I} \, / \, N_i = \max_j N_j }
\, .
\]
We always take $pct=0.15$ in our experiments, that is, the default value proposed by \citet{Bau_Mau_Mic:2010}.

Note that \citet{Bau_Mau_Mic:2010} only provide a model-selection procedure $\mhcapushe\,$,
and not a value $\Ch$ of the constant in front of the penalty.
In order to help understanding better $\mhcapushe\,$, we also report in our experiments the distribution of $\Chcapushe$ that we define as some median of
\[
\setb{ \Chslopecapushe(D_{\ih}), \ldots, \Chslopecapushe(D_{\ih+1}-1) }
\, .
\]
This choice is arbitrary among many others that all lead to having $\mhcapushe = \mh(2 \Chcapushe) $.

\paragraph{Median (`med')} 
As defined in the caption of Figure~\ref{fig.OLS.dist-Ch}, `median' refers to taking $\Ch$ as the median of
\[
\set{\Chmaxjump,\Chthr,\Chwindow,\Chslope,\Chcapushe}
\]
(with their default parameter values for $\Chthr\,$, $\Chwindow\,$, and $\Chslope\,$: 
$T_n=n/2$, $\eta=n^{-1/2}$, $D_0=n/2$),
and
$\mh = \mh(2 \Ch)$.

Remark that the set of procedures considered is arbitrary, and other choices could be made. 
Nevertheless, it seems wise to keep an equilibrium between 
the jump and slope formulations; 
here, the jump approach is slightly favored, 
but the slope definitions come into play when 
$\Chmaxjump\,$, $\Chthr\,$, and $\Chwindow$ do not exactly coincide. 

The idea of considering some median of several values of $\Ch$ could also be used when there is some uncertainty about the parameter of some procedure (say, $T_n$ for $\Chthr$), by considering the median of the set of values obtained on a grid of values of the parameter.

\paragraph{Residuals ($\sigh^2_{m_0}\,$, `Residuals on $m_0$' or `resid')} 
The residual-based variance estimator $\sigh^2_{m_0}$ is defined by Eq.~\eqref{def.sighm0} in Section~\ref{sec.related.variance}:
\begin{equation}
\notag
\sigh^2_{m_0} \egaldef \frac{1}{n-D_{m_0}} \norm{Y-\Fh_{m_0}}^2
\end{equation}
for some model~$m_0\,$.
In the experiments, there is one model per dimension, 
so $m_0$ is given by the value of its dimension $D_{m_0}$ and the default choice is $D_{m_0} = n/2$.
Since $n=100$, the default choice is $D_{m_0} = 50$ which is even, so the definition of $\Shard_m$ 
---in which models of odd dimension are good and models of even dimension are very poor--- 
is made on purpose.

Note that in Tables~\ref{tab.dist-Ch.easy}--\ref{tab.dist-Ch.hard}, the line ``$D_{m_0} = n / \log(n)$'' means ``$D_{m_0} = 21$'' (hence, for the `hard' setting, it is a reasonably good model).

\paragraph{Consensus (`cons')} 
As defined in the caption of Figure~\ref{fig.OLS.dist-risk-ratio}, 
the ``consensus'' procedure performs a majority vote among
\[
\set{\mh(2\Chmaxjump),\mh(2\Chthr),\mh(2\Chwindow),\mh(2\Chslope),\mhcapushe}
\]
with their default parameters values.
If no majority emerges (that is, if we do not have at least three of these procedures that agree), the default choice is $\mh(2\Chwindow)$.
Remark that Table~\ref{tab.stats-comp-mh.easy} shows that an agreement occurs for more than $96 \%$ of the samples in the `easy' setting, and for more than $89 \%$ of the samples in the `hard' setting.

\paragraph{Consensus when no reject (`no rej')} 
This actually refers to the same procedure as `consensus', but showing results (a boxplot or an estimation of the expectation of the loss ratio) only for the samples for which a majority emerged.
Again, Table~\ref{tab.stats-comp-mh.easy} shows that this only removes a small fraction of the $N=10^4$ independent samples generated in our experiments.

\paragraph{Mallows' $C_p$}
When the variance $\sigma^2$ is known, a natural model-selection procedure for the framework of Section~\ref{sec.slopeOLS} 
is Mallows' $C_p$ \citep{Mal:1973}, that is, selecting
\[
\mh = \mh(2\sigma^2) = \argmin_{\mM} \set{ \frac{1}{n} \norm{\Fh_m - Y}^2 + \frac{2 \sigma^2 D_m}{n} }
\, .
\]
Its performance is shown in Tables~\ref{tab.dist-Ch.easy}--\ref{tab.dist-Ch.hard} for comparison.

Mallows' $C_p$ is also considered for illustrating the overpenalization phenomenon in Figure~\ref{fig.surpen} in Section~\ref{sec.empirical.overpenalization}.
On the graph of Figure~\ref{fig.surpen}, what is plotted is, 
for $C \in [0,4]$, the estimated value (from $N=10^4$ independent samples) of the expected risk ratio
\[
\E\croch{ \frac{\norm{\Fh_{\mh(2 C \sigma^2)} - F}^2}{\inf_{\mM} \norm{\Fh_{m} - F}^2} }
\]
when using Mallows' $C_p$ penalty multiplied by~$C$; 
we recall that $\mh(\cdot)$ is defined 
by Eq.~\eqref{eq.mhC} in Section~\ref{sec.slopeOLS.penmin}. 
For plotting the graph of Figure~\ref{fig.surpen}, a linear grid of values of $C$ with stepsize $1/100$ is considered.
The optimal performance is obtained for $C=1.12$ in the `easy' and `hard' settings, and it is also included in Tables~\ref{tab.dist-Ch.easy}--\ref{tab.dist-Ch.hard}.

\subsection{Additional remarks}

Repeated experiments show results obtained from $N=10^4$ independent samples.

Illustrations made on a single sample in the least-squares framework are showed in Figures~\ref{fig.OLS.algo}, \ref{fig.Lcurve.easy.ech6}, \ref{fig.DmhC.easy.ech5}, \ref{fig.DmhC-Lcurve.easy.ech540}, \ref{fig.OLS.slope-vs-residuals.easy-hard}.
The samples considered have been chosen manually in order to illustrate 
either typical or rare (but still possible) configurations. 
The graphs of Figure~\ref{fig.OLS.algo}, Figure~\ref{fig.Lcurve.easy.ech6}, 
and Figure~\ref{fig.OLS.slope-vs-residuals.easy-hard}a are made on the same sample (they correspond to a ``typical'' situation).
The graph of Figure~\ref{fig.DmhC.easy.ech5} is made on a second sample (corresponding to a ``rare'' situation).
The two graphs of Figure~\ref{fig.DmhC-Lcurve.easy.ech540} are made on a third sample (also corresponding to a ``rare'' situation, similar to the one shown in Figure~\ref{fig.DmhC.easy.ech5}).

Figure~\ref{fig.linear.jump} is taken from the article by \citet[top right graph of Figure~2]{Arl_Bac:2009:minikernel_long_v2}. 
It is made from a single sample generated as in the kernel-ridge framework 
(see Appendix~\ref{app.details-simus.data}), with a sample size $n=200$.

\end{document}